
\documentclass{aic}

\aicAUTHORdetails{%
  title = {The Tur\'an Density of 4-Uniform Tight Cycles},
  author = {Maya Sankar},
  plaintextauthor = {Maya Sankar},
  plaintexttitle = {The Turan Density of 4-Uniform Tight Cycles}, 
  keywords = {extremal numbers, hypergraphs, tight cycles},
}   

\aicEDITORdetails{%
   year={2026},
   number={4},
   received={21 November 2024},   
   published={22 May 2026}, 
   doi={10.19086/aic.2026.4},     
}   

\pdfoutput=1

\usepackage{amsmath,amssymb,amsthm}
\usepackage{thmtools}
\usepackage[capitalize, nameinlink]{cleveref}
\usepackage{enumitem}
	\setlist[enumerate,1]{label={(\arabic*)}}
\usepackage[usenames,dvipsnames]{xcolor}
\usepackage{tikz}
\usepackage{bm} 

\colorlet{mygreen}{ForestGreen!60!LimeGreen}
\colorlet{graphyellow}{yellow}
\colorlet{graphblue}{blue!60!cyan}
\colorlet{graphred}{red}
\colorlet{graphgreen}{mygreen}
\colorlet{graphpurple}{violet!90!black}

\newtheorem{theorem}{Theorem}[section]
\newtheorem{lemma}[theorem]{Lemma}
\newtheorem{proposition}[theorem]{Proposition}
\newtheorem{corollary}[theorem]{Corollary}
\newtheorem{claim}[theorem]{Claim}
\newtheorem{subclaim}[theorem]{Subclaim}
\newtheorem{obstruction}{Obstruction}
\newtheorem{step}{Step}
\newtheorem*{claim*}{Claim}
\newtheorem*{milne}{Milne's Inequality}

\theoremstyle{definition}
\newtheorem{definition}[theorem]{Definition}

\newtheorem{question}[theorem]{Question}
\newtheorem{problem}[theorem]{Problem}
\newtheorem{conjecture}[theorem]{Conjecture}

\theoremstyle{remark}
\newtheorem{remark}[theorem]{Remark}

\crefname{claim}{Claim}{Claims}
\newcommand{\proofofclaim}{\renewcommand*{\qedsymbol}{$\blacksquare$}}

\newcommand{\al}{\alpha}
\newcommand{\bet}{\beta}
\newcommand{\del}{\delta}
\newcommand{\eps}{\epsilon}
\newcommand{\ga}{\gamma}
\newcommand{\sg}{\sigma}

\newcommand{\Ga}{\Gamma}
\newcommand{\Del}{\Delta}
\newcommand{\Delbf}{{\boldsymbol{\Delta}}}

\newcommand{\texthom}{\textnormal{-hom}}
\newcommand{\ent}{{\mathrm H}}
\newcommand{\epsref}[1]{\eps_{\ref{#1}}}

\newcommand{\CC}{{\mathcal C}}
\newcommand{\EE}{{\mathcal E}}
\newcommand{\LL}{{\mathcal L}}
\newcommand{\TT}{{\mathcal T}}

\newcommand{\C}[2]{{\CC_{#1}^{(#2)}}}
\newcommand{\Vbad}{V_{\mathrm{bad}}}
\newcommand{\Ebad}{{\mathcal E}_{\mathrm{bad}}}
\newcommand{\Tbad}{{\mathcal T}_{\mathrm{bad}}}

\newcommand{\Gbar}{\overline{G}}
\newcommand{\degbar}{\overline{\deg}}

\newcommand{\Godd}{G^{\mathrm{odd}}}
\newcommand{\Gopt}{G^{\mathrm{opt}}}
\newcommand{\eopt}{e^{\mathrm{opt}}}
\newcommand{\ebar}{{\bar e}}

\DeclareMathOperator{\id}{id}

\DeclareMathOperator*{\E}{{\mathbb E}}
\DeclareMathOperator{\ex}{ex}
\DeclareMathOperator{\stab}{Stab}
\DeclareMathOperator{\tc}{tc}
\DeclareMathOperator{\indeg}{indeg}
\DeclareMathOperator{\outdeg}{outdeg}

\newcommand{\x}{{\mathbf x}}
\newcommand{\y}{{\mathbf y}}
\newcommand{\z}{{\mathbf z}}
\newcommand{\cyc}{\mathrm{cyc}}

\newcommand{\Edir}{{\vec E}}
\newcommand{\Fdir}{{\vec F}}

\newcommand{\abc}{\al,\bet,\ga}
\newcommand{\abd}{\al,\bet,\del}
\newcommand{\abcd}{\al,\bet,\ga,\del}
\newcommand{\abcdopt}{\left(\frac 14,\frac 14,\frac 14,0\right)}


\usetikzlibrary{calc, arrows.meta, positioning, shapes.geometric, bending, decorations.markings}


\newcommand{\resettikzparameters}{
	\def\pinwheelopacity{0}
	\def\circletriangleangle{240}
}
\resettikzparameters

\tikzset{vtx/.style={circle, fill, inner sep=1.5pt}}
\tikzset{
	circle arrow/.style={},
	purple opacity/.style={fill opacity=0.1},
	edge color/.style={},
}

\newcounter{uid}
\setcounter{uid}{0}


\tikzset{unlabeled triangle/.style={baseline=0.8ex, scale=0.6}}
\tikzset{labeled triangle/.style={baseline=1ex,scale=0.6,every node/.style={scale=0.7}}}

\newcommand{\tikzmaketriangle}[4][]
{\path (0,0) coordinate (#1#3) -- (1,0) coordinate (#1#4) -- (60:1) coordinate (#1#2);
}
\newcommand{\tikzlabeltriangle}[7][]
{\path (#1#2) node[above=-1pt] {$#5$}
		(#1#3) node[left=-1pt] {$#6$}
		(#1#4) node[right=-1pt] {$#7$};
}

\newcommand{\tikzmakebluetriangle}[4][]
{\draw[graphblue, line width=0.45pt, fill=graphblue, fill opacity=0.2] 
	(#1#2) -- (#1#3) -- (#1#4) -- cycle;
}

\newcommand{\tikzmakepointedtriangle}[4][] {
	\path (#1#2) -- (#1#3) coordinate[pos=0.4] (#1#2#3);
	\path (#1#2) -- (#1#4) coordinate[pos=0.4] (#1#2#4);
	\fill[red, draw=graphblue] (#1#2) -- (#1#2#3) -- (#1#2#4) -- cycle;
	\fill[graphblue, draw=none, opacity=0.2] (#1#3) -- (#1#2#3) -- (#1#2#4) -- (#1#4);
	\draw[graphblue, line width=0.45pt] (#1#2) -- (#1#3) -- (#1#4) -- cycle;
}

\newcommand{\tikzmakepinwheel}[4][] {
	\path (#1#2) -- (#1#3) coordinate[pos=0.5] (#1#2#3)
			-- (#1#4) coordinate[pos=0.5] (#1#3#4)
			-- (#1#2) coordinate[pos=0.5] (#1#2#4)
			-- (#1#3#4) coordinate[pos=2/3] (#1#2#3#4);
	\if\pinwheelopacity0
	\else
	\fill[draw=none,fill=white,fill opacity=\pinwheelopacity]
		(#1#2) -- (#1#2#3#4) -- (#1#2#4) -- (#1#2)
		(#1#3) -- (#1#2#3#4) -- (#1#2#3) -- (#1#3)
		(#1#4) -- (#1#2#3#4) -- (#1#3#4) -- (#1#4);
	\fi
	\def\pinwheelopacity{0}
	\draw[graphpurple, fill=graphpurple, fill opacity=0.8]
		(#1#2) -- (#1#2#3#4) -- (#1#2#3) -- cycle
		(#1#3) -- (#1#2#3#4) -- (#1#3#4) -- cycle
		(#1#4) -- (#1#2#3#4) -- (#1#2#4) -- cycle;
	\draw[graphpurple, line width=0.45pt] (#1#2) -- (#1#3) -- (#1#4) -- cycle;
}

\newcommand{\tikzmakecircletriangle}[4][] {
	\draw[graphpurple, line width=0.45, fill=graphpurple, purple opacity] (#1#2) -- (#1#3) -- (#1#4) -- cycle;
	\coordinate (#1#2#3#4) at ($1/3*(#1#2) + 1/3*(#1#3) + 1/3*(#1#4)$);
	\coordinate (#1#2#2) at ($0.5*(#1#2)+0.5*(#1#4)$);
	\coordinate (#1#3#3) at ($0.5773502*(#1#3) - 0.288675*(#1#2) + 0.711325*(#1#4)$);
	\begin{scope}[shift=(#1#4),x={(#1#2#2)},y={(#1#3#3)}]
	\draw[->,graphpurple,>={Latex[angle=50:3pt]},circle arrow] ($(#1#2#3#4) + (\circletriangleangle:0.42)$) arc (\circletriangleangle:\circletriangleangle-340:0.42);
	\end{scope}
\def\circletriangleangle{240}
}

\newcommand{\tikzmakeshiftedvtxs}[5]{
	\stepcounter{uid}
	\coordinate (\theuid med) at ($1/3*(#1)+1/3*(#2)+1/3*(#3)$); 
	\coordinate (\theuid x) at ($(#1)!#5!(\theuid med)+#4$);
	\coordinate (\theuid y) at ($(#2)!#5!(\theuid med)+#4$);
	\coordinate (\theuid z) at ($(#3)!#5!(\theuid med)+#4$);
}
\newcommand{\trianglecommands}[4][]{
\expandafter\newcommand\csname #2\endcsname{
	\tikz[unlabeled triangle, #1] {
		\tikzmaketriangle #4;
		\csname tikzmake#3\endcsname xyz;
}}
\expandafter\newcommand\csname #2labeled\endcsname[3]{
	\tikz[labeled triangle, #1] {
		\tikzmaketriangle #4;
		\tikzlabeltriangle #4{##1}{##2}{##3}
		\csname tikzmake#3\endcsname xyz;
}}
\expandafter\newcommand\csname tikzshifted#2\endcsname[5]{
	\begin{scope}[#1]
	\tikzmakeshiftedvtxs{##1}{##2}{##3}{##4}{##5};
	\csname tikzmake#3\endcsname[\theuid]xyz;
	\end{scope}
}
}

\trianglecommands{plaintriangle}{plaintriangle}{xyz}
\trianglecommands{bluetriangle}{bluetriangle}{xyz}
\trianglecommands{graytriangle}{graytriangle}{xyz}

\trianglecommands{pointedtriangle}{pointedtriangle}{xyz}
\trianglecommands{pointedtriangleleft}{pointedtriangle}{yxz}
\trianglecommands{pointedtriangleright}{pointedtriangle}{zyx}

\trianglecommands[edge color/.prefix style={graphred},vtx color/.prefix style={graphyellow}]
{rededgetriangle}{edgetriangle}{xyz}
\trianglecommands[edge color/.prefix style={graphyellow},vtx color/.prefix style={graphred}]
{yellowedgetriangle}{edgetriangle}{xyz}
\trianglecommands[edge color/.prefix style={graphred},vtx color/.prefix style={graphyellow}]
{rededgetriangleright}{edgetriangle}{yxz}
\trianglecommands[edge color/.prefix style={graphyellow},vtx color/.prefix style={graphred}]
{yellowedgetriangleright}{edgetriangle}{yxz}

\trianglecommands{pinwheel}{pinwheel}{xyz}
\trianglecommands{pinwheelrev}{pinwheel}{yxz}
\trianglecommands{circletriangle}{circletriangle}{xyz}
\trianglecommands{circletrianglerev}{circletriangle}{xzy}

\newcommand{\circlediamond}{
\tikz[labeled triangle, baseline=-2.2pt] { 
	\path (0,0) coordinate (w) -- (-30:1) coordinate (x) -- (30:1) coordinate (y) --++ (-30:1) coordinate (z);
	\tikzmakecircletriangle ywx;
	\def\circletriangleangle{60}
	\tikzmakecircletriangle xzy;
}}
\newcommand{\circlediamondrev}{
\tikz[labeled triangle, baseline=-2.2pt] { 
	\path (0,0) coordinate (w) -- (-30:1) coordinate (x) -- (30:1) coordinate (y) --++ (-30:1) coordinate (z);
	\tikzmakecircletriangle xwy;
	\def\circletriangleangle{60}
	\tikzmakecircletriangle yzx;
}}


\tikzset{unlabeled tetrahedron/.style={baseline=0.4ex,scale=0.6}}
\tikzset{labeled tetrahedron/.style={baseline=0.42ex,scale=0.6,every node/.style={scale=0.7}}}

\newcommand{\tikzmaketetrahedron}[5][]
{\path (0,0.05) coordinate (#1#3)
	(0.75,-0.2) coordinate (#1#4)
	(1,0.33) coordinate (#1#5)
	(0.5,0.85) coordinate (#1#2);
}
\newcommand{\tikzlabeltetrahedron}[9][]
{\path (#1#2) node[above=-1.3pt] {$#6$}
	(#1#3) node[left=-1pt] {$#7$}
	(#1#4) node[below right=-3pt] {$#8$}
	(#1#5) node[right=-1pt] {$#9$};
}

\newcommand{\tikzmakepointedtetrahedron}[5][] {
	\path (#2) -- (#3) coordinate[pos=0.4] (#2#3)
		(#2) -- (#4) coordinate[pos=0.4] (#2#4)
		(#2) -- (#5) coordinate[pos=0.4] (#2#5);
	\foreach \i/\j/\op in {#3/#4/0.15,#4/#5/0.35} {
		\draw[draw=none,fill=graphred, fill opacity=0.9] (#2) -- (#2\i) -- (#2\j) -- (#2);
		\fill[draw=none,graphblue, opacity=\op] (\i) -- (#2\i) -- (#2\j) -- (\j);
	}
	\fill[graphblue, opacity=0.1, draw=none] (#3) -- (#4) -- (#5);
	\draw[graphblue,line width=0.45pt] (#2) -- (#3) -- (#4) -- (#5) -- (#2)
		(#2) -- (#4);
	\draw[graphblue, line width=0.45pt, opacity=0.45] (#3) -- (#5);
	\draw[graphblue, line width=0.3pt] (#2#3) -- (#2#4) -- (#2#5);
	\draw[graphblue, opacity=0.3, line width=0.27pt] (#2#3) -- (#2#5);
}

\newcommand{\tetrahedroncommands}[4][]{
\expandafter\newcommand\csname #2\endcsname{
	\tikz[unlabeled tetrahedron, #1] {
		\tikzmaketetrahedron wxyz;
		\csname tikzmake#3\endcsname #4;
}}
\expandafter\newcommand\csname #2labeled\endcsname[4]{
	\tikz[labeled tetrahedron, #1] {
		\tikzmaketetrahedron wxyz;
		\tikzlabeltetrahedron wxyz{##1}{##2}{##3}{##4};
		\csname tikzmake#3\endcsname #4;
}}
}
\tetrahedroncommands{pointedtetrahedron}{pointedtetrahedron}{wxyz}
\tetrahedroncommands{pointedtetrahedrontwisted}{pointedtetrahedron}{yxwz}
\tetrahedroncommands[edge1/.style={graphyellow}, edge2/.style=graphred]
	{edgetetrahedron}{edgetetrahedron}{wxyz}
\tetrahedroncommands[edge1/.style={graphred}, edge2/.style=graphyellow]
	{edgetetrahedronflipped}{edgetetrahedron}{wxyz}
\tetrahedroncommands{circletetrahedron}{circletetrahedron}{wxyz}


\newcommand{\tikztetrahedronvtxs}[1]{
	\path (0,0.05) coordinate (x)
		(0.75,-0.2) coordinate (z)
		(0.95,0.33) coordinate (y)
		(0.5,0.85) coordinate (w);
	\path (w) -- (z) node[pos=0.007,#1]{} node[pos=0.98,#1]{}
		  (x) -- (y) node[pos=0.03,#1]{} node[pos=0.99,#1]{};

}

\tikzset{tetrahedron boundary/.style={
	inline/.style={baseline=0.48ex,scale=0.9,vtx/.append style={scale=0.6}, line width=0.4pt},
	big/.style={scale=2.7, line width=0.6pt,
		circle arrow/.append style={>={Latex[angle=50:3pt,scale=1.5]}}},
}}

\newcommand{\pointedtetrahedronboundary}[1][inline]{
\tikz[tetrahedron boundary,#1] {
	\tikztetrahedronvtxs{vtx};
	\tikzshiftedpointedtriangle wxy{(0,0.01)}{0.14};
	\tikzshiftedbluetriangle xyz{(-0.03,0.01)}{0.22};
	\tikzshiftedpointedtriangle wyz{(-0.008,-0.05)}{0.28};
	\tikzshiftedpointedtriangle wxz{(0.008,0.02)}{0.27};
}}
\newcommand{\edgetetrahedronboundary}{
\tikz[tetrahedron boundary, big] {
	\tikztetrahedronvtxs{vtx};
	\begin{scope}[edge color/.append style={fill opacity=0.55}, vtx color/.append style={fill opacity=0.55}]
		\tikzshiftedyellowedgetriangle ywx{(0,0.01)}{0.14};
		\tikzshiftedrededgetriangle xyz{(-0.03,0.01)}{0.22};
	\end{scope}
	\tikzshiftedrededgetriangle wyz{(-0.008,-0.05)}{0.28};
	\tikzshiftedyellowedgetriangle zwx{(0.008,0.02)}{0.27};
}}
\newcommand{\circletetrahedronboundary}{
\tikz[tetrahedron boundary, big] {
	\tikztetrahedronvtxs{vtx};
	\begin{scope}[graphpurple/.style={graphpurple!0.8!gray},
				circle arrow/.append style={opacity=0.5}, 
				purple opacity/.style={fill opacity=0.12}]
		\def\circletriangleangle{280}
		\tikzshiftedcircletriangle wyx{(0,0.01)}{0.14};
		\tikzshiftedcircletriangle xyz{(-0.03,0.01)}{0.22};
	\end{scope}
	\begin{scope}[circle arrow/.append style={line width=0.7pt}, 
				purple opacity/.style={fill opacity=0.15}]
		\def\circletriangleangle{165}
		\tikzshiftedcircletriangle wzy{(-0.008,-0.05)}{0.28};
		\def\circletriangleangle{230}
		\tikzshiftedcircletriangle wxz{(0.01,0.02)}{0.27};
	\end{scope}
}}


\tikzset{hash1/.style={postaction={decorate}, 
        decoration={markings, mark=at position 0.5 with {\draw[-, line width=0.5mm] (0,-2pt) -- (0,2pt);}}}}
\tikzset{hash2/.style={postaction={decorate}, 
        decoration={markings,
        mark=at position 0.4 with {\draw[-, line width=0.5mm] (0,-2pt) -- (0,2pt);},
        mark=at position 0.6 with {\draw[-, line width=0.5mm] (0,-2pt) -- (0,2pt);}
        }}}
\tikzset{hash3/.style={postaction={decorate}, 
        decoration={markings,
        mark=at position 0.32 with {\draw[-, line width=0.47mm] (0,-2pt) -- (0,2pt);},
        mark=at position 0.5 with {\draw[-, line width=0.47mm] (0,-2pt) -- (0,2pt);},
        mark=at position 0.68 with {\draw[-, line width=0.47mm] (0,-2pt) -- (0,2pt);}
        }}}
\tikzset{arrow1/.style={>={
	Stealth[width=1.84mm,length=1.2mm]},
		postaction={decorate},
		decoration={markings, mark=at position 0.5 with
			{\draw[->] (-0.6mm,0) -- (0.6mm,0);}}}}
\tikzset{arrow2/.style={>={
	Stealth[width=1.84mm,length=1.2mm]},
		postaction={decorate},
		decoration={markings, mark=at position 0.5 with
			{\draw[->>] (-1.2mm,0) -- (1.2mm,0);}}}}
\tikzset{arrow3/.style={>={
	Stealth[width=1.84mm,length=1mm]},
		postaction={decorate},
		decoration={markings, mark=at position 0.5 with
			{\draw[->>>] (-1.5mm,0) -- (1.5mm,0);}}}}

\tikzset{tikz arrows/.style={
	line width=0.9pt,
	every node/.style={scale=0.8,inner sep=2pt},
	>={Stealth[width=1.84mm, length=2mm]},
	e0/.style={},e1/.style={},e2/.style={},e3/.style={},
red arrow/.style={
	graphred,
	e1/.style={arrow1},
	e2/.style={arrow2},
	e3/.style={arrow3},
	e0/.style={->}
},
blue edge/.style={
	graphblue,
	e1/.style={hash1},
	e2/.style={hash2},
	e3/.style={hash3},
	e0/.style={}
},
bipartite edge/.style={graphblue, >={Stealth[scale=1, red]}, ->}
}}


\newcommand{\arrowcommands}[3][]{
\expandafter\newcommand\csname#2\endcsname[1][#1]{
	\tikz[baseline=-0.6ex, tikz arrows]{
	\draw (0,0) coordinate[vtx] (1);
	\draw (0.75,0) coordinate[vtx] (2);
	\draw[#3,##1] (1) -- (2);
}}
\expandafter\newcommand\csname#2labeled\endcsname[3][#1]{
	\tikz[baseline=-0.6ex, tikz arrows]{
	\path (0,0) coordinate[vtx] (1) node[above] {$##2$};
	\draw (0.75,0) coordinate[vtx] (2) node[above] {$##3$};
	\draw[#3,##1] (1) -- (2);
}}
\expandafter\newcommand\csname#2xy\endcsname[1][#1]
{\csname#2labeled\endcsname[##1]{\vphantom{y}x}{y}}
}

\arrowcommands[e0]{redto}{red arrow}
\arrowcommands[e0]{blueedge}{blue edge}
\arrowcommands{greenedge}{graphgreen}
\arrowcommands{purpleto}{graphpurple, ->}
\arrowcommands{purpleot}{graphpurple, <-}
\arrowcommands{bipedge}{bipartite edge} 


\tikzset{unlabeled vtx triangle/.style={baseline=1ex, scale=0.7}} 
\tikzset{labeled vtx triangle/.style={baseline=1ex, scale=0.7}}

\newcommand{\tikzmakevtxtriangle}[4][]
{\path (0,0) coordinate[vtx] (#3) -- (1,0) coordinate[vtx] (#4) 
	-- (60:1) coordinate[vtx] (#2);
}
\newcommand{\tikzlabelvtxtriangle}[7][]
{\path (#1#2) node[left=0.5pt] {$#5$}
		(#1#3) node[left=0pt] {$#6$}
		(#1#4) node[right=0pt] {$#7$};
}

\newcommand{\triangleboundarycommands}[5][]{
\expandafter\newcommand\csname #2\endcsname[1][#1]{
	\tikz[unlabeled vtx triangle, tikz arrows] {
	\tikzmakevtxtriangle xyz;
	\draw[#3,##1] (y) -- (x);
	\draw[#4,##1] (z) -- (x);
	\draw[#5,##1] (y) -- (z);
}}
\expandafter\newcommand\csname #2labeled\endcsname[4][#1]{
	\tikz[labeled vtx triangle, tikz arrows, ##1] {
		\tikzmakevtxtriangle xyz;
		\tikzlabelvtxtriangle xyz{##2}{##3}{##4};
		\draw[#3,##1] (y) -- (x);
		\draw[#4,##1] (z) -- (x);
		\draw[#5,##1] (y) -- (z);
}}
}

\triangleboundarycommands[e0]{cherry}{red arrow}{red arrow}{blue edge}
\triangleboundarycommands[e0]{cherrypath}{red arrow}{draw=none,}{blue edge}
\triangleboundarycommands{greentri}{graphgreen}{graphgreen}{graphgreen}
\triangleboundarycommands{purpletri}{graphpurple, ->}{graphpurple, <-}{graphpurple, <-}
\triangleboundarycommands{purpletranstri}{graphpurple, ->}{graphpurple, ->}{graphpurple, ->}
\triangleboundarycommands{dirboundary}{bipartite edge}{bipartite edge}{blue edge}

\begin{document}

\begin{frontmatter}[classification=text]

\author[maya]{Maya Sankar\thanks{Supported by NSF GRFP Grant DGE-1656518 and a Hertz Fellowship.}}

\begin{abstract}
For any uniformity $r$ and residue $k$ modulo $r$, we give an exact characterization of the $r$-uniform hypergraphs that homomorphically avoid tight cycles of length $k$ modulo $r$, in terms of colorings of $(r-1)$-tuples of vertices. This generalizes the result that a graph avoids all odd closed walks if and only if it is bipartite, as well as a result of Kam\v cev, Letzter, and Pokrovskiy in uniformity 3. In fact, our characterization applies to a much larger class of families than those of the form $\C kr=\{\text{$r$-uniform tight cycles of length $k$ modulo $r$}\}$.

We also outline a general strategy to prove that, if $\CC$ is a family of tight-cycle-like hypergraphs (including but not limited to the families $\C kr$) for which the above characterization applies, then all sufficiently long  $C\in \CC$ will have the same Tur\'an density.
We demonstrate an application of this framework, proving that there exists an integer $L_0$ such that for every $L>L_0$ not divisible by 4, the tight cycle $C^{(4)}_L$ has Tur\'an density $1/2$. 
\end{abstract}
\end{frontmatter}

\section{Introduction}

One of the most fundamental problems in extremal combinatorics is understanding those graphs avoiding a fixed forbidden graph $F$. The first results in this area were for graphs forbidding a clique $K_s$: Mantel \cite{Man07} in 1907 (for $s=3)$ and Tur\'an \cite{Tu41} in 1941 (for all $s$) showed that the densest $K_s$-free graph on $n$ vertices is a balanced blowup of $K_{s-1}$, i.e., a complete $(s-1)$-partite graph. In general, given a graph $F$, its \emph{Tur\'an number} $\ex(n,F)$ is defined to be the maximum number of edges in any $F$-free graph on $n$ vertices. 
In many cases, it is impractical to determine the Tur\'an number exactly --- instead, we usually study the \emph{Tur\'an density} $\pi(F)=\lim_{n\to\infty}\ex(n,F)/\binom n2$, which limit is known to exist.

A celebrated theorem of Erd\H os, Stone, and Simonovits \cite{ErSt46,ErSi66} shows that $\pi(F)=1-\frac 1{\chi(F)-1}$ for any graph $F$. Note that this is the density of a balanced blowup of the clique $K_{\chi(F)-1}$. Shortly thereafter, Erd\H os and Simonovits \cite{Er67,Er68,Si68} proved a stronger structural result: any construction attaining $\ex(n,F)$ differs from a complete $(\chi(F)-1)$-partite graph in at most $o(n^2)$ edges as $n\to\infty$.

We may define Tur\'an numbers of $r$-uniform hypergraphs, henceforth called \emph{$r$-graphs}, analogously. If $F$ is an $r$-graph, then $\ex(n,F)$ is defined to be the maximum number of edges in an $F$-free $r$-graph on $n$ vertices, and its Tur\'an density is $\pi(F)=\lim_{n\to\infty}\ex(n,F)/\binom nr$. The $F$-free $r$-graphs attaining $\ex(n,F)$ edges are called the \emph{extremal} constructions.
In contrast to the graph case, determining the Tur\'an density of hypergraphs remains a major research area. There is no unique parameter, like the chromatic number, that determines the Tur\'an density. In some cases, the extremal $F$-free construction is the densest $r$-graph with chromatic or rainbow chromatic number smaller than $F$ \cite{MuPi07,fano-1,fano-2}. However, the extremal constructions for other $F$ are mystifying, including fractal-like iterated blowups \cite{BaLu22,KLP24,LiMaPf24}, algebraic constructions \cite{KeSu05}, and exponentially numerous families of examples \cite{BaClLu22}.

Even determining the Tur\'an densities of complete hypergraphs remains widely open. In the simplest case, Tur\'an \cite{Tu-tetrahedron} conjectured that the Tur\'an density of the \emph{tetrahedron} $K_4^{(3)}$ is $5/9$. The lower bound is attained by a large family of constructions \cite{Fo88,Fr08}, but proving a matching upper bound remains open.

This paper has two main contributions. 
The first is to introduce a new algebraic parameter that should control the Tur\'an density of various ``cycle-like'' hypergraphs, together with a general framework (given in \cref{sec:framework}) to show that these correctly determine the Tur\'an density of sufficiently long ``cycle''s. This is elaborated on later in the introduction. Secondly, we demonstrate an application of this framework by determining the Tur\'an density of 4-uniform tight cycles, defined as follows. The \emph{tight $r$-uniform cycle of length $\ell$}, denoted $C_{\ell}^{(r)}$, is an $r$-graph with $\ell> r$ vertices $v_1,\dots, v_\ell$ such that $v_i\cdots v_{i+r-1}$ is an edge for each $1\leq i\leq \ell$, with subscripts taken modulo $\ell$. When $\ell$ is a multiple of $r$, the tight cycle $C_\ell^{(r)}$ is an $r$-partite $r$-graph, and an old result of Erd\H os \cite{Er64} states that $\pi(C_{\ell}^{(r)})=0$. Recently, Kam\v cev, Letzter, and Pokrovskiy \cite{KLP24} determined $\pi(C_{\ell}^{(3)})=2\sqrt 3-3$ for all sufficiently large $\ell\not\equiv 0\pmod 3$. Partially inspired by their ideas, Balogh and Luo \cite{KLP24} showed that $\pi(C_{\ell}^{(3)-})=1/4$, where $C_\ell^{(3)-}$ is obtained by removing an edge from $C_\ell^{(3)}$. In both cases, the asymptotically optimal examples have a fractal-like structure. (The lower bounds on $\ell$ have since been improved using computer-assisted flag algebra computations, to $\ell\geq 7$ for $C_\ell^{(3)}$ by Bodn\'ar--Le\'on--Liu--Pikhurko \cite{BLLP25} and $\ell\geq 5$ for $C_\ell^{(3)-}$ independently by the same group and Lidick\'y--Mattes--Pfender \cite{BLLP24,LiMaPf24}.)

The key new component in the framework, our algebraic parameter allows us to unify key structural lemmas in \cite{BaLu22,KLP24} in a very general result that also generalizes the folklore equivalence between bipartiteness and avoiding odd cycles (see \cref{intro:generalizing-bipartite}).
Using this framework, we completely determine the Tur\'an density of all sufficiently long 4-uniform tight cycles of any other residue modulo 4.

\begin{theorem}\label{thm:unif4-density}
	There exists an integer $L_0$ such that the following holds. For any $L>L_0$ not divisible by 4, we have $\pi(C_L^{(4)})=\frac 12$.
\end{theorem}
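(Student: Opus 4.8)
The plan is to derive Theorem~\ref{thm:unif4-density} by feeding the families of $4$-uniform tight cycles into the general framework of \cref{sec:framework} and then carrying out, by hand, the single optimization the framework leaves open. The first step is to verify that for each residue $k\in\{1,2,3\}$ the family $\C{k}{4}$ satisfies the ``tight-cycle-like'' hypotheses under which the coloring characterization and the framework apply --- these are structural conditions on how consecutive edges of a cycle overlap, and tight cycles are the motivating example, so this should be a direct check. Granting it, the framework produces a threshold (take $L_0$ to be the maximum over $k\in\{1,2,3\}$ of the three thresholds) and, for each $k$, a constant $\pi_k$ with $\pi(C_L^{(4)})=\pi_k$ whenever $L>L_0$ and $L\equiv k\pmod4$, and --- this is the operative point --- identifies $\pi_k$ with a single computable extremal quantity, namely the largest edge density attainable by a $4$-graph (or a blow-up of a finite ``reduced'' one) that admits a valid coloring of its triples in the sense of the characterization. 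So the entire theorem reduces to the assertion that this quantity equals $1/2$ for each of $k=1,2,3$.

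For the lower bound $\pi_k\ge 1/2$ I would read off, from the proof of the coloring characterization specialized to $\C{k}{4}$, an explicit valid triple-coloring whose associated $4$-graph has edge density approaching $1/2$; since it homomorphically avoids $\C{k}{4}$, blowing it up gives $C_L^{(4)}$-free $4$-graphs of density $1/2-o(1)$ on arbitrarily many vertices, for every $L\equiv k\pmod4$. I anticipate the extremal coloring to be the $4$-uniform analogue of a balanced bipartition --- not of the vertex set (a vertex partition fails here, as it creates large cliques that contain short tight cycles of every residue) but at the level of the triple-coloring: one of the two-valued patterns arising in the characterization, set up so that whether a $4$-set is an edge is governed by a single additive relation among its four triples. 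A generic such coloring then makes a random $4$-set an edge with probability exactly $1/2$, while the additive bookkeeping around any closed tight walk keeps its length out of the class $k$ modulo $4$. The work here is to confirm that this coloring really is of the permitted form and that no richer coloring does better.

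The upper bound $\pi_k\le1/2$ is where the real content lies. Given an $n$-vertex $4$-graph $H$ with $e(H)\ge(\tfrac12+\eps)\binom n4$, I would apply hypergraph regularity to pass to a ``reduced'' $4$-graph $R$ of density still exceeding $1/2$ --- the step the framework is designed to make routine. It then suffices to prove the purely combinatorial statement that \emph{any $4$-graph whose triples carry a valid coloring, so that the four triples of every edge form one of the allowed ``tetrahedron-boundary'' configurations, has edge density at most $1/2$}. For then $R$ admits no valid coloring, so by the characterization it homomorphically contains $C_m^{(4)}$ for all sufficiently large $m\equiv k\pmod4$; taking $m=L$ and promoting the homomorphic copy to an injective copy inside $H$ by supersaturation (legitimate since $L$ is a fixed constant while $n\to\infty$) produces the desired tight cycle. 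The density statement I would establish by enumerating the finitely many allowed boundary configurations for each $k$ and showing, via a short convexity/counting argument over the resulting type classes, that on any $4$-subset at most half of the compatible ambient colorings make it an edge.

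The principal obstacle is this final density bound: it requires a complete and correct census of which unordered quadruples of triple-types can bound an edge of a $\C{k}{4}$-avoiding $4$-graph --- simultaneously for all of $k=1,2,3$ --- together with the argument that the associated linear program caps out at exactly $1/2$, with the bipartition-type construction witnessing tightness. A secondary difficulty is combining the homomorphic-to-injective transfer with hitting the exact length $L$ in the prescribed residue class: one must know that a reduced $4$-graph admitting no valid coloring homomorphically contains tight cycles of \emph{every} sufficiently large length $\equiv k\pmod4$, not merely some, which is a statement about the cyclic structure of such reduced graphs that I would extract from the proof of the characterization.
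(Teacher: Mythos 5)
There is a genuine gap, and it is in the step you dismiss as ``the step the framework is designed to make routine.'' The framework (\cref{sec:framework}) is not a black box that outputs a threshold $L_0$ and a constant $\pi_k$ with $\pi(C_L^{(4)})=\pi_k$ for all $L>L_0$: that equality is the \emph{conclusion} of Steps 2--4, and Steps 2 and 4 each require serious independent work. Your proposal silently assumes the equality $\pi(C_L^{(4)})=\pi(\C{k}{4}\texthom)$ holds for all large $L$ and then argues about the right-hand side; but establishing that equality is precisely where the difficulty lies.

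The concrete failure of your regularity-based upper bound is quantitative. Starting from $e(H)\ge(\tfrac12+\eps)\binom n4$, hypergraph regularity yields a reduced $4$-graph $R$ on $M(\eps)$ parts with density exceeding $\tfrac12$. If $R$ admits no valid coloring then, via the tight-connectivity argument (\cref{prop:findoddcycle}, \cref{prop:hom-free-chain}), $R$ homomorphically contains $C_m^{(4)}$ for all $m\equiv k\pmod4$ with $m\ge\ell_0(R)$; but the least such $\ell_0(R)$ is only bounded in terms of $|\Edir(R)|=O(M(\eps)^4)$, which blows up as $\eps\to 0$. Thus for a \emph{fixed} $L$ the contradiction is available only for $\eps$ above some threshold $\eps^*(L)>0$, and the argument delivers $\pi(C_L^{(4)})\le\tfrac12+\eps^*(L)$ with $\eps^*(L)\to 0$ as $L\to\infty$. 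This proves $\lim_{L\to\infty}\pi(C_L^{(4)})=\tfrac12$, which is strictly weaker than the theorem's claim that $\pi(C_L^{(4)})$ \emph{equals} $\tfrac12$ for every $L>L_0$. There is no way, within the regularity framework alone, to make the error term vanish for finite $L$.

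The paper avoids regularity entirely. Step 3 (\cref{delete-long-cycles}) gives a direct, regularity-free deletion bound: a $C_L^{(4)}$-hom-free $G$ becomes $\C{k}{4}$-hom-free after removing only $O(L^{-1/4}n^4)$ edges, a polynomial (not tower-type) dependence on $L$. Combined with the density bound $\ex(n,\C{1}{4}\texthom)<n^4/48$ from Step 2 (\cref{thm:C14-turan-density}), this already yields $\pi(C_L^{(4)})\le\tfrac12+O(L^{-1/4})$ --- better than regularity, but still not exactly $\tfrac12$. Closing the remaining gap is the entire content of Step 4 (\cref{subsec:prove-one-cycle}): one uses the \emph{stability} theorem (\cref{thm:stability}), which says a $\C{1}{4}$-hom-free $4$-graph of density near $\tfrac12$ must be structurally close to $\Godd(A,B)$, and then a careful cleaning argument shows that the extremal $C_L^{(4)}$-hom-free graph is \emph{exactly} complete oddly bipartite. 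That exactness, not a limiting density bound, is what gives $\pi(C_L^{(4)})=\tfrac12$. Your proposal contains no stability ingredient and hence cannot reach the exact statement. (Secondarily, the ``short convexity/counting argument'' you gesture at for Step 2 is doing a lot of unacknowledged work: the paper's proof of \cref{lem:color-ineqs} invokes Shearer's inequality, Kruskal--Katona, Goodman's bound, Milne's inequality, and a nontrivial four-variable optimization. And the extremal construction $\Godd(A,B)$ \emph{is} based on a vertex bipartition with a parity rule; your remark that ``a vertex partition fails here'' misreads the construction.)
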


The lower bound is given by the \emph{complete oddly bipartite} $4$-graph, which is constructed as follows. 
Let $V$ be a set of vertices partitioned as $A\cup B$. The complete oddly bipartite 4-graph $\Godd(A,B)$ has vertex set $V$ whose edge set comprises those 4-tuples intersecting $A$ in an odd number of vertices. This hypergraph is $C_\ell^{(4)}$-free for all $4\nmid\ell$ (see \cref{prop:Godd-C-free}) and has edge density approaching $\frac 12$ as $|V|\to\infty$ if the two parts have roughly equal sizes. 
Actually, $e(\Godd(A,B))$ is maximized if $|A|$ and $|B|$ differ from $|V|/2$ by $\Theta(\sqrt{|V|})$ vertices, but it is unknown which explicit choices of $|A|$ and $|B|$ maximize the part sizes \cite{KeSu05}.
For this reason, our results are stated in terms of the parameter $\eopt(n)$, defined to be the maximum number of edges in a complete oddly bipartite 4-graph on $n$ vertices, and phrased in accordance with the fact that there may be multiple nonisomorphic complete oddly bipartite 4-graphs with $\eopt(n)$ edges (although we do not know of any $n$ for which this is the case).

In fact, our full result (see \cref{cor:unif4-cycles}) shows that the extremal 4-graph(s) avoiding $C_L^{(4)}$ \emph{homomorphically} --- a slightly stronger notion of avoidance defined in \cref{subsec:homomorphisms} --- are exactly the complete oddly bipartite 4-graphs with the maximum edges. As a consequence of our techniques, the extremal $C_L^{(4)}$-free 4-graph differs from a complete oddly bipartite 4-graph in at most $o(n^4)$ edges, although it seems likely that this difference is much smaller.

\subsection{Hypergraph homomorphisms} \label{subsec:homomorphisms}
An alternate perspective on Tur\'an numbers comes through the lens of hypergraph homomorphisms. Given $r$-graphs $F$ and $G$, a \emph{homomorphism} $F\to G$ is a map $\phi:V(F)\to V(G)$ such that, for any edge $\{v_1,\ldots,v_r\}\in E(F)$, its image $\{\phi(v_1),\ldots,\phi(v_r)\}$ is an edge of $G$ (and in particular, $\phi(v_1),\ldots,\phi(v_r)$ must be distinct, as edges of $G$ have size $r$). Observe that $G$ contains $F$ as a sub-hypergraph if and only if there is a homomorphism $F\to G$ which maps every vertex of $F$ to a distinct vertex of $G$. A \emph{homomorphic copy} of $F$ in $G$ is the image of a homomorphism $F\to G$ in which vertices of $F$ are not necessarily mapped to distinct vertices of $G$. We say $G$ is \emph{$F$-hom-free} if $G$ contains no homomorphic copy of $F$. Define $\ex(n,F\texthom)$ to be the maximum number of edges in an $F$-hom-free $r$-graph on $n$ vertices, and define $\pi(F\texthom)$ analogously.

Understanding $\ex(n,F\texthom)$ provides an alternate route to computing the Tur\'an density $\pi(F)$. Indeed, a now-standard result in Tur\'an theory (see \cref{thm:density-hom}) states that, if $F$ is an $r$-graph, then $\ex(n,F)=\ex(n,F\text{-hom})+o(n^r)$. 
For 2-graphs, one observes that the clique $K_{\chi(F)}$ is a homomorphic copy of $F$ and deduces that $\ex(n,F\text{-hom})\leq\ex(n,K_{\chi(F)})$. In fact, this is an equality: by Tur\'an's theorem \cite{Tu41}, the extremal $K_{\chi(F)}$-free graph has chromatic number $\chi(F)-1$, and is thus $F$-hom-free. Thus, the homomorphism approach provides a relatively straightforward derivation of the Erd\H os--Stone--Simonovits theorem.

To further underscore this connection, fix a 2-graph $F$ and let $\mathcal G_\chi(F)$ be the set of graphs $G$ with $\chi(G)<\chi(F)$. Every graph in $\mathcal G_\chi(F)$ is $F$-hom-free; what is surprising is that, from the previous paragraph, the extremal $F$-hom-free graph on any number of vertices is always in $\mathcal G_\chi(F)$.

For hypergraphs, there are several na\"ive analogues of $\mathcal G_\chi$, but for many $F$ these families do not contain the extremal or conjecturally extremal $F$-hom-free constructions. We contribute a new \emph{tight connectivity} parameter $\tc(F)$, which is a collection of subgroups of $S_r$ measuring how a given $r$-edge can be connected to itself via tight walks. (More specifically, we define a subgroup $\tc(x_1\dots x_r)\subset S_r$ for each ordered $r$-tuple $x_1\cdots x_r$ whose support $\{x_1,\ldots,x_r\}$ is an edge of $F$, and we take $\tc(F)=\{\tc(x_1\cdots x_r):\{x_1,\ldots,x_r\}\in E(F)\}$.) Any homomorphism $F\to G$ induces an inclusion from each $\Gamma\in\tc(F)$ to some $\Gamma'\in\tc(G)$. Abbreviating by $\tc(F)\leq\tc(G)$ the condition that each $\Gamma\in\tc(F)$ is contained in some $\Gamma'\in\tc(G)$, it is natural to consider the family $\mathcal G_{\tc}(F)=\{G:\tc(F)\not\leq\tc(G)\}$. This paper shows that the extremal $C_L^{(4)}$-hom-free 4-graphs are contained in $\mathcal G_{\tc}(C_L^{(4)})$ for all sufficiently large $L\not\equiv 0\pmod 4$. Analogous results in uniformities 2 and 3 can be derived from \cite{Man07,KLP24}.

\subsection{Generalizing bipartite graphs}\label{intro:generalizing-bipartite}

Our framework to understand $\pi(C_L^{(4)})$ further develops the approach used by Kam\v cev, Letzter, and Pokrovskiy \cite{KLP24} to understand the Tur\'an density of tight cycles in uniformity 3. Let $\C kr$ be the family of $r$-uniform tight cycles of length $k$ modulo $r$; we say an $r$-graph is $\C kr$-hom-free if it is $C$-hom-free for each $C\in\C kr$. Let $\ex(n,\C kr\texthom)$ denote the maximum number of edges in an $n$-vertex $\C kr$-hom-free $r$-graph, and define $\pi(\C kr)=\pi(\C kr\texthom)$ analogously.

In \cref{sec:framework}, we describe a general approach to show that, if $\CC$ is a family of cycle-like hypergraphs (including but not restricted to the families of the form $\C kr$), then $\ex(n,C\texthom)=\ex(n,\CC\texthom)$ for any sufficiently large $C\in\CC$. Observe that $\ex(n,C\texthom)\geq\ex(n,\CC\texthom)$; the challenge lies in showing that the extremal $C$-hom-free $r$-graph is actually $\CC$-hom-free. This mirrors the ideas used in \cite{KLP24} for uniformity 3, but the tight connectivity parameter we introduce is key to deriving results in higher uniformities, and additionally allows us to extend the ideas beyond tight cycles.

Key to the framework is a coloring characterization of $\C kr$-hom-free hypergraphs, which is an interesting result in its own right. This generalizes the equivalence between avoiding odd closed walks and 2-colorability for graphs, as well as a more complicated result in uniformity 3 due to Kam\v cev, Letzter, and Pokrovskiy \cite{KLP24}. Our generalization characterizes $\C kr$-hom-free $r$-graphs in terms of colorings of $(r-1)$-tuples of vertices, making $\ex(n,\C kr\texthom)$ a tractable quantity to study.
Let us remark that we in fact characterize $\CC$-hom-free hypergraphs for a much broader class of families $\CC$ of \emph{twisted tight cycles} (discussed in \cref{subsec:twisted}) which is what allows us to generalize our framework beyond tight cycles. Notably, the specialization to 3-uniform twisted tight cycles was proven by Balogh and Luo \cite{BaLu22} when analyzing $\pi(C_\ell^{(3)-})$.

The full statement of our characterization is given in two equivalent forms in \cref{thm:visual-applicable,thm:alg-provable}, with explicit corollaries in uniformities 2, 3, and 4 stated in \cref{sec:coloring-corollaries}. To give a flavor of the result, we state one of these corollaries here, which was not known prior to this paper.

\begin{theorem}\label{thm:coloring-2mod4}
Let $G$ be a 4-graph. Then $G$ is $\C 24$-hom-free if and only if we can place one of the pictograms $\pointedtriangle$ or $\bluetriangle$ on each triple of vertices of $G$ such that, for each edge $x_1\cdots x_4\in E(G)$, the coloring restricted to those four vertices is isomorphic to $\pointedtetrahedronboundary$. 
In particular, three of the four triples $x_ix_jx_k$ will be colored with $\pointedtriangle$, with the red triangle always located at the same vertex, and the fourth triple will be colored with $\bluetriangle$.
\end{theorem}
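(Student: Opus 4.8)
\emph{Proof strategy.} The plan is to deduce \cref{thm:coloring-2mod4} from the general coloring characterization (\cref{thm:visual-applicable,thm:alg-provable}) specialized to $r=4$, $k=2$, after first recording a convenient reformulation and checking the easy direction by hand. The reformulation I would use: a $4$-graph $G$ is $\C 24$-hom-free if and only if every closed tight walk of $G$ has length divisible by $4$. Indeed, a closed tight walk of length $\ell>4$ is exactly the datum of a homomorphic copy of $C^{(4)}_\ell$; conversely, if $G$ has a closed tight walk of odd length $\ell$ then traversing it twice produces one of length $2\ell\equiv 2\pmod 4$, so $\C 24$-hom-freeness rules out every closed tight walk of length $2\pmod 4$ and every one of odd length, leaving only lengths in $4\Z$.

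For the ``if'' direction I would argue directly. Suppose the $\pointedtriangle/\bluetriangle$-coloring exists and, for contradiction, that $u_1u_2\cdots u_L$ (cyclically, indices in $\Z/L$) is a homomorphic copy of $C^{(4)}_L$ with $4\nmid L$. On each edge $e_i=\{u_i,u_{i+1},u_{i+2},u_{i+3}\}$ the coloring is isomorphic to $\pointedtetrahedronboundary$, so $e_i$ has a well-defined \emph{distinguished vertex} $d_i$ (the common apex of its three $\pointedtriangle$ faces); write $d_i=u_{i+s_i}$ with $s_i\in\{0,1,2,3\}$. Now compare $e_i$ with $e_{i+1}$ on their shared triple $T=\{u_{i+1},u_{i+2},u_{i+3}\}$: the pictogram on $T$ is $\bluetriangle$ iff $T$ omits the distinguished vertex of its containing edge, which for $e_i$ says $d_i=u_i$ and for $e_{i+1}$ says $d_{i+1}=u_{i+4}$, whereas it is a $\pointedtriangle$ aimed at a vertex $v$ iff $d_i=v=d_{i+1}$. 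Since $u_{i+1},u_{i+2},u_{i+3}$ are pairwise distinct (they lie in an edge), this forces either $(s_i,s_{i+1})=(0,3)$ or $s_{i+1}=s_i-1$, hence $s_{i+1}\equiv s_i-1\pmod4$ in all cases. Iterating around the cycle gives $s_1\equiv s_1-L\pmod 4$, i.e.\ $4\mid L$, a contradiction. (In particular this already shows the coloring forces $G$ to be $C^{(4)}_L$-hom-free for \emph{every} $L$ with $4\nmid L$, not merely $L\equiv 2\pmod4$.)

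The real content is the ``only if'' direction. Assuming every closed tight walk has length in $4\Z$, I would build the coloring one tight component at a time: fix a reference ordered $4$-tuple whose support is an edge, declare its phase to be $0\in\Z/4$, and propagate a phase along tight walks by having a forward tight step add $1$. The walk-length reformulation is exactly what makes this assignment well-defined on closed \emph{monotone} walks, but two further points need care --- closed walks that are not monotone, and, more seriously, closed walks that return to the \emph{support} of the starting edge in a permuted order and thereby contribute a monodromy element of $S_4$. This monodromy is precisely the data recorded by the tight connectivity $\tc$, and the hard part will be showing that the subgroups of $S_4$ that can occur are restrictive enough to admit a consistent $\Z/4$-grading, and that the only edge-local picture compatible with that grading is $\pointedtetrahedronboundary$ --- so that exactly three of an edge's four triples receive a $\pointedtriangle$ with a common apex and the fourth receives $\bluetriangle$. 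This is where I expect to invoke \cref{thm:alg-provable} rather than the elementary walk-counting above; once it is in place, the pictogram on any triple lying in an edge is read off from the phases of the ordered edge-tuples through it, and triples lying in no edge may be colored arbitrarily (say, all $\bluetriangle$).
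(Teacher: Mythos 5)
Your ``if'' direction is correct and is a genuine simplification: tracking the offset $s_i$ of the distinguished apex around a closed tight walk and noting $s_{i+1}\equiv s_i-1\pmod 4$ at every step gives a direct parity obstruction, whereas the paper derives this implication abstractly from \cref{prop:accordant-implies-stab,prop:findoddcycle}. The opening reformulation (closed walk lengths all divisible by $4$) is also correct and is essentially \cref{prop:homfree-equiv}.

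The ``only if'' direction, however, is a genuine gap: you identify the right tool but do not use it. You correctly recognize that the obstruction is the ``monodromy'' of tight walks returning to a permuted ordering of the starting support --- this is exactly the group $\tc(\x)\leq S_4$ --- but then you say you ``expect to invoke \cref{thm:alg-provable}'' without performing the specialization, which is where the theorem's content actually lives. Concretely, one must set $\pi=\cyc^2=(1\,3)(2\,4)$; verify that the maximal $\pi$-avoiding subgroups of $S_4$ are precisely the conjugates of $S_3$ (every other maximal subgroup, namely $A_4$ and the dihedral groups $D_4$, contains a double transposition --- see \cref{appendix:subgroups}); observe that $\pointedtetrahedron$ is stabilized by a copy of $S_3$; and then explicitly translate between accordant $\{\pointedtetrahedron\}$-colorings of $E(G)$ and $\{\pointedtriangle,\bluetriangle\}$-colorings of triples, where accordancy is what makes the triple-coloring well-defined when a triple lies in several edges. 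These are precisely the steps of the paper's \cref{cor:C24-free}, and none of them appear in your sketch. Finally, the ``$\Z/4$-grading'' framing is somewhat misleading: the natural invariant on an oriented edge is a choice of distinguished coordinate, i.e., an element of the $S_4$-set $S_4/S_3$, not a $\Z/4$-valued cocycle; only $\langle\cyc\rangle$ acts on it by shifts, while arbitrary permutations act non-additively, and it is exactly to encode this that the accordant $A_\pi$-coloring formalism replaces a naive grading.
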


The general characterization of $\CC$-hom-free $r$-graphs prescribes a set of pictograms (or ``oriented colors'') with which we can color each $(r-1)$-tuple of vertices --- this set is specific to each family $\CC$ --- and requires that, for each $r$-edge of $G$, the oriented colors of the sub-$(r-1)$-tuples are mutually consistent in an appropriate sense.

\paragraph{Outline.} In \cref{sec:framework}, we outline our general framework, describing the approach taken to prove \cref{thm:unif4-density}. This framework is separated into four steps, which are tackled in Sections 3--6. \cref{sec:bipartite} is devoted to our coloring characterization of $\C kr$-hom-free $r$-graphs. This section, together with the very short \cref{sec:delete-long-cycles}, constitutes the primary application of the tight connectivity parameter and handles tight cycles in arbitrary uniformities. \cref{sec:density-stability,sec:cleaning} are devoted to understanding the extremal $\C 14$-hom-free and $C_L^{(4)}$-hom-free 4-graphs, respectively. We conclude in \cref{sec:concluding} with a discussion of the implications of our approach for other classes of cycle-like hypergraphs.

\paragraph{Accessibility.} This paper includes a number of figures where color is crucial. Most notably, \cref{sec:density-stability} works with four (directed and undirected) edge colors: red \redto, blue \blueedge, green \greenedge, and purple \purpleto. Readers are encouraged to contact the author if the choice of colors poses an issue to readability. It is intentionally possible to redefine the colors used with minimal overhead (the interested reader can also do this themselves by downloading the \TeX\ source from arXiv).

\section{Framework and Preliminaries}\label{sec:framework}

In this section, we outline a general proof strategy to show that all sufficiently long tight cycles in the family $\CC=\C kr$ have the same Tur\'an density. This framework builds on the approach followed in \cite{KLP24} to understand tight cycles in uniformity 3, and it gives an outline of our proof of \cref{thm:unif4-density}. We are able to handle Steps 1 and 3 below for any $r$ and $k$ --- indeed, the characterization described in Step 1 is one of the key contributions of our paper --- and in fact, this framework is more broadly applicable to any family $\CC_\pi$ of \emph{twisted tight cycles}, as defined in \cref{subsec:twisted}.

\begin{step}[\cref{sec:bipartite}]
Show that $\CC$-hom-free $r$-graphs have an equivalent characterization in terms of (oriented) colorings of $(r-1)$-tuples of vertices.
\end{step}
\vspace{-\smallskipamount}

This step generalizes the statement that a 2-graph contains no odd closed walks if and only if it is bipartite (i.e., properly 2-colorable). In \cref{sec:bipartite}, we give a generalization of this statement that characterizes $\C kr$-hom-free $r$-graphs for all $r$ and $k$, deriving \cref{thm:coloring-2mod4} as a special case as well as corollaries in uniformities 3 and 4 in \cref{sec:coloring-corollaries}.
This result is one of our key contributions --- it enables us to reframe understanding $\ex(n,\CC\texthom)$ as a coloring problem and paves the way to understanding the extremal hypergraphs avoiding tight cycles in higher uniformities. Indeed, Kam\v cev, Letzter, and Pokrovskiy \cite{KLP24} pointed out that understanding this step in higher uniformities would be requisite to applying their strategy to longer tight cycles.

\begin{step}[\cref{sec:density-stability}]
Leveraging the classification in Step 1, identify the extremal $n$-vertex $\CC$-hom-free hypergraph(s) for all sufficiently large $n$.
\\Additionally, prove a stability result: if $G$ is $\CC$-hom-free with $n$ vertices and edge density at least $\pi(\CC)-\del$, show that $G$ differs from an extremal $\CC$-hom-free $r$-graph in at most $\eps n^r$ edges for $\eps,\del\to 0$.
\end{step} 
\vspace{-\smallskipamount}

This step must be handled separately for each family $\CC$, as the extremal structure depends heavily on the specific coloring characterization derived in Step 1. In \cref{sec:density-stability}, we show that $\pi(\C k4)=1/2$ for $k=1,2,3$, with the lower bound given by the complete oddly bipartite 4-graph. Additionally, with far more work, we show (\cref{thm:stability}) that for every $\eps$ there exists a $\del$ such that if $G$ is a $\C 14$-hom-free 4-graph with edge density at least $\frac 12-\del$, then $G$ differs from a complete oddly bipartite 4-graph on at most $\eps n^4$ edges.

\begin{step}[\cref{sec:delete-long-cycles}]
Let $C\in\CC$. Show that one may delete $O\big(|V(C)|^{-1/r}n^r\big)$ edges from $G$ so that the resulting $r$-graph is $\CC$-hom-free.
\end{step}
\vspace{-\smallskipamount}

Using the \emph{tight connectivity parameter} defined in \cref{sec:bipartite}, we are able to handle Step 3 for any family $\CC$ for which the $\CC$-hom-free $r$-graphs can be characterized by Step 1. This is a quick step, with the main ideas encapsulated in \cref{prop:delete-long-cycles}. The explicit deletion result, \cref{delete-long-cycles}, is only stated for $\CC=\C kr$ for simplicity, but it is not difficult to derive a similar result for other $\CC$.

\begin{step}[\cref{sec:cleaning}]
Suppose $G$ is a $C$-hom-free $r$-graph with $n$ vertices and $\ex(n,C\texthom)$ edges, where $C\in\CC$ is a sufficiently long cycle-like $r$-graph. 
From steps 2 and 3, conclude that $G$ differs from an extremal $\CC$-hom-free $r$-graph $\Gopt$ on an $O(|V(C)|^{-1/r})$-fraction of $r$-tuples.
By carefully analyzing the structure of $G$, show that $|E(G)\setminus E(\Gopt)|\leq 0.99|E(\Gopt)\setminus E(G)|$.
\end{step}
\vspace{-\smallskipamount}

This step must also be handled separately for each family $\CC$, as it involves  arguments that depend heavily on the structure of $\Gopt$. For the family $\C14$, this is done in \cref{subsec:prove-one-cycle}. Nevertheless, our general approach --- summarized in the next paragraph --- seems applicable in many cases where $\Gopt$ has a blowup-like structure.

Suppose a $C_L^{(4)}$-hom-free 4-graph $G$ with $n$ vertices differs from a complete oddly bipartite 4-graph $\Godd(A_1,B_1)$ on $\eps n^4$ edges. First, we slightly alter the sets $A_1$ and $B_1$ to locate a partition $V(G)=A\cup B$, so that $G$ and $\Godd(A,B)$ ``look similar'' on all but a $(1-\eps')$-fraction of triples in $\binom{V(G)}3$, collected in a set $\TT\subset\binom{V(G)}3$. The precise idea of similarity we introduce --- which generalizes naturally to higher uniformities --- allows us to easily locate homomorphic copies of forbidden tight cycles if edges of $G-\Godd(A,B)$ are placed ``near'' edges that contain a triple in $\TT$.
Second, we leverage our structural understanding to show $e(G)\leq e(\Godd(A,B))$, with equality if and only if these 4-graphs are identical.

\subsection{Notation}

Let $G$ be an $r$-uniform hypergraph. Its vertex set and edge set are denoted by $V(G)$ and by $E(G)\subseteq\binom{V(G)}r$, respectively, and we write $e(G):=|E(G)|$ for the cardinality of the latter. Its \emph{complement} $\Gbar$ is the $r$-graph on $V(G)$ with edge set $E(\Gbar)=\binom{V(G)}r-E(G)$.

Given $0<s<r$ vertices $v_1\cdots v_s\in\binom{V(G)}{s}$, their \emph{neighborhood} $N_G(v_1\cdots v_s)$ is the set of vertices $v_{s+1}\cdots v_r\in\binom{V(G)}{r-s}$ such that $v_1\cdots v_s\in E(G)$. Given a set $W\subseteq\binom{V(G)}{r-s}$, we write $N_G(v_1\cdots v_s;W):=N_G(v_1\cdots v_s)\cap W$. The \emph{degree} of $v_1\cdots v_s$ in $G$ is $\deg_G(v_1\dots v_s):=|N_G(v_1\cdots v_s)|$; similarly, we define $\deg_G(v_1\dots v_s;W):=|N_G(v_1\cdots v_s;W)|$.

In some cases, it is helpful to discuss neighborhoods and degrees with regard to some set $\EE\subseteq\binom Vr$ of $r$-subsets of a given ground set $V$, particularly in the cases $r=2$ and $r=3$. In this case, $N_\EE$ and $\deg_\EE$ implicitly refer to neighborhoods and degrees taken in the $r$-graph $G$ with $V(G)=V$ and $E(G)=\EE$.

In an $r$-graph $G$, the \emph{link} $\LL(v)$ of a vertex $v\in V(G)$ is an auxiliary $(r-1)$-graph on vertex set $V(G)-\{v\}$, whose $(r-1)$-edges $v_1\cdots v_{r-1}\in E(\LL(v))$ are in direct correspondence with $r$-edges $vv_1\cdots v_{r-1}\in E(G)$. That is, $E(\LL(v))=N_G(v)$ and $e(\LL(v))=\deg_G(v)$.

Given an integer $n$, let 
$\eopt(n)=\max_{a+b=n}\left(\binom a3\times b+a\times\binom b3\right)$
denote the maximum number of edges in a complete oddly bipartite 4-graph on $n$ vertices. The exact value of $\eopt(n)$ is unknown, as the optimal values $a,b$ will differ from $n/2$ by $\Theta(\sqrt n)$ \cite{KeSu05}. However, we note $\eopt(n)=(1-o(1))n^4/48$ in general, by taking $a=\lfloor n/2\rfloor$ and $b=\lceil n/2\rceil$.

\begin{proposition}\label{prop:eopt}
For any $n\geq 1$, we have $(n^4-6n^3)/48\leq\eopt(n)\leq n^4/48$.
\end{proposition}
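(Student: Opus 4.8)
The plan is to collapse the two-parameter optimization defining $\eopt(n)$ into a one-variable problem via the substitution $t=ab$. Since $a+b=n$ forces $a^2+b^2=n^2-2t$, a one-line expansion gives the identity
\[
\binom a3 b + a\binom b3 = \frac{ab\bigl((a-1)(a-2)+(b-1)(b-2)\bigr)}{6} = \frac{t\,(n^2-3n+4-2t)}{6},
\]
valid for all integers $a,b\ge 0$ with $a+b=n$. As $a$ runs over $\{0,\dots,n\}$, the value $t=a(n-a)$ runs over a subset of $[0,n^2/4]$; the largest value is $t=n^2/4$ when $n$ is even and $t=(n^2-1)/4$ when $n$ is odd. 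Both inequalities will then follow from understanding quadratics in $t$ on $[0,n^2/4]$.

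For the upper bound I would not even use the exact identity, but the cruder estimate $\binom x3\le x^3/6$ (valid for every integer $x\ge 0$), which yields $\binom a3 b+a\binom b3\le\tfrac16 ab(a^2+b^2)=\tfrac16 t(n^2-2t)$. The quadratic $t\mapsto t(n^2-2t)$ is increasing on $[0,n^2/4]$ and attains $n^4/8$ at the right endpoint $t=n^2/4$, so every term in the maximum defining $\eopt(n)$ is at most $n^4/48$. Using this crude bound (rather than the exact identity) is precisely what makes the argument uniform in $n$ and avoids small-$n$ exceptions.

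For the lower bound I would exhibit the balanced-as-possible partition $a=\lfloor n/2\rfloor$, $b=\lceil n/2\rceil$, for which $t=ab\in\{(n^2-1)/4,\ n^2/4\}$, so in particular $t\ge(n^2-1)/4$. Plugging $b=n-a$ into the identity and discarding the $+4$, it suffices to bound $\tfrac16 t(n^2-3n-2t)$ from below. The quadratic $u\mapsto u(n^2-3n-2u)$ has vertex at $u=(n^2-3n)/4$, and since $(n^2-3n)/4\le (n^2-1)/4\le t\le n^2/4$ for all $n\ge 1$, it is non-increasing on the interval containing $t$; hence $t(n^2-3n-2t)\ge\tfrac{n^2}4\bigl(n^2-3n-\tfrac{n^2}2\bigr)=\tfrac{n^4-6n^3}{8}$, and dividing by $6$ gives $\eopt(n)\ge(n^4-6n^3)/48$.

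There is no genuine obstacle here beyond bookkeeping: the only thing requiring (elementary) care is tracking the lower-order terms precisely enough to land the exact constant $-6n^3$, and checking that the near-balanced value of $ab$ lies on the decreasing branch of the relevant quadratic. As a sanity check one can evaluate $\binom a3 b+a\binom b3$ directly at $a=\lfloor n/2\rfloor$, $b=\lceil n/2\rceil$, obtaining $(n^4-6n^3+8n^2)/48$ for $n$ even and $(n^4-6n^3+8n^2+6n-9)/48$ for $n$ odd, both visibly at least $(n^4-6n^3)/48$ for $n\ge 1$.
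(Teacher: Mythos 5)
Your proof is correct and follows the same basic strategy as the paper's: for the upper bound, the crude estimate $\binom{x}{3}\le x^3/6$ plus optimizing $ab(n^2-2ab)$; for the lower bound, plugging in the near-balanced partition $a=\lfloor n/2\rfloor$, $b=\lceil n/2\rceil$. The one place you diverge is in how the lower bound is finished: the paper splits into even and odd $n$ and evaluates $2m\binom{m}{3}$ (resp.\ $m\binom{m+1}{3}+(m+1)\binom{m}{3}$) directly, obtaining exactly the two closed-form expressions you note as a sanity check, whereas you substitute $t=ab$, use the identity $\binom{a}{3}b+a\binom{b}{3}=\tfrac16 t(n^2-3n+4-2t)$, and argue via the vertex of the downward-opening quadratic that $t\ge(n^2-1)/4\ge(n^2-3n)/4$ places $t$ on the decreasing branch, so the value at $t$ is at least the value at $n^2/4$. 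This unifies the parity cases at the cost of a slightly subtler monotonicity argument; both routes land on the same constant $-6n^3$ and both are valid for all $n\ge 1$.
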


\begin{proof}
For any $a+b=n$ we have
\[
6\eopt(n)\leq a^3b+ab^3=ab((a+b)^2-2ab)=ab(n^2-2ab).
\]
This quantity is maximized when $2ab=n^2/2$, in which case it equals $n^4/8$. Thus, $\eopt(n)\leq n^4/48$.

If $n=2m$ is even, we have
\[
\eopt(n)\geq 2m\binom m3=\frac{m^4-3m^3+2m^2}3\geq\frac{m^4-3m^3}{3}=\frac{n^4-6n^3}{48}.
\]
If $n=2m+1$ is odd, we have
\begin{align*}
\eopt(n)&\geq m\binom{m+1}3+(m+1)\binom m3
=\frac{(m+1)m(m-1)^2}3
=\frac{(n+1)(n-1)(n-3)^2}{48}
\\&=\frac{n^{4}-6n^{3}+8n^{2}+6n-9}{48}\geq\frac{n^4-6n^3}{48}.\qedhere
\end{align*}
\end{proof}

\subsection{Preliminaries}

We state several preliminary results that will be useful to us later.

\begin{lemma}\label{thm:density-hom}
Let $F$ be an $r$-graph. We have $\pi(F)=\pi(F\texthom)$.
\end{lemma}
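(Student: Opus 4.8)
The plan is to show the two densities are equal by sandwiching. First I would observe the easy inequality $\pi(F) \le \pi(F\texthom)$: every $F$-hom-free $r$-graph is in particular $F$-free (a subgraph copy of $F$ is the image of an injective homomorphism, hence a homomorphic copy), so $\ex(n, F) \ge \ex(n, F\texthom)$ for every $n$, and passing to the limit gives the inequality. The real content is the reverse direction, which amounts to the claim quoted in the text that $\ex(n, F) = \ex(n, F\texthom) + o(n^r)$, or at least that $\pi(F) \ge \pi(F\texthom)$.

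For the reverse inequality, I would take an $F$-free $r$-graph $G$ on $n$ vertices with $\ex(n,F)$ edges and produce from it an $F$-hom-free $r$-graph on $N$ vertices (for $N$ of the same order, or for a suitable subsequence $N \to \infty$) with edge density at least $e(G)/\binom nr - o(1)$. The standard device is a random blowup: replace each vertex $v$ of $G$ by a set $V_v$ of $t$ new vertices, put an $r$-edge on $\{u_1, \dots, u_r\}$ (with $u_i \in V_{v_i}$) exactly when $\{v_1, \dots, v_r\} \in E(G)$ and the $v_i$ are distinct. Call this $G[t]$; it has $nt$ vertices and edge density tending to $e(G)/\binom nr$ as $t \to \infty$ (the lost edges are those lying inside fewer than $r$ classes, a negligible fraction). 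The key point is that $G[t]$ is still $F$-hom-free: any homomorphism $F \to G[t]$, composed with the projection $G[t] \to G$ (collapsing each $V_v$ to $v$), is a homomorphism $F \to G$; since $G$ is $F$-free it has no such homomorphism that is injective on $V(F)$ — but in fact $G$ being $F$-free is not obviously enough here, so instead I would pass to $G$ being chosen $F$-hom-free is circular. The clean route: by a supersaturation / removal-type argument (or directly), an $F$-free $G$ with $\ex(n,F)$ edges, when blown up, has the property that the number of homomorphic copies of $F$ in $G[t]$ that are \emph{not} injective is small, and one deletes one edge from each such non-injective copy. More precisely, the number of homomorphisms $F \to G$ that are non-injective on $V(F)$ is $O(n^{|V(F)| - 1})$, so the number of non-injective homomorphic copies of $F$ in $G[t]$ is $O((nt)^{|V(F)|-1} \cdot t) = o((nt)^{|V(F)|})$ relative to the edge count; deleting an edge from each yields an $F$-hom-free $r$-graph with edge density still $e(G)/\binom nr - o(1)$ as $t \to \infty$. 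Taking $t \to \infty$ and then $n \to \infty$ gives $\pi(F\texthom) \ge \pi(F)$.

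I would then combine: $\pi(F) \le \pi(F\texthom) \le \pi(F)$, hence equality. The existence of the limit $\pi(F)$ (and $\pi(F\texthom)$) is standard — the sequence $\ex(n,F)/\binom nr$ is non-increasing by an averaging argument over $(n-1)$-vertex subsets — and may be cited.

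The main obstacle is making the blowup argument honest: one must control the homomorphic copies of $F$ in $G[t]$ that fail to be injective and argue these can be cheaply destroyed, since a priori an $F$-free graph is not $F$-hom-free, so the blowup of an extremal $F$-free graph need not be $F$-hom-free on the nose. The cleanest fix is a counting bound on non-injective homomorphisms $F \to G$ (there are $O(n^{|V(F)|-1})$ of them), which after blowup contribute only $o((nt)^r)$ edges that need removing. Everything else — the density computation for the blowup, monotonicity of $\ex(n,\cdot)/\binom nr$ — is routine.
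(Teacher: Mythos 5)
Your overall route (blow up an extremal $F$-free $G$ and delete edges to kill the remaining copies of $F$) has a genuine counting gap, and it is not the paper's argument. The issue is that the number of copies of $F$ in the blowup $G[t]$ can wildly exceed the number of edges. Concretely: since $G$ is $F$-free but typically not $F$-hom-free, a single non-injective homomorphism $\phi\colon F\to G$ (with image on $\le |V(F)|-1$ vertices) lifts to roughly $t^{|V(F)|}$ injective copies of $F$ inside $G[t]$, while those copies collectively use only $O(t^r)$ edges of $G[t]$. With up to $\Theta(n^{|V(F)|-1})$ such $\phi$, the copy count is $\Theta(n^{|V(F)|-1}t^{|V(F)|})$, whereas $e(G[t])=\Theta(n^r t^r)$. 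The ratio is $n^{|V(F)|-1-r}t^{|V(F)|-r}$, which diverges as $t\to\infty$ (and also diverges as $n\to\infty$ whenever $|V(F)|\ge r+2$). A concrete counterexample at $r=2$: take $F=P_3$ (the path with $4$ vertices) and $G=K_{1,n-1}$. Then $G$ is $F$-free, $G[t]=K_{t,(n-1)t}$ has $\Theta(nt^2)$ edges but $\Theta(n^2t^4)$ copies of $P_3$; deleting one edge per copy removes everything. So your proposed deletion step fails, and the order of limits "$t\to\infty$ then $n\to\infty$" only makes it worse.

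The paper avoids this entirely by working on the forbidden-family side rather than by repairing the blowup of $G$. Let $\mathcal F$ be the finite family of homomorphic images of $F$ and set $t=v(F)$. The easy inclusion gives $\pi(F)\ge\pi(\mathcal F)=\pi(F\texthom)$. For the other direction, the key local observation is that for every $F'\in\mathcal F$, the $t$-blowup $F'[t]$ contains $F$; hence any $F$-free graph is automatically $\mathcal F[t]$-free, giving $\pi(F)\le\pi(\mathcal F[t])$. One then cites the blowup-invariance theorem $\pi(\mathcal F[t])=\pi(\mathcal F)$ for a finite family (whose proof uses supersaturation, a much stronger tool than per-copy edge deletion). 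Chaining gives $\pi(F)\le\pi(\mathcal F[t])=\pi(\mathcal F)\le\pi(F)$. If you want to keep your "construct a denser $F$-hom-free graph" viewpoint, you would need to replace the naive deletion with a supersaturation/random-slicing argument; the per-copy deletion as written does not close.

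One small slip: your opening sentence states the easy inequality as $\pi(F)\le\pi(F\texthom)$, but your own reasoning ($F$-hom-free implies $F$-free, so $\ex(n,F)\ge\ex(n,F\texthom)$) gives $\pi(F)\ge\pi(F\texthom)$. The later text treats $\pi(F\texthom)\ge\pi(F)$ as the hard direction, which is correct, so this is just a typo in the first sentence.
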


The proof of \cref{thm:density-hom} is alluded to in \cite[\S2]{Ke11}, but we give an explicit proof here, leveraging the equivalence between the Tur\'an density of a hypergraph and any blowups.

\begin{proof}
Let $\mathcal F$ be the family of homomorphic copies of $F$, i.e., the family of $r$-graphs $F'$ for which there exists a surjective (on vertices and on edges) $r$-graph homomorphism $F\twoheadrightarrow F'$. By definition, $\pi(F\texthom)=\pi(\mathcal F)$. Moreover, $\pi(F)\geq\pi(\mathcal F)$ because $F\in\mathcal F$.

Let $t=v(F)$. For each $F'\in\mathcal F$, let $F'[t]$ denote the \emph{$t$-blowup} of $F'$, which is constructed by replacing each vertex of $F$ with an independent set of size $t$ and placing a complete $r$-partite $r$-graph $K^{(r)}_{t,\ldots,t}$ between any $r$ size-$t$ sets corresponding to an $r$-edge in $F'$. It is clear that $F'[t]$ contains $F$ for each homomorphic copy $F'$ of $F$.

Let $\mathcal F[t]=\{F'[t]:F'\in\mathcal F\}$. A standard result (proven for single hypergraphs in \cite[Theorem 2.2]{Ke11}, but the same proof applies to any finite family $\mathcal F$) states that $\pi(\mathcal F)=\pi(\mathcal F[t])$. However, because each $F'[t]\in\mathcal F$ contains $F$, we have $\pi(\mathcal F[t])\geq\pi(F)$.

We conclude that $\pi(F)\geq\pi(\mathcal F)=\pi(\mathcal F[t])\geq \pi(F)$.
\end{proof}

\begin{proposition}\label{prop:hom-free-chain}
Fix a uniformity $r$, positive integers $\ell,s$ and a nonnegative integer $t$. Any $C_{s\ell+rt}^{(r)}$-hom-free $r$-graph is also $C_\ell^{(r)}$-hom-free.
\end{proposition}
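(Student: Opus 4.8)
The plan is to pass to the equivalent ``closed tight walk'' description of homomorphic cycle copies and then show that a short closed tight walk forces a longer one. Recall that a homomorphism $C_m^{(r)}\to G$ is the same thing as a cyclic sequence $y_1,\dots,y_m$ of vertices of $G$ (indices read modulo $m$) such that every window $\{y_i,y_{i+1},\dots,y_{i+r-1}\}$ of $r$ cyclically consecutive terms is an edge of $G$; such a window automatically consists of $r$ distinct vertices, since edges of $G$ have size $r$. So, arguing contrapositively, it suffices to show: if $G$ admits a closed tight walk of length $\ell$, then it admits one of length $s\ell+rt$.

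First I would wind around $s$ times. Given a closed tight walk $w_1,\dots,w_\ell$, form the length-$s\ell$ sequence whose $j$-th term is $w_{((j-1)\bmod\ell)+1}$, i.e.\ the concatenation of $s$ copies of $(w_1,\dots,w_\ell)$. Since $\ell>r$, any $r$ cyclically consecutive positions of this sequence carry $r$ cyclically consecutive terms $w_i,w_{i+1},\dots,w_{i+r-1}$ of the original $\ell$-cycle (the window never ``wraps all the way around'' the short cycle), so every $r$-window is an edge of $G$. This produces a homomorphism $C_{s\ell}^{(r)}\to G$.

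Next I would pad by multiples of $r$. I claim that from any closed tight walk $z_1,\dots,z_m$ in $G$ one obtains a closed tight walk of length $m+r$, namely $z_1,\dots,z_m,z_1,z_2,\dots,z_r$ (read cyclically). Every $r$-window of this new sequence that does not meet the appended block $z_1,\dots,z_r$ is a window of the old walk; a short case analysis on the cyclic indices shows that each window meeting the appended block is either again a window of the original cyclic walk $z_1,\dots,z_m$ or is exactly the set $\{z_1,\dots,z_r\}$, which is an edge of $G$. Applying this operation $t$ times, starting from the length-$s\ell$ walk produced above, yields a closed tight walk of length $s\ell+rt$, i.e.\ a homomorphism $C_{s\ell+rt}^{(r)}\to G$; this completes the proof.

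There is no serious obstacle here — the entire content is these two elementary surgeries on tight walks — and the only point needing care is the routine verification that every $r$-window straddling a junction (between two winding copies, at the two boundaries of an appended block, or where the padding wraps back to the start of the walk) is genuinely an edge of $G$; this is just bookkeeping with indices modulo $\ell$ and modulo $r$. Alternatively, one can dispense with the induction entirely and write down the homomorphism $C_{s\ell+rt}^{(r)}\to G$ directly: send the first $s\ell$ of its vertices around the $\ell$-cycle $s$ times, and the remaining $rt$ vertices around the edge $\{w_1,\dots,w_r\}$ a further $t$ times, then check all windows in one pass.
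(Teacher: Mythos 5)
Your proof is correct and is essentially the same as the paper's: the paper directly writes down the homomorphism $C_{s\ell+rt}^{(r)}\to C_\ell^{(r)}$ (wind $s$ times around the $\ell$-cycle, then $t$ times around the first $r$ vertices) and precomposes with $C_\ell^{(r)}\to G$, which is precisely the ``alternative'' you describe at the end. Your main presentation phrases the same construction as two surgeries on closed tight walks (winding, then padding by $r$), but this is just an inductive reorganization of the identical idea, and the window-checking you defer is indeed routine.
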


\begin{proof}
We prove the contrapositive. Set $m=s\ell+rt$.
Suppose that $G$ is an $r$-graph that is not $C_\ell^{(r)}$-hom-free, i.e., there is a homomorphism $C_\ell^{(r)}\to G$. 

Write $C_{m}^{(r)}=v_1\cdots v_{m}$ and $C_\ell^{(r)}=w_1\cdots w_\ell$, where edges are of the form $v_i\cdots v_{i+r-1}$ or $w_j\cdots w_{j+r-1}$ for each $i$ modulo $m$ or $j$ modulo $\ell$.
There is a homomorphism $C^{(r)}_{m}\to C_\ell^{(r)}$ given by mapping
\begin{align*}
(v_1,\ldots,v_{s\ell})&\mapsto(w_1,\ldots,w_\ell,w_1,\ldots,w_\ell,\ldots,w_1,\ldots,w_\ell)\ \text{and}
\\(v_{s\ell+1},\ldots,v_{s\ell+tr})&\mapsto(w_1,\ldots,w_r,w_1,\ldots,w_r,\ldots,w_1,\ldots,w_r).
\end{align*}
Precomposing the homomorphism $C_\ell^{(r)}\to G$ with the homomorphism $C_m^{(r)}\to C_\ell^{(r)}$ yields a homomorphism $C_m^{(r)}\to G$, showing that $G$ is not $C_m^{(r)}$-hom-free.
\end{proof}

\begin{lemma}\label{prelim:turan-regular}
Let $\mathcal F$ be a family of $r$-graphs and suppose $G$ is an $\mathcal F$-hom-free $r$-graph with $\ex(n,\mathcal F\texthom)$ edges. Then for any $v,v'\in E(G)$, we have $|\deg_G(v)-\deg_G(v')|\leq\binom n{r-2}$.
\end{lemma}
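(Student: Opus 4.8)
This is a Zykov-type symmetrization (vertex-cloning) argument. Assume without loss of generality that $\deg_G(v)\ge\deg_G(v')$, and suppose toward a contradiction that $\deg_G(v)-\deg_G(v')>\binom n{r-2}$. The plan is to replace $v'$ by a ``clone'' of $v$ and show the resulting $r$-graph $G'$ is still $\mathcal F$-hom-free but has strictly more edges, contradicting the extremality of $G$.

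\textbf{Construction of $G'$.} Let $G'$ be the $r$-graph on $V(G)$ whose edge set consists of (i) all edges of $G$ not containing $v'$, together with (ii) all sets $\{v'\}\cup S$ with $S\in\binom{V(G)\setminus\{v,v'\}}{r-1}$ and $\{v\}\cup S\in E(G)$. Thus every edge of $G'$ through $v'$ avoids $v$, so no edge of $G'$ contains both $v$ and $v'$. I would first observe that the map $\psi\colon V(G')\to V(G)$ sending $v'\mapsto v$ and fixing every other vertex is a homomorphism $G'\to G$: an edge of type (i) is mapped to itself, which is an edge of $G$ on which $\psi$ is injective, and an edge $\{v'\}\cup S$ of type (ii) is mapped to $\{v\}\cup S\in E(G)$, again injectively since $v\notin S$. (This injectivity on edges is exactly why $G'$ must not contain edges meeting both $v$ and $v'$; it is the only delicate point, and it is also what produces the $\binom n{r-2}$ slack below.) Consequently, precomposing any homomorphism $F\to G'$ with $\psi$ yields a homomorphism $F\to G$, so $G'$ inherits $\mathcal F$-hom-freeness from $G$; and $G'$ has $n$ vertices.

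\textbf{Edge count and conclusion.} Comparing edge sets, $G'$ loses the $\deg_G(v')$ edges of $G$ through $v'$ and gains one edge $\{v'\}\cup S$ for each edge $\{v\}\cup S$ of $G$ with $v'\notin S$; the latter number is $\deg_G(v)-d$, where $d$ is the number of edges of $G$ containing both $v$ and $v'$. Hence
\[
e(G')-e(G)=\deg_G(v)-\deg_G(v')-d\ge \deg_G(v)-\deg_G(v')-\binom{n-2}{r-2}>\binom n{r-2}-\binom{n-2}{r-2}\ge 0,
\]
using $d\le\binom{n-2}{r-2}\le\binom n{r-2}$. This contradicts the assumption that $G$ has $\ex(n,\mathcal F\texthom)$ edges. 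Therefore $\deg_G(v)-\deg_G(v')\le\binom n{r-2}$, and swapping the roles of $v$ and $v'$ gives $|\deg_G(v)-\deg_G(v')|\le\binom n{r-2}$.

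\textbf{Expected difficulty.} There is no serious obstacle here; the argument is routine symmetrization. The only point requiring a little care is making sure that $\psi$ is a genuine homomorphism in the sense used in this paper (injective on each edge), which dictates the precise definition of $G'$ and is exactly what accounts for the $\binom n{r-2}$ error term rather than an exact equality of degrees.
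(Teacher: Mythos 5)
Your proof is correct and follows essentially the same symmetrization argument as the paper: replace $v'$ by a clone of $v$ (you do this in place, whereas the paper phrases it as deleting $w$ and adding a fresh vertex $v'$ with $N(v')=N_{G-w}(v)$, but the resulting graph and edge count are identical), observe that collapsing $v'$ onto $v$ gives a homomorphism so $\mathcal F$-hom-freeness is preserved, and compare edge counts to get the bound $\deg_G(v)-\deg_G(v')\le\deg_G(vv')\le\binom{n}{r-2}$.
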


\begin{proof}
Let $v,w\in V(G)$. Let $G'=G-w$ and let $G''$ be the $r$-graph obtained from $G'$ by adding a vertex $v'$ whose neighborhood is exactly $N_{G'}(v)$. Identifying $v$ and $v'$ yields a homomorphism $G''\to G$, so if $G''$ contains a homomorphic copy of some $F\in\mathcal F$, then the composition $F\to G''\to G$ shows that $G$ does as well. Because $G$ is $\mathcal F$-hom-free, we conclude that $G''$ is $\mathcal F$-hom-free. 

Observe that $e(G')=e(G)-\deg_G(w)$ and $e(G'')= e(G')+\deg_G(v)-\deg_G(vw)$. Because $e(G)=\ex(n,\mathcal F\texthom)$ and $G''$ is $\mathcal F$-hom-free on the same number of vertices, it holds that
\[
e(G)\geq e(G'')=e(G)+\deg_G(v)-\deg_G(w)-\deg_G(vw).
\]
Thus $\deg_G(v)-\deg_G(w)\leq\deg_G(vw)\leq\binom n{r-2}$.
\end{proof}

\begin{lemma}\label{prelim:large-min-degree}
For every uniformity $r\geq 2$ and constant $\eps>0$, there exists some $\eps_0=\eps_{\thetheorem}(r,\eps)$ such that the following holds for any $0<c\leq 1$.

Suppose $G$ is an $r$-graph with $n$ vertices such that, for all $W\subseteq V(G)$, the induced subgraph $G[W]$ has at most $c|W|^r/r!$ edges. If $e(G)\geq(c-\eps_0)n^r/r!$, then there is a set $V'\subseteq V(G)$ containing at least $(1-\eps)n$ vertices such that, in the induced subgraph $G'=G[V']$, we have $\deg_{G'}(v)\geq(c-\eps)(|V'|-1)^{r-1}/(r-1)!$ for each $v\in V'$.
\end{lemma}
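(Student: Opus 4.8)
The plan is to prove this by an iterative vertex-deletion argument, repeatedly removing low-degree vertices until none remain. First I would set up the cleaning process: starting from $G_0 = G$, as long as the current graph $G_i$ on $n_i$ vertices contains a vertex $v$ with $\deg_{G_i}(v) < (c-\eps)(n_i - 1)^{r-1}/(r-1)!$, delete such a vertex to form $G_{i+1}$. The key observation is that each deletion removes at most $(c-\eps)(n_i-1)^{r-1}/(r-1)! < (c-\eps)n^{r-1}/(r-1)!$ edges, so after $k$ steps we have removed at most $k(c-\eps)n^{r-1}/(r-1)!$ edges in total. Since $G$ has at least $(c - \eps_0)n^r/r!$ edges and every induced subgraph $G[W]$ has at most $c|W|^r/r! \le cn^r/r!$ edges, the process cannot run too long: if it deleted $k \ge \eps n$ vertices, the resulting graph would still have nonnegative edge count, giving $(c-\eps_0)n^r/r! - \eps n \cdot (c-\eps)n^{r-1}/(r-1)! \le e(G_k) \ge 0$ is not quite the right inequality to push — instead I should compare against the trivial upper bound.

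More carefully: suppose for contradiction the process runs for at least $\lceil \eps n \rceil$ steps, reaching a graph $G'$ on $n' \le n - \eps n = (1-\eps)n$ vertices. By hypothesis $e(G') \le c(n')^r/r! \le c(1-\eps)^r n^r / r!$. On the other hand, $e(G') \ge e(G) - (\text{edges deleted})$, and the edges deleted total at most $(\text{number of steps}) \cdot (c - \eps) n^{r-1}/(r-1)!$. The issue is that the number of steps could in principle be up to $n$, so I need to bound it differently — I would instead argue that the process terminates with at least $(1-\eps)n$ vertices by a direct edge-counting contradiction at the first moment $n_i$ drops to $\lfloor (1-\eps)n \rfloor$. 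At that point the total number of edges deleted is at most $\eps n \cdot (c-\eps) n^{r-1}/(r-1)! = \eps(c-\eps) r \cdot n^r/r!$, which would force $e(G') \ge (c - \eps_0)n^r/r! - \eps(c-\eps)r\, n^r/r!$; but this can exceed $c(n')^r/r!$ only if we choose $\eps_0$ and track constants carefully. So the right move is: choose $\eps_0 = \eps_{\thetheorem}(r,\eps)$ small enough (depending only on $r$ and $\eps$, after perhaps also shrinking $\eps$ itself to a convenient value or using $\eps/2$ internally) that the inequality $(c-\eps_0) - \eps(c - \eps)r > c(1-\eps)^r$ holds for all $0 < c \le 1$; note $c(1-\eps)^r \le c(1 - r\eps + O(\eps^2))$, so the gap on the left is roughly $\eps_0$ versus something negative in $\eps$, and picking $\eps_0$ small suffices once $\eps$ is small — but since $\eps$ is given, I may need to run the cleaning with threshold parameter $\eps/2$ and only claim the weaker conclusion, or absorb constants; I will state $\eps_0$ as a function of $r, \eps$ and verify the numeric inequality.

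The cleaner bookkeeping, which I would actually write: let $k$ be the total number of vertices deleted, so $G' = G[V']$ with $|V'| = n - k$ and $\deg_{G'}(v) \ge (c - \eps)(|V'|-1)^{r-1}/(r-1)!$ for all $v \in V'$ by construction (the process stops precisely when this holds). It remains to show $k \le \eps n$. If not, consider the first time exactly $\lceil \eps n\rceil$ vertices have been deleted, reaching $G''$ on $n'' = n - \lceil \eps n \rceil$ vertices; each of these $\lceil \eps n\rceil$ deletions removed fewer than $(c-\eps)n^{r-1}/(r-1)! \le \tfrac{r}{r!}n^{r-1}$ edges (using $c - \eps < 1$), so $e(G'') > e(G) - \lceil \eps n\rceil \cdot \tfrac{r}{r!} n^{r-1} \ge (c - \eps_0)\tfrac{n^r}{r!} - (\eps n + 1)\tfrac{r}{r!}n^{r-1}$. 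Comparing with $e(G'') \le c (n'')^r/r! \le c(1 - \eps)^r n^r/r!$ (for $n$ large; small $n$ handled trivially since then the statement is vacuous or the sets can be taken appropriately) yields, after dividing by $n^r/r!$, the inequality $c(1-\eps)^r \ge (c - \eps_0) - \eps r - o(1)$, i.e. $\eps_0 \ge c - c(1-\eps)^r - \eps r - o(1) = c(1 - (1-\eps)^r) - \eps r - o(1)$. Since $1 - (1-\eps)^r \le r\eps$ and $c \le 1$, the right-hand side is at most $r\eps - \eps r - o(1) = -o(1) < \eps_0$ for any fixed $\eps_0 > 0$ once $n$ is large — contradiction.

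The main obstacle is getting the constants to work out for \emph{all} $0 < c \le 1$ simultaneously and with $\eps_0$ depending only on $r$ and $\eps$: the bound $1 - (1-\eps)^r \le r\eps$ is exactly what makes the $c$-dependence cancel, but one must be slightly careful because $c(1-(1-\eps)^r) - \eps r$ can be slightly negative (good) yet the $o(1)$ error term from rounding $\lceil \eps n\rceil$ and from $(n''-1)$ versus $n''$ needs $n$ large; for small $n$ one observes the conclusion is trivial (take $V' = V(G)$ when $n$ is bounded, adjusting $\eps_0$ so the hypothesis $e(G) \ge (c - \eps_0)n^r/r!$ is impossible, or simply noting the degree bound is vacuous). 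I would also double-check the edge-count-per-deletion bound: when we delete $v$ from $G_i$ we remove $\deg_{G_i}(v) < (c-\eps)(n_i - 1)^{r-1}/(r-1)!$ edges, and $(n_i - 1)^{r-1} \le n^{r-1}$, so the per-step bound $(c-\eps)n^{r-1}/(r-1)!$ is valid throughout. With these pieces the argument closes.
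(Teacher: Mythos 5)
Your high-level plan — iteratively delete a minimum-degree vertex, then show the process cannot run for $\eps n$ steps by an edge-counting contradiction — is the same as the paper's, but your execution has a genuine gap that makes the contradiction fail. You bound the degree of the $i$th deleted vertex by $(c-\eps)n^{r-1}/(r-1)!$, throwing away the refinement that it is actually at most $(c-\eps)(n-i)^{r-1}/(r-1)!$, and this crude bound is too lossy. Tracing your own inequality: assuming the process runs $\geq\eps n$ steps you derive
\[
\eps_0 \;\geq\; c\bigl(1-(1-\eps)^r\bigr)-\eps r - o(1),
\]
and you then observe (correctly) that the right-hand side is at most $r\eps - \eps r - o(1) \leq 0 < \eps_0$. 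But that confirms the derived inequality is \emph{true}, not that it is contradicted, so there is no contradiction and the argument does not close. Quantitatively, even keeping the $(c-\eps)$ factor and being generous, the slack you would need is of the form $c\bigl(1-(1-\eps)^r\bigr)-\eps r c + \eps^2 r \approx -\binom r2 c\eps^2 + r\eps^2$, which is negative already for $r=4$, $c=1$; so no positive $\eps_0$ works with your bookkeeping.

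The paper avoids this by retaining the $(n-i)^{r-1}$ factor and splitting the coefficient as $c-\eps$: it writes
\[
e(G)\;\leq\; e(G[V_s]) + \sum_{i=1}^{s}\frac{c(n-i)^{r-1}}{(r-1)!} - \sum_{i=1}^{s}\frac{\eps(n-i)^{r-1}}{(r-1)!},
\]
and then the $c$ part $\sum_i c(n-i)^{r-1}/(r-1)!\leq c\bigl(n^r-(n-s)^r\bigr)/r!$ telescopes exactly against the hypothesis $e(G[V_s])\leq c(n-s)^r/r!$ to give precisely $cn^r/r!$, while each term in the $\eps$ part is $\geq\eps(n-s)^{r-1}/(r-1)!$, giving a clean $c$-independent slack of $r\eps^2(1-\eps)^{r-1}\cdot n^r/r!$. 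Your bound $(n-i)^{r-1}\leq n^{r-1}$ destroys this cancellation. The fix is to keep the exact summand $(n-i)^{r-1}$, split $c-\eps$ additively, and compare the $c$ part to the integral $\int_{n-s}^n x^{r-1}\,dx$; this is the step your proof is missing.
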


\begin{proof}
Suppose $\eps_0$ is sufficiently small in terms of $r$ and $\eps$, and let $G$ be an $r$-graph as described with $e(G)\geq(c-\eps_0)n^r/r!$.
Consider the following greedy process: identify vertices $v_1,v_2,\ldots$ such that $\deg_G(v_i;V-\{v_1,\ldots,v_i\})\leq(c-\eps)(n-i)^{r-1}/(r-1)!$. Suppose this process proceeds for $s=\eps n$ steps and let $V_s=V-\{v_1,\ldots,v_s\}$.  Then,
\begin{align*}
e(G)&\leq e(G[V_s])+\sum_{i=1}^{s}\frac{(c-\eps)(n-i)^{r-1}}{(r-1)!}
\\&\leq	\frac{c(n-s)^r}{r!}+\frac{c\times\left(n^r-(n-s)^r\right)}{r!}
	-\eps s\times\frac{\left(n-s\right)^{r-1}}{(r-1)!}
\\&= \frac{cn^r}{r!}
-r\eps\times\frac sn\left(1-\frac sn\right)^{r-1}\times\frac{n^r}{r!}
\leq\left(c-r\eps^2(1-\eps)^{r-1}\right)\frac{n^r}{r!}
<(c-\eps_0)\frac{n^r}{r!}
\end{align*}
if $\eps_0$ is sufficiently small in terms of $\eps$ and $r$, contradicting the lower bound on $e(G)$.

Thus, this process terminates in $t<\eps n$ steps. The resulting set $V'=V_t=V-\{v_1,\ldots,v_t\}$ has $m\geq n-\eps n$ vertices, each of which has degree at least $(c-\eps)(m-1)^{r-1}/(r-1)!$.
\end{proof}

\section{Characterizing Hypergraphs Avoiding Certain Walks}
\label{sec:bipartite}

In this section, we develop a hypergraph analogue of the following well-known statement.
\begin{proposition}\label{prop:bipartite}
	Let $G$ be a graph. There is a proper 2-coloring of $V(G)$ if and only if $G$ contains no odd closed walks.
\end{proposition}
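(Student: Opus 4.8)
The plan is to prove the two implications separately, working one connected component at a time, with all the content living in a parity-of-walk-length bookkeeping argument.

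For the easy direction, I would assume a proper $2$-coloring $c:V(G)\to\{0,1\}$ and observe that along any walk $v_0v_1\cdots v_\ell$ consecutive vertices are adjacent, hence differently colored, so $c(v_i)\equiv c(v_0)+i\pmod 2$ by induction on $i$. If the walk is closed, then $v_\ell=v_0$ gives $c(v_0)\equiv c(v_0)+\ell\pmod 2$, forcing $\ell$ even. So $G$ has no odd closed walk.

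For the converse, I would assume $G$ has no odd closed walk; since it suffices to color each component, I may assume $G$ is connected and fix a root $u\in V(G)$. For each $v$ I would pick an arbitrary walk $W_v$ from $u$ to $v$ and define $c(v)$ to be the parity of its length. The step requiring care — and the only place the hypothesis is actually used — is showing $c$ is well-defined: given two walks $W,W'$ from $u$ to $v$, concatenating $W$ with the reversal of $W'$ yields a closed walk based at $u$ of length $|W|+|W'|$; by hypothesis this is even, so $|W|\equiv|W'|\pmod 2$ and the assigned parity is independent of the chosen walk.

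Finally I would verify properness: for any edge $vw\in E(G)$, appending $vw$ to $W_v$ produces a walk from $u$ to $w$ of length $|W_v|+1$, so $c(w)\equiv c(v)+1\pmod 2$ and in particular $c(v)\neq c(w)$. I expect the well-definedness check to be the "main obstacle" only in the sense that it is the conceptual heart of the argument; everything else is routine parity manipulation.
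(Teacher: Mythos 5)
Your proof is correct, but it takes a genuinely different route from the paper's. You give the classical elementary argument: the forward direction by the parity-along-a-walk induction, the converse by rooting each component, assigning each vertex the parity of the length of an arbitrary walk from the root, and using the no-odd-closed-walk hypothesis exactly once to show that parity is well-defined. The paper instead deliberately \emph{derives} \cref{prop:bipartite} as the $r=2$, $k=1$ instance of the general characterization \cref{thm:visual-applicable}: one computes that the only $\pi$-avoiding subgroup of $S_2$ is the trivial group, so $\Delbf_\pi$ consists of a single pictogram depicting a directed edge, and then observes that an accordant oriented coloring of $E(G)$ by a single directed-edge pictogram is precisely the data of a partition of $V(G)$ into sources and sinks, i.e.\ a proper $2$-coloring. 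Your proof is shorter and fully self-contained, and is the one a reader would reach for if they only cared about graphs; the paper's proof, by contrast, is intentionally a sanity check on the machinery, showing that the general $S_r$-equivariant coloring framework (which carries the real content of \cref{thm:alg-provable}) specializes to the familiar bipartiteness criterion when $r=2$. Both are valid; the paper's choice reflects its expository goal of presenting uniformities $2$, $3$, and $4$ uniformly as corollaries of one theorem, whereas yours re-proves the base case from scratch.
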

Our result (stated pictorially in \cref{thm:visual-applicable} and more formally in \cref{thm:alg-provable} below) is phrased in terms of \emph{oriented colorings} of the edge set of an $r$-graph. These can be described either pictorially or symbolically.

We first state our main result pictorially as \cref{thm:visual-applicable}, characterizing $\C kr$-hom-free $r$-graphs as those colorable by a specific set of pictograms in an \emph{accordant} manner. The pictorial perspective given in \cref{sec:pictorial-oriented-colorings} is primarily intended to motivate the formal symbolic definitions given in \cref{sec:algebraic-oriented-colorings}, using which we state an equivalent non-pictorial formulation of \cref{thm:visual-applicable}. This equivalent formulation, which is \cref{thm:alg-provable}, is proven in \cref{sec:prove-coloring-result}. Lastly, in \cref{sec:coloring-corollaries} we apply \cref{thm:visual-applicable} to derive explicit pictorial characterizations of $\C kr$-hom-free $r$-graphs (including \cref{thm:coloring-2mod4}) for all uniformities $r\leq 4$ and all residues $k$ modulo $r$. The characterizations in uniformities 2 and 3 were known prior to this paper --- indeed, the 2-uniform result is precisely \cref{prop:bipartite} --- but we feel it is instructive to illustrate how they are both special cases of this more general characterization.

\subsection{Pictorial perspective}\label{sec:pictorial-oriented-colorings}
The goal of this subsection is to phrase our characterization in an informal but visually intuitive manner in \cref{thm:visual-applicable}. 
Fix a uniformity $r$, and suppose $\Delbf=\{\Delta_1,\ldots,\Delta_m\}$ is some set of pictograms, each depicting a coloring of the $r$-vertex geometric simplex stabilized by some symmetries. As an example, if $r=3$, the three-vertex simplex is a triangle, and two possible pictograms are $\Delta_{\mathrm{refl}}=\pointedtriangle$ and $\Delta_{\mathrm{rot}}=\pinwheel$. The former is stabilized by reflection across the vertical axis, while the latter is stabilized by rotation by 120 or 240 degrees; both are subgroups of the dihedral group $D_3=S_3$. In general, the full symmetry group of the $r$-vertex simplex is the symmetric group $S_r$, where each symmetry of the simplex corresponds to a permutation of its $r$ vertices. Each pictogram $\Delta$ is stabilized by some subgroup of $S_r$; this group is denoted by $\stab(\Delta)$. When formalizing our setup in \cref{sec:algebraic-oriented-colorings}, the subgroup $\stab(\Delta)\subseteq S_r$ is the only combinatorial data about the pictogram $\Delta$ that will be used; the geometric coloring is simply for visual intuition.

Given an $r$-graph $G$, an \emph{oriented coloring} of $E(G)$ by the set $\Delbf=\{\Del_1,\ldots,\Del_m\}$ involves placing one of the pictograms $\Delta_i$ on each $r$-edge of $G$. An example oriented coloring of a 3-graph by $\Delbf=\{\Del_{\mathrm{refl}},\Del_{\mathrm{rot}}\}$ is given in \Cref{fig:accordantoricoloring}. An oriented coloring is said to be \emph{accordant} if, for any two edges $e=x_1\cdots x_{r-1}y$ and $e'=x_1\cdots x_{r-1}y'$ in $E(G)$ differing at a single vertex, the edges $e$ and $e'$ are colored with the same orientation of the same pictogram, where we identify $e$ with $e'$ by mapping $y$ to $y'$. Visually, this means the coloring should be reflected across the subsimplex $x_1\cdots x_{r-1}$.
 The oriented coloring pictured in \Cref{fig:accordantoricoloring} is accordant; we remark that, two intersecting edges that differ on two or more vertices need not be colored in the same fashion. Non-accordant oriented colorings are pictured in \Cref{fig:nonaccordantcoloring}.

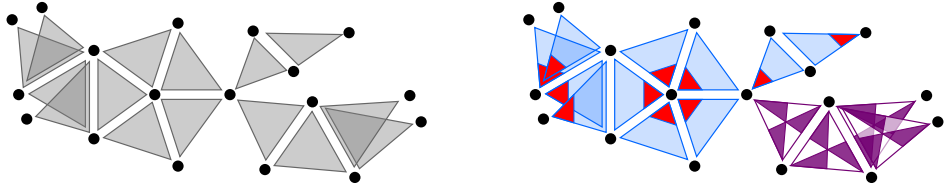
\begin{figure}[t]
\centering
\begin{tikzpicture}
	\coordinate[vtx] (x) at (0,0);
	\foreach \i in {1,...,5} {
		\coordinate[vtx] (x\i) at (72*\i:1);
	}
	\coordinate[vtx] (y) at ($(x2|-x)+(180:1)$);
	\coordinate[vtx] (y2) at ($(x2|-x)+(200:0.95)$);
	\path (y) --+ (95:1) coordinate[vtx] (z) --+ (75:1.2) coordinate[vtx] (z2);
	\path[rotate=45] (x5) --+ (25:0.9) coordinate[vtx] (a1) 
		--++ (-25:0.9) coordinate[vtx] (a2) 
		--++ (-10:0.9) coordinate[vtx] (a3);
	\path[rotate=-35] (x5) --+ (30:1.1) coordinate[vtx] (b1) 
		--++ (-30:1.1) coordinate[vtx] (b2) 
		--++ (30:1.2) coordinate[vtx] (b3) 
		--++ (75:1.15) coordinate[vtx] (b4);
	\path (b4) --++ (-0.15,0.35) coordinate[vtx] (b5);

	\tikzshiftedgraytriangle{x}{x1}{x2}{(0,0)}{0.2}
	\tikzshiftedgraytriangle{x}{x2}{x3}{(0,0)}{0.2}
	\tikzshiftedgraytriangle{x}{x3}{x4}{(0,0)}{0.2}
	\tikzshiftedgraytriangle{x}{x4}{x5}{(0,0)}{0.2}
	\tikzshiftedgraytriangle{x}{x5}{x1}{(0,0)}{0.2}
	\tikzshiftedgraytriangle{y}{x2}{x3}{(-0.03,0.01)}{0.25}
	\tikzshiftedgraytriangle{y2}{x2}{x3}{(0,-0.01)}{0.2}
	\tikzshiftedgraytriangle{y}{z}{x2}{(0,-0.01)}{0.2}
	\tikzshiftedgraytriangle{y}{z2}{x2}{(0,0.04)}{0.25}
	\tikzshiftedgraytriangle{x5}{a1}{a2}{(0,0)}{0.2}
	\tikzshiftedgraytriangle{a3}{a1}{a2}{(0.05,0.01)}{0.21}
	\tikzshiftedgraytriangle{x5}{b1}{b2}{(0,0)}{0.2}
	\tikzshiftedgraytriangle{b3}{b1}{b2}{(0,0)}{0.2}
	\tikzshiftedgraytriangle{b3}{b1}{b4}{(0.04,0)}{0.2}
	\tikzshiftedgraytriangle{b3}{b1}{b5}{(-0.02,0)}{0.2}
\end{tikzpicture}	
\qquad\quad
\begin{tikzpicture}
	\coordinate[vtx] (x) at (0,0);
	\foreach \i in {1,...,5} {
		\coordinate[vtx] (x\i) at (72*\i:1);
	}
	\coordinate[vtx] (y) at ($(x2|-x)+(180:1)$);
	\coordinate[vtx] (y2) at ($(x2|-x)+(200:0.95)$);
	\path (y) --+ (95:1) coordinate[vtx] (z) --+ (75:1.2) coordinate[vtx] (z2);
	\path[rotate=45] (x5) --+ (25:0.9) coordinate[vtx] (a1) 
		--++ (-25:0.9) coordinate[vtx] (a2) 
		--++ (-10:0.9) coordinate[vtx] (a3);
	\path[rotate=-35] (x5) --+ (30:1.1) coordinate[vtx] (b1) 
		--++ (-30:1.1) coordinate[vtx] (b2) 
		--++ (30:1.2) coordinate[vtx] (b3) 
		--++ (75:1.15) coordinate[vtx] (b4);
	\path (b4) --++ (-0.15,0.35) coordinate[vtx] (b5);
	
	\tikzshiftedpointedtriangle{x}{x1}{x2}{(0,0)}{0.2}
	\tikzshiftedpointedtriangle{x}{x2}{x3}{(0,0)}{0.2}
	\tikzshiftedpointedtriangle{x}{x3}{x4}{(0,0)}{0.2}
	\tikzshiftedpointedtriangle{x}{x4}{x5}{(0,0)}{0.2}
	\tikzshiftedpointedtriangle{x}{x5}{x1}{(0,0)}{0.2}
	\tikzshiftedpointedtriangle{y}{x2}{x3}{(-0.03,0.01)}{0.25}
	\tikzshiftedpointedtriangle{y2}{x2}{x3}{(0,-0.01)}{0.2}
	\tikzshiftedpointedtriangle{y}{z}{x2}{(0,-0.01)}{0.2}
	\tikzshiftedpointedtriangle{y}{z2}{x2}{(0,0.04)}{0.25}
	\tikzshiftedpointedtriangle{x5}{a1}{a2}{(0,0)}{0.2}
	\tikzshiftedpointedtriangle{a3}{a1}{a2}{(0.05,0.01)}{0.21}
	\tikzshiftedpinwheel{x5}{b1}{b2}{(0,0)}{0.2}
	\tikzshiftedpinwheel{b3}{b1}{b2}{(0,0)}{0.2}
	\tikzshiftedpinwheel{b3}{b1}{b5}{(-0.02,0)}{0.2}
	\def\pinwheelopacity{0.6}
	\tikzshiftedpinwheel{b3}{b1}{b4}{(0.04,0)}{0.2}
\end{tikzpicture}

\caption{At left, a 3-graph $G$. At right, an (accordant) oriented coloring of $E(G)$ by $\Delbf=\{\Delta_{\mathrm{refl}},\Delta_{\mathrm{rot}}\}$.}
\label{fig:accordantoricoloring}
\end{figure}

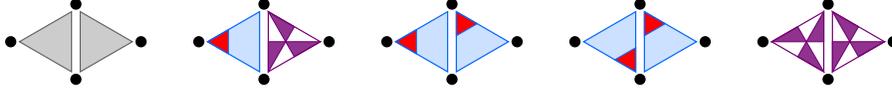
\begin{figure}
\centering
\begin{tikzpicture}
	\path\foreach \x in {v,x,a,b,c} {
		--++ (2.5,0) coordinate[vtx] (\x 0)
	};
	\foreach \x in {v,x,a,b,c} {
		\path (\x 0) --++ (30:1) coordinate[vtx] (\x 1)
			--++ (-90:1) coordinate[vtx] (\x 2)
			--++ (30:1) coordinate[vtx] (\x 3);
	}
	\tikzshiftedgraytriangle{v0}{v1}{v2}{(0,0)}{0.2}
	\tikzshiftedgraytriangle{v3}{v1}{v2}{(0,0)}{0.2}
	
	\tikzshiftedpointedtriangle{x0}{x1}{x2}{(0,0)}{0.2}
	\tikzshiftedpinwheel{x1}{x3}{x2}{(0,0)}{0.2}
	
	\tikzshiftedpointedtriangle{a0}{a1}{a2}{(0,0)}{0.2}
	\tikzshiftedpointedtriangle{a1}{a3}{a2}{(0,0)}{0.2}
	
	\tikzshiftedpointedtriangle{b2}{b1}{b0}{(0,0)}{0.2}
	\tikzshiftedpointedtriangle{b1}{b3}{b2}{(0,0)}{0.2}
	
	\tikzshiftedpinwheel{c0}{c1}{c2}{(0,0)}{0.2}
	\tikzshiftedpinwheel{c1}{c3}{c2}{(0,0)}{0.2}
\end{tikzpicture}

\caption{A 3-graph $G$ followed by four non-accordant oriented colorings of $E(G)$ by $\Delbf=\{\Delta_{\mathrm{refl}},\Delta_{\mathrm{rot}}\}$.}
\label{fig:nonaccordantcoloring}	
\end{figure}

We characterize $\C kr$-hom-free $r$-graphs as exactly those whose edge sets admit an accordant coloring by a specific set $\Delbf_{r,k}$. To state this result, we require a small amount of group theory.
Given a subgroup $\Ga$ of $S_r$, write $[\Ga]=\{\sg\Ga\sg^{-1} : \sg\in S_r\}$ for the family of conjugates of $\Ga$ in $S_r$. Given $\pi\in S_r$, we say $\Ga$ \emph{avoids $\pi$-conjugates} if $\Ga$ contains no element of the form $\sg\pi\sg^{-1}$ --- equivalently, $\pi\notin\bigcup_{\Ga'\in[\Ga]}\Ga'$.
Write $\mathfrak G_\pi$ for the set of maximal $\pi$-conjugate avoiding subgroups of $S_r$. Observe that $\mathfrak G_\pi$ may be partitioned as $\mathfrak G_\pi=\bigsqcup_{i=1}^m[\Ga_i]$ where $\Ga_1,\ldots,\Ga_m$ are subgroups of $S_r$. For $i=1,\ldots,m$, let $\Delta_i$ be any pictogram with $\stab(\Delta_i)=\Ga_i$. (One way to construct such a pictogram, exemplified by $\pinwheel$, is by taking the barycentric subdivision of the $(r-1)$-simplex, noting that $\Ga$ acts on the $r!$ resulting $(r-1)$-simplices, and coloring the union of the $(r-1)$-simplices in one orbit of $\Ga$ purple and the remainder of the simplex white. We do not dwell further on this construction because the pictograms are primarily for visual intuiton.)
Set $\Delbf_\pi=\{\Del_1,\ldots,\Del_m\}$. Using this notation, we may state the pictorial form of our characterization.

\begin{theorem}\label{thm:visual-applicable}
Fix a uniformity $r\geq 2$ and a residue $k$ modulo $r$. Let $\cyc=(1\,2\,\cdots\,r)\in S_r$ denote the cyclic shift permutation, and set $\pi=\cyc^k$. An $r$-graph $G$ is $\C kr$-hom-free if and only if $E(G)$ admits an accordant oriented coloring by the set $\Delbf_\pi$ defined above.
\end{theorem}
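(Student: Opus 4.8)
The plan is to pass from the pictorial statement to the algebraic reformulation (\cref{thm:alg-provable}, assumed available from \cref{sec:algebraic-oriented-colorings}) and work entirely with subgroups of $S_r$. So I would first spell out the translation: an accordant oriented coloring of $E(G)$ by $\Delbf_\pi$ is the data, for each ordered $r$-tuple $x_1\cdots x_r$ whose support is an edge, of a coset-like labeling that is consistent under the single-vertex-swap relation; unwinding what ``accordant'' means, this is exactly a choice of conjugate $\Ga_e' \in \bigcup_i [\Ga_i] = \mathfrak G_\pi$ attached to each edge $e$, together with a compatibility condition across edges sharing $r-1$ vertices. The key observation I would isolate up front is that tight walks in $G$ correspond to compositions of the ``shift'' maps on ordered tuples, and a tight cycle of length $k$ modulo $r$ that closes up forces the permutation $\cyc^k = \pi$ into the relevant stabilizer subgroup. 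Conversely, the subgroup $\tc(x_1\cdots x_r)$ defined in the introduction is precisely the group of permutations realized by tight walks from the ordered edge back to itself; so ``$G$ is $\C kr$-hom-free'' should be equivalent to ``$\pi \notin \tc(x_1\cdots x_r)$ (up to conjugacy) for every ordered edge,'' i.e.\ each $\tc$-group avoids $\pi$.

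Given that dictionary, the two directions split cleanly. For the forward direction (hom-free $\Rightarrow$ colorable): I would show that if $G$ is $\C kr$-hom-free then for each ordered edge $\vec e = x_1\cdots x_r$ the tight-connectivity group $\tc(\vec e)$ avoids $\pi$, hence is contained in some maximal $\pi$-avoiding subgroup $\Ga_i$ (after conjugation); one then checks this assignment $\vec e \mapsto$ (a conjugate of some $\Ga_i$) is accordant, because the compatibility across edges sharing $r-1$ vertices is built into how $\tc$ propagates along a single tight step. For the reverse direction (colorable $\Rightarrow$ hom-free): suppose $E(G)$ has an accordant coloring by $\Delbf_\pi$ and, for contradiction, there is a homomorphism $\phi \colon C_\ell^{(r)} \to G$ with $\ell \equiv k \pmod r$. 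Tracking the coloring around the image of the cycle: each tight step of $C_\ell^{(r)}$ corresponds to a single-vertex-swap in $G$, so accordancy pins down how the local pictogram (equivalently, the group label) transforms; after going all the way around $\ell$ steps and returning to the start, the net transformation is conjugation/multiplication by $\cyc^\ell = \cyc^k = \pi$, forcing $\pi$ into the stabilizer of the pictogram at that edge — but that stabilizer is (a conjugate of) some $\Ga_i \in \mathfrak G_\pi$, which by definition avoids $\pi$. Contradiction. I would also need the routine reduction (via \cref{prop:hom-free-chain}) that it suffices to rule out cycles of length exactly $k$, or all lengths $\equiv k$, whichever the algebraic statement is phrased for — this is bookkeeping, not a real obstacle.

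The main obstacle I expect is making the ``accordancy $\Leftrightarrow$ well-defined $\tc$-labeling'' correspondence genuinely tight, especially handling the orbit/stabilizer bookkeeping: a pictogram with stabilizer $\Ga_i$ placed on an unordered edge really encodes a coset $\Ga_i \backslash S_r$ or an $S_r/\Ga_i$-torsor, and one must verify that a single-vertex swap between two edges acts on these torsors in a way that is independent of all the choices (labeling of the shared $(r-1)$-set, representative permutation, etc.). Getting the group action conventions to line up so that ``the net holonomy around a $C_\ell^{(r)}$ is $\cyc^\ell$'' is literally true — rather than true up to an unbounded ambiguity — is the delicate point; once that is nailed down, both implications are essentially the statement that holonomy lands in a stabilizer iff no forbidden closed walk exists, exactly as in the $r=2$ case where the ``holonomy'' is the parity of a closed walk and the ``stabilizer'' is the trivial/sign data of a proper $2$-coloring. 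Everything else — that $\mathfrak G_\pi$ partitions into finitely many conjugacy classes $[\Ga_1],\dots,[\Ga_m]$, that maximal $\pi$-avoiding subgroups exist by finiteness of $S_r$, that the $t$-blowup reductions go through — is either finite group theory or already supplied by the preliminaries.
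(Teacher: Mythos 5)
Your proposal follows essentially the same route as the paper: the paper also reduces \cref{thm:visual-applicable} to its algebraic reformulation \cref{thm:alg-provable}, shows that an accordant $A_\pi$-coloring exists iff every $\tc(\x)$ avoids $\pi$ (\cref{prop:accordant-implies-stab}, \cref{prop:stab-implies-accordant}, \cref{cor:accordant-stab}), and then proves via \cref{prop:findoddcycle} that a tight walk of stretch $k$ modulo $r$ from $\x$ back to itself corresponds exactly to $\cyc^k\in\tc(\x)$, which is your holonomy observation. The ``delicate point'' you flag about well-definedness under single-vertex swaps is precisely what the paper's \cref{prop:stab-implies-accordant} resolves by fixing a representative in each $\sim$-equivalence class of $\Edir(G)$ and defining the coloring by cosets; the plan is sound and matches the paper's structure.
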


\subsection{Formalizing the pictogram perspective}\label{sec:algebraic-oriented-colorings}

Although oriented colorings as defined in the prior section (which we term the \emph{pictorial} definition) are comparatively easy to think about, it is challenging to discuss them formally. In this subsection, we formalize the pictorial perspective by viewing an oriented coloring as an equivariant map between two \emph{$S_r$-sets}, i.e., sets acted on by the symmetric group $S_r$. We then derive an equivalent formulation of \cref{thm:visual-applicable} (this is \cref{thm:alg-provable} below) which completely avoids reference to pictograms --- 
ultimately we will prove this non-pictorial formulation in \cref{sec:prove-coloring-result}.

Fix a uniformity $r$ and let $G$ be an $r$-graph. An \emph{oriented edge} of $G$ is an (ordered) $r$-tuple $\x=x_1\cdots x_r$ whose support $\{x_1,\dots,x_r\}$ is an edge of $G$. We denote by $\Edir(G)$ the set of oriented edges of $G$, which is naturally equipped with an $S_r$-action. Formally, the action of a permutation $\pi:[r]\to[r]$ on an oriented edge $\x=x_1\cdots x_r$ is given by
\[
\pi(x_1\cdots x_r)=x_{\pi^{-1}(1)}\cdots x_{\pi^{-1}(r)};
\]
one may check that $\pi_2(\pi_1(\x))=(\pi_2\circ\pi_1)(\x)$ for any $\pi_1,\pi_2\in S_r$.

Let $\Delbf=\{\Del_1,\ldots,\Del_m\}$ be a set of $r$-vertex pictograms. Let $A_\Delbf$ be the set of all rotations/reflections of any pictogram $\Del_i$. Using the 3-uniform example $\Delbf=\left\{\pointedtriangle,\pinwheel\right\}$, the resulting $S_3$-set would be $A_\Delbf=\left\{\pointedtriangle,\pointedtriangleleft,\pointedtriangleright,\pinwheel,\pinwheelrev\right\}$. Each $\pi\in S_r$ acts on the $r$-vertex simplex by permuting its vertices: $\pi$ sends the $i$th vertex of the simplex to the $\pi(i)$th vertex of the simplex. When $r=3$, for example, we may order the 3 vertices of the triangle as $\plaintrianglelabeled 123{}$, in which case each $\pi\in S_3$ acts on any triangle pictogram by sending the vertex labeled $i$ to the vertex labeled $\pi(i)$.

We claim that an oriented coloring of $E(G)$ by a set $\Delbf$ of pictograms carries the same data as an \emph{$S_r$-equivariant} map $\chi:\Edir(G)\to A_\Delbf$, i.e., a map $\chi$ satisfying $\chi(\pi(\x))=\pi(\chi(\x))$ for all $\x\in\Edir(G)$ and $\pi\in S_r$. Given a pictorial oriented coloring of $E(G)$, construct $\chi:\Edir(G)\to A_\Delbf$ by defining $\chi(x_1\cdots x_r)$ to be the pictogram placed on the simplex $x_1\cdots x_r$, where the simplex is oriented such that $x_i$ is placed at the $i$th vertex of the simplex. Continuing with our 3-uniform example, if a simplex $x_1x_2x_3$ is colored as $\pointedtrianglelabeled{x_1}{x_2}{x_3}$, then we set $\chi(x_1x_2x_3)=\pointedtriangle$ and $\chi(x_2x_1x_3)=\pointedtriangleleft$.

The fact that $\chi(\pi(x_1\cdots x_r))$ must be an appropriate rotation/reflection of $\chi(x_1\cdots x_r)$ for any $\pi\in S_r$ is equivalent to the $S_r$-equivariance of $\chi$. Indeed, $\chi(\pi(x_1\cdots x_r))=\chi(x_{\pi^{-1}(1)}\cdots x_{\pi^{-1}(r)})$ maps $x_1\cdots x_r$ to the pictogram placed on the simplex $x_1\cdots x_r$ oriented such that $x_{\pi^{-1}(i)}$ is the $i$th vertex for each $i$. Furthermore, $\pi(\chi(x_1\cdots x_r))$ maps $x_1\cdots x_r$ to the pictogram placed on the simplex $x_1\cdots x_r$ oriented with $x_i$ at the $i$th vertex, and then rotates/reflects this pictogram by $\pi$. The result is the pictogram placed on the simplex $x_1\cdots x_r$ oriented such that $x_i$ is the $\pi(i)$th vertex for each $i$ --- equivalently, $x_{\pi^{-1}(j)}$ is the $j$th vertex for each $j$. Thus, $\chi(\pi(\x))=\pi(\chi(\x))$ for each oriented edge $\x=x_1\cdots x_r\in\Edir(G)$, i.e., $\chi$ is $S_r$-equivariant.

Lastly, observe that a pictorial oriented coloring of $E(G)$ is accordant if and only if, for any two oriented edges $\x=x_1\cdots x_r$ and $\x'=x_1\cdots x_{i-1}x'_ix_{i+1}\cdots x_r$ in $\Edir(G)$ differing at exactly one vertex, the corresponding $S_r$-equivariant map $\chi$ satisfies $\chi(\x)=\chi(\x')$. Thus, we have derived a symbolic definition of accordant oriented colorings.

\begin{definition}\label{def:ori-coloring-formal}
Let $G$ be an $r$-graph and let $A$ be an $S_r$-set. An \emph{oriented coloring} of $\Edir(G)$ by the set $A$, also called an \emph{$A$-coloring}, is an $S_r$-equivariant map $\chi:\Edir(G)\to A$. We say $\chi$ is \emph{accordant} if, for any two oriented edges $\x=x_1\cdots x_r$ and $\x'=x_1\cdots x_{i-1}x'_ix_{i+1}\cdots x_r$ in $\Edir(G)$ differing in exactly one coordinate, it holds that $\chi(\x)=\chi(\x')$.
\end{definition}

Using oriented colorings in the sense of \cref{def:ori-coloring-formal}, \cref{thm:visual-applicable} states that an $r$-graph is $\C kr$-free if and only if there is an accordant $A_{\Delbf_\pi}$-coloring of $\Edir(G)$, where $\pi=\cyc^k\in S_r$ and $\Delbf_\pi$ is the set defined in \cref{sec:pictorial-oriented-colorings}.
To formulate a result that completely avoids reference to pictograms, we define the following $S_r$-set $A_\pi$, which we show is isomorphic (as an $S_r$-set) to $A_{\Delbf_\pi}$.

 Given a permutation $\pi\in S_r$, recall that $\mathfrak G_\pi$ is defined to be the set of maximal $\pi$-conjugate avoiding subgroups of $S_r$. Recall that $\mathfrak G_\pi$ may be partitioned as $\bigsqcup_{i=1}^m[\Ga_i]$, where $\Ga_1,\ldots,\Ga_m$ are subgroups of $S_r$ and $[\Ga_i]$ denotes the family of conjugates of $\Ga_i$ in $S_r$. Set $A_\pi=\bigsqcup_{i=1}^m(S_r/\Ga_i)$, where $S_r/\Ga_i$ denotes the set of left cosets of $\Ga_i$ acted on by left multiplication by $S_r$.
 
 Recall that $\Delbf_\pi=\{\Del_1,\ldots,\Del_m\}$ is a set of pictograms with each pictogram $\Del_i$ stabilized by $\Ga_i$. Hence the orbit of $\Del_i$ under rotations/reflections from $S_r$ is isomorphic to the $S_r$-set $S_r/\stab(\Del_i)=S_r/\Ga_i$. (This isomorphism can be constructed by following the proof of the orbit--stabilizer theorem.) As $A_{\Delbf_\pi}$ is the disjoint union of these orbits, it follows that $A_{\Delbf_\pi}$ is isomorphic to $A_\pi$. Thus, we have derived the following equivalent formulation of \cref{thm:visual-applicable}.

\begin{theorem}[reformulation of \cref{thm:visual-applicable}]
\label{thm:alg-provable}
Fix a uniformity $r\geq 2$ and a residue $k$ modulo $r$. Let $\cyc=(1\,2\,\cdots\,r)\in S_r$ denote the cyclic shift permutation, and set $\pi=\cyc^k$. An $r$-graph $G$ is $\C kr$-hom-free if and only if there is an accordant $A_\pi$-coloring $\chi:\Edir(G)\to A_\pi$, where $A_\pi$ is the set defined above.
\end{theorem}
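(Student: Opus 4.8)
The plan is to prove both directions by translating the combinatorial statement "$G$ contains a homomorphic copy of a tight cycle in $\C kr$" into the algebraic statement "$\Edir(G)$ admits no accordant $A_\pi$-coloring." The central bookkeeping device is the notion of a \emph{tight walk}: a sequence of oriented edges $\x_0,\x_1,\ldots,\x_t$ in $\Edir(G)$ where each $\x_{j+1}$ is obtained from $\x_j$ by dropping the first vertex, shifting the remaining $r-1$ vertices forward, and appending a new vertex $y$ so that the support of $\x_{j+1}$ lies in $E(G)$. A tight walk of length $\ell$ that returns to its starting oriented edge, i.e.\ a \emph{tight closed walk}, is essentially the same data as a homomorphic copy of $C^{(r)}_\ell$ (this should already be implicit in the paper's framework; I would spell out the equivalence explicitly). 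The key observation is that "advancing one step of a tight walk" is the composite of the cyclic shift $\cyc \in S_r$ (which rotates the tuple) together with a choice of new last coordinate; so a tight \emph{closed} walk of length $\ell$ corresponds to the condition that $\cyc^\ell$ acts on the relevant oriented edge by an element compatible with that walk — and reducing modulo the period, a closed walk of length $\equiv k \pmod r$ forces an appearance of (a conjugate of) $\pi = \cyc^k$.

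For the forward direction — an accordant $A_\pi$-coloring exists $\Rightarrow$ $G$ is $\C kr$-hom-free — I would argue by contradiction. Suppose $\chi:\Edir(G)\to A_\pi$ is accordant and there is a homomorphism $C^{(r)}_\ell \to G$ with $\ell \equiv k \pmod r$. Pull the coloring back along the associated tight closed walk. Accordance means $\chi$ is constant as we pass between oriented edges differing in one coordinate; walking around the cycle and tracking how $\chi$ changes under the cyclic shifts, I would show the stabilizer (in $S_r$) of the value $\chi(\x) \in A_\pi$ must contain a conjugate of $\cyc^\ell$, and since $\cyc^\ell$ is conjugate (indeed equal up to the relevant reduction) to $\cyc^k = \pi$ when $\ell\equiv k$, this stabilizer contains a conjugate of $\pi$. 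But every point stabilizer in $A_\pi = \bigsqcup_i S_r/\Ga_i$ is conjugate to some $\Ga_i$, which by construction is a maximal $\pi$-avoiding subgroup and hence contains no conjugate of $\pi$ — contradiction. (One must be slightly careful that $\ell$ need not be exactly $k$; here \cref{prop:hom-free-chain} or a direct divisibility argument handles the passage between $C^{(r)}_\ell$ for various $\ell\equiv k$, reducing everything to the residue.)

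For the reverse direction — $G$ is $\C kr$-hom-free $\Rightarrow$ an accordant $A_\pi$-coloring exists — I would construct $\chi$ tight-component by tight-component. Fix a tight component, pick a base oriented edge $\x_0$, and attempt to define $\chi$ by following tight walks: for any oriented edge $\x$ reachable from $\x_0$, a tight walk from $\x_0$ to $\x$ records an element of $S_r$ (the accumulated shift data), and we want to set $\chi(\x)$ to be the corresponding coset. This is well-defined precisely when any two tight walks between the same endpoints differ by an element that must fix the chosen coset — i.e.\ the "holonomy group" of tight closed walks at $\x_0$ must lie in the stabilizer $\Ga_i$. Hom-freeness tells us this holonomy group avoids $\pi$ (no tight closed walk has length $\equiv k \bmod r$); we then choose $\Ga_i$ to be a maximal $\pi$-avoiding subgroup \emph{containing} this holonomy group, which exists by maximality, and assign that orbit $S_r/\Ga_i$ to this component. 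Extending $S_r$-equivariantly over the whole $S_r$-orbit of each oriented edge, and checking accordance (which holds by construction, since stepping between one-coordinate-different edges corresponds to a trivial shift contribution), finishes the construction.

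The main obstacle I expect is making the "holonomy / tight-walk monodromy" argument precise: one has to pin down exactly what group element a tight walk accumulates, verify that one-coordinate moves versus full cyclic-shift moves contribute correctly (so that accordance and equivariance are simultaneously satisfiable), and confirm that "closed walk of length $\ell$" contributes $\cyc^\ell$ up to conjugacy rather than something subtler. There is also a subtlety in that a tight walk can revisit vertices and the "component" structure on $\Edir(G)$ under the combined $S_r$-action and tight-walk moves needs to be set up carefully so that equivariance is automatic once $\chi$ is defined on one representative per component. Once that combinatorial-group-theoretic dictionary is in place, both directions are short; the conjugacy-class partition $\mathfrak G_\pi = \bigsqcup [\Ga_i]$ and the maximality of the $\Ga_i$ are exactly what is needed to make the two directions dovetail.
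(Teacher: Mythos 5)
Your proposal follows essentially the same route as the paper, with the two technical subtleties you flag being exactly the ones the paper's formulation is designed to handle. The paper's key device is the group $\tc(\x)$ of permutations $\sg$ for which $\x$ is reachable from $\sg(\x)$ by a sequence of \emph{single-coordinate replacements} (not cyclic shifts); this "holonomy" notion commutes with the $S_r$-action (\cref{prop:tc-properties}), so the well-definedness, accordance, and equivariance of the constructed coloring all follow from $\tc(\x_0)\subseteq\Ga_i$ in one stroke (\cref{prop:accordant-implies-stab,prop:stab-implements-accordant}). The translation between this and the "accumulated cyclic shift along a tight walk" picture you are using is packaged as a stand-alone proposition (\cref{prop:findoddcycle}). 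One caution about your formulation: the "holonomy group of tight \emph{closed} walks at $\x_0$" as you describe it naturally captures only the cyclic subgroup $\{\cyc^\ell:\text{a closed walk of stretch }\ell\text{ exists}\}$, which is in general strictly smaller than $\tc(\x_0)$; to make the equivariant extension work you actually need $\Ga_i\supseteq\tc(\x_0)$, which requires accounting for walks from $\x_0$ to $\sg(\x_0)$ for \emph{all} $\sg\in S_r$, not just $\sg=\id$. You gesture at this ("the component structure under the combined $S_r$-action and tight-walk moves"), and once it is made precise it collapses back onto $\tc(\x_0)$ — so the two approaches are ultimately the same, but the paper's one-coordinate-replacement definition avoids the mismatch between cyclic-shift walks and the $S_r$-action from the outset.
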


\subsection{Proof of \cref{thm:alg-provable}}\label{sec:prove-coloring-result}
Let $G$ be an $r$-uniform hypergraph and fix oriented edges $\x,\y\in\Edir(G)$. We say $\x$ is \emph{tightly connected} to $\y$ if there is a sequence of oriented edges
\[
\x=\z^{(0)},\,\z^{(1)},\,\ldots,\,\z^{(s)}=\y
\]
of $G$ such that, for all $0\leq i<s$, the $r$-tuples $\z^{(i)}=z_1^{(i)}\cdots z_r^{(i)}$ and $\z^{(i+1)}=z^{(i+1)}_1\cdots z_r^{(i+1)}$ differ on at most one coordinate; equivalently $z^{(i)}_j=z^{(i-1)}_j$ for at least $r-1$ indices $j\in[r]$. Define
\[
\tc(\x)=\{\pi\in S_r:\x\text{ is tightly connected to }\pi(\x)\}.
\]
Our proof of \cref{thm:alg-provable} hinges on analyzing the sets $\tc(\x)$. 

It is clear that tight connectedness forms an equivalence relation on $\Edir(G)$. We state several more elementary properties of tight connectedness and the sets $\tc(\x)$ that will prove useful throughout this subsection.

\begin{proposition}\label{prop:tc-properties}
Let $G$ be an $r$-graph and fix $\x,\y\in\Edir(G)$.
\begin{enumerate}
	\item If $\x$ is tightly connected to $\y$, then $\pi(\x)$ is tightly connected to $\pi(\y)$ for all $\pi\in S_r$.
	\item The set $\tc(\x)$ is a subgroup of $S_r$.
	\item It holds that $\tc(\sg(\x))=\sg\tc(\x)\sg^{-1}$ for all $\sg\in S_r$.
\end{enumerate}
\end{proposition}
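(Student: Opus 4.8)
The plan is to prove part (1) directly from the definition of the $S_r$-action on $\Edir(G)$, and then to deduce parts (2) and (3) as formal consequences of (1) together with the fact — noted just above the proposition — that tight connectedness is an equivalence relation on $\Edir(G)$.

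\emph{Part (1).} Suppose $\x = \z^{(0)}, \z^{(1)}, \ldots, \z^{(s)} = \y$ is a sequence witnessing that $\x$ is tightly connected to $\y$. I would show that $\pi(\z^{(0)}), \pi(\z^{(1)}), \ldots, \pi(\z^{(s)})$ witnesses that $\pi(\x)$ is tightly connected to $\pi(\y)$. Each $\pi(\z^{(i)})$ is again an oriented edge of $G$, since reordering the coordinates of a tuple does not change its support. Moreover, if $\z^{(i)}$ and $\z^{(i+1)}$ agree in every coordinate except possibly the $j$-th, then, under the convention $\pi(z_1\cdots z_r) = z_{\pi^{-1}(1)}\cdots z_{\pi^{-1}(r)}$, the tuples $\pi(\z^{(i)})$ and $\pi(\z^{(i+1)})$ agree in every coordinate $k$ with $\pi^{-1}(k)\neq j$, i.e.\ in every coordinate except possibly $k=\pi(j)$. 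So consecutive terms of the new sequence again differ in at most one coordinate, as required.

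\emph{Part (2).} The identity permutation lies in $\tc(\x)$ via the length-$0$ walk $\x$. For closure under the group operation, let $\pi_1,\pi_2\in\tc(\x)$. Applying part (1) with the permutation $\pi_1$ to the tight connection from $\x$ to $\pi_2(\x)$ shows that $\pi_1(\x)$ is tightly connected to $\pi_1(\pi_2(\x)) = (\pi_1\pi_2)(\x)$; chaining this with the tight connection from $\x$ to $\pi_1(\x)$ and using transitivity gives $\pi_1\pi_2\in\tc(\x)$. Since $S_r$ is finite, containing the identity and being closed under composition already forces $\tc(\x)$ to be a subgroup; alternatively, one checks directly that if $\pi\in\tc(\x)$ then $\pi^{-1}\in\tc(\x)$, by applying part (1) with $\pi^{-1}$ to the (symmetric) tight connection from $\pi(\x)$ to $\x$.

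\emph{Part (3).} Fix $\sg,\pi\in S_r$. Applying part (1) once with the permutation $\sg^{-1}$ and once with $\sg$ shows that the statements ``$\sg(\x)$ is tightly connected to $\pi(\sg(\x))$'' and ``$\x$ is tightly connected to $(\sg^{-1}\pi\sg)(\x)$'' are equivalent, using $\pi(\sg(\x)) = (\pi\sg)(\x)$ and $\sg^{-1}((\pi\sg)(\x)) = (\sg^{-1}\pi\sg)(\x)$. The first statement says $\pi\in\tc(\sg(\x))$ and the second says $\sg^{-1}\pi\sg\in\tc(\x)$, i.e.\ $\pi\in\sg\,\tc(\x)\,\sg^{-1}$; hence the two subgroups coincide. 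The argument is essentially formal, and the only step that warrants care is the index bookkeeping in part (1) — verifying that a single-coordinate discrepancy at position $j$ is transported by $\pi$ to a single-coordinate discrepancy at position $\pi(j)$, rather than being spread across several coordinates — after which parts (2) and (3) are routine manipulations with the equivalence relation.
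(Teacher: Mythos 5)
Your proposal is correct and follows essentially the same route as the paper's own proof: prove (1) directly by pushing the connecting sequence through the $S_r$-action, then derive (2) and (3) as formal consequences of (1) and the equivalence-relation properties. The only differences are cosmetic — you spell out the index bookkeeping in (1) (that a discrepancy at coordinate $j$ moves to coordinate $\pi(j)$) where the paper just asserts it, and you phrase (3) as a single biconditional rather than two containments.
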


\begin{proof}
(1) Suppose $\x$ is tightly connected to $\y$ via the sequence
\[
\x=\z^{(0)},\,\z^{(1)},\,\ldots,\,\z^{(s)}=\y,
\]
where $\z^{(i-1)}$ and $\z^{(i)}$ differ on at most coordinate for each $1\leq i\leq s$.
Then $\pi(\z^{(i-1)})$ and $\pi(\z^{(i)})$ also differ on at most one coordinate, so the sequence
\[
\pi(\x)=\pi(\z^{(0)}),\,\pi(\z^{(1)}),\,\ldots,\,\pi(\z^{(s)})=\pi(\y)
\]
proves that $\pi(\x)$ is tightly connected to $\pi(\y)$.

(2) We check that $\tc(\x)$ contains the identity and is closed under inverses and composition. $\x$ is tightly connected to itself via a length-1 sequence $\x=\z^{(0)}=\x$ so $\id\in\tc(\x)$. If $\pi\in\tc(\x)$, i.e., $\x$ is tightly connected to $\pi(\x)$, then $\pi^{-1}(\x)$ is tightly connected to $\x$ by (1). It follows that $\pi^{-1}\in\tc(\x)$. Lastly, suppose $\pi,\pi'\in\tc(\x)$. Because $\x$ is tightly connected to $\pi'(\x)$, it follows by (1) that $\pi(\x)$ is tightly connected to $\pi\pi'(\x)$. Concatenating the sequences of oriented edges tightly connecting $\x$ to $\pi(\x)$ and $\pi(\x)$ to $\pi\pi'(\x)$, we conclude that $\x$ is tightly connected to $\pi\pi'(\x)$, and hence that $\pi\pi'(\x)\in\tc(\x)$. Thus, $\tc(\x)$ contains the identity and is closed under inverses and composition, so $\tc(\x)$ is a subgroup of $S_r$.

(3) Fix $\sg\in S_r$. If $\pi\in\tc(\x)$, i.e., $\x$ is tightly connected to $\pi(\x)$, then (1) implies that $\sg(\x)$ is tightly connected to $\sg\pi(\x)=\sg\pi\sg^{-1}(\sg(\x))$, so $\sg\tc(\x)\sg^{-1}\subseteq\tc(\sg(\x))$. The analogous argument applied to $\sg'=\sg^{-1}$ and $\x'=\sg(\x)$ shows that $\sg^{-1}\tc(\sg(\x))\sg\subseteq\tc(\x)$; equivalently, $\tc(\sg(\x))\subseteq\sg\tc(\x)\sg^{-1}$.
\end{proof}

The next two propositions combine to show that an $r$-graph $G$ admits an accordant $A_\pi$-coloring of $\Edir(G)$ if and only if the groups $\tc(\x)$ avoid $\pi$-conjugates for all $\x\in\Edir(G)$ --- this result is stated in \cref{cor:accordant-stab} below. \cref{prop:accordant-implies-stab} implies that each group $\tc(\x)$ must avoid $\pi$-conjugates if there is an accordant $A_\pi$-coloring of $\Edir(G)$ and \cref{prop:stab-implies-accordant} shows that there is an $A_\pi$-coloring of $\Edir(G)$ if the groups $\tc(\x)$ avoid $\pi$-conjugates.

\begin{proposition}\label{prop:accordant-implies-stab}
Fix a uniformity $r$. Let $G$ be an $r$-graph and let $\chi$ be an accordant oriented coloring of $\Edir(G)$ by an $S_r$-set $A$. We have $\tc(\x)\subseteq\stab(\chi(\x))$ for each $\x\in\Edir(G)$.
\end{proposition}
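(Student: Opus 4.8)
The plan is to reduce the claimed containment to a statement purely about the coloring $\chi$ along a tight-connection sequence, using only two facts: the $S_r$-equivariance of $\chi$ and its accordance. Fix $\x\in\Edir(G)$ and let $\pi\in\tc(\x)$; I must show $\pi\in\stab(\chi(\x))$, i.e.\ $\pi(\chi(\x))=\chi(\x)$. The first step is to move the permutation onto the argument of $\chi$: by equivariance, $\pi(\chi(\x))=\chi(\pi(\x))$, so it suffices to prove $\chi(\pi(\x))=\chi(\x)$.

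The second step unpacks membership in $\tc(\x)$: since $\pi\in\tc(\x)$, the oriented edge $\x$ is tightly connected to $\pi(\x)$, so there is a sequence
\[
\x=\z^{(0)},\,\z^{(1)},\,\ldots,\,\z^{(s)}=\pi(\x)
\]
of oriented edges of $G$ in which $\z^{(i-1)}$ and $\z^{(i)}$ agree on at least $r-1$ coordinates for every $1\le i\le s$. The third step is to walk along this sequence. For each $i$, either $\z^{(i-1)}=\z^{(i)}$, in which case $\chi(\z^{(i-1)})=\chi(\z^{(i)})$ trivially, or $\z^{(i-1)}$ and $\z^{(i)}$ differ in exactly one coordinate, in which case accordance (\cref{def:ori-coloring-formal}) gives $\chi(\z^{(i-1)})=\chi(\z^{(i)})$. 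Either way the colors agree across each step, so telescoping yields $\chi(\x)=\chi(\z^{(0)})=\chi(\z^{(s)})=\chi(\pi(\x))$. Combined with the first step, this gives $\pi(\chi(\x))=\chi(\x)$, as desired.

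There is essentially no serious obstacle here: the argument is a direct chain, and the only point requiring a moment's care is the degenerate case in which consecutive tuples in the tight-connection sequence coincide (the definition of accordance is phrased for tuples differing in \emph{exactly} one coordinate), which is handled by the trivial observation above. One should also be careful to apply equivariance in the correct direction, $\chi(\pi(\x))=\pi(\chi(\x))$, rather than its inverse. I would write the proof in essentially the three-step form above.
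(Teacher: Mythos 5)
Your proof is correct and follows essentially the same route as the paper's: unpack $\pi\in\tc(\x)$ into a tight-connection sequence, use accordance to telescope the colors along it, and finish with equivariance. The only difference is that you explicitly note the degenerate case of consecutive tuples coinciding, a small pedantic point the paper elides.
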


\begin{proof}
Suppose $\sg\in\tc(\x)$. Then, there is a sequence
\[
\x=\x^{(0)},\,\x^{(1)},\,\ldots,\,\x^{(s)}=\sg(\x)
\]
of oriented edges $\x^{(i)}\in\Edir(G)$ such that $\x^{(i-1)}$ and $\x^{(i)}$ differ on at most one coordinate for each $1\leq i\leq s$. Because $\chi$ is accordant, it holds that $\chi(\x^{(0)})=\chi(\x^{(1)})=\dots=\chi(\x^{(s)})$, i.e., that $\chi(\x)=\chi(\sg(\x))$. Because $\chi$ is $S_r$-equivariant, it follows that $\chi(\x)=\sg(\chi(\x))$, so $\sg\in\stab(\chi(\x))$.
\end{proof}

\begin{proposition}\label{prop:stab-implies-accordant}
Fix a uniformity $r$ and let $G$ be an $r$-graph. Let $\Ga_1,\ldots,\Ga_m$ be subgroups of $S_r$ such that, for each $\x\in\Edir(G)$, the group $\tc(\x)$ is contained in a conjugate $\sg\Ga_i\sg^{-1}$ of one of the groups $\Ga_i$.
Let $A=\bigsqcup_{i=1}^m(S_r/\Ga_i)$ be the set of left cosets of the subgroups $\Ga_i$ acted on by left multiplication. There is an accordant $A$-coloring $\chi$ of $\Edir(G)$.
\end{proposition}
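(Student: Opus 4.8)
The plan is to observe that an accordant $A$-coloring of $\Edir(G)$ is exactly the data of an $S_r$-equivariant map out of the quotient $S_r$-set of tight-connectivity classes, and then to build such a map one $S_r$-orbit at a time using the standard fact that a containment $H\subseteq gKg^{-1}$ yields an equivariant map $S_r/H\to S_r/K$.

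First I would set up the quotient. Since tight connectedness is an equivalence relation on $\Edir(G)$, let $\mathcal O$ be its set of classes; by \cref{prop:tc-properties}(1) the $S_r$-action on $\Edir(G)$ descends to $\mathcal O$, and the quotient map $q:\Edir(G)\to\mathcal O$ is $S_r$-equivariant. An $A$-coloring $\chi$ is accordant if and only if it is constant on each class: two oriented edges differing in one coordinate are tightly connected, and conversely accordance propagates along any tight-connecting sequence (consecutive terms differ in at most one coordinate, so $\chi$ agrees on them). Hence $\chi\mapsto\bar\chi$ (where $\chi=\bar\chi\circ q$) is a bijection between accordant $A$-colorings of $\Edir(G)$ and $S_r$-equivariant maps $\bar\chi:\mathcal O\to A$; equivariance transfers because $q$ is surjective and equivariant. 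It therefore suffices to produce one equivariant $\bar\chi:\mathcal O\to A$.

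Next I would compute stabilizers. For a class $O\in\mathcal O$ and any $\x\in O$, one has $\stab(O)=\tc(\x)$: indeed $\pi O=O$ iff $\pi(\x)$ lies in $O$ iff $\x$ is tightly connected to $\pi(\x)$. Moreover $\tc(\x)$ does not depend on the choice of $\x\in O$: if $\x$ is tightly connected to $\y$, then for any $\pi$, \cref{prop:tc-properties}(1) and transitivity give $\x\sim\pi(\x)\iff\y\sim\pi(\y)$, so $\tc(\x)=\tc(\y)$. Decomposing $\mathcal O=\bigsqcup_j\mathcal O_j$ into $S_r$-orbits and choosing a class in each $\mathcal O_j$ with a representative oriented edge $\x_j$, we get $S_r$-set isomorphisms $\mathcal O_j\cong S_r/\tc(\x_j)$ (sending $\pi\tc(\x_j)$ to the class of $\pi(\x_j)$), and it is enough to give an equivariant map $S_r/\tc(\x_j)\to A$ for each $j$.

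Finally I would invoke the hypothesis. By assumption $\tc(\x_j)\subseteq\sg_j\Ga_{i_j}\sg_j^{-1}$ for some index $i_j$ and some $\sg_j\in S_r$. For subgroups $H\subseteq gKg^{-1}$ the assignment $aH\mapsto agK$ is a well-defined $S_r$-equivariant map $S_r/H\to S_r/K$: if $a^{-1}a'\in H\subseteq gKg^{-1}$ then $g^{-1}a^{-1}a'g\in K$, so $agK=a'gK$. Taking $H=\tc(\x_j)$, $K=\Ga_{i_j}$, $g=\sg_j$ gives an equivariant map $\mathcal O_j\cong S_r/\tc(\x_j)\to S_r/\Ga_{i_j}\subseteq A$; assembling these over all $j$ produces $\bar\chi:\mathcal O\to A$, and $\chi=\bar\chi\circ q$ is the desired accordant $A$-coloring. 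There is essentially no obstacle in this argument — all the content is in the correspondence between accordant colorings and equivariant maps on $\mathcal O$ — but the one point needing a little care is verifying that $\tc(\x)$ depends only on the tight-connectivity class of $\x$, so that $\stab(O)$ is unambiguously $\tc(\x)$ and the hypothesis can be applied class-wise.
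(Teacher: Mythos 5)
Your proof is correct and follows essentially the same strategy as the paper: both decompose the data by $S_r$-orbits of tight-connectivity classes and use the containment $\tc(\x_j)\subseteq\sg_j\Ga_{i_j}\sg_j^{-1}$ to send $\pi^{-1}\mapsto\pi^{-1}\sg_j\Ga_{i_j}$ on each piece (the paper's coarser classes $\Fdir_j$ are exactly the preimages under your quotient map $q$ of your orbits $\mathcal O_j$, and the paper's verification of well-definedness is the same computation as the well-definedness of your coset map $S_r/H\to S_r/K$). Your reorganization around the quotient $S_r$-set $\mathcal O$ is a cleaner packaging that also makes the required $S_r$-equivariance of $\chi$ automatic rather than implicit.
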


\begin{proof}
Write $\x\sim\y$ if $\x$ is tightly connected to $\pi(\y)$ for some $\pi\in S_r$. This is equivalent to $\y$ being tightly connected to $\pi^{-1}(\x)$, so $\sim$ defines an equivalence relation on $\Edir(G)$. Partition $\Edir(G)$ into equivalence classes $\Fdir_1\cup\cdots\cup\Fdir_t$ under $\sim$ and fix an element $\x_i\in\Fdir_i$ for each $i$.
We define an accordant oriented coloring $\chi:\Edir(G)\to A$ separately on each equivalence class $\Fdir_i$. 

Fix an integer $j\in[t]$. It holds that $\tc(\x_j)\subseteq\sg_j\Ga_{i_j}\sg_j^{-1}$ for some index $i_j\in[m]$ and permutation $\sg_j\in S_r$.
For each $\y\in\Fdir_j$, choose some permutation $\pi$ such that $\x_j$ is tightly connected to $\pi(\y)$, and set $\chi(\y)=\pi^{-1}\sg_j\Ga_{i_j}$.
We claim that any choice of $\pi$ yields the same value $\chi(\y)$. Indeed, suppose $\x_j$ is tightly connected to both $\pi_1(\y)$ and $\pi_2(\y)$. Then $\pi_1(\y)=\pi_1\pi_2^{-1}(\pi_2(\y))$ is also tightly connected to $\pi_1\pi_2^{-1}(\x_j)$, so $\pi_1\pi_2^{-1}\in\tc(\x_j)\subseteq\sg_j\Ga_{i_j}\sg_j^{-1}$. It follows that $\pi_1\pi_2^{-1}$ stabilizes $\sg_j\Ga_{i_j}$ so $\pi_2^{-1}\sg_j\Ga_{i_j}=\pi_1^{-1}\pi_1\pi_2^{-1}\sg_j\Ga_{i_j}=\pi_1^{-1}\sg_j\Ga_{i_j}$ as desired.

To see that $\chi$ is $S_r$-equivariant, fix $\tau\in S_r$. If $\x_j$ is tightly connected to $\pi(\y)=\pi\tau^{-1}(\tau(\y))$, then
\[
\chi(\tau(\y))=\left(\pi\tau^{-1}\right)^{-1}\sg_j\Ga_{i_j}=\tau\pi^{-1}\sg_j\Ga_{i_j}=\tau(\chi(\y)),
\]
so $\chi$ is $S_r$-equivariant.

Lastly, we verify that $\chi$ is accordant. Suppose that $\y=y_1\cdots y_r$ and $\y'=y_1\cdots y_{i-1}y'_iy_{i+1}\cdots y_r$ are oriented edges differing in exactly one coordinate. If $\y\in\Fdir_j$ and $\x_j$ is tightly connected to $\pi(\y)$, then $\x_j$ is also tightly connected to $\pi(\y')$, as $\y$ and $\y'$ are tightly connected to each other. It follows that $\chi(\y')=\pi^{-1}\sg_j\Ga_{i_j}=\chi(\y)$.
\end{proof}

Combining \cref{prop:accordant-implies-stab,prop:stab-implies-accordant}, one obtains the corollary alluded to earlier.

\begin{corollary}\label{cor:accordant-stab}
Fix a uniformity $r$ and a permutation $\pi\in S_r$. Let $A_\pi$ be the set defined in \cref{sec:algebraic-oriented-colorings}. Given an $r$-graph $G$, there is an accordant $A_\pi$-coloring of $\Edir(G)$ if and only if $\tc(\x)$ avoids $\pi$-conjugates for each $\x\in\Edir(G)$.
\end{corollary}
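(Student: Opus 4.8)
The plan is to derive the corollary directly from \cref{prop:accordant-implies-stab,prop:stab-implies-accordant}, bridging the two with a couple of routine group-theoretic observations. Recall that $\mathfrak G_\pi$ is partitioned into conjugacy classes $[\Ga_1],\dots,[\Ga_m]$ with each $\Ga_i$ a maximal $\pi$-avoiding subgroup, and $A_\pi=\bigsqcup_{i=1}^m(S_r/\Ga_i)$ with $S_r$ acting on each component $S_r/\Ga_i$ by left multiplication. First I would record two elementary facts. (i) Under left multiplication, the stabilizer of a coset $\sg\Ga_i\in S_r/\Ga_i$ is the conjugate subgroup $\sg\Ga_i\sg^{-1}$, since $\ta\sg\Ga_i=\sg\Ga_i$ if and only if $\sg^{-1}\ta\sg\in\Ga_i$. (ii) The property of avoiding $\pi$ is inherited by subgroups: if $H\leq H'$ and $H'$ avoids $\pi$, then any conjugate of $\pi$ contained in $H$ would also lie in $H'$, so $H$ avoids $\pi$ as well. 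Conversely, because $S_r$ is finite, every $\pi$-avoiding subgroup is contained in some \emph{maximal} $\pi$-avoiding subgroup (enlarge repeatedly; $\pi$-avoidance is preserved at each step and the process terminates), hence in some conjugate $\sg\Ga_i\sg^{-1}$ of a chosen representative $\Ga_i$.

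For the forward implication, I would take an accordant $A_\pi$-coloring $\chi\colon\Edir(G)\to A_\pi$ and fix $\x\in\Edir(G)$. \cref{prop:accordant-implies-stab} gives $\tc(\x)\subseteq\stab(\chi(\x))$. Writing $\chi(\x)$ as a coset $\sg\Ga_i$ lying in the appropriate component $S_r/\Ga_i$ of $A_\pi$, fact (i) identifies $\stab(\chi(\x))$ with $\sg\Ga_i\sg^{-1}$, which is a conjugate of the $\pi$-avoiding group $\Ga_i$ and therefore itself avoids $\pi$; then fact (ii) shows that its subgroup $\tc(\x)$ avoids $\pi$. For the reverse implication, I would assume $\tc(\x)$ avoids $\pi$ for every $\x\in\Edir(G)$; by \cref{prop:tc-properties}(2) each $\tc(\x)$ is a subgroup of $S_r$, so fact (ii) places each $\tc(\x)$ inside a conjugate of one of the $\Ga_i$. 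That is precisely the hypothesis of \cref{prop:stab-implies-accordant} applied with the subgroups $\Ga_1,\dots,\Ga_m$, whose conclusion is an accordant $\bigl(\bigsqcup_{i=1}^m S_r/\Ga_i\bigr)$-coloring of $\Edir(G)$, i.e.\ an accordant $A_\pi$-coloring.

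I do not expect a genuine obstacle: all the substantive work is already contained in the two propositions, and what remains is bookkeeping. The only points warranting care are matching the coset $\chi(\x)$ to the correct component $S_r/\Ga_i$ of $A_\pi$ before computing its stabilizer in the forward direction, and invoking finiteness of $S_r$ to guarantee that $\pi$-avoiding subgroups sit inside maximal ones in the reverse direction; both are immediate.
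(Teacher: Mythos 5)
Your proposal is correct and matches the paper's proof exactly: both directions run through \cref{prop:accordant-implies-stab} and \cref{prop:stab-implies-accordant} with the same identification of $\stab(\chi(\x))$ as the conjugate $\sg\Ga_i\sg^{-1}$ and the same use of maximality to place each $\tc(\x)$ inside some conjugate of a $\Ga_i$. The only difference is that you spell out the elementary bookkeeping (coset stabilizers, inheritance of $\pi$-avoidance, finiteness giving containment in a maximal subgroup) that the paper leaves implicit.
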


\begin{proof}
Recall that $A_\pi=\bigsqcup_{i=1}^m(S_r/\Ga_i)$	, where the maximal $\pi$-conjugate avoiding subgroups are exactly the conjugates of the groups $\Ga_i$.

First, suppose there is an accordant $A_\pi$-coloring $\chi:\Edir(G)\to A_\pi$. For each $\x\in\Edir(G)$, we have $\chi(\x)=\sg\Ga_i$ for some $\sg\in S_r$ and $i\in[m]$. It follows that $\stab(\chi(\x))=\sg\Ga_i\sg^{-1}$ avoids $\pi$-conjugates, and by \cref{prop:accordant-implies-stab}, $\tc(\x)\subseteq\stab(\chi(\x))$ avoids $\pi$-conjugates as well.

Conversely, suppose $\tc(\x)$ avoids $\pi$-conjugates for each $\x\in\Edir(G)$. It follows that each group $\tc(\x)$ is contained in one of the maximal $\pi$-conjugate avoiding subgroups, i.e., that $\tc(\x)\subseteq\sg\Ga_i\sg^{-1}$ for some $\sg\in S_r$ and $i\in[m]$. By \cref{prop:stab-implies-accordant}, there is an accordant $A_\pi$-coloring of $\Edir(G)$.
\end{proof}

To complete the proof of \cref{thm:alg-provable}, we show that an $r$-graph $G$ contains a homomorphic copy of some cycle in $\C kr$ if and only if $\tc(\x)$ contains $\cyc^k$ for some $\x\in\Edir(G)$. To streamline the proof, we introduce some new notation. Given an $r$-graph $G$, a \emph{tight walk of stretch $\ell$} is a sequence $v_1\cdots v_{\ell+r}$ of $r+\ell$ vertices (not necessarily distinct) such that $v_{i+1}\cdots v_{i+r}\in\Edir(G)$ for each $0\leq i\leq\ell$; we say this is a walk \emph{from} $v_1\cdots v_r$ \emph{to} $v_{\ell+1}\cdots v_{\ell+r}$. We acknowledge that such a walk is standardly considered to have \emph{length} $\ell+1$, as there are $\ell+1$ indices $i$ for which $v_{i+1}\cdots v_{i+r}$ is an edge; the terminology \emph{stretch} is used to forestall potential confusion.

Observe that a homomorphic copy of the tight cycle $C_\ell^{(r)}$ in an $r$-graph $G$ is equivalent to a tight walk $v_1\cdots v_\ell v_1\cdots v_r$ of stretch $\ell$ from some $r$-tuple $v_1\cdots v_r$ to itself. The following proposition gives an equivalent condition for the existence of such a walk.

\begin{proposition}\label{prop:findoddcycle}
Let $G$ be an $r$-graph of any uniformity $r$. Fix a residue $k$ modulo $r$ and two oriented edges $\x,\y\in\Edir(G)$. There is a tight walk from $\x$ to $\y$ of stretch $k$ modulo $r$ if and only if $\x$ is tightly connected to $\cyc^k(\y)$, where $\cyc=(1\,2\,\ldots\,r)\in S_r$ is the cyclic shift whose action on $r$-tuples is given by $\cyc(x_1\cdots x_r)=x_rx_1\cdots x_{r-1}$.
\end{proposition}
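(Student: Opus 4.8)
The plan is to prove both directions by an explicit translation between tight walks and the cyclic shift $\cyc$. The key observation driving everything is a single-step identity: if $z_1\cdots z_r \in \Edir(G)$ and $z_2 \cdots z_r z_{r+1} \in \Edir(G)$ (i.e.\ we extend a tight walk by one edge), then the oriented edge $z_2\cdots z_r z_{r+1}$ differs in at most one coordinate from $z_r z_1 z_2 \cdots z_{r-1} = \cyc(z_1\cdots z_r)$ only after we relabel --- more precisely, I want to track how appending edges to a tight walk composes with $\cyc$. Let me set this up carefully: a tight walk of stretch $\ell$ from $\x = x_1\cdots x_r$ visits oriented edges $\mathbf{e}_i := v_{i+1}\cdots v_{i+r}$ for $i = 0, \ldots, \ell$. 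I claim that for each $i$, the oriented edge $\mathbf{e}_i$ is tightly connected to $\cyc^{-i}(\mathbf{e}_0)$ after suitable bookkeeping --- actually the cleanest statement is that $\mathbf{e}_i$ and $\cyc^{i}(v_{i+1}\cdots v_{i+r})$-type reindexing lets us say: $\x$ is tightly connected to $\cyc^{-i}(\mathbf{e}_i)$ for each $i$. I will prove this by induction on $i$, the inductive step being exactly the single-edge observation that $v_{i+1}\cdots v_{i+r+1}$ deleted-and-shifted agrees with $v_i v_{i+1}\cdots v_{i+r-1} = $ (a cyclic rotation of $\mathbf{e}_i$) in $r-1$ coordinates.

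\textbf{Forward direction.} Suppose there is a tight walk from $\x$ to $\y$ of stretch $\ell$ with $\ell \equiv k \pmod r$. Applying the claim above with $i = \ell$, we get that $\x$ is tightly connected to $\cyc^{-\ell}(\mathbf{e}_\ell) = \cyc^{-\ell}(\y)$. Since $\cyc$ has order $r$ and $\ell \equiv k \pmod r$, we have $\cyc^{-\ell} = \cyc^{-k}$, but I actually want $\cyc^k(\y)$; here I just need to be careful about whether the convention gives $\cyc^{k}$ or $\cyc^{-k}$ and adjust the direction of the walk or the sign accordingly --- since tight connectedness is symmetric (Proposition~\ref{prop:tc-properties} and the remark that it is an equivalence relation) and $\tc(\x)$ is a group, $\x$ tightly connected to $\cyc^{-k}(\y)$ is equivalent to $\cyc^k(\x)$ tightly connected to $\y$, hence (running the walk in reverse, or conjugating) equivalent to $\x$ tightly connected to $\cyc^k(\y)$ once one matches conventions. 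The stretch modulo $r$ is exactly what survives because $\cyc^r = \id$.

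\textbf{Reverse direction.} Conversely, suppose $\x$ is tightly connected to $\cyc^k(\y)$. By definition there is a sequence of oriented edges $\x = \z^{(0)}, \z^{(1)}, \ldots, \z^{(s)} = \cyc^k(\y)$ with consecutive terms differing in at most one coordinate. I need to convert ``differs in one coordinate'' into ``append one edge to a tight walk.'' The point is that if $\z^{(j)} = z_1 \cdots z_r$ and $\z^{(j+1)}$ agrees with it except in coordinate $p$, then I can realize the transition $\z^{(j)} \rightsquigarrow \z^{(j+1)}$ by a tight walk of stretch $r$ (going ``around'' the edge): walk along $z_1 \cdots z_r$, then append vertices to cyclically rotate until the differing coordinate is at the end, swap it, then rotate back; each such maneuver uses a multiple of... hmm, more carefully: moving the ``active'' slot from position $p$ back to a standard position via tight steps costs a number of edge-appends that is $\equiv 0 \pmod r$ only if I return to the same rotation. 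So concatenating over all $s$ steps, plus the final rotation by $\cyc^k$ (which contributes stretch $\equiv k \pmod r$, since one application of $\cyc$ corresponds to stretch $1$), yields a closed-up tight walk from $\x$ to $\y$ of total stretch $\equiv k \pmod r$. The bookkeeping --- showing that each elementary tight-connectedness move contributes stretch $\equiv 0 \pmod r$, so that only the net rotation $\cyc^k$ matters --- is the technical heart.

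\textbf{Main obstacle.} The hard part will be the reverse direction: cleanly setting up the dictionary that a one-coordinate change of oriented edges corresponds to a tight walk of stretch divisible by $r$, and carefully tracking the cyclic rotation so that the total stretch is $\equiv k \pmod r$ rather than some other residue. I expect the right framing is to prove a lemma that for $\x, \y \in \Edir(G)$ and any $j \in \Z$, ``$\y$ is reachable from $\x$ by a tight walk of stretch exactly $j$'' holds iff ``$\x$ is tightly connected to $\cyc^{-j}(\y)$'' is \emph{false} as an ``exact'' statement but becomes true modulo $r$; the modular reduction is forced because a single coordinate-swap can always be padded out to stretch $r$ (walk once around the edge), and $\cyc^r = \id$ absorbs the padding. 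Once that lemma is in hand, Proposition~\ref{prop:findoddcycle} is immediate by taking $j \equiv k$, and the earlier observation that a homomorphic copy of $C_\ell^{(r)}$ is a stretch-$\ell$ tight walk from an $r$-tuple to itself then closes the loop with \cref{thm:alg-provable}.
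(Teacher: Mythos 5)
Your overall route is essentially the paper's: on the ``walk exists implies tightly connected'' side, induct along the walk, tracking how the current edge relates to the start via a power of $\cyc$; on the other side, pad each one-coordinate change out to a full stretch-$r$ excursion (the observation that $\z\z'$ is a tight walk of stretch $r$ when $\z,\z'$ differ in one coordinate), which you correctly identify as the mechanism. The paper packages the same work a bit more cleanly: it first writes down explicit tight walks of stretches $r-k$ and $k$ between $\y$ and $\y':=\cyc^k(\y)$, uses these to reduce the whole proposition to the statement ``$\x$ is tightly connected to $\y'$ iff there is a tight walk from $\x$ to $\y'$ of stretch $0\pmod r$,'' and only then proves that statement directly. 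This reduction buys you something real --- it removes all rotation bookkeeping from the induction --- and is worth adopting.

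There is, however, a concrete sign error in your inductive claim, and the proposed repair for it does not work. The correct claim is that $\x=\mathbf{e}_0$ is tightly connected to $\cyc^{+i}(\mathbf{e}_i)$, not $\cyc^{-i}(\mathbf{e}_i)$: with $\mathbf{e}_i=v_{i+1}\cdots v_{i+r}$, one has $\cyc(\mathbf{e}_{i+1})=v_{i+r+1}\,v_{i+2}\cdots v_{i+r}$, which differs from $\mathbf{e}_i=v_{i+1}\,v_{i+2}\cdots v_{i+r}$ in the first coordinate only; applying \cref{prop:tc-properties}(1) with $\cyc^i$ and using transitivity gives $\x$ tightly connected to $\cyc^i(\mathbf{e}_i)$. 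With this sign you land directly on $\cyc^\ell(\y)=\cyc^k(\y)$ and there is nothing to fix. Your proposed workaround --- that ``$\x$ tightly connected to $\cyc^{-k}(\y)$'' is equivalent, by symmetry and conjugation, to ``$\x$ tightly connected to $\cyc^{k}(\y)$'' --- is not valid. \cref{prop:tc-properties}(1) only lets you apply the \emph{same} permutation to both sides of a tight-connectedness relation, so it converts ``$\x\sim\cyc^{-k}(\y)$'' into ``$\cyc^{k}(\x)\sim\y$,'' which is the same statement again, not the one you want; and the symmetry of $\sim$ just flips the two sides. When $2k\not\equiv 0\pmod r$ the conditions ``$\x\sim\cyc^{-k}(\y)$'' and ``$\x\sim\cyc^{k}(\y)$'' are genuinely different (they would only coincide if $\cyc^{2k}\in\tc(\x)$, which is extra information you do not have). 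So the sign must be gotten right in the induction itself; it cannot be repaired afterward by an appeal to symmetry.
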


\begin{proof}
Write $\y=y_1\cdots y_r$ and set $\y'=\cyc^k(\y)=y_{r-k+1}\cdots y_ry_1\cdots y_{r-k}$. Observe that there are tight walks $y_1\cdots y_ry_1\cdots y_{r-k}$ from $\y$ to $\y'$ and $y_{r-k+1}\cdots y_ry_1\cdots y_r$ from $\y'$ to $\y$ of stretches $r-k$ and $k$, respectively.

We claim that there is a tight walk from $\x$ to $\y$ of stretch $k$ modulo $r$ if and only if there is a tight walk from $\x$ to $\y'$ of stretch 0 mod $r$. Indeed, concatenating a walk from $\x$ to $\y$ with the walk from $\y$ to $\y'$, or conversely concatenating a walk from $\x$ to $\y'$ with the walk from $\y'$ to $\y$, lets us transform walks from $\x$ to $\y$ of stretch $k$ modulo $r$ into walks from $\x$ to $\y'$ of stretch 0 modulo $r$, and vice versa.
Thus, it suffices to show that $\x$ is tightly connected to $\y'$ if and only if there is a tight walk from $\x$ to $\y'$ of stretch 0 modulo $r$.

Suppose $\x$ is tightly connected to $\y'$. Then, there is a sequence
\[
\x=\z^{(0)},\,\z^{(1)},\,\ldots,\,\z^{(s)}=\y'
\]
of oriented edges $\z^{(i)}\in\Edir(G)$ such that $\z^{(i-1)}$ and $\z^{(i)}$ differ on at most one coordinate for each $1\leq i\leq s$. Observe that if $\z=z_1\cdots z_r$ and $\z'=z_1\cdots z_{j-1}z'_jz_{j+1}\cdots z_r$ are oriented edges differing on the $j$th coordinate, then the sequence $\z\z'=z_1\cdots z_rz_1\cdots z_{j-1}z'_jz_{j+1}\cdots z_r$ is a tight walk in $G$ --- the first $j$ length-$r$ subsequences of $\z\z'$ are cyclic shifts of $\z$, and the remaining $r-j+1$ length-$r$ subsequences are cyclic shifts of $\z'$. Thus, writing $\z^{(i)}=z_1^{(i)}\cdots z_r^{(i)}$ for each $i$, the sequence
\[
\z^{(0)}\z^{(1)}\cdots \z^{(s)}=z^{(0)}_1\cdots z^{(0)}_rz_1^{(1)}\cdots z_r^{(1)}\cdots z_1^{(s)}\cdots z_r^{(s)}
\]
is a tight walk of stretch $rs$ from $\x=\z^{(0)}$ to $\y'=\z^{(s)}$.

Conversely, suppose there is a tight walk from $\x$ to $\y'$ of stretch $\ell\equiv 0\pmod r$; we denote this walk by $v_1\cdots v_{\ell+r}$ where $\x=v_1\cdots v_r$ and $\y'=v_{\ell+1}\cdots v_{\ell+r}$. There is a sequence of oriented edges
\begin{align*}
\z^{(0)}&=v_1\cdots v_r,&\z^{(1)}&=v_{r+1}v_2\cdots v_r,&\ldots,&&
	\z^{(r-1)}&=v_{r+1}\cdots v_{2r-1}v_r,
\\\z^{(r)}&=v_{r+1}\cdots v_{2r},&\z^{(r+1)}&=v_{2r+1}v_{r+2}\cdots v_{2r},&\ldots,&&
	\z^{(2r-1)}&=v_{2r+1}\cdots v_{3r-1}v_{2r},
\\&\,\,\,\vdots&&\,\,\,\vdots &
&&&\,\,\,\vdots
\\\z^{(\ell-r)}&=v_{\ell-r+1}\cdots v_{\ell},&\z^{(\ell-r+1)}&=v_{\ell+1}v_{\ell-r+2}\cdots v_{\ell},&\ldots,&&
	\z^{(2r-1)}&=v_{\ell+1}\cdots v_{\ell+r-1}v_{\ell},
\\\z^{(\ell)}&=v_{\ell+1}\cdots v_{\ell+r},
\end{align*}
where $\z^{(i+1)}$ is obtained from $\z^{(i)}$ by replacing the vertex $v_{i+1}$ with $v_{i+r+1}$. Thus, $\x=\z^{(0)}$ is tightly connected to $\y'=\z^{(\ell)}$.
\end{proof}

Using \cref{prop:findoddcycle}, we may prove \cref{thm:alg-provable}.

\begin{proof}[Proof of \cref{thm:alg-provable}]
Let $G$ be an $r$-graph and let $\pi=\cyc^k$ as in the theorem statement. Recall that a homomorphic copy of $C_\ell^{(r)}$ is equivalent to a tight walk of stretch $\ell$ from some oriented edge $\x\in\Edir(G)$ to $\x$. $G$ is $\C kr$-hom-free if and only if, for each $\x\in\Edir(G)$, there is no tight walk from $\x$ to $\x$ of stretch $k$ modulo $r$. By \cref{prop:findoddcycle} this is equivalent to $\tc(\x)$ not containing $\pi$ for each $\x\in\Edir(G)$.

Observe that the conjugates of $\tc(\x)$ may be written as $\sg\tc(\x)\sg^{-1}=\tc(\sg(\x))$ for $\sg\in S_r$. Thus, $\tc(\x)$ avoids $\pi$-conjugates for a given $\x\in\Edir(G)$ if and only if $\pi\notin \tc(\sg(\x))$ for each $\sg\in S_r$. It follows that $G$ is $\C kr$-hom-free if and only if $\tc(\x)$ avoids $\pi$-conjugates for each $\x\in\Edir(G)$. By \cref{cor:accordant-stab}, this occurs if and only if there is an accordant $A_\pi$-coloring of $\Edir(G)$.
\end{proof}

\subsection{Applying \cref{thm:visual-applicable}}\label{sec:coloring-corollaries}

In this subsection, we give explicit corollaries of \cref{thm:visual-applicable} in uniformities 2, 3, and 4. Here and in the remainder of the paper, we rely on the pictorial perspective given in \cref{sec:pictorial-oriented-colorings}, as it would be needlessly cumbersome to phrase our results in terms of equivariant maps.

\begin{proof}[Proof of \cref{prop:bipartite} from \cref{thm:visual-applicable}]

We apply \cref{thm:visual-applicable} with $r=2$ and $k=1$. Set $\pi=\cyc^1=(1\,2)$. The symmetric group $S_2$ has only two subgroups --- $\{\id\}$ and $S_2$ --- and only $\{\id\}$ avoids $\pi$-conjugates. Thus, $\Delbf_\pi=\{\Delta_1\}$ where $\Delta_1$ is a pictogram depicting a coloring of the 2-vertex simplex (i.e., a line segment) with $\stab(\Del_1)=\{\id\}$. We may take $\Delta_1=\bipedge$.
	
\cref{thm:visual-applicable} states that a graph $G$ avoids all odd closed walks --- i.e., $G$ is $\C 12$-hom-free --- if and only if $E(G)$ admits an accordant oriented coloring by $\Delbf_\pi=\{\bipedge\}$. An oriented coloring of $E(G)$ is accordant if, for any two incident edges $xy,xz\in E(G)$, the colorings of $xy$ and $xz$ agree at the vertex $x$. In other words, every vertex $x\in V(G)$ is either a source or a sink; by coloring sources blue and sinks red as in \Cref{fig:coloring-accordant-2uniform}, we observe that this carries the same data as a proper 2-coloring of $V(G)$.
It follows that $G$ avoids all odd closed walks if and only if $V(G)$ is 2-colorable, i.e., $G$ is bipartite.
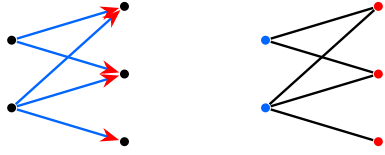
\begin{figure}
\begin{tikzpicture}[yscale=0.9,xscale=0.75,tikz arrows]
\coordinate[vtx] (a1) at (0,1);
\coordinate[vtx] (a2) at (0,0);
\coordinate[vtx] (b1) at (2,1.5);
\coordinate[vtx] (b2) at (2,0.5);
\coordinate[vtx] (b3) at (2,-0.5);
\foreach \i in {1,2}
	\path (a\i) --++ (4.5,0) coordinate[graphblue,vtx] (c\i);
\foreach \i in {1,2,3}
	\path (b\i) --++ (4.5,0) coordinate[graphred,vtx] (d\i);
\foreach \i/\j in {1/1,1/2,2/1,2/2,2/3} {
	\draw[bipartite edge,->] (a\i) -- (b\j);
	\draw (c\i) -- (d\j);
}
\end{tikzpicture}
\centering
\caption{At left, an accordant oriented coloring of the edges of a bipartite graph by $\{\bipedge\}$. At right, the corresponding 2-coloring of the vertices.}
\label{fig:coloring-accordant-2uniform}
\end{figure}
\end{proof}

Next, we derive a result in uniformity 3 due to Kam\v cev, Letzter, and Pokrovskiy \cite{KLP24}. Before deriving this result, we observe that $\C 13$-hom-freeness and $\C 23$-hom-freeness are equivalent conditions; indeed, a more general result holds.

\begin{proposition}\label{prop:homfree-equiv}
Fix an integer $r$ and two residues $s,k$ modulo $r$. Any $\C{sk}r$-hom-free $r$-graph is also $\C kr$-hom-free.
\end{proposition}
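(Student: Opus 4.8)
The plan is to reduce the statement to the algebraic characterization developed in \cref{sec:prove-coloring-result}, specifically to \cref{prop:findoddcycle} and the definition of tight connectivity. Concretely, I claim it suffices to show that for any $r$-graph $G$, any oriented edge $\x \in \Edir(G)$, and any residues $s, k$ modulo $r$, if $\cyc^{sk} \in \tc(\x)$ then $\cyc^k \in \tc(\x')$ for some $\x' \in \Edir(G)$ — after which the conclusion follows from the reformulation used in the proof of \cref{thm:alg-provable}: $G$ is $\C{k}{r}$-hom-free if and only if $\tc(\x)$ avoids $\cyc^k$ for every $\x \in \Edir(G)$, and likewise for $\C{sk}{r}$.

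The cleanest route, however, is probably the direct homomorphism-theoretic argument mirroring \cref{prop:hom-free-chain}: I would prove the contrapositive. Suppose $G$ is not $\C{k}{r}$-hom-free, so there is a homomorphism $C_\ell^{(r)} \to G$ for some $\ell \equiv k \pmod r$. I want to produce a homomorphism $C_{\ell'}^{(r)} \to G$ for some $\ell' \equiv sk \pmod r$. The natural candidate is to take $\ell' = s\ell$, since $s\ell \equiv sk \pmod r$, and to use the ``wrap around $s$ times'' homomorphism $C_{s\ell}^{(r)} \to C_\ell^{(r)}$ sending $(v_1, \ldots, v_{s\ell}) \mapsto (w_1, \ldots, w_\ell, w_1, \ldots, w_\ell, \ldots)$ — exactly the first component of the map constructed in the proof of \cref{prop:hom-free-chain}. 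One checks this is a valid $r$-graph homomorphism: each edge $v_i \cdots v_{i+r-1}$ of $C_{s\ell}^{(r)}$ maps to $w_{i'} \cdots w_{i'+r-1}$ where $i' \equiv i \pmod \ell$ (using $r \le \ell$, so no edge straddles a ``seam'' in a way that causes a collision), which is an edge of $C_\ell^{(r)}$. Composing $C_{s\ell}^{(r)} \to C_\ell^{(r)} \to G$ gives a homomorphism $C_{s\ell}^{(r)} \to G$, and since $s\ell \equiv sk \pmod r$, this witnesses that $G$ is not $\C{sk}{r}$-hom-free.

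One subtlety I should be careful about: if $s \equiv 0 \pmod r$ or $k \equiv 0 \pmod r$, the families $\C 0r$ and the residue classes need to be handled consistently with whatever conventions the paper uses for $\C kr$ when $k \equiv 0$; I would note that the argument goes through verbatim as long as $s\ell > r$ (so that $C_{s\ell}^{(r)}$ is a genuine tight cycle), which holds since $\ell > r \ge 1$ and $s \ge 1$. I should also double-check the edge case $s \ell = $ small relative to $r$ cannot occur under the hypothesis $\ell > r$.

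\textbf{Main obstacle.} The conceptual content is essentially nil — this is a direct generalization of \cref{prop:hom-free-chain}, using only the ``repeat the cycle $s$ times'' half of that construction — so the only real care needed is bookkeeping: verifying that the wrap-around map genuinely sends every edge to an edge (the modular arithmetic on indices, and that consecutive vertices never get identified in a way that shrinks an edge below $r$ vertices), and confirming the residue computation $s\ell \equiv sk \pmod r$. I do not anticipate a genuine difficulty, but the seam-crossing check is the place where an off-by-one or a collision could hide, so that is where I would focus the written verification.
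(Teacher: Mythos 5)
Your proposal is correct and takes essentially the same approach as the paper: the paper simply invokes \cref{prop:hom-free-chain} (with $t=0$) after noting $s\ell\equiv sk\pmod r$, while you inline the ``wrap around $s$ times'' homomorphism $C_{s\ell}^{(r)}\to C_\ell^{(r)}$, which is exactly the first component of the map constructed in the proof of that proposition. The seam-crossing check you flag is indeed the only thing to verify, and it works because each edge of $C_{s\ell}^{(r)}$ maps to a cyclic window of $w_1\cdots w_\ell$ (using $\ell>r$), so no vertices in an edge are identified.
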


\begin{proof}
Suppose $G$ is $\C{ak}r$-hom-free.
For each $\ell\equiv k\pmod r$, it holds that $G$ is $C^{(r)}_{s\ell}$-hom-free, and \cref{prop:hom-free-chain} implies that $G$ is $C_\ell^{(r)}$-hom-free.
\end{proof}

\begin{remark}\label{remark:homfree-equiv}
In the case that $a$ is a multiplicative unit modulo $r$, it is not difficult to check that the permutations $\pi=\cyc^k$ and $\pi'=\cyc^{ak}$ are conjugates in $S_r$. It follows that the families $\mathfrak G_\pi$ and $\mathfrak G_{\pi'}$ of subgroups of $S_r$ avoiding $\pi$-conjugates and $\pi'$-conjugates are identical, and thus we may take $\Delbf_\pi=\Delbf_{\pi'}$. Thus, the characterization in \cref{thm:visual-applicable} gives an alternate proof that $C^{(r)}_k$-hom-freeness is equivalent to $C_{ak}^{(r)}$-hom-freeness whenever $a$ is a multiplicative unit modulo $r$.
\end{remark}

We now derive a 3-uniform corollary of \cref{thm:visual-applicable}. The equivalence between (1), (2), and (4) below is originally due to Kam\v cev, Letzter, and Pokrovskiy \cite{KLP24}; however, their proof is of a very different style and requires several cases. To streamline this statement and our 4-uniform results, we introduce one further piece of notation. Given a set of vertices $V$ and an integer $r\geq 2$, let $V^{(r)}$ denote the $S_r$-set $\Edir(K_V^{(r)})$, where $K_V^{(r)}$ denotes the complete $r$-graph on the vertex set $V$. In this section and the next, we shall frequently work with non-accordant oriented colorings of $V^{(r)}$ that satisfy other consistency conditions.

\begin{corollary}
Let $G$ be a 3-graph with vertex set $V=V(G)$. The following are equivalent.
\begin{enumerate}
	\item $G$ is $\C13$-hom-free.
	\hfil (2) $G$ is $\C23$-hom-free.
	\stepcounter{enumi}
	\item There is an accordant oriented coloring of $E(G)$ by the set $\Delbf=\left\{\pointedtriangle\right\}$.
	\item There is an oriented coloring of $V^{(2)}$ by the set $\Delbf'=\{\bipedge,\blueedge\}$ such that,
	for any 3-edge $xyz\in E(G)$, the coloring restricted to $\{x,y,z\}$ is isomorphic to $\dirboundarylabeled{}{}{}$.
\end{enumerate}
\end{corollary}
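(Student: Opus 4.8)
The plan is to derive the corollary as a direct specialization of \cref{thm:visual-applicable} (equivalently \cref{thm:alg-provable}), handling each implication in turn. For the setup, I would apply \cref{thm:visual-applicable} with $r=3$ and $\pi=\cyc^1=(1\,2\,3)\in S_3$. The first task is to identify the maximal $\pi$-avoiding subgroups of $S_3$. The subgroups of $S_3$ are $\{\id\}$, the three order-2 subgroups generated by transpositions, and the order-3 subgroup $A_3=\langle\cyc\rangle$, and $S_3$ itself. A subgroup avoids $\pi=(1\,2\,3)$ iff it contains no conjugate of $(1\,2\,3)$; since the conjugacy class of $(1\,2\,3)$ is $\{(1\,2\,3),(1\,3\,2)\}=A_3\setminus\{\id\}$, the $\pi$-avoiding subgroups are exactly those not containing $A_3$, namely $\{\id\}$ and the three transposition subgroups. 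The maximal ones are the three transposition subgroups, which form a single conjugacy class $[\langle(2\,3)\rangle]$. Hence $\mathfrak G_\pi=[\Ga_1]$ with $\Ga_1=\langle(2\,3)\rangle$, so $m=1$, and $\Delbf_\pi=\{\Del_1\}$ where $\Del_1$ is any triangle pictogram with $\stab(\Del_1)=\langle(2\,3)\rangle$, i.e. stabilized by the single reflection fixing one vertex. We may take $\Del_1=\pointedtriangle$. \cref{thm:visual-applicable} then immediately gives the equivalence of (1) and (3).

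Next I would establish the equivalence of (1), (2), and (4). The implication $(2)\Rightarrow(1)$ follows from \cref{prop:homfree-equiv} with $s=2$, $k=1$, using that $2\cdot 1\equiv 2\pmod 3$; the implication $(1)\Rightarrow(2)$ follows from \cref{prop:homfree-equiv} with $s=2$, $k=2$ since $2\cdot 2\equiv 1\pmod 3$ (alternatively from \cref{remark:homfree-equiv}, as $2$ is a unit mod $3$ so $\cyc$ and $\cyc^2$ are conjugate). For the equivalence of (3) and (4), the point is that placing a copy of $\pointedtriangle$ on a triple $xyz$ is the same data as choosing the "apex" vertex (where the red triangle sits) together with an orientation data on each of the two outer edges: unwinding the pictogram $\pointedtriangle$, its boundary consists of two directed red edges pointing outward from the apex and one undirected blue edge on the far side. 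This is precisely the local pattern $\dirboundarylabeled{}{}{}$ on the three vertices. Formally, I would note that the boundary of the pictogram $\Del_1$ induces, on each pair of vertices of an edge $xyz\in E(G)$, a color from $\{\bipedge,\blueedge\}$, and accordance of the $\Del_1$-coloring of $E(G)$ translates exactly into the well-definedness of a single oriented coloring of $V^{(2)}$ (the shared sub-edge of two edges sharing two vertices gets the same color/orientation). Conversely, a $\mathbf\Del'$-coloring of $V^{(2)}$ whose restriction to each edge is the boundary pattern determines an accordant $\Del_1$-coloring of $E(G)$ by filling in the pictogram.

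The main obstacle I anticipate is the bookkeeping in $(3)\Leftrightarrow(4)$: one must carefully check that "accordant oriented coloring of $E(G)$ by $\{\pointedtriangle\}$" and "oriented coloring of $V^{(2)}$ with the prescribed local restriction" really carry identical data, in both directions, without circular reference. The subtlety is that an oriented coloring of $V^{(2)}$ is required to be $S_2$-equivariant globally (it is a function on \emph{all} ordered pairs of $V$), whereas the pictogram coloring only lives on edges; the translation works because accordance forces the edge-pictograms to agree on shared pairs, so the induced pair-coloring extends consistently, but I would want to state this extension argument cleanly. I would also remark that each edge must have exactly one vertex as apex — equivalently, in the pattern $\dirboundarylabeled{}{}{}$ the two red arrows emanate from a common vertex — and that this is automatic from $\stab(\Del_1)=\langle(2\,3)\rangle$ having index $3$ in $S_3$, so there are three orientations of $\Del_1$, one per choice of apex. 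Everything else is routine once the group-theoretic computation of $\mathfrak G_\pi$ is in hand.
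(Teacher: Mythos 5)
Your proposal is correct and follows essentially the same route as the paper: apply \cref{thm:visual-applicable} with $\pi=(1\,2\,3)$, compute that the maximal $\pi$-avoiding subgroups of $S_3$ are the conjugates of a transposition subgroup (giving $\Delbf_\pi=\{\pointedtriangle\}$), deduce $(1)\iff(2)$ from \cref{prop:homfree-equiv} or \cref{remark:homfree-equiv}, and translate between the $\pointedtriangle$-coloring of $E(G)$ and the $\Delbf'$-coloring of $V^{(2)}$ by reading off the boundary pattern and using accordance for well-definedness. The one extension point you flag --- that a $V^{(2)}$-coloring must also assign colors to pairs lying in no edge --- is handled in the paper simply by coloring those pairs arbitrarily, which is the clean statement you were looking for.
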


\begin{proof}
We have (1)$\iff$(2) by either \cref{prop:homfree-equiv} or \cref{remark:homfree-equiv}.

To show (1)$\iff$(3), we apply \cref{thm:visual-applicable} with $r=3$ and $k=1$. Let $\pi=\cyc^1=(1\,2\,3)$. The symmetric group $S_3$ has four conjugacy classes of subgroups: $\{\id\}$, $S_2$, $A_3$, and $S_3$. No conjugate of $\{\id\}$ or $S_2$ contains $\pi$; however, both $A_3$ and $S_3$ contain all 3-cycles. It follows that the conjugates of $S_2$ are the only maximal subgroups of $S_3$ avoiding $\pi$-conjugates. Thus, $\Delbf_\pi=\{\Del_1\}$, where $\Del_1$ depicts a coloring of the 3-vertex simplex (i.e., the triangle) stabilized by some conjugate of $S_2$ in $S_3$. We may take $\Del_1=\pointedtriangle$; its stabilizer is generated by the symmetry which swaps the bottom two vertices (i.e., reflects across the vertical axis). We conclude by \cref{thm:visual-applicable} that $G$ is $\C13$-hom-free if and only if there is an accordant oriented coloring of $E(G)$ by the set $\Delbf_\pi=\left\{\pointedtriangle\right\}$, i.e., that (1)$\iff$(3).

Lastly, we relate (3) and (4). Each accordant coloring of $E(G)$ by $\Delbf_\pi$ induces an oriented coloring on $V^{(2)}$: if a 3-edge $xyz\in E(G)$ is colored as $\pointedtrianglelabeled xyz$, then we color the three corresponding 2-edges $xy$, $yz$, and $xz$ as $\dirboundarylabeled{x}{y}{z}$, and if $xy\in V^{(2)}$ is contained in no 3-edge $xyz\in E(G)$, then we color $xy$ arbitrarily. The coloring of $V^{(2)}$ is well-defined because the coloring of $E(G)$ is accordant: if $xyz,xyz'\in E(G)$ are two edges containing the pair $xy$ then the edges $xyz$ and $xyz'$ will induce the same coloring of $xy$ in $V^{(2)}$. Thus, (3)$\implies$(4).

Conversely, suppose there is a coloring of $V^{(2)}$ by $\Delbf'$ which restricts to a subgraph isomorphic to $\dirboundarylabeled{}{}{}$ on each edge of $G$. We may recover the associated oriented coloring of $E(G)$ by $\Delbf$ by coloring an edge $xyz\in E(G)$ as $\pointedtrianglelabeled xyz$ if the coloring of $V^{(2)}$ restricted to those three vertices is $\dirboundarylabeled{x}{y}{z}$. It is immediate that this is an accordant coloring of $E(G)$, so (4)$\implies$(3).
\end{proof}

Lastly, we provide two corollaries of \cref{thm:visual-applicable} in uniformity 4 that were not previously known. We handle the $k\equiv 2\pmod 4$ and $k\equiv 1,3\pmod 4$ cases are separately. 

Our first 4-uniform result proves \cref{thm:coloring-2mod4}. Let $\pointedtetrahedron$ denote a 4-vertex pictogram stabilized by any symmetry that fixes the top vertex. This tetrahedron has three faces colored with $\pointedtriangle$ and one face colored with $\bluetriangle$.

\begin{corollary}\label{cor:C24-free}
Let $G$ be a 4-graph with vertex set $V=V(G)$. The following are equivalent.
\begin{enumerate}
	\item $G$ is $\C24$-hom-free.
	\item There is an accordant oriented coloring of $E(G)$ by the set $\Delbf=\left\{\pointedtetrahedron\right\}$.
	\item There is an oriented coloring of $V^{(3)}$ by the set $\Delbf'=\left\{\pointedtriangle,\bluetriangle\right\}$ such that, given any edge $wxyz\in E(G)$, the coloring of $V^{(3)}$ restricted to $\{w,x,y,z\}$ is isomorphic to $\pointedtetrahedronboundary$. In other words, three of the four 3-edges $xwy,xwz,wyz,xyz$ are colored as $\pointedtriangle$, with the red vertex placed at the same vertex in all three 3-edges, and the last 3-edge is colored with $\bluetriangle$.
\end{enumerate}
\end{corollary}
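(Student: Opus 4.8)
The plan is to mirror the proof of the preceding $3$-uniform corollary. First I would establish $(1)\iff(2)$ by applying \cref{thm:visual-applicable} with $r=4$ and $k=2$, for which the only real work is identifying the pictogram set $\Delbf_\pi$ for $\pi=\cyc^2=(1\,3)(2\,4)\in S_4$. The conjugacy class of $\pi$ in $S_4$ consists of exactly the three double transpositions, which together with the identity form the normal Klein four-subgroup $V_4\trianglelefteq S_4$; hence a subgroup $\Ga\le S_4$ avoids $\pi$ if and only if $\Ga\cap V_4=\{\id\}$. By the second isomorphism theorem such a $\Ga$ embeds into $S_4/V_4\cong S_3$, so $|\Ga|$ divides $6$.

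Next I would pin down the maximal $\pi$-avoiding subgroups. The order-$6$ subgroups of $S_4$ are precisely the four point stabilizers $\stab(i)\cong S_3$ (for $i\in[4]$), and each of these avoids $\pi$ since every double transposition is fixed-point-free. Any $\pi$-avoiding subgroup of order $1$, $2$, or $3$ is cyclic, generated by the identity, a transposition, or a $3$-cycle --- not a double transposition, since those are conjugate to $\pi$ --- and each such generator fixes some point $i$, so the subgroup lies in $\stab(i)$. Thus every $\pi$-avoiding subgroup is contained in a point stabilizer; since the $\stab(i)$ are pairwise conjugate, $\mathfrak G_\pi=[\stab(4)]$ is a single conjugacy class, so $m=1$ and $\Ga_1=\stab(4)$. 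As $\pointedtetrahedron$ is stabilized precisely by the symmetries fixing its apex --- a subgroup conjugate to $\stab(4)\cong S_3$ --- we may take $\Delbf_\pi=\{\pointedtetrahedron\}$, and \cref{thm:visual-applicable} yields $(1)\iff(2)$.

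For $(2)\iff(3)$ I would translate between the two colorings exactly as in the $3$-uniform case. An accordant coloring of $E(G)$ by $\pointedtetrahedron$ induces a coloring of $V^{(3)}$: color a triple $\{x,y,z\}$ by whichever pictogram the tetrahedron on some edge $wxyz\in E(G)$ places on the face $xyz$ (and arbitrarily if no such edge exists). Accordance makes this well-defined, since two $4$-edges sharing the triple $\{x,y,z\}$ differ at most in the fourth vertex and hence agree on that face; and by construction the induced coloring restricts on every $4$-edge to a copy of $\pointedtetrahedronboundary$, giving $(2)\implies(3)$. Conversely, the assignment of the four face-colorings to a tetrahedron orientation is a bijection onto the colorings isomorphic to $\pointedtetrahedronboundary$ --- the $\bluetriangle$ face is the base, and the opposite vertex is the apex carrying all three red marks --- so a $V^{(3)}$-coloring as in $(3)$ determines a unique pictogram on each $4$-edge of $G$, and the resulting coloring of $E(G)$ is accordant because two $4$-edges differing in one vertex induce the same color on their common triple.

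The substantive step is the subgroup computation, in particular verifying that every $\pi$-avoiding subgroup of $S_4$ sits inside a point stabilizer; the rest is the same bookkeeping used in the $3$-uniform corollary. I do not expect a genuine obstacle --- the $k\equiv 2$ case is comfortably small --- with the only subtlety being to double-check that $\stab(\pointedtetrahedron)$ really is a full point stabilizer $\cong S_3$ rather than a proper subgroup, so that $\pointedtetrahedron$ is a legitimate choice for the single pictogram in $\Delbf_\pi$.
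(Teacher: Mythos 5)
Your proof is correct and follows the same overall route as the paper's: apply \cref{thm:visual-applicable} with $r=4$, $k=2$ to obtain $(1)\iff(2)$, then translate between accordant colorings of $E(G)$ and the face-colorings of $V^{(3)}$ in condition $(3)$ exactly as in the $3$-uniform corollary.

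The one place you genuinely differ is in determining $\mathfrak G_\pi$. The paper simply reads off the $11$ conjugacy classes of subgroups of $S_4$ from the appendix table and notes that $S_3$ and its three subgroups are the $\pi$-avoiding ones. You instead give a self-contained argument: the conjugates of $\pi$ together with the identity form the normal Klein four-subgroup $V_4\trianglelefteq S_4$, so a subgroup avoids $\pi$ iff it meets $V_4$ trivially, whence by the second isomorphism theorem it embeds in $S_4/V_4\cong S_3$ and has order dividing $6$; a short case check (order $6$ forces a point stabilizer, orders $1,2,3$ are cyclic and generated by an element with a fixed point) then puts every $\pi$-avoiding subgroup inside some $\stab(i)$. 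This is a cleaner and more conceptual derivation than tabulating the subgroup lattice, though it lands in the same place. Everything else --- the well-definedness via accordance in $(2)\Rightarrow(3)$ and the observation that the color of any single face determines the tetrahedron's apex in $(3)\Rightarrow(2)$ --- matches the paper's bookkeeping.
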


\begin{proof}
To show (1)$\iff$(2), we apply \cref{thm:visual-applicable} with $r=4$ and $k=2$. Let $\pi=\cyc^2=(1\,3)(2\,4)$. There are 11 conjugacy classes of subgroups of $S_4$, of which four ($S_3$ and its three subgroups) avoid $\pi$-conjugates. (A full list of these 11 conjugacy classes is given in \cref{appendix:subgroups}.) Thus, $\Delbf_\pi=\{\Del_1\}$ where $\Del_1$ is a pictogram depicting a coloring of the 4-vertex simplex (i.e., a tetrahedron) stabilized by some conjugate of $S_3$ in $S_4$. We may take $\Del_1=\pointedtetrahedron$; the stabilizer of this pictogram is the group of permutations of the bottom 3 vertices. We conclude by \cref{thm:visual-applicable} that $G$ is $\C24$-hom-free if and only if there is an accordant oriented coloring of $E(G)$ by the set $\Delbf_\pi=\left\{\pointedtetrahedron\right\}$, which implies (1)$\iff$(2).

To show (2)$\implies$(3), fix an accordant oriented coloring of $E(G)$ by $\Delbf_\pi$. There is an associated oriented coloring of $V^{(3)}$ by $\Delbf'=\left\{\pointedtriangle,\bluetriangle\right\}$ which satisfies the following rule on each triple of vertices $xyz\in V^{(3)}$ contained in some edge $wxyz\in E(G)$.
The coloring of $xyz$ in $V^{(3)}$ is 
$\pointedtrianglelabeled xyz$ if every edge $wxyz\in E(G)$ containing $xyz$ is colored as $\pointedtetrahedronlabeled xywz$ and is 
$\bluetrianglelabeled xyz$ if every $wxyz\in E(G)$ is colored as $\pointedtetrahedrontwistedlabeled xywz$. 
This is well defined because the coloring of $G$ is accordant.
Additionally, it is immediate that, given any edge $wxyz\in E(G)$, the coloring of $V^{(3)}$ restricted to the vertex set $\{w,x,y,z\}$ is isomorphic to $\pointedtetrahedronboundary$, which is exactly the condition required by (3).

Lastly, we show (3)$\implies$(2). Suppose there is an oriented coloring of $V^{(3)}$ by $\Delbf'$ which, when restricted to the vertex set of any 4-edge of $G$, is isomorphic to $\pointedtetrahedronboundary$. We may recover an oriented coloring of $E(G)$ by $\Delbf_\pi$ in the ``obvious'' fashion --- an edge $wxyz\in E(G)$ is colored as $\pointedtetrahedronlabeled wxyz$ if and only if the corresponding four 3-edges of $K_V^{(3)}$ are colored as $\pointedtrianglelabeled wxy$, $\pointedtrianglelabeled wxz$, $\pointedtrianglelabeled wyz$, and $\bluetrianglelabeled{x}{y}{z}$. It is immediate that such an oriented coloring of $E(G)$ is accordant.
\end{proof}

Our last result characterizes $\C14$-hom-free and $\C34$-hom-free 4-graphs $G$. Once again, we have a characterizations in terms of accordant oriented colorings of $E(G)$ and an equivalent characterization in terms of oriented colorings of $V^{(3)}$ in which the four sub-triples of any edge $wxyz\in E(G)$ are colored ``consistently''. This result requires three 4-vertex pictograms. Let $\pointedtetrahedron$ be as before. Let $\edgetetrahedron$ be a pictogram that is stabilized by transpositions of the top and left vertices and of the bottom and right vertices; that is, $\stab(\edgetetrahedron)$ when viewed as a subgroup of $S_4$ belongs to the conjugacy class of the non-normal Klein four-subgroup $\langle(1\,2),(3\,4)\rangle$. Lastly, we require a 4-vertex pictogram which is stabilized by rotations but not reflections, i.e., which has stabilizer $A_4$. We render this pictogram as $\circletetrahedron$; it has an orientation $\circletriangle$ or $\circletrianglerev$ placed on each of the four faces of the tetrahedron, so that adjacent faces are always consistently oriented as $\circlediamond$ (or equivalently $\circlediamondrev$). The orientations on the back two faces are not pictured due to space limitations.

\begin{corollary}\label{cor:C14-free}
Let $G$ be a 4-graph with vertex set $V=V(G)$. The following are equivalent.
\begin{enumerate}
	\item $G$ is $\C14$-hom-free.
	\hfil (2) $G$ is $\C34$-hom-free.
	\stepcounter{enumi}
	\item There is an accordant oriented coloring of $E(G)$ by the set $\Delbf=\left\{\pointedtetrahedron,\circletetrahedron,\edgetetrahedron\right\}$.
	\item There is an oriented coloring of $V^{(3)}$ by the set $\Delbf'=\left\{\pointedtriangle,\bluetriangle,\circletriangle,\rededgetriangle,\yellowedgetriangle\right\}$ such that, given any edge $wxyz\in E(G)$, the coloring of $V^{(3)}$ restricted to $\{w,x,y,z\}$ is isomorphic to one of three oriented colorings of $E(K_4^{(3)})$ by $\Delbf'$ given in \Cref{fig:tetrahedron-boundaries}.
	\begin{figure}[h]
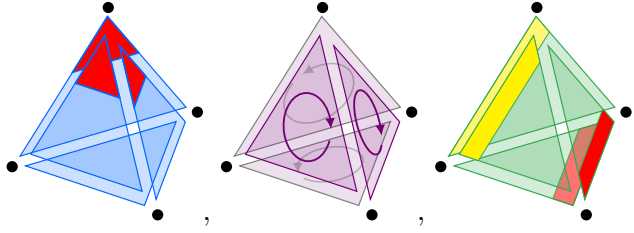

	\centering
	\pointedtetrahedronboundary[big],\circletetrahedronboundary,\edgetetrahedronboundary
	\caption{The three colorings of the four sub-triples of a 4-edge permitted by \cref{cor:C14-free}(4).}
	\label{fig:tetrahedron-boundaries}
	\end{figure}
\end{enumerate}
\end{corollary}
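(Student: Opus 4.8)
The plan is to follow the template of the proofs of the preceding $3$-uniform corollary and of \cref{cor:C24-free}, establishing $(1)\iff(2)$, then $(1)\iff(3)$ via \cref{thm:visual-applicable}, and finally $(3)\iff(4)$ by the pictogram-to-coloring dictionary. For $(1)\iff(2)$, I would invoke \cref{prop:homfree-equiv} with $s=3$ (noting $3$ is a unit modulo $4$, so $3\cdot1\equiv3$ and $3\cdot3\equiv1\pmod 4$), or equivalently cite \cref{remark:homfree-equiv}: since $\cyc^3=\cyc^{-1}$ is conjugate to $\cyc^1$ in $S_4$, the sets $\mathfrak G_{\cyc}$ and $\mathfrak G_{\cyc^3}$ coincide and hence so do the two characterizations.

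For $(1)\iff(3)$, I would apply \cref{thm:visual-applicable} with $r=4$ and $k=1$, so that $\pi=\cyc=(1\,2\,3\,4)$ is a $4$-cycle. Using the list of the $11$ conjugacy classes of subgroups of $S_4$ (see \cref{appendix:subgroups}), I would identify the $\pi$-avoiding subgroups as those containing no $4$-cycle. A $4$-cycle is an odd permutation moving all four points, so it lies in $C_4$, in each dihedral group $D_4$, and in $S_4$, but in none of the other eight conjugacy classes ($\{e\}$, the two classes of order-$2$ subgroups, $\langle(1\,2\,3)\rangle$, the normal and the non-normal Klein four-groups, $S_3$, and $A_4$). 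Among these eight, exactly three classes are maximal: $[A_4]$ and $[S_3]$ (point stabilizers), since the only strictly larger subgroup of $S_4$ is $S_4$ itself; and $[\langle(1\,2),(3\,4)\rangle]$ (non-normal Klein four-subgroups), since any strictly larger subgroup containing one lies in some $D_4$ and hence contains a $4$-cycle — here I would stress that the \emph{normal} Klein four-group is not maximal, as it sits inside $A_4$. Therefore $\mathfrak G_\pi=[A_4]\sqcup[S_3]\sqcup[\langle(1\,2),(3\,4)\rangle]$, so $m=3$, and we may take $\Delbf_\pi=\{\circletetrahedron,\pointedtetrahedron,\edgetetrahedron\}$, these being pictograms with stabilizers $A_4$, the point stabilizer $S_3$ fixing the top vertex, and the non-normal Klein four-subgroup respectively. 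Then \cref{thm:visual-applicable} gives $(1)\iff(3)$.

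For $(3)\iff(4)$, I would carry out the same translation as in \cref{cor:C24-free}. For $(3)\implies(4)$: given an accordant coloring $\chi$ of $E(G)$ by $\Delbf$, set $\chi'(xyz)$, for any triple in some $4$-edge $wxyz$, to be the coloring of the face $\{x,y,z\}$ in the tetrahedron pictogram $\chi(wxyz)$ (and color triples in no edge arbitrarily); accordance of $\chi$ makes this well-defined, and the restriction of $\chi'$ to any $4$-edge is then the boundary coloring of one of $\pointedtetrahedron$, $\circletetrahedron$, $\edgetetrahedron$, i.e.\ one of the three patterns of \cref{fig:tetrahedron-boundaries}. For $(4)\implies(3)$: given such a $\chi'$, color each $4$-edge $wxyz$ with the unique oriented tetrahedron pictogram whose boundary is $\chi'$ restricted to $\{w,x,y,z\}$. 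Three inspections make this work: (i) the three boundary patterns are pairwise non-isomorphic because they use disjoint sets of triangle pictograms ($\{\pointedtriangle,\bluetriangle\}$, $\{\circletriangle\}$, $\{\rededgetriangle,\yellowedgetriangle\}$), so in fact a single face color already determines the tetrahedron's type; (ii) within each type the oriented pictogram is pinned down by the coloring of a single face (for $\pointedtetrahedron$, the apex is the red vertex of a $\pointedtriangle$ face, or the vertex off the $\bluetriangle$ face; for $\circletetrahedron$, the global circulation is read off any face's orientation; for $\edgetetrahedron$, the ordered pair of complementary vertex-pairs is recovered from the distinguished edge of a face together with its red/yellow color); and (iii) consequently, for any two $4$-edges $wxyz,w'xyz$ sharing the triple $xyz$, the reconstructed colors $\chi(wxyz)$ and $\chi(w'xyz)$ are both determined by $\chi'(xyz)$ and thus equal, so $\chi$ is accordant.

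The routine parts are $(1)\iff(2)$ and the subgroup bookkeeping in $(1)\iff(3)$. The one step needing genuine care is the $(4)\implies(3)$ direction: unlike in \cref{cor:C24-free}, there are three distinct tetrahedron pictograms, so one must check both that each $4$-edge's boundary coloring unambiguously selects one of them and that a single shared face determines the \emph{oriented} pictogram — together these are precisely what guarantee the reconstructed $E(G)$-coloring is accordant. I expect this to be the main obstacle, but it reduces to the three elementary inspections (i)–(iii) of the pictograms' symmetry groups, so no serious difficulty is anticipated.
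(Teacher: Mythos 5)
Your proposal is correct and follows essentially the same route as the paper's proof: $(1)\iff(2)$ via \cref{prop:homfree-equiv}/\cref{remark:homfree-equiv}, $(1)\iff(3)$ via \cref{thm:visual-applicable} together with the classification of maximal $\cyc$-avoiding subgroups of $S_4$ (the paper outsources this bookkeeping to \cref{appendix:subgroups}, whereas you argue it inline), and $(3)\iff(4)$ via the same ``restrict to faces / reconstruct from faces'' translation, with your inspections (i)--(iii) being an expanded version of the paper's one-line claim that the color of any triple $xyz$ uniquely determines the color of every $4$-edge containing it.
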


\begin{proof}
We have (1)$\iff$(2) by either \cref{prop:homfree-equiv} or \cref{remark:homfree-equiv}.

To show (1)$\iff$(3), we apply \cref{thm:visual-applicable} with $r=4$ and $k=1$. Let $\pi=\cyc^1=(1\,2\,3\,4)$. There are 11 conjugacy classes of subgroups of $S_4$; a complete list of these subgroups is given in \cref{appendix:subgroups}. The subgroups avoiding $\pi$-conjugates are the conjugates of $S_3$, $A_4$, the non-normal Klein four-subgroup $\Ga=\langle (1\,2),(3\,4)\rangle$, and their subgroups. Thus, $\Delbf_\pi=\{\Del_1,\Del_2,\Del_3\}$ where the pictograms $\Del_1$, $\Del_2$, and $\Del_3$ are stabilized respectively by some conjugate of $S_3$, $A_4$, and $\Ga$. We may take $\Del_1=\pointedtetrahedron$, $\Del_2=\circletetrahedron$, and $\Del_3=\edgetetrahedron$, yielding $\Delbf_\pi=\left\{\pointedtetrahedron,\circletetrahedron,\edgetetrahedron\right\}$. We conclude by \cref{thm:visual-applicable} that $G$ is $\C14$-hom-free if and only if there is an accordant coloring of $E(G)$ by the set $\left\{\pointedtetrahedron,\circletetrahedron,\edgetetrahedron\right\}$, which implies (1)$\iff$(3).

Lastly, we show (3)$\iff$(4). Fix an accordant oriented coloring of $E(G)$ by $\Delbf$. Because the coloring of $E(G)$ is accordant, there is an associated oriented coloring of $V^{(3)}$ by $\Delbf'$ which satisfies the following rule on each triple of vertices $xyz\in V^{(3)}$ contained in some edge $wxyz\in E(G)$:
\[xyz\text{ is colored as}
\begin{cases}
\pointedtrianglelabeled xyz
	\text{if every edge $wxyz\in E(G)$ containing $xyz$ is colored as}
	\pointedtetrahedronlabeled xyzw,
\\\bluetrianglelabeled xyz
	\text{if every edge $wxyz\in E(G)$ containing $xyz$ is colored as}
	\pointedtetrahedrontwistedlabeled xywz,
\\\circletrianglelabeled xyz
	\text{if every edge $wxyz\in E(G)$ containing $xyz$ is colored as}
	\circletetrahedronlabeled xyzw,
\\\rededgetrianglelabeled xyz
	\text{if every edge $wxyz\in E(G)$ containing $xyz$ is colored as}
	\edgetetrahedronlabeled xwyz,
\\\yellowedgetrianglelabeled xyz
	\text{if every edge $wxyz\in E(G)$ containing $xyz$ is colored as}
	\edgetetrahedronflippedlabeled xwyz.
\end{cases}\]
For triples $xyz\in V^{(3)}$ not contained in any edge $wxyz\in E(G)$, we color $xyz$ arbitrarily. It is immediate that this coloring of $V^{(3)}$ satisfies (4), so (3)$\implies$(4).

Conversely, suppose there is an oriented coloring of $V^{(3)}$ by $\Delbf'$ satisfying the condition in (4). There is a natural oriented coloring of $E(G)$ by $\Delbf$ such that the above rule is satisfied. This coloring is accordant because the color of any $xyz\in V^{(3)}$ uniquely determines the colors of all edges $wxyz\in E(G)$ containing $xyz$. Thus, (4)$\implies$(3).
\end{proof}

Using \cref{cor:C24-free,cor:C14-free} yields an alternate proof that the complete oddly bipartite construction is $\C k4$-hom-free for $k=1,2,3$.

\begin{proposition}\label{prop:Godd-C-free}
Given any two disjoint sets of vertices $V_1$ and $V_2$, the complete oddly bipartite $4$-graph $\Godd(V_1,V_2)$ is $\C k4$-hom-free for $k=1,2,3$. In particular, $\ex(n,C_L^{(4)})\geq\eopt(n)$ for each $n$ and each $L\not\equiv 0\pmod 4$.
\end{proposition}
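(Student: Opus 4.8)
The plan is to realize the coloring characterization of \cref{cor:C24-free} explicitly for $G:=\Godd(V_1,V_2)$, and then to bootstrap to the remaining residues. Write $V=V(G)=V_1\cup V_2$. I would define the oriented coloring of $V^{(3)}$ as follows: for a triple $T$ of vertices, set $p=|T\cap V_1|$; color $T$ with $\bluetriangle$ when $p\in\{0,3\}$ (that is, when $T$ lies entirely inside $V_1$ or entirely inside $V_2$), and otherwise---when $p\in\{1,2\}$---color $T$ with $\pointedtriangle$, placing its red vertex at the unique vertex of $T$ on the ``minority'' side of the partition, namely in $V_1$ if $p=1$ and in $V_2$ if $p=2$. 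This is a well-defined oriented coloring of $V^{(3)}$, since $\bluetriangle$ carries no orientation data while $\pointedtriangle$ carries exactly the choice of its red vertex.

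Next I would check that this coloring restricts to $\pointedtetrahedronboundary$ on every $e\in E(G)$. Such an edge meets $V_1$ in an odd number $q\in\{1,3\}$ of vertices; let $u$ be the unique vertex of $e$ on the minority side ($u\in V_1$ if $q=1$, and $u\in V_2$ if $q=3$). Then $e\setminus\{u\}$ lies wholly on one side of the bipartition, so the triple $e\setminus\{u\}$ is colored $\bluetriangle$; meanwhile each of the three triples of $e$ that contain $u$ has $u$ as its unique minority vertex and is therefore colored $\pointedtriangle$ with red vertex $u$. This is precisely the boundary coloring $\pointedtetrahedronboundary$, with $u$ at the apex, so by \cref{cor:C24-free} the $4$-graph $G$ is $\C24$-hom-free.

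For the residues $k=1$ and $k=3$, the very same coloring---now regarded as an oriented coloring of $V^{(3)}$ by the larger palette $\{\pointedtriangle,\bluetriangle,\circletriangle,\rededgetriangle,\yellowedgetriangle\}$---still restricts on every edge of $G$ to $\pointedtetrahedronboundary$, which is the first of the three boundary colorings permitted in \cref{fig:tetrahedron-boundaries}; hence \cref{cor:C14-free} shows $G$ is $\C14$-hom-free and $\C34$-hom-free. (Alternatively, these two can be deduced from $\C24$-hom-freeness via \cref{prop:homfree-equiv} with $s=2$, since $2\cdot 1\equiv 2\cdot 3\equiv 2\pmod 4$.) Finally, a subgraph copy is in particular a homomorphic copy, so $G$ is $C_L^{(4)}$-free for every $L\not\equiv 0\pmod 4$; since $e(G)=\binom{|V_1|}{3}|V_2|+|V_1|\binom{|V_2|}{3}$, choosing $|V_1|+|V_2|=n$ to maximize this quantity gives $e(G)=\eopt(n)$, and therefore $\ex(n,C_L^{(4)})\geq\eopt(n)$.

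I do not expect any genuine obstacle here: the entire argument reduces to the short two-case verification (for $q=1$ and $q=3$) that the displayed coloring of $V^{(3)}$ satisfies the hypothesis of \cref{cor:C24-free}. For what it is worth, one can also bypass the coloring machinery altogether: a homomorphic copy of $C_\ell^{(4)}$ is a cyclic word $v_0v_1\cdots v_{\ell-1}$ in which every window $v_iv_{i+1}v_{i+2}v_{i+3}$ spans an edge, so if $f\colon V\to\Z/2\Z$ is the indicator function of $V_1$ then $f(v_i)+f(v_{i+1})+f(v_{i+2})+f(v_{i+3})=1$ for all $i$; differencing consecutive relations forces $f(v_i)=f(v_{i+4})$, so $f\circ v$ has period $\gcd(4,\ell)\in\{1,2\}$ whenever $4\nmid\ell$, whence any four consecutive values of $f$ sum to $0$---a contradiction.
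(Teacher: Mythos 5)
Your main argument is the same as the paper's: you give the identical oriented coloring of $V^{(3)}$ by $\{\pointedtriangle,\bluetriangle\}$ (blue on triples lying wholly in one side, pointed with red vertex at the unique minority vertex otherwise), verify it restricts to $\pointedtetrahedronboundary$ on each $4$-edge, and invoke \cref{cor:C24-free} and \cref{cor:C14-free}. The paper merely asserts the verification ``is clear,'' whereas you carry out the two-case check ($q=1,3$); that is a small improvement in explicitness but not a different route. Your appeal to \cref{prop:homfree-equiv} with $s=2$ as an alternate way to pass from $k=2$ to $k\in\{1,3\}$ is also correct. The parity argument you sketch at the end is a genuinely more elementary alternative the paper does not take: letting $f$ be the $\Z/2$-indicator of $V_1$, the edge constraint forces $f(v_i)+\cdots+f(v_{i+3})\equiv 1$ for every window of a hypothetical homomorphic $C_\ell^{(4)}$, hence $f(v_i)=f(v_{i+4})$, so $f\circ v$ has period $\gcd(4,\ell)\in\{1,2\}$ when $4\nmid\ell$, and in either case every window sums to $0\pmod 2$, a contradiction. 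This bypasses the coloring machinery entirely and is probably the shortest self-contained proof of the proposition; the paper instead routes through \cref{cor:C24-free} and \cref{cor:C14-free} because it wants to showcase the coloring characterization (and the same coloring is reused later in \Cref{fig:Godd-link-coloring} and the stability argument).
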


\begin{proof}
Let $V=V_1\cup V_2$. Consider the following oriented coloring of $V^{(3)}$ by $\Delbf'=\left\{\pointedtriangle,\bluetriangle\right\}$, which is pictured in \Cref{fig:Godd-coloring} at left.
If $x\in V_i$ and $y,z\in V_{3-i}$, color the triple $xyz$ as $\pointedtrianglelabeled xyz$. Else, if $x,y,z$ are in the same part $V_i$, then color $xyz$ as $\bluetrianglelabeled xyz$. It is clear that this coloring satisfies \cref{cor:C24-free}(3) and \cref{cor:C14-free}(4), so $\Godd(V_1,V_2)$ is $\C k4$-hom-free for $k=1,2,3$ by \cref{cor:C24-free}(1) and \cref{cor:C14-free}(1)--(2).
\begin{figure}
\centering
\begin{tikzpicture}[scale=0.8]
	\coordinate (o1) at (0,0);
	\coordinate (o2) at (3,0);
	\draw[yscale=1.3] (o1) circle (1) (o2) circle (1);
	\path (o1) --++(-1,0) node[left]{$V_1$};
	\path (o2) --++(1,0) node[right]{$V_2$};
	\path (o1) --+ (-0.8,0) coordinate(x1)
			--+ (0.1,0.8) coordinate (y1)
			--+ (0.1,-0.8) coordinate (z1);
	\path (o2) --+ (0.8,0) coordinate(x2)
			--+ (-0.1,0.8) coordinate (y2)
			--+ (-0.1,-0.8) coordinate (z2);
	\path (y1) --++ (0.2,0) 
			--++ (0,-0.2) coordinate (a1) --++ (0,-0.2) coordinate (a3)
			(z1) --+ (0.2,0) coordinate (a2)
			(y2) --+ (-0.2,0) coordinate (b1)
			(z2) --++ (-0.2,0) 
			--++ (0,0.2) coordinate (b2) --++ (0,0.2) coordinate (b3);

	\tikzmakebluetriangle {x1}{y1}{z1};
	\tikzmakebluetriangle {x2}{y2}{z2};
	\tikzmakepointedtriangle {a1}{b1}{b3};
	\tikzmakepointedtriangle {b2}{a3}{a2};
\end{tikzpicture}
\qquad
\begin{tikzpicture}[scale=0.8]
	\coordinate (o1) at (0,0);
	\coordinate (o2) at (3,0);
	\draw[yscale=1.3] (o1) circle (1) (o2) circle (1);
	\path (o1) --++(-1,0) node[left] {$V_1$};
	\path (o2) --++(1,0) node[right] {$V_2$};
		
	\path (o1) --+ (-0.8,0) coordinate(x1)
			--+ (0.1,0.6) coordinate (y1)
			--+ (0.1,-0.8) coordinate (z1)
			--+ (0.1,0.8) coordinate (w2);
	\path (o2) --+ (0.8,0) coordinate(x2)
			--+ (-0.1,0.8) coordinate (y2)
			--+ (-0.1,-0.6) coordinate (z2)
			--+ (-0.1,-0.8) coordinate (w1);
	\tikzmakepointedtetrahedron{w1}{y1}{x1}{z1}
	\tikzmakepointedtetrahedron{w2}{y2}{x2}{z2}
\end{tikzpicture}
\caption{At left, a coloring of $(V_1\cup V_2)^{(3)}$ that proves $\Godd(V_1,V_2)$ is $\C k4$-hom-free. At right, the corresponding accordant oriented coloring of $E(\Godd(V_1,V_2))$.}
\label{fig:Godd-coloring}
\end{figure}
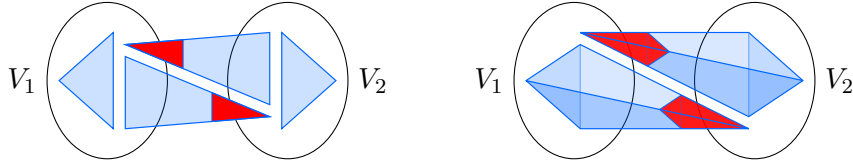
\end{proof}

\section{Removing Long Tight Cycles}\label{sec:delete-long-cycles}

In this section, we describe how to handle Step 3 of the framework described in \cref{sec:framework} in full generality.

\begin{proposition}\label{prop:delete-long-cycles}
Let $G$ be a $n$-vertex $r$-graph. For any $\eps>0$, we may delete at most $\eps n^r$ edges from $G$ such that the resulting subgraph $G'$ has the following property. For any $\x,\y\in\Edir(G')$ that are tightly connected in $G'$, there is a tight walk from $\x$ to $\y$ in $G'$ of stretch $sr$ for some integer $s\leq(2r+1)\eps^{-r}$.
\end{proposition}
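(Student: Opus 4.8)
The plan is to delete edges so as to make the co-degree structure of $G$ rich, and then to show that in a sufficiently rich hypergraph tight connectivity is always realised by a short tight walk. For oriented edges $\x,\y$ of an $r$-graph $H$ lying in the same tight-connectivity class, write $d_H(\x,\y)$ for the least length of a sequence of oriented edges joining $\x$ to $\y$ in which consecutive terms differ in one coordinate. As in the proof of \cref{prop:findoddcycle}, each single-coordinate change from $\z$ to $\z'$ underlies a tight walk $\z\z'$ of stretch exactly $r$, so concatenating the $m=d_H(\x,\y)$ changes of a minimal such sequence (overlapping consecutive blocks along the shared $r$-tuple) yields a tight walk from $\x$ to $\y$ of stretch $mr$. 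Hence it is enough to delete at most $\eps n^r$ edges so that the resulting $G'$ satisfies $d_{G'}(\x,\y)\le(2r+1)\eps^{-r}$ whenever $\x,\y$ are tightly connected in $G'$; the walk claimed in the statement is then obtained with $s=d_{G'}(\x,\y)$.

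\textbf{Step 1 (cleaning).} Put $\theta:=\eps n/(2\cdot r!)$. Repeatedly delete from $G$ any edge that contains an $(r-1)$-subset of current co-degree less than $\theta$, and call the result $G'$. Charging each deleted edge to an $(r-1)$-set whose co-degree drops below $\theta$ (each such set being charged fewer than $\theta$ times) and using that there are at most $\binom{n}{r-1}$ such sets, at most $\theta\binom{n}{r-1}\le\eps n^r$ edges are deleted. By construction, every $(r-1)$-subset of $V(G')$ lying in an edge of $G'$ has co-degree at least $\theta$; hence every non-empty subset of an edge of $G'$ lies in at least $\theta$ edges, and in particular each oriented edge of $G'$ differs in one coordinate from at least $r(\theta-1)$ others (replace any one coordinate by one of the $\ge\theta-1$ other vertices in the neighbourhood of the complementary $(r-1)$-tuple).

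\textbf{Step 2 (short connections).} It remains to bound $d_{G'}(\x,\y)$ for tightly connected $\x,\y$, and the plan is to morph $\x$ into $\y$ one coordinate at a time, in an $r$-level recursion: transform $\x$ into a tuple agreeing with $\y$ in coordinate $1$, then in coordinates $1$ and $2$, and so on. To change coordinate $i$ while keeping coordinates $1,\dots,i-1$ fixed, note that if the desired new value already lies in the neighbourhood of the complementary $(r-1)$-tuple then a single elementary step does it; otherwise one first adjusts coordinates $i+1,\dots,r$ --- the recursive sub-problem, in one fewer coordinate --- to reach a tuple for which it does, and then makes the step. Step 1 guarantees that every $(r-1)$-tuple encountered has at least $\theta=\Theta(\eps n)$ admissible extensions, so a budget of $O(\eps^{-1})$ attempts per recursion level should suffice; the $r$ nested levels then give $O(\eps^{-r})$ elementary steps in total, and a careful accounting of the constants brings this to $(2r+1)\eps^{-r}$.

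\textbf{Main obstacle.} The reduction and Step 1 are routine; the real work is Step 2, which has two delicate points. First, the recursion must be arranged so that the assertion ``$v$ is reachable in principle as the new $i$-th coordinate'' --- a statement about tight connectivity --- can be converted into an explicit bounded-length sequence of coordinate changes; this is exactly where the large co-degrees from Step 1 are indispensable, since without them a coordinate could be ``trapped''. Second, one must show that $O(\eps^{-1})$ attempts suffice at each level (rather than $O(\eps^{-1}\log n)$, which is all a naive ball-growing argument in the auxiliary graph on $\Edir(G')$ would give), so that the final bound is independent of $n$; it seems cleanest to phrase this as the statement that each block of $O(\eps^{-1})$ coordinate changes either already reaches every oriented edge tightly connected to the current one or strictly enlarges the set of vertices appearing among the reachable oriented edges, and since that set has at most $n$ elements and grows in only $r$ ``directions'', $r$ such blocks are enough.
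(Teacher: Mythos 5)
Your Step~1 is essentially the same cleaning as the paper's (the paper uses the threshold $\eps n$ rather than your smaller $\theta=\eps n/(2\cdot r!)$, and either works: since $\eps n\cdot\binom{n}{r-1}<\eps n^r$, the larger threshold is already enough, and it gives the stronger lower bound $\deg_{G'}(x_1\cdots x_{r-1})\geq\eps n$ which is what the quantitative step needs). The reduction at the start --- that a sequence of $m$ single-coordinate changes can be concatenated into a tight walk of stretch $mr$ --- is also correct and is exactly how the paper concludes.

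The gap is Step~2. What you describe there is not a proof: the recursion has no well-defined termination bound, and you say so yourself (``should suffice'', ``it seems cleanest to phrase this as the statement that\dots''). More concretely, there is no reason that changing coordinate $i$ while coordinates $1,\dots,i-1$ are held fixed is achievable by adjusting only coordinates $i+1,\dots,r$: tight connectivity in $G'$ is a global relation, and once you constrain a prefix of coordinates you are working in a subrelation which may not connect the current tuple to one admitting the desired extension. Your closing heuristic (``each block of $O(\eps^{-1})$ changes either finishes or enlarges the reachable vertex set\dots which has at most $n$ elements and grows in $r$ directions'') does not produce the claimed $n$-free bound --- a set of size $n$ that grows by one element per block gives $O(n)$ blocks, and the phrase ``grows in $r$ directions'' has no precise content that would reduce this.

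The paper avoids coordinate-by-coordinate morphing entirely. It fixes a \emph{shortest} sequence $\x=\z^{(0)},\dots,\z^{(s)}=\y$ of single-coordinate changes, and for each $i$ considers the set $Z^{(i)}$ of all oriented edges $\z'$ such that $\z^{(i)}\z'$ is a tight walk of stretch $r$. The min-codegree from Step~1 gives $|Z^{(i)}|\geq(\eps n)^r$. The crux, which your proposal is missing, is that $Z^{(i)}\cap Z^{(j)}=\emptyset$ whenever $j>i+2r$: a common $\z'$ would let one splice $\z^{(i)}$ to $\z^{(j)}$ through $\z'$ in at most $2r$ elementary steps, shortening the supposedly minimal sequence. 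Since the pairwise-disjoint sets $Z^{(0)},Z^{(2r+1)},Z^{(4r+2)},\dots$ live inside $\Edir(G')$, which has fewer than $n^r$ elements, one gets $\tfrac{s}{2r+1}(\eps n)^r<n^r$ and hence $s\leq(2r+1)\eps^{-r}$. This extremal argument on a shortest path is what makes the bound independent of $n$; without it, or some genuinely different quantitative lemma, your Step~2 does not close.
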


\begin{proof}
Iteratively delete edges from $G$ according to the following process: whenever an $(r-1)$-tuple $x_1\dots x_{r-1}$ is contained in at most $\eps n$ edges, delete all edges containing $x_1\dots x_{r-1}$ from $G$. Let $G'$ be the resulting $r$-graph. At most $\eps n\times\binom n{r-1}<\eps n^r$ edges are deleted during this process. Moreover, $\deg_{G'}(x_1\dots x_{r-1})\geq \eps n$ for any $(r-1)$-tuple of vertices $x_1\dots x_{r-1}$ contained in an edge of $G'$.

Suppose $\x,\y\in\Edir(G')$ are tightly connected in $G'$. Let
$\x=\z^{(0)},\z^{(1)},\ldots,\z^{(s)}=\y$
be a shortest sequence of oriented edges tightly connecting $\x$ and $\y$. 
For each $0\leq i\leq s$, set
\[Z^{(i)}=\{\z'\in\Edir(G'):\z^{(i)}\z'=z^{(i)}_1\cdots z^{(i)}_rz'_1\cdots z'_r\text{ is a tight walk in $G'$}\}.\]
We have $|Z^{(i)}|\geq(\eps n)^r$ as there are at least $\eps n$ choices for each vertex $z'_i$ given any choice of $z'_1,\dots,z'_{i-1}$.

We claim that $Z^{(i)}$ and $Z^{(j)}$ are disjoint if $j>i+2r$. Otherwise, given $\z'\in Z^{(i)}\cap Z^{(j)}$, the sequence
\begin{align*}
\z^{(i)}&=z^{(i)}_1\cdots z^{(i)}_r,
\,\,\, z'_1z^{(i)}_2\cdots z^{(i)}_r,
\,\,\, \ldots,
\,\,\, z'_1\cdots z'_{r-1} z^{(i)}_r,
\,\,\, \z'=z'_1\cdots z'_r,
\\&\,z'_1\cdots z'_{r-1}z^{(j)}_r,
\,\,\, \ldots,
\,\,\, z'_1z_2^{(j)}\cdots z_r^{(j)},
\,\,\, \z^{(j)}=z^{(j)}_1,\ldots,z^{(j)}_r
\end{align*}
from $\z^{(i)}$ to $\z^{(j)}$ could shorten the sequence $\z^{(0)},\ldots,\z^{(s)}$. In particular, the sets $Z^{(0)},Z^{(2r+1)},Z^{(4r+2)},\ldots$ are pairwise disjoint. Thus,
\[
\frac s{2r+1}(\eps n)^r\leq \sum_{i=0}^{\lfloor s/(2r+1)\rfloor}|Z^{(i)}|\leq\Edir(G')< n^r,
\]
and it follows that $s\leq(2r+1)\eps^{-r}$. The proof is completed by observing that
\[
\z^{(0)}\z^{(1)}\cdots\z^{(s)}=z^{(0)}_1\cdots z^{(0)}_rz^{(1)}_1\cdots z^{(1)}_r\cdots z_1^{(s)}\cdots z_r^{(s)}
\]
forms a tight walk from $\x$ to $\y$ of stretch $sr$, as discussed in the proof of \cref{prop:findoddcycle}.
\end{proof}

\begin{corollary}\label{delete-long-cycles}
Let $G$ be a $n$-vertex $r$-graph.
Fix an integer $L>r$, and let $0\leq k<r$ be the residue of $L$ modulo $r$.
If $G$ is $C_L^{(r)}$-hom-free, then we may delete at most $2rL^{-1/r}n^r$ edges from $G$ such that the resulting $r$-graph $G'$ is $\C kr$-hom-free.
\end{corollary}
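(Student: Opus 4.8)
The plan is to invoke \cref{prop:delete-long-cycles} with the parameter $\eps = 2rL^{-1/r}$. That proposition lets us delete at most $\eps n^r = 2rL^{-1/r}n^r$ edges of $G$ to obtain a subgraph $G'\subseteq G$ in which any two tightly connected oriented edges are joined by a tight walk of stretch $sr$ for some integer $s\le (2r+1)\eps^{-r}$. It then suffices to check that $G'$ is $\C kr$-hom-free. If $\eps\ge 1$ this is immediate: every $(r-1)$-tuple lies in fewer than $n\le\eps n$ edges, so the deletion process of \cref{prop:delete-long-cycles} removes every edge of $G$, and the empty $r$-graph is vacuously $\C kr$-hom-free. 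So we may assume $\eps<1$, equivalently $L>(2r)^r$.

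Suppose toward a contradiction that $G'$ is not $\C kr$-hom-free. By \cref{thm:alg-provable} together with \cref{cor:accordant-stab}, there is an oriented edge $\x\in\Edir(G')$ whose group $\tc(\x)$, computed inside $G'$, fails to avoid $\cyc^k$; replacing $\x$ by $\sg^{-1}(\x)$ for a suitable $\sg\in S_r$ and using \cref{prop:tc-properties}(3), we may assume $\cyc^k\in\tc(\x)$ outright, i.e.\ $\x$ is tightly connected to $\cyc^k(\x)$ inside $G'$. Now \cref{prop:delete-long-cycles} furnishes a tight walk $W_1$ in $G'$ from $\x$ to $\cyc^k(\x)$ of stretch $sr$ with $s\le (2r+1)\eps^{-r}$. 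Concatenating $W_1$ with the ``canonical'' tight walk from $\cyc^k(\x)$ back to $\x$ of stretch $k$ --- the one built from cyclic shifts of the single edge $\x$, exactly as exhibited in the proof of \cref{prop:findoddcycle}, and which therefore also lies in $G'$ --- produces a closed tight walk based at $\x$ of stretch $sr+k$.

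The last step is numerical bookkeeping. Since $sr+k\equiv k\equiv L\pmod r$, the quantity $L-(sr+k)$ is a multiple of $r$; and it is nonnegative because $s\le(2r+1)\eps^{-r}=(2r+1)L/(2r)^r$ and $k\le r-1$ give
\[
sr+k \le \frac{(2r+1)r}{(2r)^r}\,L + r - 1 \le L,
\]
where the last inequality is a routine consequence of $L>(2r)^r$ and the elementary bound $(2r)^r\ge 2r^2+2r-1$ valid for $r\ge 2$. Hence we may lengthen the closed tight walk of stretch $sr+k$ by $(L-sr-k)/r$ additional loops around the edge $\x$, each contributing stretch $r$, to obtain a closed tight walk based at $\x$ of stretch exactly $L$. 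As noted before \cref{prop:findoddcycle}, such a walk is a homomorphic copy of $C_L^{(r)}$ in $G'\subseteq G$, contradicting the hypothesis that $G$ is $C_L^{(r)}$-hom-free. Thus $G'$ is $\C kr$-hom-free, as required.

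I expect no serious obstacle here; the plan is essentially to package \cref{prop:delete-long-cycles} and \cref{prop:findoddcycle} together and pad a short closed tight walk up to length exactly $L$. The only delicate points are the direction-and-stretch bookkeeping --- making sure the filler walk runs from $\cyc^k(\x)$ to $\x$ with stretch $k$, and that each loop around $\x$ adds exactly $r$ to the stretch and stays in $G'$ (it uses only the edge $\{x_1,\dots,x_r\}$) --- and the choice of $\eps$, which must be small enough to force $sr+k\le L$ while keeping the number of deleted edges at the target $2rL^{-1/r}n^r$. One should also observe that the degenerate residue $k=0$ causes no trouble: then $\cyc^0$ lies in every $\tc(\x)$, so the argument shows any nonempty $G'$ already contains a homomorphic copy of $C_L^{(r)}$ (namely $L/r$ loops around an edge), and hence the hypothesis forces $G$ itself to be edgeless.
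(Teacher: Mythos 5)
Your proof is correct and takes essentially the same route as the paper's: apply \cref{prop:delete-long-cycles}, extract a short closed tight walk of stretch $\equiv L\pmod r$, and pad it with loops around $\x$ until the stretch is exactly $L$. The one genuine difference is the choice of $\eps$: you take $\eps=2rL^{-1/r}$ (the target deletion bound itself) and compensate with the case split $\eps\ge 1$ and the bound $(2r)^r\ge 2r^2+2r-1$, whereas the paper picks $\eps=\bigl(\tfrac{r(2r+1)}{L-k}\bigr)^{1/r}$, which makes $s\le(L-k)/r$ automatic (so the padding count is nonnegative with no case analysis) and then verifies $\eps n^r\le 2rL^{-1/r}n^r$ after the fact; also, you detour through \cref{thm:alg-provable} and \cref{cor:accordant-stab} to deduce $\cyc^k\in\tc(\x)$ from non-$\C kr$-hom-freeness, which the paper gets directly from \cref{prop:findoddcycle}.
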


\begin{proof}
Let $\eps=\big(\frac {r(2r+1)}{L-k}\big)^{1/r}$ and let $G'$ be the subgraph of $G$ guaranteed by \cref{prop:delete-long-cycles}. $G'$ is formed by deleting at most $\eps n^r=\big(\frac{r(2r+1)L}{L-k}\big)^{1/r}L^{-1/r}n^r\leq 2rL^{-1/r}n^r$ edges of $G$.

Suppose for the sake of contradiction that $G'$ is not $\C kr$-hom-free, i.e., $G'$ contains a tight walk from some oriented edge $\x=x_1\cdots x_r$ to itself of stretch $k$ modulo $r$. By \cref{prop:findoddcycle}, $\x$ is tightly connected to $\cyc^k(\x)=x_{r-k+1}\cdots x_rx_1\cdots x_{r-k}$. By \cref{prop:delete-long-cycles}, there is a tight walk $z_1\cdots z_{(s+1)r}$ from $\x$ to $\cyc^k(\x)$ of stretch $sr$ with $s\leq (2r+1)\eps^{-r}=(L-k)/r$. Concatenating this walk with the $k$ vertices $x_{r-k+1}\cdots x_r$ on the right, followed by $(L-k-rs)/r$ copies of $\x$, yields a tight walk
\[
(z_1\cdots z_{(s+1)r})(x_{r-k+1}\cdots x_r)(x_1\cdots x_r)\cdots (x_1\cdots x_r)
\]
in $G'$ from $\x$ to $\x$ of stretch exactly $L$, i.e., a homomorphic copy of $C_L^{(r)}$. This contradicts the assumption that $G$ is $C_L^{(r)}$-hom-free.
\end{proof}

\section{The Tur\'an Density of $\C k4$, with Stability}\label{sec:density-stability}

By \cref{cor:C14-free}, the Tur\'an number $\ex(n,\C14\texthom)$ may be reformulated as follows. Let $V$ be a set of $n$ vertices. Then $\ex(n,\C14\texthom)$ is the maximum, over all oriented colorings $\chi$ of $V^{(3)}$ by $\Delbf_3=\left\{\pointedtriangle,\bluetriangle,\circletriangle,\rededgetriangle,\yellowedgetriangle\right\}$, of the number of sets of four vertices whose coloring matches one of the three configurations in \Cref{fig:tetrahedron-boundaries}.

Let $w\in V$. Given an oriented coloring $\chi$ of $V^{(3)}$ by $\Delbf_3$ (i.e., an $S_3$-equivariant map $\chi:V^{(3)}\to A_{\Delbf_3}$), one may define a \emph{link coloring} $\chi_w$ of $(V-\{w\})^{(2)}$, where $\chi_w(xy)$ is exactly determined by $\chi(wxy)$. The easiest way to define $\chi_w(xy)$ symbolically would be to treat $\chi_w$ as a coloring by $A_{\Delbf_3}$, viewed as an $S_2$-set, in which case $\chi_w(xy)$ would be defined equal to $\chi(xyw)$. However, for ease of presentation, we define $\chi_w$ to be an oriented coloring of $(V-\{w\})^{(2)}$ by the $S_2$-set
\[\Delbf_2=\{
\redto[e1],\,\blueedge[e1],\,\redto[e2],\,\blueedge[e2],\,\redto[e3],\,\blueedge[e3],\,
\greenedge,\,\purpleto
\}
\tag{\thesection.1}
\label{eq:del2}
\]
with $\chi_w$ defined in terms of $\chi$ according to \cref{table:link-coloring}.
\begin{table}[h]\centering
\begin{tabular}{|l|cccccccc|}
\hline
$\chi(wxy)$
&\pointedtrianglelabeled {w\vphantom{L}}xy
&\pointedtrianglerightlabeled wxy
&\bluetrianglelabeled wxy
&\circletrianglelabeled wxy
&\rededgetrianglelabeled wxy
&\rededgetrianglerightlabeled wxy
&\yellowedgetrianglelabeled wxy
&\yellowedgetrianglerightlabeled wxy
\\[10pt]$\chi_w(xy)$
&\greenedgexy
&\redtoxy[e1]
&\blueedgexy[e1]
&\purpletoxy
&\blueedgexy[e2]
&\redtoxy[e3]
&\blueedgexy[e3]
&\redtoxy[e2]
\\[3pt]\hline
\end{tabular}
\caption{Colorings of $wxy$ by $\chi$ and the corresponding colorings of $xy$ by $\chi_w$.}
\label{table:link-coloring}
\vspace{-0.1in}
\end{table}

Observe that the coloring of $wxyz$ matches one of the configurations in \Cref{fig:tetrahedron-boundaries} if and only if (after permutation of $xyz$) the colorings of $xyz$ by $\chi$ and $\chi_w$ are given by one of the following pairs:
\[
\begin{array}{c}
\left(\bluetrianglelabeled xyz,\greentrilabeled xyz\right),
\left(\pointedtrianglelabeled xyz,\cherrylabeled[e1]xyz\right),
\left(\circletrianglelabeled xyz,\purpletrilabeled xyz\right),
\\
\left(\rededgetrianglelabeled xyz,\cherrylabeled[e2]xyz\right),
\text{ or }
\left(\yellowedgetrianglelabeled xyz,\cherrylabeled[e3]xyz\right).
\end{array}
\tag{\thesection.2}
\label{eq:chi-and-link-pairs}
\]
We begin by studying the number of triples colored \greentri,\purpletri,\cherry[e1],\cherry[e2], or\cherry[e3] by $\chi_w$.

If one does not distinguish the different types of red or blue edges, one obtains a \emph{simplified link coloring} $\chi'_w$ of $(V-\{w\})^{(2)}$ by 
$\Delbf'_2=\{\redto,\,\blueedge,\,\greenedge,\,\purpleto\}$.  
In \cref{subsection:link}, we prove \cref{lem:color-ineqs}, which provides bounds on the number of copies of \purpletri, \greentri, and \cherry\ in such a coloring, in terms of the density of each edge color. In \cref{subsec:turan-density}, we leverage \cref{lem:color-ineqs} to prove that $\pi(\C14)=\pi(\C34)=1/2$. Lastly, in \cref{subsection:stability}, we use the results of the first two subsections to prove \cref{thm:stability}, a stability result stating that if $G$ is $\C14$-hom-free with edge density close to $1/2$, then $G$ differs from a complete oddly bipartite 4-graph on a small fraction of edges.

\subsection{Counting triangles in a link coloring} 
\label{subsection:link}

Let $V$ be a set of vertices and consider an oriented coloring of $V^{(2)}$ by the set $\Delbf'_2=\{\redto,\,\blueedge,$ $\greenedge,\,\purpleto\}$.
This should be thought of as the simplified link coloring $\chi'_w$, arising from an oriented coloring $\chi$ of a set $(V\cup \{w\})^{(3)}$ by the set $\Delbf_3$.
Let $T(\greentri)$, $T(\purpletri)$, and $T(\cherry)$ denote the number of (unlabeled) copies of each of these three types of triangles in the oriented coloring. We refer to triangles of the third type as \emph{cherries}, in keeping with the language used in \cite{KLP24}. The key lemma of this subsection bounds these triangle counts in terms of the density of each edge color.

\begin{lemma}\label{lem:color-ineqs}
Let $V$ be a set of $n$ vertices. Fix an oriented coloring of $V^{(2)}$ by the set
$\Delbf'_2=\{
\redto,\,
\blueedge,$
$\greenedge,\,
\purpleto
\}$ with $2\al\binom n2$ red directed edges, $\bet\binom n2$ blue undirected edges, $\ga\binom n2$ green undirected edges, and $2\del\binom n2$ purple directed edges. Then the numbers of triangles of each type satisfy the following inequalities.
\begin{enumerate}
	\item $T(\greentri)\leq \ga^{3/2}\times n^3/6$.
	\item $T(\purpletri)\leq 2\del^{3/2}\times n^3/6$.
	\item $T(\cherry)\leq 3\al\sqrt\bet \times n^3/6$.
	\item $T(\cherry)\leq 0.465\times n^3/6$.
	\item $T(\purpletri) + \frac 14\left(T(\greentri) + T(\cherry)\right)\leq 1/4\times n^3/6$
	\item $T(\cherry)\leq 3\al\bet/(\al+\bet)\times n^3/6$.
\end{enumerate}
\end{lemma}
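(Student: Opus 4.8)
\emph{Overview.} The plan is to read each of the three triangle counts off the constituent (di)graphs of the colouring: the green graph (with $\ga\binom n2$ edges), the purple digraph (with $2\del\binom n2$ arcs), the red digraph (with $2\al\binom n2$ arcs), and the blue graph (with $\bet\binom n2$ edges). Throughout I would use one structural fact: every unordered pair gets exactly one colour, so at a fixed vertex $v$ the red out-degree $d^+_{\mathrm r}(v)$, red in-degree $d^-_{\mathrm r}(v)$, blue degree $d_{\mathrm b}(v)$, green degree $d_{\mathrm g}(v)$, and purple degrees $d^{\pm}_{\mathrm p}(v)$ are supported on disjoint subsets of the $n-1$ pairs at $v$; in particular any two of them sum to at most $n-1$.

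\emph{Parts (1) and (2).} Part (1) is immediate from Kruskal--Katona: a graph with $e$ edges has at most $\tfrac16(2e)^{3/2}$ triangles, and $2\ga\binom n2\le\ga n^2$. For (2), use the identity that a directed triangle through $v$ is exactly an arc from $N^+_{\mathrm p}(v)$ to $N^-_{\mathrm p}(v)$, and that each directed triangle is counted once at each of its three vertices, so $3\,T(\purpletri)=\sum_v\#\{\text{arcs }N^+_{\mathrm p}(v)\!\to N^-_{\mathrm p}(v)\}\le\sum_v\min\big(d^+_{\mathrm p}(v)d^-_{\mathrm p}(v),\,P\big)$, where $P=2\del\binom n2$ is the number of purple pairs. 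Now $\min(ab,P)\le\sqrt{abP}$ and $\sqrt{ab}\le\tfrac12(a+b)$, while $\sum_v\big(d^+_{\mathrm p}(v)+d^-_{\mathrm p}(v)\big)=2P$; the right-hand side is thus at most $\sqrt P\cdot P=P^{3/2}\le(\del n^2)^{3/2}$, which rearranges to (2).

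\emph{Parts (3) and (6) --- cherries.} I would count cherries in the two natural ways: over the apex, $T(\cherry)=\sum_x e_{\mathrm b}\big(N^-_{\mathrm r}(x)\big)$ (where $e_{\mathrm b}(S)$ is the number of blue pairs inside $S$), and over the blue base, $T(\cherry)=\sum_{yz\text{ blue}}|N^+_{\mathrm r}(y)\cap N^+_{\mathrm r}(z)|$; a triple is a cherry in at most one way, so neither overcounts. Part (3): bound $e_{\mathrm b}(S)\le\min\big(\binom{|S|}2,\bet\binom n2\big)$ and then $\min(d^2,\bet n^2)\le d\sqrt\bet\,n$ with $d=|N^-_{\mathrm r}(x)|$; summing and using $\sum_x|N^-_{\mathrm r}(x)|=2\al\binom n2$ gives (3). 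Part (6): from the blue-base form, $|N^+_{\mathrm r}(y)\cap N^+_{\mathrm r}(z)|\le\tfrac12\big(d^+_{\mathrm r}(y)+d^+_{\mathrm r}(z)\big)$, so $T(\cherry)\le\tfrac12\sum_v d^+_{\mathrm r}(v)d_{\mathrm b}(v)$. The crux is then the elementary pointwise inequality
\[
uv\le(1-\theta)^2 m\,u+\theta^2 m\,v\qquad(u,v\ge0,\ u+v\le m,\ \theta\in[0,1]),
\]
which holds because, for fixed $v$, the difference is affine in $u$ and equals $\theta^2 mv\ge0$ at $u=0$ and $\big(m(1-\theta)-v\big)^2\ge0$ at $u=m-v$. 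Applying it with $m=n-1$, $u=d^+_{\mathrm r}(v)$, $v=d_{\mathrm b}(v)$ (legitimate since $d^+_{\mathrm r}(v)+d_{\mathrm b}(v)\le n-1$), summing over $v$, and using $\sum_v d^+_{\mathrm r}(v)=\al n(n-1)$ and $\sum_v d_{\mathrm b}(v)=\bet n(n-1)$, the choice $\theta=\al/(\al+\bet)$ makes $(1-\theta)^2\al+\theta^2\bet=\al\bet/(\al+\bet)$, yielding $\sum_v d^+_{\mathrm r}(v)d_{\mathrm b}(v)\le\al\bet n(n-1)^2/(\al+\bet)\le\al\bet n^3/(\al+\bet)$ and hence (6).

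\emph{Parts (4) and (5).} Part (4) should follow by optimizing (3) against (6) over feasible $(\al,\bet)$ --- here it matters that $2\al+\bet\le1$, since red and blue pairs are disjoint --- and I would simply carry out this one-variable optimization to confirm the constant $0.465$. Part (5) is the step I expect to be the main obstacle. Its weighting is sharp in several different regimes at once: it is tight for the all-green clique, for the balanced tripartite colouring with cross-pairs purple and cyclically oriented $A\to B\to C\to A$ and within-part pairs green, and --- for the $T(\purpletri)$ term alone --- for a regular tournament in purple, and the crude bound $3\,T(\purpletri)\le\sum_v d^+_{\mathrm p}(v)d^-_{\mathrm p}(v)$ used in (2) is off by a factor of $2$ in the last of these. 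So I expect the proof to need a tournament-type input (the number of transitively oriented purple triples is large when the purple digraph is dense) to control $T(\purpletri)$ when purple dominates, married to the per-vertex / disjointness-of-colours arguments above when green or cherries dominate; locating the single clean inequality simultaneously tight on all of these extremal colourings is the delicate point, and where I would expect most of the work to go.
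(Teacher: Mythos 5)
Your arguments for parts (1), (2), (3), and (6) are correct, and they take a genuinely different (and more elementary) route than the paper: the paper derives (1)--(3) from Shearer's entropy inequality and (6) from a Cauchy--Schwarz step in the guise of Milne's inequality, whereas your degree-counting derivations avoid the entropy machinery entirely. Your chain for (2), bounding $3T(\purpletri)\le\sum_v\min\big(d_{\mathrm p}^+(v)d_{\mathrm p}^-(v),P\big)\le\sqrt P\sum_v\tfrac12\big(d_{\mathrm p}^+(v)+d_{\mathrm p}^-(v)\big)=P^{3/2}$, and your pointwise inequality $uv\le(1-\theta)^2mu+\theta^2mv$ for (6), each recover the paper's constants exactly.

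Your proposed route to (4), however, does not work. Taking the minimum of (3) and (6) over $\al,\bet\ge0$ with $2\al+\bet\le 1$ does not give $0.465$: at $\al=\bet=\tfrac13$ one has $3\al\bet/(\al+\bet)=\tfrac12$ and $3\al\sqrt\bet=1/\sqrt 3>\tfrac12$, so the minimum of your two bounds equals $\tfrac12$, and indeed no combination of (3) and (6) can dip below $\tfrac12$. The sharp constant $2\sqrt3-3\approx 0.464$ is a substantially harder fact, attained by a geometric nesting of blue cliques with red arcs directed from larger to smaller cliques; the paper simply cites it to Falgas-Ravry--Vaughan and to Huang rather than reproving it.

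For (5) you correctly identify the obstruction and the competing extremal colourings, but you do not supply a proof, and the paper's argument is one you did not find. First handle the all-purple case via the Goodman-type identity $\sum_v\indeg(v)\outdeg(v)=2T(\purpletri)+\binom n3$ combined with $\indeg(v)\outdeg(v)\le\tfrac14(n-1)^2$, giving $T(\purpletri)\le n^3/24$. Then reduce the general colouring to the all-purple case by recolouring every red, blue, and green pair as a purple arc with a uniformly random orientation: each original green triangle or cherry becomes a purple directed triangle with probability exactly $\tfrac14$ (two of the eight orientations of a triangle are cyclic), while each original purple directed triangle remains one with probability $1$. Taking expectations yields $T(\purpletri)+\tfrac14\big(T(\greentri)+T(\cherry)\big)\le n^3/24$. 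This ``randomly purpleify'' step is precisely the single clean inequality you were looking for, and it is simultaneously tight on the all-green clique, the cyclic tripartite colouring, and the regular tournament.
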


Parts (1)--(3) follow from standard results about entropy (\cref{prop:entropy-bits,shearer}). For proofs of these results and further exposition, we refer the reader to \cite[\S 15]{Alon-PM}. Given a discrete random variable $X$ supported on a finite set $S$, its \emph{entropy} is the quantity
\[
\ent(X)=\sum_{x\in S}\Pr[X=s]\times\log_2\left(\frac 1{\Pr[X=s]}\right).
\]
Intuitively, the entropy of $X$ measures the number of bits of information revealed by observing the value of $X$. In particular, if $X$ is distributed uniformly randomly on $S$, then $\ent(X)=\log_2|S|$, and this is known to be the largest possible entropy of any distribution on $S$.

\begin{proposition}\label{prop:entropy-bits}
	Let $X$ be a random variable supported on a finite set $S$. Then $\ent(X)\leq\log_2|S|$, with equality if and only if $X$ is uniformly distributed on $S$.
\end{proposition}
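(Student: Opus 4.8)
The plan is to derive this from the concavity of the logarithm. Write $p_x=\Pr[X=x]$ for each $x\in S$, let $S'=\{x\in S: p_x>0\}$ be the support of $X$, and adopt the usual convention $0\log_2 0=0$, so that $\ent(X)=\sum_{x\in S'}p_x\log_2(1/p_x)$. Since $(p_x)_{x\in S'}$ is a genuine probability vector and $t\mapsto\log_2 t$ is concave, Jensen's inequality applied with weights $p_x$ and points $1/p_x$ gives
\[
\ent(X)=\sum_{x\in S'}p_x\log_2\frac{1}{p_x}\;\le\;\log_2\!\left(\sum_{x\in S'}p_x\cdot\frac{1}{p_x}\right)=\log_2|S'|\;\le\;\log_2|S|,
\]
which is the claimed bound. (One could equivalently run the standard Gibbs-inequality argument: using $\ln t\le t-1$, compare $X$ against the uniform law $q_x=1/|S|$ to get $\ent(X)-\log_2|S|=\sum_{x\in S'}p_x\log_2(q_x/p_x)\le(\log_2 e)\big(\sum_{x\in S'}q_x-1\big)\le 0$.)

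For the equality case I would trace back through the two inequalities above. The logarithm is \emph{strictly} concave, so equality in Jensen's inequality holds precisely when $1/p_x$ is the same for every $x\in S'$, i.e.\ when $X$ is uniformly distributed on its support $S'$. The second inequality $\log_2|S'|\le\log_2|S|$ is an equality exactly when $S'=S$. Combining these, $\ent(X)=\log_2|S|$ if and only if $X$ is uniform on all of $S$. (In the Gibbs-inequality version the same conclusion comes from the two tightness conditions $t=1$ in $\ln t\le t-1$ and $\sum_{x\in S'}q_x=1$, which together force $p_x=q_x=1/|S|$.)

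I do not expect any genuine obstacle here — this is the textbook statement cited to \cite{Alon-PM}. The only point deserving a line of care is the bookkeeping around the support $S'$ and the $0\log_2 0$ convention, so that Jensen's inequality (or the inequality $\ln t\le t-1$) is being applied to an honest probability distribution and no division by zero occurs; once that is set up, both the bound and the equality characterization are immediate.
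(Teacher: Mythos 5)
Your proof is correct, but there is nothing in the paper to compare it against: the paper does not prove \cref{prop:entropy-bits}, it simply states it and refers the reader to \cite[\S 15]{Alon-PM} for a proof. Your Jensen's-inequality argument (with the alternative Gibbs-inequality route) is the standard textbook proof one would find there; the bookkeeping around the support $S'$ and the $0\log_2 0$ convention, and the use of \emph{strict} concavity of $\log_2$ to pin down the equality case, are all handled correctly.

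One side remark unrelated to your proof: the paper's displayed definition of $\ent(X)$ is missing a minus sign (and writes $\Pr[X=s]$ where it should index over $x$); your convention $\ent(X)=\sum_{x\in S'}p_x\log_2(1/p_x)$ is the correct one and is what the cited reference uses.
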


Given random variables $X_1,\ldots,X_j$ supported on finite sets $S_1,\ldots,S_j$, write $\ent(X_1,\ldots,X_j)$ for the entropy of the random variable $(X_1,\ldots,X_j)$, which is supported on the set $S_1\times\cdots\times S_j$. The following inequality is a special case of a more general inequality due to Shearer (see \cite{Shearer}).

\begin{proposition}[Shearer's inequality, special case]\label{shearer}
	Let $X_1,X_2,X_3$ be three random variables supported on finite sets. Then $2\ent(X_1,X_2,X_3)\leq \ent(X_1,X_2) + \ent(X_1,X_3)+\ent(X_2,X_3)$.
\end{proposition}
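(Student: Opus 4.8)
The plan is to derive the inequality from two standard facts about entropy: the \emph{chain rule} and the fact that \emph{conditioning does not increase entropy}. For random variables $X,Y$ supported on finite sets, define the conditional entropy $\ent(X\mid Y):=\ent(X,Y)-\ent(Y)$; telescoping this definition gives the chain rule $\ent(X_1,\dots,X_j)=\sum_{i=1}^{j}\ent(X_i\mid X_1,\dots,X_{i-1})$, while Jensen's inequality applied to the concave map $t\mapsto\log_2 t$ (equivalently, nonnegativity of mutual information) gives the monotonicity $\ent(X\mid Y,Z)\le\ent(X\mid Z)\le\ent(X)$. Both are among the ``standard results about entropy'' the paper already invokes for parts (1)--(3) of \cref{lem:color-ineqs}, and may be cited from \cite[\S 15]{Alon-PM}.

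With these in hand the argument is a short bookkeeping exercise. Abbreviate $a=\ent(X_1)$, $b=\ent(X_2\mid X_1)$, and $c=\ent(X_3\mid X_1,X_2)$, so that the chain rule gives $\ent(X_1,X_2,X_3)=a+b+c$. Then $\ent(X_1,X_2)=a+b$ exactly; $\ent(X_1,X_3)=\ent(X_1)+\ent(X_3\mid X_1)\ge a+\ent(X_3\mid X_1,X_2)=a+c$, since conditioning additionally on $X_2$ cannot increase entropy; and $\ent(X_2,X_3)=\ent(X_2)+\ent(X_3\mid X_2)\ge\ent(X_2\mid X_1)+\ent(X_3\mid X_1,X_2)=b+c$, by two applications of monotonicity. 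Summing the three estimates yields $\ent(X_1,X_2)+\ent(X_1,X_3)+\ent(X_2,X_3)\ge 2(a+b+c)=2\,\ent(X_1,X_2,X_3)$, which is the claim. (Alternatively one could deduce this as the instance of the general Shearer inequality of \cite{Shearer} in which the covering family is the set of all $2$-subsets of $\{1,2,3\}$, but the above self-contained derivation is shorter.)

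There is no substantive obstacle here: the entire content sits in the monotonicity of entropy under conditioning --- the single place where convexity enters --- and everything else is the chain rule together with elementary arithmetic. The only genuine choice is how much of the conditional-entropy formalism to spell out versus cite; given that the paper already relies on \cite{Alon-PM} for the entropy toolkit used elsewhere, I would recall the chain rule and ``conditioning reduces entropy'' as known facts and then include the three-line computation above verbatim.
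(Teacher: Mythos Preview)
Your proof is correct and is the standard derivation of this special case of Shearer's inequality from the chain rule together with the fact that conditioning does not increase entropy. The paper, however, does not actually prove this proposition: it merely states it as a preliminary fact, noting that it is a special case of the general Shearer inequality and citing \cite{Shearer} (and more broadly \cite[\S 15]{Alon-PM}) for the entropy toolkit. So there is nothing to compare against beyond observing that you have supplied a self-contained argument where the paper opted for a citation.
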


Having stated the requisite preliminaries, we may now prove \cref{lem:color-ineqs}.

\begin{proof}[Proof of \cref{lem:color-ineqs}] The proofs of (1)--(3) follow the same entropy-based framework. We choose a uniformly random oriented triangle with the given coloring, apply Shearer's inequality (\cref{shearer}), and then apply \cref{prop:entropy-bits} to both sides.

(1) is a slight loosening of the Kruskal--Katona inequality \cite{kk-1,kk-2} but we give a short proof using the aforementioned entropic framework. Let $(X_1,X_2,X_3)\in V^{(3)}$ be the vertices of a green triangle \greentri, chosen uniformly at random among all orientations of all green triangles. Each green triangle contributes $6=3!$ orientations, so $\ent(X_1,X_2,X_3)=\log_2\left(6T(\greentri)\right)$. Moreover, $(X_1,X_2)$ is supported on the set of ordered pairs $(x,y)$ of vertices forming a green edge; this is a set of size $2\ga\binom n2< \ga n^2$. By \cref{prop:entropy-bits}, it follows that $\ent(X_1,X_2)\leq\log_2(\ga n^2)$, and the same upper bound holds for $\ent(X_1,X_3)$ and $\ent(X_2,X_3)$. Thus, applying Shearer's inequality, we have
\begin{align*}
\log_2\left(6T(\greentri)\right)=\ent(X_1,X_2,X_3)\leq\frac 12\left(\ent(X_1,X_2)+\ent(X_1,X_3)+	\ent(X_2,X_3)\right)
\leq\frac 32\log_2(\ga n^2),
\end{align*}
and it follows that $6T(\greentri)\leq\ga^{3/2}n^3$.

(2) Let $(X_1,X_2,X_3)\in V^{(3)}$ be the vertices of a purple directed triangle $\purpletri$, chosen uniformly among all orientations of all purple directed triangles in which edges are directed from $X_i$ to $X_{i+1 \pmod 3}$. Each purple directed triangle contributes 3 such orientations, so $\ent(X_1,X_2,X_3)=\log_2\left(3 T(\purpletri)\right)$. Moreover, $(X_1,X_2)$  is supported on the set of purple edges $(x,y)$ directed from $x$ to $y$, which is a set of size $2\del\binom n2<\del n^2$. By \cref{prop:entropy-bits}, we have $\ent(X_1,X_2)\leq\log_2(\del n^2)$; the same inequality holds for $\ent(X_1,X_3)=\ent(X_3,X_1)$ and for $\ent(X_2,X_3)$. Thus, applying Shearer's inequality, we have
\begin{align*}
\log_2\left(3T(\purpletri)\right)=\ent(X_1,X_2,X_3)\leq\frac 12\left(\ent(X_1,X_2)+\ent(X_1,X_3)+	\ent(X_2,X_3)\right)
\leq\frac 32\log_2(\del n^2),
\end{align*}
and it follows that $3T(\purpletri)\leq\del^{3/2}n^3$.

(3) Let $(X_1,X_2,X_3)\in V^{(3)}$ be the vertices of a cherry $\cherry$, chosen uniformly at random among all such oriented triangles with apex $X_1$ (i.e., among all orientations where $(X_2,X_3)$ is a blue edge). Each cherry contributes 2 such orientations, so $\ent(X_1,X_2,X_3)=\log_2\left(2 T(\cherry)\right)$. Observe that $(X_1,X_2)$ and $(X_1,X_3)$ are both supported on the set of red edges $(x,y)$ directed from $y$ to $x$, while $(X_2,X_3)$ is supported on the set of ordered pairs $(x,y)$ forming a blue edge. The former set has size $2\al\binom n2<\al n^2$, while the latter set has size $2\bet\binom n2<\bet n^2$. Thus, applying \cref{prop:entropy-bits} and Shearer's inequality yields
\begin{align*}
\log_2\left(2T(\cherry)\right)=\ent(X_1,X_2,X_3)&\leq\frac 12\left(\ent(X_1,X_2)+\ent(X_1,X_3)+	\ent(X_2,X_3)\right)
\\&\leq\frac 12\left(2\log_2(\al n^2)+\log_2(\bet n^2)\right),
\end{align*}
and it follows that $2T(\cherry)\leq\al\sqrt\bet n^3$.

(4) A result of Falgas-Ravry and Vaughan \cite{FaVa12} using flag algebras shown also by by Huang \cite{Hu14} using symmetrization (see also \cite{KLP24}) shows that any oriented coloring of $V^{(2)}$ by $\Delbf_2''=\{\redto,\blueedge\}$ has at most $\left(2\sqrt 3-3\right)n^3/6\approx 0.464\times n^3/6$ cherries. The optimal construction (pictured in \Cref{fig:iterated-blowup}) is a disjoint union of unboundedly many blue cliques whose sizes approximate a geometric sequence, with red edges directed from larger cliques to smaller cliques. The constant $2\sqrt 3-3$ comes from optimizing the ratio between the sizes of consecutive blue cliques.

\begin{figure}
\centering
\begin{tikzpicture}[tikz arrows, scale=0.95]
	\draw[yscale=1.5,graphblue] (0,0) circle (1);
	\path [yscale=1.5] 
		(180:0.7) coordinate[vtx] (a1)
		(108:0.7) coordinate[vtx] (a2)
		(36:0.7) coordinate[vtx] (a3)
		(-36:0.7) coordinate[vtx] (a4)
		(-108:0.7) coordinate[vtx] (a5);
	\foreach \i [evaluate=\i as \ii using \i+1] in {1,...,4} {
		\foreach \j in {\ii,...,5} {
			\ifnum \i=\j \else
			\draw[graphblue] (a\i) -- (a\j);
			\fi
		}
	}
\begin{scope}[shift={(2.6,0)},scale=0.69]
	\draw[yscale=1.5,graphblue] (0,0) circle (1);
	\path [yscale=1.5] 
		(180:0.7) coordinate[vtx] (b1)
		(108:0.7) coordinate[vtx] (b2)
		(36:0.7) coordinate[vtx] (b3)
		(-36:0.7) coordinate[vtx] (b4)
		(-108:0.7) coordinate[vtx] (b5);
	\foreach \i [evaluate=\i as \ii using \i+1] in {1,...,4} {
		\foreach \j in {\ii,...,5} {
			\draw[graphblue] (b\i) -- (b\j);
		}
	}
\end{scope}
\begin{scope}[shift={(4.7,0)},scale=0.45]
	\draw[yscale=1.5,graphblue] (0,0) circle (1);
	\path [yscale=1.5] 
		(180:0.7) coordinate[vtx] (c1)
		(108:0.7) coordinate[vtx] (c2)
		(36:0.7) coordinate[vtx] (c3)
		(-36:0.7) coordinate[vtx] (c4)
		(-108:0.7) coordinate[vtx] (c5);
	\foreach \i [evaluate=\i as \ii using \i+1] in {1,...,4} {
		\foreach \j in {\ii,...,5} {
			\draw[graphblue] (c\i) -- (c\j);
		}
	}
\end{scope}
\draw[graphred,->,shorten >=0.7cm, shorten <=0.1cm] (a3) -- (b3);
\draw[graphred,->,shorten >=0.7cm, shorten <=0.1cm] (a4) -- (b4);
\draw[graphred,->,shorten >=1cm, shorten <=0.25cm] (a3|-0,0) -- (b4|-0,0);

\draw[graphred,->,shorten >=0.7cm, shorten <=0.05cm] (b3) -- (c3);
\draw[graphred,->,shorten >=0.7cm, shorten <=0.05cm] (b4) -- (c4);
\draw[graphred,->,shorten >=0.9cm, shorten <=0.2cm] (b3|-0,0) -- (c4|-0,0);

\draw[graphred,->,shorten <=0.4cm, shorten >=0.1cm] (a2) to [bend left=25,] (c2);
\draw[graphred,->,shorten <=0.4cm, shorten >=0.1cm] (a5) to [bend right=25,] (c5);
\end{tikzpicture}
\caption{The coloring of $E(K_n)$ by $\{\blueedge,\redto\}$ that maximizes the number of cherries.}
\label{fig:iterated-blowup}
\end{figure}

(5) We first handle the case that $\al=\bet=\ga=0$, i.e., $V^{(2)}$ is colored only by $\{\purpleto\}$. In this case we show that $T(\purpletri)\leq n^3/24$ by the following well-known variant of Goodman's formula \cite{Goodman}.

If $V^{(2)}$ is solely colored by purple edges, then every triple in $\binom V3$ induces a triangle whose coloring is isomorphic to either $\purpletri$ or $\purpletranstri$. Let $T(\purpletranstri)=\binom n3-T(\purpletri)$ be the number of occurrences of the latter.
Writing $\indeg(v)$ and $\outdeg(v)$ for the purple indegree and purple outdegree of a vertex $v\in V$, we note that
\[
\sum_{v\in V(G)}\indeg(v)\times \outdeg(v)=3T(\purpletri)+T(\purpletranstri)=2T(\purpletri) + \binom n3.
\]
Moreover, $\indeg(v)+\outdeg(v)=n-1$ for each vertex $v$, so $\indeg(v)\times\outdeg(v)\leq\frac 14(n-1)^2$.
It follows that
\[
\frac{n(n-1)^2}4\geq\sum_{v\in V(G)}\indeg(v)\times\outdeg(v)\geq 2T(\purpletri)+\binom n3.
\]
Rearranging this inequality yields
\[
T(\purpletri)\leq \frac{n(n-1)^2}8-\frac{n(n-1)(n-2)}{12}=\frac{n(n-1)(n+1)}{24}
\leq\frac{n^3}{24}.\]

We now handle the general case, in which all four oriented colors in $\Delbf'_2$ may be present. Form a coloring $\chi'$ of $V^{(2)}$ by $\{\purpleto\}$ by recoloring any red, green, or blue edge with a purple directed edge, whose orientation is chosen uniformly at random. Let $T'(\purpletri)$ denote the number of purple directed triangles in $\chi'$; the preceding paragraph gives $T'(\purpletri)\leq n^3/24$. If three vertices $x,y,z$ induced a green triangle or a cherry in the original coloring of $V^{(2)}$, they induce a purple directed triangle in $\chi'$ with probability $\frac 14$. If they induced a purple directed triangle originally, then they also induce such a triangle in $\chi'$. Thus,
\[
\frac{n^3}{24}\geq\E\left[T'(\purpletri)\right]\geq T(\purpletri) + \frac 14\left(T(\greentri)+T(\cherry)\right),
\]
as desired.

(6) We require the following inequality.
\begin{milne}
Let $a_1,\ldots,a_n$ and $b_1,\ldots,b_n$ be sequences of nonnegative real numbers such that $a_i+b_i>0$ for each $i\in[n]$. Then
\[
\left(\sum_{i=1}^n\frac{a_ib_i}{a_i+b_i}\right)\left(\sum_{i=1}^n(a_i+b_i)\right)
\leq\left(\sum_{i=1}^na_i\right)\left(\sum_{i=1}^nb_i\right).
\]	
\end{milne}

\begin{proof}
Observe that $\frac{a_ib_i}{a_i+b_i}=a_i-\frac{a_i^2}{a_i+b_i}$. By the Cauchy--Schwarz inequality,
\[
\left(\sum_{i=1}^n\frac{a_i^2}{a_i+b_i}\right)\left(\sum_{i=1}^n(a_i+b_i)\right)\geq\left(\sum_{i=1}^na_i\right)^2,
\]
so
\begin{align*}
\left(\sum_{i=1}^n\frac{a_ib_i}{a_i+b_i}\right)\left(\sum_{i=1}^n(a_i+b_i)\right)
&\leq\left(\sum_{i=1}^na_i\right)\left(\sum_{i=1}^n(a_i+b_i)\right)-\left(\sum_{i=1}^na_i\right)^2
\\&=\left(\sum_{i=1}^na_i\right)\left(\sum_{i=1}^nb_i\right).\qedhere
\end{align*}
\end{proof}

For each vertex $v\in V$, let $a_v$ be its red outdegree and $b_v$ its blue degree, so $a_v+b_v\leq n-1$. We bound $2T(\cherry)$ by the number of copies of $\cherrypath$, which is exactly $\sum_va_vb_v$. Thus, applying Milne's inequality to the sequences $\{a_v\}$ and $\{b_v\}$, indexed over vertices $v$ with $a_v+b_v>0$, yields
\[
2T(\cherry)\leq\sum_va_vb_v
\leq\sum_v\frac{(n-1)a_vb_v}{a_v+b_v}
\leq\frac{(n-1)\left(\sum_v a_v\right)\left(\sum_v b_v\right)}{\sum_va_v+b_v}.
\]
Substituting $\sum_va_v=2\al\binom n2=\al n(n-1)$ and $\sum_vb_v=2\bet\binom n2=\bet n(n-1)$ yields
\[
T(\cherry)\leq(n-1)\frac{\al\bet n(n-1)}{2(\al+\bet)}\leq\frac{3\al\bet}{\al+\bet}\times\frac{n^3}{6}.\qedhere
\]
\end{proof}

The six inequalities in \cref{lem:color-ineqs} combine to show the following upper bound on the number of triangles colored \purpletri, \greentri, or \cherry. Say an oriented coloring of $V^{(2)}$ by $\Delbf'_2$ has \emph{edge color densities} $(2\al,\bet,\ga,2\del)$ if it has $2\al\binom n2$ red edges, $\bet\binom n2$ blue edges, $\ga\binom n2$ green edges, and $2\del\binom n2$ purple edges.

\begin{corollary}\label{cor:define-f}
Let $g(\al,\bet,\ga)=\ga^{3/2}+\min\left(0.465, \frac{3\al\bet}{\al+\bet},3\al\sqrt\bet \right)$, where $\frac{\al\bet}{\al+\bet}$ is defined to be 0 if $(\al,\bet)=(0,0)$, and set 
$f(\al,\bet,\ga,\del)=\min\big(2\del^{3/2}+g(\al,\bet,\ga), \frac 14+\frac 34 g(\al,\bet,\ga)\big)$.

In any oriented coloring of $E(K_n)$ by $\Delbf'_2$ with edge color densities $(2\al,\bet,\ga,2\del)$, it holds that
\[
T(\purpletri) + T(\greentri) + T(\cherry)\leq f(\al,\bet,\ga,\del)\frac{n^3}{6}.
\]
\end{corollary}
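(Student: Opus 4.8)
The plan is to assemble \cref{cor:define-f} directly from the six inequalities of \cref{lem:color-ineqs}; no new idea is needed, only careful bookkeeping. First I would combine the three bounds on the number of cherries. Parts (3), (4), and (6) of \cref{lem:color-ineqs} give
\[
T(\cherry)\le\min\left(0.465,\ \frac{3\al\bet}{\al+\bet},\ 3\al\sqrt\bet\right)\frac{n^3}{6},
\]
where the middle term is read as $0$ when $\al=\bet=0$ — a case in which there are no red edges, so $T(\cherry)=0$ holds trivially anyway. Adding part (1), namely $T(\greentri)\le\ga^{3/2}n^3/6$, yields the intermediate bound
\[
T(\greentri)+T(\cherry)\le g(\al,\bet,\ga)\,\frac{n^3}{6}.
\]

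Next I would derive two competing upper bounds on the full sum $T(\purpletri)+T(\greentri)+T(\cherry)$ and take their minimum. For the first bound, add part (2), $T(\purpletri)\le2\del^{3/2}n^3/6$, to the intermediate bound to obtain
\[
T(\purpletri)+T(\greentri)+T(\cherry)\le\left(2\del^{3/2}+g(\al,\bet,\ga)\right)\frac{n^3}{6}.
\]
For the second bound, rearrange part (5) as $T(\purpletri)\le\frac14\cdot\frac{n^3}{6}-\frac14\big(T(\greentri)+T(\cherry)\big)$, so that
\[
T(\purpletri)+T(\greentri)+T(\cherry)\le\frac14\cdot\frac{n^3}{6}+\frac34\big(T(\greentri)+T(\cherry)\big)\le\left(\frac14+\frac34\,g(\al,\bet,\ga)\right)\frac{n^3}{6},
\]
where in the last step I substitute the intermediate bound, which is legitimate because the coefficient $\frac34$ is positive and the intermediate bound holds unconditionally. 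The minimum of the two displayed bounds is precisely $f(\al,\bet,\ga,\del)\,n^3/6$, which is the claimed inequality.

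I do not expect any real obstacle: this corollary is purely a combination step. The only points worth a sentence of care are the degenerate case $(\al,\bet)=(0,0)$ in the definition of $g$ (handled above by noting $T(\cherry)=0$ there), and verifying that the substitution into the second bound is valid — which it is, since $\frac34>0$. If desired, one could also remark that the min inside $g$ being achieved by different terms in different density regimes is irrelevant here, as all three inequalities hold simultaneously.
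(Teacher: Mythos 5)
Your proof is correct and is precisely the intended (though unwritten) argument: the paper presents \cref{cor:define-f} as an immediate consequence of the six inequalities in \cref{lem:color-ineqs}, and your bookkeeping — combining (3),(4),(6) for the cherry count, adding (1), then taking the minimum of the bound from (2) and the rearranged form of (5) — is exactly how they combine. The side remarks about the $(\al,\bet)=(0,0)$ convention and the positivity of the $\tfrac34$ coefficient are both correct and appropriately brief.
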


\begin{proof}
Combining \cref{lem:color-ineqs}(1), (3)--(4) and (6) we have 
\[
T(\greentri)+T(\cherry)\leq\left(\ga^{3/2}+\min\left(0.465, \frac{3\al\bet}{\al+\bet},3\al\sqrt\bet \right)\right)\frac{n^3}6=g(\abc)\frac{n^3}6.
\tag{\thetheorem.1}\label{eq:cor-def-f}
\]
Adding \cref{lem:color-ineqs}(2) to (\ref{eq:cor-def-f}) yields 
$T(\purpletri) + T(\greentri) + T(\cherry)\leq \left(2\del^{3/2}+g(\abc)\right)\frac{n^3}6$. 
Adding $\frac 34$ times (\ref{eq:cor-def-f}) to \cref{lem:color-ineqs}(5) yields 
$T(\purpletri) + T(\greentri) + T(\cherry)\leq\left(\frac 14+\frac 34g(\abc)\right)\frac{n^3}6$.
\end{proof}

\subsection{The Tur\'an density of $\C14$}\label{subsec:turan-density}

By studying the function $f(\al,\bet,\ga,\del)$ defined in \cref{cor:define-f}, we may derive an asymptotically tight upper bound on $\ex(n,\C 14\texthom)$. 
As at the beginning of this section, let $V$ be a set of $n$ vertices and $\chi$ an oriented coloring of $V^{(3)}$ by $\Delbf_3=\left\{\pointedtriangle,\bluetriangle,\circletriangle,\rededgetriangle,\yellowedgetriangle\right\}$. 

Our strategy is to bound the number of triangles of the form \purpletri, \greentri, \cherry[e1], \cherry[e2], or \cherry[e3] in a link coloring $\chi_w$ by applying \cref{cor:define-f} to the simplified link coloring $\chi'_w$. However, if $\chi_w$ has too many green edges, this approach is hopeless --- in the worst case, if all edges are green, then all $\binom{n-1}3$ triangles in $V-\{w\}$ will be colored as \greentri. To avoid this case, we observe that, for some $w\in V$, the link coloring $\chi_w$ contains at least twice as many red edges as green edges.

\begin{proposition}\label{prop:redgreenedges}
Suppose $V$ is a set of vertices and $\chi$ an oriented coloring of $V^{(3)}$ by $\Delbf_3$. There is some vertex $w\in V$ such that the link coloring $\chi_w$ contains at least twice as many edges colored \redto[e1] as edges colored \greenedge.
\end{proposition}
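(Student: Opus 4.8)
The plan is a short double-counting argument over the choice of apex vertex $w$, whose engine is the observation that the colors $\greenedge$ and $\redto[e1]$ in a link coloring come from a single source: triples of $V$ that $\chi$ colors by (an orientation of) the pointed pictogram $\pointedtriangle$. The first step is to read off \cref{table:link-coloring}: among the eight entries of its bottom row, the entry below $\pointedtrianglelabeled{w}{x}{y}$ is the unique $\greenedge$, the entry below $\pointedtrianglerightlabeled{w}{x}{y}$ is the unique $\redto[e1]$, and the remaining six entries (coming from the $\bluetriangle$, $\circletriangle$, $\rededgetriangle$, $\yellowedgetriangle$ pictograms and from $\pointedtriangle$ in its apex-fixing orientation) are neither. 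Translating this through the $S_2$-equivariance of $\chi_w$, I would record: for a pair $\{x,y\}\subseteq V-\{w\}$, the link coloring $\chi_w$ colors $\{x,y\}$ with $\greenedge$ exactly when the triple $\{w,x,y\}$ is $\chi$-colored by $\pointedtriangle$ with its distinguished (red) vertex at $w$, and colors $\{x,y\}$ with $\redto[e1]$ exactly when $\{w,x,y\}$ is $\chi$-colored by $\pointedtriangle$ with its distinguished vertex at $x$ or at $y$ (the choice between $x$ and $y$ only flips the orientation of the red-type-1 edge, since that $S_2$-orbit is closed under the transposition).

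Next I would set up the count. Let $P$ be the number of $3$-subsets of $V$ that $\chi$ colors by a pointed pictogram; each such triple $T$ has a well-defined distinguished vertex $v(T)\in T$ because $\stab(\pointedtriangle)$ is the reflection fixing the apex. For $w\in V$ let $g_w$ and $r_w$ denote, respectively, the number of unordered pairs that $\chi_w$ colors $\greenedge$ and the number it colors $\redto[e1]$. By the previous paragraph, $g_w$ equals the number of pointed triples $T\ni w$ with $v(T)=w$, and $r_w$ equals the number of pointed triples $T\ni w$ with $v(T)\ne w$. Summing over all $w$ and regrouping by the triple gives
\[
\sum_{w\in V} g_w=\sum_{T\text{ pointed}}\bigl|\{w\in T:v(T)=w\}\bigr|=P,\qquad
\sum_{w\in V} r_w=\sum_{T\text{ pointed}}\bigl|\{w\in T:v(T)\ne w\}\bigr|=2P,
\]
so $\sum_{w\in V}(r_w-2g_w)=0$. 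In particular the summands cannot all be strictly negative, so some $w\in V$ satisfies $r_w\ge 2g_w$, which is precisely the claim that $\chi_w$ has at least twice as many $\redto[e1]$-edges as $\greenedge$-edges.

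The only genuine work is the verification in the first paragraph linking a link color to the position of the distinguished vertex of a pointed triple; this is a direct inspection of \cref{table:link-coloring} together with one application of the $S_2$-equivariance of the link colorings, so I do not expect it to pose a real obstacle — the remainder is a one-line averaging step.
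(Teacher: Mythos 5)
Your argument is correct and takes essentially the same approach as the paper: the paper's one-sentence proof is precisely your averaging/double-counting over the apex $w$, noting that each triple $\chi$-colored by $\pointedtriangle$ contributes one $\greenedge$ to $\chi_{v(T)}$ and one $\redto[e1]$ to each of $\chi_w$ for the two $w\in T$ with $w\ne v(T)$. You spell out the bookkeeping that the paper compresses into an expectation calculation, but the engine and the conclusion are identical.
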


\begin{proof}
Let $T(\pointedtriangle)$ be the number of triples colored $\pointedtriangle$ by $\chi$. If $w\in V$ is chosen uniformly at random, then in expectation $\chi_w$ colors $\frac 2nT(\pointedtriangle)$ edges with \redto[e1]\ and $\frac 1nT(\pointedtriangle)$ edges with \greenedge.
\end{proof}

To count triangles of the form \purpletri, \greentri, \cherry[e1], \cherry[e2], or \cherry[e3] in $\chi_w$, we apply \cref{cor:define-f} to $\chi'_w$, where $w$ is a vertex satisfying \cref{prop:redgreenedges}. In other words, the edge color densities $(2\al,\bet,\ga,2\del)$ of $\chi'_w$ must satisfy $\ga\leq\al$. Hence, it suffices to study the function $f(\al,\bet,\ga,\del)$ in the regime that $\ga\leq\al$.

We require the following lemma, which is an exercise in multivariable calculus. A full proof is given in \cref{appendix:calculus}; the lemma may also be checked by computational software such as Mathematica.

\begin{lemma}\label{calculus}
Let $f(\al,\bet,\ga,\del)$ be the function defined in \cref{cor:define-f}. If $\al,\bet,\ga,\del$ are nonnegative real numbers with $2\al+\bet+\ga+2\del=1$ and $\ga\leq\al$ then $f(\al,\bet,\ga,\del)\leq\frac 12$. Equality holds if and only if $(\al,\bet,\ga,\del)=(\frac 14,\frac 14,\frac 14,0)$.
\end{lemma}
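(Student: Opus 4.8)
The plan is to reduce \cref{calculus}, after using $2\al+\bet+\ga+2\del=1$ to eliminate one variable, to a bounded number of one- and two-variable inequalities, exploiting convexity and the piecewise structure of $f$. Write $A=2\del^{3/2}+g(\al,\bet,\ga)$ and $B=\frac14+\frac34 g(\al,\bet,\ga)$, so that $f=\min(A,B)$. Since $B\le\frac12$ precisely when $g\le\frac13$, the regime $g\le\frac13$ is handled at once, with $f\le B<\frac12$ whenever $g<\frac13$. Hence the entire content is the assertion: \emph{if $g\ge\frac13$ then $A\le\frac12$, with equality only at $\abcdopt$}. Granting this, the value $g=\frac13$ also yields $A<\frac12$ (the extremal point $\abcdopt$ has $g=\frac12$), so $f<\frac12$ there, while for $g>\frac13$ we get $f\le A\le\frac12$ with equality forcing $A=\frac12$ and hence $\abcdopt$; this also settles the equality claim.

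The first real step is to eliminate the $\del$--$\ga$ direction by convexity. Fix $\al$ and $\bet$, and hence fix $M:=\min\!\bigl(0.465,\tfrac{3\al\bet}{\al+\bet},3\al\sqrt\bet\bigr)$ and $s:=\ga+2\del=1-2\al-\bet$. Viewed as a function of how the budget $s$ is divided between $\ga$ and $2\del$, the quantity $A=\ga^{3/2}+2\del^{3/2}+M$ is a sum of convex functions of a single parameter, hence convex, so on the feasible segment $\{\ga+2\del=s,\ 0\le\ga\le\al,\ \del\ge0\}$ it is maximized at an endpoint. This reduces the problem to three families: (a) $\del=0$, where $A=g=\ga^{3/2}+M$ with $2\al+\bet+\ga=1$ and $\ga\le\al$; (b) $\ga=\al$, where $2\del=1-3\al-\bet$ and $A=\al^{3/2}+\tfrac1{\sqrt2}(1-3\al-\bet)^{3/2}+M$; and (c) $\ga=0$, where $A=\tfrac1{\sqrt2}(1-2\al-\bet)^{3/2}+M$.

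To finish, I would use two elementary facts about $M$: for fixed $c:=2\al+\bet$, the quantity $3\al\sqrt\bet$ is maximized at $\al=\bet=c/3$ (a one-line Lagrange computation), where it equals $c^{3/2}/\sqrt3$ and $\tfrac{3\al\bet}{\al+\bet}$ equals $c/2$; and $\tfrac{3\al\bet}{\al+\bet}\le\tfrac34(\al+\bet)$ by AM--GM. Together with $M\le0.465$ these bound the maximum of $M$ over $\{2\al+\bet=c\}$ by an explicit function $M^\ast(c)$. Families (a) and (c), after also optimizing the ratio $\al:\bet$ for fixed $c$ (keeping $\al\ge\ga$), become single-variable inequalities --- $\ga^{3/2}+M^\ast(1-\ga)\le\frac12$ on $\ga\in[0,\tfrac13]$, respectively $\tfrac1{\sqrt2}(1-c)^{3/2}+M^\ast(c)\le\frac12$ --- to be checked on the relevant interval, using crucially that $g\ge\frac13$ forces $c$ to be bounded below (so $\del$ is bounded above) in family (c). Family (b) is a genuine two-variable problem on the triangle $\{3\al+\bet\le1\}$; one splits once more according to the active branch of $M$, locates the boundary and interior critical points by elementary calculus, and checks that the maximum is $\frac12$, attained at $\al=\bet=\frac14$ with $\del=0$, a point lying on the common boundary of (a) and (b). Assembling the three families gives $A\le\frac12$ whenever $g\ge\frac13$, with equality only at $\abcdopt$.

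The main obstacle is the case bookkeeping created by the three-way minimum defining $M$ and the two-way minimum defining $f$: every reduction spawns sub-cases according to which branch is active, and one must be careful that each upper bound invoked for $M$ is valid on the whole region in question --- for instance $M\le c/2$ is false in general on $\{2\al+\bet=c\}$, since $\tfrac{3\al\bet}{\al+\bet}$ alone can reach $(9-6\sqrt2)c>c/2$, and it is only the minimum with $3\al\sqrt\bet$ that is controlled. Convexity disposes of the $(\ga,\del)$ direction cleanly, but the $(\al,\bet)$ direction --- especially the interior branches $M=3\al\sqrt\bet$ and $M=\tfrac{3\al\bet}{\al+\bet}$ with $\del>0$ --- is irreducibly two-dimensional, and it is the fact that both of these expressions are maximized along the diagonal $\al=\bet$ for fixed $2\al+\bet$ that ultimately pins the extremum to $\abcdopt$.
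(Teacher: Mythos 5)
Your first reduction — convexity of $\ga^{3/2}+2\del^{3/2}$ along the segment $\{\ga+2\del=s,\ 0\le\ga\le\al\}$ to push the problem to the boundary families (a) $\del=0$, (b) $\ga=\al$, (c) $\ga=0$ — is exactly the paper's \cref{calc:not-interior}, and the reframing ``$g\le\tfrac13\Rightarrow B\le\tfrac12$, so it suffices to bound $A$ when $g\ge\tfrac13$'' is a legitimate way to sidestep the paper's \cref{calc:reduce-to-R0}. After that, though, there are two genuine problems. First, your closing assertion that both $3\al\sqrt\bet$ and $\frac{3\al\bet}{\al+\bet}$ are maximized on the diagonal $\al=\bet$ for fixed $c=2\al+\bet$ is false for the second expression: by the paper's \cref{calc:ab:a+b} its maximum is at $\bet=\sqrt2\al$, where it equals $(9-6\sqrt2)c\approx 0.515c$, strictly exceeding the diagonal value $c/2$. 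Your hedge that ``it is only the minimum with $3\al\sqrt\bet$ that is controlled'' is also not true: at $c=0.9$ and $\bet=\sqrt2\al$ (so $\al\approx0.264$, $\bet\approx0.373$), one computes $3\al\sqrt\bet\approx0.483>\frac{3\al\bet}{\al+\bet}\approx0.463$, hence $M\approx 0.463 > c/2 = 0.45$. So the extremum in the $(\al,\bet)$ direction does not sit on the diagonal, and the heuristic that is supposed to pin down $\abcdopt$ is incorrect.

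Second, and more concretely, the three bounds you offer for $M^\ast(c)$ — namely $3\al\sqrt\bet\le c^{3/2}/\sqrt3$, $\frac{3\al\bet}{\al+\bet}\le\frac34(\al+\bet)$, and $M\le 0.465$ — are not strong enough to close family (a). Take $\del=0$, $c=0.85$, so $\ga=0.15$ and $\ga^{3/2}\approx 0.058$; you then need $M^\ast(0.85)\le 0.442$, but $c^{3/2}/\sqrt3\approx 0.453$, $0.465>0.442$, and the AM--GM bound $\frac34(\al+\bet)$ evaluated at the actual near-maximizer $\bet\approx\sqrt2\al$ gives $\frac34(\al+\bet)\approx 0.451$, all too large. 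What is needed here is precisely the sharper inequality $\frac{3\al\bet}{\al+\bet}\le(9-6\sqrt2)(2\al+\bet)$ from \cref{calc:ab:a+b}, which at $c=0.85$ gives $M\le 0.438$ and works. The paper in fact avoids the ``optimize over the inner $(\al,\bet)$'' step entirely: \cref{calc:hard-boundaries} uses Lagrange-type gradient conditions on each of the boundary families to force $\sqrt\bet=\al+\bet$ at any maximum (the crossover curve of the two branches of $M$), after which \cref{calc:max} is an explicit one-variable verification; and your family (b) is handled in \cref{calc:easy-boundaries} by the one-variable reduction $j(\al)=\al^{3/2}+3\al\sqrt{c-3\al}$, maximized at $\al=c/4$, so it need not be treated as a ``genuine two-variable problem.'' Your outline could likely be repaired by replacing the AM--GM estimate with \cref{calc:ab:a+b} and dropping the diagonal claim, but as written the argument has a gap that a careful check at $c\in[0.8,0.9]$ exposes.
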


Using \cref{calculus}, we prove that the family $\C 14$ has Tur\'an density $1/2$. The lower bound is given by the complete oddly bipartite construction, which is $\C14$-hom-free by \cref{prop:Godd-C-free}.

\begin{theorem}\label{thm:C14-turan-density}
Let $G$ be a $\C 14$-hom-free 4-graph on $n$ vertices. Then $e(G)<n^4/48$.	
\end{theorem}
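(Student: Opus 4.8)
The plan is to induct on $n$, removing one well-chosen vertex at each step and feeding the three main inequalities of this section (\cref{lem:color-ineqs}, \cref{cor:define-f}, \cref{calculus}) into the recursion.

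Since $G$ is $\C14$-hom-free, fix by \cref{cor:C14-free}(4) an oriented coloring $\chi$ of $V^{(3)}$ by $\Delbf_3$ whose restriction to the vertex set of any $4$-edge of $G$ is isomorphic to one of the three configurations in \cref{fig:tetrahedron-boundaries}. For a vertex $w$, let $\chi'_w$ be the simplified link coloring of $(V\setminus\{w\})^{(2)}$ by $\Delbf'_2=\{\redto,\blueedge,\greenedge,\purpleto\}$ and let $(2\al_w,\bet_w,\ga_w,2\del_w)$ be its edge-color densities, which sum to $1$ since every pair is colored. For any edge $wxyz\in E(G)$, the coloring of $\{w,x,y,z\}$ is one of the three tetrahedron boundaries, so by the correspondence \eqref{eq:chi-and-link-pairs} the triple $xyz$ is colored $\greentri$, $\cherry[e1]$, $\purpletri$, $\cherry[e2]$, or $\cherry[e3]$ in $\chi_w$, hence $\greentri$, $\purpletri$, or $\cherry$ in $\chi'_w$. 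Thus $\deg_G(w)$ is at most the number of triangles of these three types in $\chi'_w$, which by \cref{cor:define-f} is at most $f(\al_w,\bet_w,\ga_w,\del_w)\,(n-1)^3/6$.

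Now \cref{prop:redgreenedges} supplies a vertex $w_0$ whose link coloring has at least twice as many $\redto[e1]$-edges as $\greenedge$-edges; since the green edges of $\chi_{w_0}$ are exactly those of $\chi'_{w_0}$ while the $\redto[e1]$-edges of $\chi_{w_0}$ sit among the red edges of $\chi'_{w_0}$, this gives $\ga_{w_0}\le\al_{w_0}$. \cref{calculus} then yields $f(\al_{w_0},\bet_{w_0},\ga_{w_0},\del_{w_0})\le\tfrac12$, so $\deg_G(w_0)\le (n-1)^3/12$. As $G-w_0$ is $\C14$-hom-free on $n-1$ vertices, the inductive hypothesis gives $e(G-w_0)<(n-1)^4/48$, whence
\[
e(G)=e(G-w_0)+\deg_G(w_0)<\frac{(n-1)^4}{48}+\frac{(n-1)^3}{12}\leq\frac{n^4}{48},
\]
the last step being the elementary inequality $(n-1)^4+4(n-1)^3\le n^4$, i.e.\ $6n^2-8n+3\ge 0$, whose discriminant $64-72$ is negative. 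For the base cases one checks that the trivial bound $e(G)\le\binom n4$ already gives $e(G)<n^4/48$ whenever $n\le 9$, which is enough since the inductive step handles all $n\ge 10$.

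The bulk of the work is already done — the substantive content lives in the three cited results — so I expect no serious difficulty here. The one subtlety worth flagging is that the hypothesis $\ga\le\al$ in \cref{calculus} is only available at a single vertex per step (guaranteed by \cref{prop:redgreenedges}), which is exactly why the argument peels that vertex and recurses rather than bounding $\sum_w\deg_G(w)$ directly; a direct averaging would only give $e(G)\lesssim n^4/24$, losing the crucial factor of two.
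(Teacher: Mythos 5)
Your proof is correct and follows the paper's argument almost verbatim: induct on $n$, use \cref{cor:C14-free}(4) to produce the coloring $\chi$, apply \cref{prop:redgreenedges} to find a vertex $w_0$ whose simplified link coloring satisfies $\ga_{w_0}\leq\al_{w_0}$, then combine \cref{cor:define-f} and \cref{calculus} to bound $\deg_G(w_0)\leq(n-1)^3/12$ and recurse. The only cosmetic difference is your choice of base case ($n\leq 9$ via the trivial bound rather than the paper's $n<4$, where $e(G)=0$); both work, and indeed the inductive step is valid for all $n\geq 2$, so the larger base case is unnecessary but harmless.
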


\begin{proof}
We induct on $n$. For $n<4$, it is clear that $e(G)=0<n^4/48$.

Now, suppose $n\geq 4$ and write $V=V(G)$. Let $\chi$ be the coloring of $V^{(3)}$ by $\Delbf_3$ guaranteed by \cref{cor:C14-free}(4). For each $w\in V$, let $\chi_w$ be the link coloring of $(V-\{w\})^{(2)}$ by $\Delbf_2$ defined by \cref{table:link-coloring}, and let $\chi'_w$ be the simplified link coloring of $(V-\{w\})^{(2)}$ by $\Delbf_2'$. By \cref{prop:redgreenedges}, there is some vertex $w$ such that $\chi'_w$ contains at least twice as many red edges as green edges. Combining \cref{cor:define-f} and \cref{calculus}, at most $(n-1)^3/12$ triples are colored as \purpletri, \greentri, or \cherry\ by $\chi'_w$. 
Hence, $\deg_G(w)\leq(n-1)^3/12$. Applying the inductive hypothesis,
\[e(G)=e(G-w)+\deg_G(w)<\frac{(n-1)^4+4(n-1)^3}{48}<\frac{n^4}{48}.\qedhere\]
\end{proof}

\begin{remark}
Although \cref{thm:C14-turan-density} does not show that the extremal $\C14$-hom-free construction is complete oddly bipartite, we suspect this could be derived from the ideas in this section without too much trouble. We do not prove this statement, as it is a corollary of \cref{thm:one-cycle}.
\end{remark}

\subsection{Stability}
\label{subsection:stability}

We conclude this section by proving \cref{thm:stability}, which shows that, if $G$ is $\C14$-hom-free with edge density close to $1/2$, then $G$ differs from a complete oddly bipartite 4-graph on only a small fraction of 4-edges.

To prove this result, we first show \cref{lem:link-stability}, which states that if a link coloring $\chi_w$ satisfying the conclusion of \cref{prop:redgreenedges} has close to $\frac 12\binom {n-1}3$ triples colored \purpletri, \greentri, \cherry[e1], \cherry[e2], or \cherry[e3], then $\chi_w$ looks close to the link coloring depicted in \Cref{fig:Godd-link-coloring}, which is the link coloring of a vertex in a complete oddly bipartite 4-graph. We begin by deriving a stability version of the inequality used to prove \cref{lem:color-ineqs}(6).

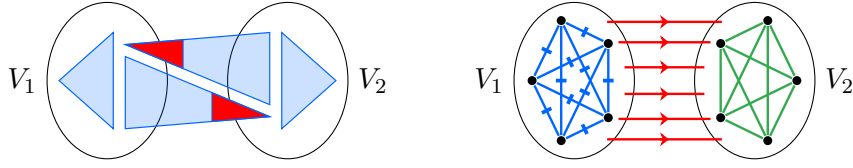
\begin{figure}
\centering
\begin{tikzpicture}[scale=0.8]
        \coordinate (o1) at (0,0);
        \coordinate (o2) at (3,0);
        \draw[yscale=1.3] (o1) circle (1) (o2) circle (1);
        \path (o1) --++(-1,0) node[left]{$V_1$};
        \path (o2) --++(1,0) node[right]{$V_2$};
        \path (o1) --+ (-0.8,0) coordinate(x1)
                        --+ (0.1,0.8) coordinate (y1)
                        --+ (0.1,-0.8) coordinate (z1);
        \path (o2) --+ (0.8,0) coordinate(x2)
                        --+ (-0.1,0.8) coordinate (y2)
                        --+ (-0.1,-0.8) coordinate (z2);
        \path (y1) --++ (0.2,0)
                        --++ (0,-0.2) coordinate (a1) --++ (0,-0.2) coordinate (a3)
                        (z1) --+ (0.2,0) coordinate (a2)
                        (y2) --+ (-0.2,0) coordinate (b1)
                        (z2) --++ (-0.2,0)
                        --++ (0,0.2) coordinate (b2) --++ (0,0.2) coordinate (b3);

        \tikzmakebluetriangle {x1}{y1}{z1};
        \tikzmakebluetriangle {x2}{y2}{z2};
        \tikzmakepointedtriangle {a1}{b1}{b3};
        \tikzmakepointedtriangle {b2}{a3}{a2};
\end{tikzpicture}
\qquad
\begin{tikzpicture}[scale=0.8]
        \coordinate (o1) at (0,0);
        \coordinate (o2) at (3,0);
        \draw[yscale=1.3] (o1) circle (1) (o2) circle (1);
        \path (o1) --++(-1,0) node[left]{$V_1$};
        \path (o2) --++(1,0) node[right]{$V_2$};

\begin{scope}[tikz arrows]
        \path [yscale=1.5]
                (180:0.7) coordinate[vtx] (a1)
                (108:0.7) coordinate[vtx] (a2)
                (36:0.7) coordinate[vtx] (a3)
                (-36:0.7) coordinate[vtx] (a4)
                (-108:0.7) coordinate[vtx] (a5);
        \path [yscale=1.5,shift=(o2)]
                (0:0.7) coordinate[vtx] (b1)
                (72:0.7) coordinate[vtx] (b2)
                (144:0.7) coordinate[vtx] (b3)
                (-144:0.7) coordinate[vtx] (b4)
                (-72:0.7) coordinate[vtx] (b5);
        \foreach \i/\j in {1/2,1/3,1/4,1/5,2/3,2/4,2/5,3/4,3/5,4/5} {
                \draw[blue edge, e1] (a\i) -- (a\j);
                \draw[graphgreen] (b\i) -- (b\j);
        }
        \foreach \ang [count=\i] in {-50,-30,...,50} {
                \path [yscale=1.5] (\ang:0.85) coordinate (r\i);
                \path [yscale=1.5] (180-\ang:0.85) coordinate (s\i);
                \path (s\i) --++ (o2) coordinate (t\i);
                \draw[red arrow, e1] (r\i) -- (t\i);
        }
\end{scope}
\end{tikzpicture}

\caption{At left, the coloring of $(V_1\cup V_2)^{(3)}$ from \Cref{fig:Godd-coloring}. At right, the associated link coloring $\chi_w$ of a vertex $w\in V_1$.}
\label{fig:Godd-link-coloring}
\end{figure}

\begin{proposition}\label{milne-stability}
Suppose $a_1,\ldots,a_n$ and $b_1,\ldots,b_n$ are nonnegative real numbers with $a_i+b_i\leq 1$ for each $i$. Then
\[
\sum_{i=1}^na_ib_i\leq\frac 14\sum_{i=1}^n(a_i+b_i).
\]
Moreover, for each $\eps>0$, if
\[
\sum_{i=1}^na_ib_i\geq\frac 14\sum_{i=1}^n(a_i+b_i)-\eps^3 n
\]
then all but at most $\eps n$ of the pairs $(a_i,b_i)$ satisfy $a_i,b_i\in\big(\frac 12-\eps,\frac 12+\eps\big)$ or $a_i,b_i\in[0,\eps)$.
\end{proposition}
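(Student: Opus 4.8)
The first inequality is immediate: since $a_i+b_i\le 1$, for each $i$ we have $4a_ib_i\le(a_i+b_i)^2\le a_i+b_i$, and summing over $i$ and dividing by $4$ gives $\sum_i a_ib_i\le\frac14\sum_i(a_i+b_i)$. I will write $\delta_i:=\frac14(a_i+b_i)-a_ib_i\ge 0$ for the per-index deficiency, so that the hypothesis of the stability statement says exactly $\sum_i\delta_i\le\eps^3 n$.

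The engine for the stability part is the identity
\[
4\delta_i=(a_i-b_i)^2+(a_i+b_i)(1-a_i-b_i),
\]
a sum of two nonnegative terms (using $a_i+b_i\le 1$). This exhibits $\delta_i$ as small precisely when $a_i+b_i$ is close to $0$ or to $1$ \emph{and} $a_i$ is close to $b_i$ --- exactly the two ``good'' shapes. The plan is then to prove the quantitative claim that \emph{every bad pair $(a_i,b_i)$ satisfies $\delta_i\ge\eps^2$}; granting this, the number of bad indices is at most $(\sum_i\delta_i)/\eps^2\le\eps^3 n/\eps^2=\eps n$, which is the conclusion.

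To prove the claim I would assume without loss of generality that $a_i\ge b_i$, in which case ``bad'' unpacks to: $a_i\ge\eps$, together with $b_i\le\frac12-\eps$ or $a_i\ge\frac12+\eps$. Rewriting the identity as $4\delta_i=4a_i^2-(a_i+b_i)(4a_i-1)$, I would split on the size of $a_i$. If $a_i\le\frac14$ then $4a_i-1\le0$, so $4\delta_i\ge4a_i^2\ge4\eps^2$. If $a_i\ge\frac12+\eps$ then $b_i\le1-a_i\le\frac12-\eps$, hence $a_i-b_i\ge2\eps$ and $4\delta_i\ge(a_i-b_i)^2\ge4\eps^2$. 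In the remaining range $\frac14<a_i<\frac12+\eps$, badness forces $b_i\le\frac12-\eps$, so $4a_i-1>0$ and $a_i+b_i\le a_i+\frac12-\eps$ yield
\[
4\delta_i\ge4a_i^2-\big(a_i+\tfrac12-\eps\big)(4a_i-1)=\tfrac12-\eps-a_i(1-4\eps)\ge\tfrac12-\eps-\big(\tfrac12+\eps\big)(1-4\eps)=4\eps^2,
\]
using $1-4\eps\ge0$ and $a_i<\frac12+\eps$ in the last step. This argument assumes $\eps\le\frac14$, which is the only regime relevant to the applications in this paper (and the statement is vacuous for $\eps\ge1$), so I would simply work under that assumption.

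\textbf{Main obstacle.} The subtle point is getting the constant in the claim sharp: a crude lower bound on $\delta_i$ over bad pairs only gives $O(\eps)\,n$ bad indices rather than $\eps n$. Extracting exactly $\delta_i\ge\eps^2$ is what forces the case analysis above --- in particular the threshold $a_i=\frac14$ --- and one should check that the pair $(\frac12+\eps,\frac12-\eps)$ is a bad configuration with $\delta_i=\eps^2$ exactly, so the constant cannot be improved.
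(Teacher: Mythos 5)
Your argument is correct and takes a genuinely different route from the paper's --- one that is in fact sharper. Both proofs exploit the same identity $4\delta_i=(a_i-b_i)^2+(a_i+b_i)(1-a_i-b_i)$ for $\delta_i:=\tfrac14(a_i+b_i)-a_ib_i$, but the paper then counts the indices $i$ with $|a_i-b_i|\geq\eps$ or $\eps\leq a_i+b_i\leq 1-\eps$, observing that each such $i$ contributes at least $\eps^2$ to $(a_i-b_i)^2+(a_i+b_i)(1-a_i-b_i)$ and hence at least $\eps^2/4$ to $\delta_i$, and then checks that every remaining index is a good pair. That per-index lower bound of $\eps^2/4$ only yields a count of $4\eps n$, so the paper's write-up is silently off by a factor of $4$ from its own statement (harmless in practice, since the application in \cref{lem:link-stability} has generous slack). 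Your approach, by contrast, unpacks the statement's good/bad classification directly and, after the WLOG $a_i\geq b_i$, proves the sharp per-index bound $\delta_i\geq\eps^2$ via a split at $a_i=\tfrac14$ and $a_i=\tfrac12+\eps$; this is exactly what the stated bound of $\eps n$ demands, and your observation that $(\tfrac12+\eps,\tfrac12-\eps)$ is a bad pair with $\delta_i=\eps^2$ confirms the constant is tight. Your working assumption $\eps\leq\tfrac14$ is fine --- the paper's proof also tacitly assumes $\eps\leq\tfrac12$ in the step $\eps(1-\eps)\geq\eps^2$, and the proposition is only ever invoked with small $\eps$.
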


\begin{proof} 
It holds that
\[
\frac 14\sum_{i=1}^n(a_i+b_i)-\sum_{i=1}^na_ib_i
=\frac 14\sum_{i=1}^n(a_i+b_i)(1-a_i-b_i)+(a_i-b_i)^2.
\tag{\theproposition.1}\label{eq:milne-stability}
\]
Because $a_i+b_i\leq 1$, this difference is a sum of nonnegative terms and hence nonnegative.

We count indices $i$ with $|a_i-b_i|\geq\eps$ or $\eps\leq a_i+b_i\leq 1-\eps$. If $|a_i-b_i|\geq\eps$, then $(a_i-b_i)^2\geq\eps^2$. If $\eps\leq a_i+b_i\leq1-\eps$, then $(a_i+b_i)(1-a_i-b_i)\geq\eps(1-\eps)\geq\eps^2$. 
Thus, if the difference in (\ref{eq:milne-stability}) is at most $\eps^3n$, then there are at most $\eps^3n/\eps^2=\eps n$ indices $i$ with either $|a_i-b_i|\geq\eps$ or $\eps\leq a_i+b_i\leq1-\eps$.

Suppose a pair $(a_i,b_i)$ satisfies $|a_i-b_i|<\eps$ and either $a_i+b_i<\eps$ or $a_i+b_i>1-\eps$. 
If $a_i+b_i<\eps$, then clearly $a_i,b_i\in[0,\eps)$. If $a_i+b_i>1-\eps$, then $2a_i\geq a_i+b_i-|b_i-a_i|>1-\eps-\eps$, so $a_i>\frac 12-\eps$. Similarly, $b_i>\frac 12-\eps$, and because $a_i+b_i\leq 1$ it holds that $a_i,b_i\in\left(\frac 12-\eps,\frac 12+\eps\right)$.

It follows that the pairs $(a_i,b_i)$ satisfy the desired conclusion except for the at most $\eps n$ pairs satisfying $|a_i-b_i|\geq\eps$ or $\eps\leq a_i+b_i\leq 1-\eps$.
\end{proof}

\begin{lemma}\label{lem:link-stability}
For each $\eps>0$ there is $\eps_0=\eps_{\theproposition}(\eps)>0$ such that the following holds.

Let $V$ be a set of $n$ vertices, where $n$ is sufficiently large in terms of $\eps$. Fix an oriented coloring of $V^{(2)}$ by the set $\Delbf_2$ defined in (\ref{eq:del2}) in which at least twice as many edges are colored $\redto[e1]$ as $\greenedge$. If the number of triangles colored \purpletri, \greentri, \cherry[e1], \cherry[e2], or \cherry[e3] is at least $(\frac 12-\eps_0)\frac{n^3}{6}$ then there is a partition $V=V_1\cup V_2$ with $(\frac 12-\eps)n\leq |V_i|\leq(\frac 12+\eps)n$ satisfying the following property. The number of edges within $V_1$ not colored $\blueedge[e1]$, edges within $V_2$ not colored $\greenedge$, and ordered pairs $(v_1,v_2)\in V_1\times V_2$ not colored $\redtolabeled[e1]{v_1}{v_2}$ total to at most $\eps n^2$.
\end{lemma}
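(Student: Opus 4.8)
The plan is to feed two facts into an essentially combinatorial argument. The first is that the edge colour densities of the simplified link colouring are forced to be nearly optimal, via \cref{cor:define-f} together with \cref{calculus}. The second is the stability of Milne's inequality, \cref{milne-stability}. Everything after that is degree counting; a little surprisingly, no stability version of the Kruskal--Katona-type bound \cref{lem:color-ineqs}(1) is needed for the ``green clique''.

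\emph{Step 1 (near-optimal densities).} First pass from the given $\Delbf_2$-colouring to the simplified colouring by $\Delbf'_2=\{\redto,\blueedge,\greenedge,\purpleto\}$, merging the three shades of red and of blue; write its edge colour densities as $(2\al,\bet,\ga,2\del)$, so $2\al+\bet+\ga+2\del=1$. Since $\#\redto[e1]\le\#(\text{red})$ and $\#\redto[e1]\ge 2\,\#\greenedge$ we get $\ga\le\al$, and every triangle counted in the hypothesis becomes a copy of $\purpletri$, $\greentri$, or $\cherry$, so $T(\purpletri)+T(\greentri)+T(\cherry)\ge(\tfrac12-\eps_0)\tfrac{n^3}6$. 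Then \cref{cor:define-f} gives $f(\al,\bet,\ga,\del)\ge\tfrac12-\eps_0$; as \cref{calculus} says the continuous function $f$ attains its maximum $\tfrac12$ on the relevant compact region only at $(\tfrac14,\tfrac14,\tfrac14,0)$, a compactness argument forces $(\al,\bet,\ga,\del)$ within any prescribed distance $\eta$ of $(\tfrac14,\tfrac14,\tfrac14,0)$ once $\eps_0$ is small. In particular $\del<\eta$, so \cref{lem:color-ineqs}(2) gives $T(\purpletri)=o(n^3)$ and \cref{lem:color-ineqs}(1) gives $T(\greentri)\le(\tfrac18+O(\eta))\tfrac{n^3}6$, whence $T(\cherry)\ge(\tfrac38-o(1))\tfrac{n^3}6$. (Throughout, $o(1)$ and $o(n^2)$ denote quantities tending to $0$ as $\eps_0\to0$, with $\eta$ a function of $\eps_0$ tending to $0$.)

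\emph{Step 2 (big/small vertices and sizes).} Let $a_v$, $b_v$ be the red out-degree and blue degree of $v$, so $\sum_v a_v$ and $\sum_v b_v$ are each $(\tfrac14+o(1))n^2$ and $2T(\cherry)\le\sum_v a_vb_v$ by the $\cherrypath$-counting step of \cref{lem:color-ineqs}(6). Combining these, the normalised degrees $\hat a_v=a_v/(n-1)$, $\hat b_v=b_v/(n-1)$ (which satisfy $\hat a_v+\hat b_v\le1$) obey $\sum_v\hat a_v\hat b_v\ge\tfrac14\sum_v(\hat a_v+\hat b_v)-o(n)$, so \cref{milne-stability} applies with an accuracy parameter $\eps_1=o(1)$: all but $\eps_1 n$ vertices are either \emph{big} ($a_v,b_v\in(\tfrac12-\eps_1,\tfrac12+\eps_1)n$) or \emph{small} ($a_v,b_v<\eps_1n$). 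Set $V_2=\{\text{small vertices}\}$ and $V_1=V\setminus V_2$. Summing red out-degrees over $V_1$ — the small vertices and the $\le\eps_1 n$ exceptions contribute $o(n^2)$, and each big vertex contributes $\approx n/2$ — pins $|V_1|$, hence $|V_2|$, to $(\tfrac12+o(1))n$, yielding the required $(\tfrac12-\eps)n\le|V_i|\le(\tfrac12+\eps)n$. Moreover each big vertex $v$ has $a_v+b_v\ge(1-2\eps_1)(n-1)$ yet $a_v+b_v\le n-1$, so its green, red-in and purple degrees are each $<2\eps_1 n$.

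\emph{Step 3 (extracting the structure), and the main difficulty.} All three error bounds now follow by double counting. Green edges meeting $V_1$ number at most $\sum_{u\in V_1}(\text{green degree of }u)=o(n^2)$ (big vertices contribute $<2\eps_1 n$, exceptions $<n$), so at least $(\tfrac14-o(1))\binom n2$ of the $(\tfrac14\pm o(1))\binom n2$ green edges lie inside $V_2$; since $\binom{|V_2|}2=(\tfrac14+o(1))\binom n2$, all but $o(n^2)$ pairs inside $V_2$ are green. Symmetrically, blue edges meeting $V_2$ number at most $\sum_{v\in V_2}b_v\le|V_2|\eps_1 n=o(n^2)$, so all but $o(n^2)$ pairs inside $V_1$ are blue; in particular only $o(n^2)$ red edges lie inside $V_1$. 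Since also at most $\sum_{v\in V_2}a_v=o(n^2)$ red edges have tail in $V_2$, all but $o(n^2)$ of the $(\tfrac14\pm o(1))n^2$ red edges go from $V_1$ to $V_2$, i.e.\ all but $o(n^2)$ ordered pairs $(v_1,v_2)\in V_1\times V_2$ carry $\redtolabeled{v_1}{v_2}$ (of some shade). Summing the three $o(n^2)$ terms and choosing $\eps_0$ small enough that the total is $<\eps n^2$ finishes the proof; the resulting colouring matches \cref{fig:Godd-link-coloring}. The only real subtlety I anticipate is not in any individual step but in tracking the chain of parameters: one must verify $\eps_1=o(1)$ so that \cref{milne-stability} is applicable — this is exactly where the lower bound $T(\cherry)\ge(\tfrac38-o(1))\tfrac{n^3}6$ of Step 1 is spent — and then propagate the resulting $o(n^2)$ bounds through the double counting in Step 3, the $\Delbf_2$-versus-$\Delbf'_2$ distinction playing no role beyond furnishing $\ga\le\al$.
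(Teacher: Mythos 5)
Your proof establishes the conclusion only for \emph{merged} blue and red (i.e.\ blue or red of any shade), whereas the statement — as it is actually used in the proof of \cref{thm:stability}, and as depicted in \cref{fig:Godd-link-coloring} — concerns the specific shades $\blueedge[e1]$ and $\redto[e1]$. Those shades are what encode, via \cref{table:link-coloring}, the triples that $\chi$ colours $\bluetriangle$ and $\pointedtriangle$; if \cref{lem:link-stability} only locates blue and red of unspecified shade, the subsequent cleaning argument in \cref{subsection:stability} cannot proceed. Your closing remark that ``the $\Delbf_2$-versus-$\Delbf'_2$ distinction plays no role beyond furnishing $\ga\le\al$'' is exactly where the gap is.

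For red the gap is cosmetic: writing $2\al_i\binom n2$ for the number of $\redto[ei]$-edges, the hypothesis gives $\al_1\ge\ga$, and Step~1 gives $\al\approx\ga\approx\frac14$, forcing $\al_2+\al_3=\al-\al_1\le\al-\ga=o(1)$, so almost every red pair is already $\redto[e1]$. For blue there is no analogous hypothesis, and after Step~1 one only knows $\bet=\bet_1+\bet_2+\bet_3\approx\frac14$; nothing in your argument rules out, say, $\bet_1\approx 0$, $\bet_2\approx\frac14$, in which case most pairs inside $V_1$ would be $\blueedge[e2]$ rather than $\blueedge[e1]$ and the conclusion fails. The missing step is to run Milne on the shade-$[e1]$ degree sequences: the bound $\al_2+\al_3=o(1)$ from the red side forces $T(\cherry[e2])+T(\cherry[e3])=o(n^3)$, hence $T(\cherry[e1])\ge\big(\tfrac38-o(1)\big)\tfrac{n^3}6$, and then $2T(\cherry[e1])\le\sum_v a^{(1)}_vb^{(1)}_v\le\tfrac n4\sum_v\big(a^{(1)}_v+b^{(1)}_v\big)=\tfrac n4(\al_1+\bet_1)n(n-1)$ yields $\bet_1\ge\bet-o(1)$. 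This is exactly the paper's derivation of (\ref{eq:bet-and-bet1}) and the estimate $\bet_1\ge\bet-5\eps_1$. Once $\al_1\approx\al$ and $\bet_1\approx\bet$ are in hand, your Steps~2--3 (which are otherwise sound, and which correctly observe that no Kruskal--Katona stability is needed on the green side) do close the argument by working with the $[e1]$-degrees throughout, as the paper does.
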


\begin{proof}
Set $\eps_3=\eps$ and suppose $\eps_2,\eps_1,\eps_0$ are chosen with each $\eps_i$ sufficiently small in terms of $\eps_{i+1}$. Suppose $V$ is a set of size $n$, with $n$ sufficiently large in terms of $\eps_3,\ldots,\eps_0$. Fix a coloring of $V^{(2)}$ as described. Suppose this coloring of $V^{(2)}$ has $2\al_1\binom n2$, $2\al_2\binom n2$, $2\al_3\binom n3$ edges colored $\redto[e1]$, $\redto[e2]$, $\redto[e3]$ respectively; $\bet_1\binom n2$, $\bet_2\binom n2$, $\bet_3\binom n2$ edges colored $\blueedge[e1]$, $\blueedge[e2]$, $\blueedge[e3]$ respectively; $\ga\binom n2$ edges colored $\greenedge$; and $2\del\binom n2$ edges colored $\purpleto$. 

Let $f$ be the function defined in \cref{cor:define-f}.
Applying \cref{cor:define-f} to the simplified coloring of $V^{(2)}$ by $\Delbf'_2$ (in which red and blue edges are recolored with \redto\ and \blueedge, respectively), the number of triangles colored \purpletri, \greentri, \cherry[e1], \cherry[e2], or \cherry[e3] is at most $f(\abcd)n^3/6$, where $\al=\al_1+\al_2+\al_3$ and $\bet=\bet_1+\bet_2+\bet_3$. Thus, $f(\abcd)\geq\frac 12-\eps_0$.

Because $f$ is a continuous function on the region
\[
R=\{(\abcd)\in\mathbb R_{\geq 0}^4:2\al+\bet+\ga+2\del=1,\ga\leq\al\}
\]
which by \cref{calculus} attains its global maximum on $R$ uniquely at $\abcdopt$,
one may choose $\eps_0$ small enough in terms of $\eps_1$ so that
\[|\al'-0.25|+|\bet'-0.25|+|\ga'-0.25|+|\del'-0|\leq\eps_1
\tag{\thetheorem.1}\label{eq:calculus-limit}
\]
for any $(\al',\bet',\ga',\del')\in R$ satisfying $f(\al',\bet',\ga',\del')\geq\frac 12-\eps_0$. In particular, (\ref{eq:calculus-limit}) is satisfied by $(\abcd)$. Because $\al_1\geq\ga$, it holds that $\al_2+\al_3=\al-\al_1\leq\al-\ga\leq \eps_1$. We conclude that at most $2(\al_2+\al_3)\binom n2\leq\eps_1n^2$ pairs are colored $\redto[e2]$ or $\redto[e3]$, and thus at most $\eps_1n^3$ triples are colored as $\cherry[e2]$ or $\cherry[e3]$. Similarly, at most $2\del\binom n2\leq\eps_1 n^2$ pairs are colored $\purpleto$ and thus at most $\eps_1n^3$ triples are colored as $\purpletri$. By \cref{lem:color-ineqs}(1), at most $\ga^{3/2} n^3/6\leq(\frac 14+\eps_1)^{3/2}n^3/6$ triples are colored $\greentri$; for sufficiently small $\eps_1$ this quantity is at most $(\frac 18+2\eps_1)n^3/6$. We conclude that the number of triples colored $\cherry[e1]$ is at least
\[
T(\cherry[e1])\geq\left(\frac 12-\eps_0\right)\frac{n^3}6-\eps_1n^3-\eps_1n^3-\left(\frac 18+2\eps_1\right)\frac{n^3}6\geq\frac{n^3}{16}-3\eps_1n^3.
\]

For each vertex $v$, let $a_v$ and $b_v$ be the  numbers of vertices $w$ with $vw$ colored as $\redtolabeled[e1]{v}{w}$ or $\blueedgelabeled[e1]{v}{w}$, respectively. As in the proof of \cref{lem:color-ineqs}(6), note that $T(\cherry[e1])$ is at most half the number of copies of $\cherrypath[e1]$, i.e., $T(\cherry[e1])\leq\frac 12\sum_va_vb_v$.
Observe also that
\[
\sum_v(a_v+b_v)=2\al_1\binom n2+2\bet_1\binom n2\leq(\al_1+\bet_1)n^2\leq\left(\frac 12+2\eps_1+\bet_1-\bet\right)n^2.
\]
It follows that
\[
\sum_v a_vb_v\geq 2T(\cherry[e1])\geq\frac{n^3}8-6\eps_1n^3\geq\frac n4\sum_v(a_v+b_v)-7\eps_1n^3+\frac{\bet-\bet_1}4n^3.
\tag{\thetheorem.2}\label{eq:bet-and-bet1}
\]

Because $a_v+b_v<n$ for each vertex $v$, we may apply \cref{milne-stability} to the sequences $a_v/n$ and $b_v/n$. Combining the first part of \cref{milne-stability} with (\ref{eq:bet-and-bet1}) yields
\[
\frac 14\sum_v\frac{a_v+b_v}n\geq\sum_v\frac{a_vb_v}{n^2}\geq \frac 14\sum_v\frac{a_v+b_v}n-7\eps_1n+\frac{\bet-\bet_1}4n,
\]
implying that $\bet_1\geq\bet-28\eps_1\geq\frac 14-29\eps_1$. 
Set $V_1=\left\{v\in V:\left(\frac 12-\eps_2\right)n\leq a_v,b_v\leq\left(\frac 12+\eps_2\right)n\right\}$ and set $V_2=V\setminus V_1$. If $7\eps_1\leq\eps_2^3$, the second part of \cref{milne-stability} implies that at most $\eps_2n$ vertices $v\in V_2$ satisfy $a_v\geq\eps_2n$ or $b_v\geq\eps_2n$. Let $\Vbad$ be the set of such vertices and set $V'_2=V_2\setminus\Vbad$. 

Note that $\sum_v(a_v+b_v)=(\al_1+\bet_1)(n^2-n)$. For each $v\in V_1$ we have $(1-2\eps_2)n\leq a_v+b_v\leq n$ and for each $v\in V'_2$ we have $a_v+b_v\leq2\eps_2n$. Thus,
\begin{align*}
\left|(\al_1+\bet_1)n^2-n|V_1|\right|
&\leq(\al_1+\bet_1)n+\Big|\sum_{v\in V}(a_v+b_v)-n|V_1|\Big|
\\&\leq(\al_1+\bet_1)n+2\eps_2n|V_1|+2\eps_2n|V'_2|+n|\Vbad|\leq4\eps_2n^2.
\end{align*}
Previously, we have observed the following bounds on $\al_1$ and $\bet_1$:
\[\frac 14-\eps_1\leq\ga\leq\al_1\leq\al\leq\frac 14+\eps_1\text{ and }\frac 14-29\eps_1\leq\bet_1\leq\bet\leq\frac 14+\eps_1.
\tag{\thetheorem.3}\label{eq:albet}\] 
Thus, $|\frac 12-(\al_1+\bet_1)|\leq 30\eps_1\leq\eps_2$ and
\[\left|\frac n2-|V_1|\right|\leq n\left|\frac 12-(\al_1+\bet_1)\right|+\big|(\al_1+\bet_1)n-|V_1|\big|\leq 5\eps_2n\leq\eps_3 n,
\tag{\thetheorem.4}
\label{eq:V1-half}\]
using that each $\eps_i$ is sufficiently small in terms of $\eps_{i+1}$. Moreover, $\left|\frac n2-|V_2|\right|=\left|\frac n2-|V_1|\right|\leq\eps_3n$.

We conclude the proof by showing that the coloring of $V^{(2)}$ is close to the coloring pictured in \cref{fig:Godd-link-coloring}.
The number of edges colored $\blueedge[e1]$ with at least one endpoint in $V_2$ is at most
\[
\sum_{v\in V_2}b_v\leq|V_2'|\times\eps_2n+|\Vbad|\times n\leq \eps_2n^2+\eps_2n^2=2\eps_2n^2.
\]
Thus, using (\ref{eq:albet}), there are least $\bet_1\binom n2-2\eps_2 n^2\geq \frac{n^2}{8}-3\eps_2n^2$ edges colored $\blueedge[e1]$ with both endpoints in $V_1$. We have $|V_1|^2/2\leq\frac 12\left(\frac n2+5\eps_2 n\right)^2\leq n^2/8+3\eps_2n^2$ by (\ref{eq:V1-half}), so at most $6\eps_2 n^2$ edges with both endpoints in $V_1$ are not colored as $\blueedge[e1]$.

The number of edges colored $\redtolabeled[e1]{v}{w}$ with $v\notin V_1$ is at most
\[
\sum_{v\in V_2}a_v\leq |V_2'|\times\eps_2n+|\Vbad|\times n\leq\eps_2n^2+\eps_2n^2\leq 2\eps_2n^2.
\]
Additionally, at most $6\eps_2n^2$ edges with both endpoints in $V_1$ are colored as $\redtolabeled[e1]{v}{w}$ because all but $6\eps_2n^2$ edges in $V_1$ are colored as $\blueedge[e1]$. Thus, by (\ref{eq:albet}), at least $2\al_1\binom n2-8\eps_2n^2\geq n^2/4-9\eps_2n^2$ pairs $(v_1,v_2)\in V_1\times V_2$ are colored as $\redtolabeled[e1]{v_1}{v_2}$. Because $|V_1\times V_2|\leq\frac{n^2}4$, at most $9\eps_2n^2$ pairs in $V_1\times V_2$ are not colored in this fashion.

Lastly, there are $\ga\binom n2\geq\left(\frac 14-\eps_1\right)\binom n2$ green edges $\greenedge$, of which at most $6\eps_2n^2+9\eps_2n^2$ have both or one endpoint in $V_1$. That is, there are at least $n^2/8-16\eps_2n^2$ green edges with both endpoints within $V_2$. We have $|V_2|^2/2\leq\frac 12\left(\frac n2+5\eps_2 n\right)^2\leq n^2/8+3\eps_2n^2$ by (\ref{eq:V1-half}), so at most $19\eps_2n^2$ edges of $V_2$ are not colored green.

In conclusion, at most $6\eps_2n^2+9\eps_2n^2+19\eps_2n^2=34\eps_2n^2\leq\eps_3 n^2$ edges are not colored as described in the lemma statement, assuming $\eps_2\leq\eps_3/34$.
\end{proof}

We may now prove our full stability result for $\C14$-hom-free 4-graphs.

\begin{theorem}\label{thm:stability}
For each $\eps>0$ there is $\eps_0=\eps_{\thetheorem}(\eps)>0$ such that the following holds.

Let $G$ be a $\C14$-hom-free 4-graph on $n$ vertices, where $n$ is sufficiently large in terms of $\eps$. If $e(G)\geq\left(\frac 12-\eps_0\right)\frac{n^4}{24}$, then there is a partition of $V(G)$ into $V_1\cup V_2$ with $|V_i|\geq\left(\frac 12-\eps\right)n$ so that $|E(G)\setminus E(\Godd(V_1,V_2))|\leq\eps n^4$.
\end{theorem}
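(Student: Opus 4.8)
The plan is to upgrade the local (link-level) stability result of \cref{lem:link-stability} to a global statement by first passing to a regime where most vertices have high link-degree, then choosing a good reference vertex $w$, using \cref{lem:link-stability} to obtain a partition $V(G)=V_1\cup V_2$, and finally arguing that this partition simultaneously witnesses stability for \emph{every} link coloring $\chi_v$, which forces $G$ itself to be close to $\Godd(V_1,V_2)$.

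First I would fix the coloring $\chi$ of $V^{(3)}$ by $\Delbf_3$ guaranteed by \cref{cor:C14-free}(4), and apply \cref{prelim:large-min-degree} (with $c=1$, since $\C14$-hom-freeness plus \cref{thm:C14-turan-density} gives $e(G[W])<|W|^4/48$ on every induced subgraph): after discarding at most $\eps'n$ vertices we may assume $G$ has minimum codegree-type bound $\deg_G(v)\geq(\tfrac12-\eps')(n-1)^3/12$ for all $v$; here I would absorb this loss into the final $\eps n^4$ count at the end. Next, by \cref{prop:redgreenedges}, pick $w$ such that the link coloring $\chi_w$ has at least twice as many $\redto[e1]$ edges as $\greenedge$ edges. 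Since $\deg_G(w)\geq(\tfrac12-\eps')(n-1)^3/12$ equals (up to the factor $\tfrac16(n-1)^3$) the number of triples colored $\purpletri$, $\greentri$, $\cherry[e1]$, $\cherry[e2]$, $\cherry[e3]$ by $\chi_w'$ via the pairs in \eqref{eq:chi-and-link-pairs}, we may apply \cref{lem:link-stability} to $\chi_w$ (choosing $\eps_0$ small enough in terms of the $\eps$ fed to that lemma) to get a partition $V-\{w\}=V_1\cup V_2$ with $|V_i|\in(\tfrac12\pm\eps'')n$ such that all but $\eps''n^2$ pairs are colored as in \cref{fig:Godd-link-coloring}: pairs inside $V_1$ blue ($\blueedge[e1]$), pairs inside $V_2$ green, and pairs $(v_1,v_2)\in V_1\times V_2$ red ($\redto[e1]$ directed from $V_1$). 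Put $w$ into $V_1$.

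The heart of the argument is now to show that this single partition is good for the \emph{whole} 4-graph, i.e. $G$ is close to $\Godd(V_1,V_2)$. By \eqref{eq:chi-and-link-pairs} and \cref{table:link-coloring}, the color $\chi_w(xy)$ records exactly $\chi(wxy)$; so from the conclusion of \cref{lem:link-stability} we already know the color of $\chi$ on all but $\eps''n^2$ of the triples $wxy$ containing $w$. Concretely: if $x,y\in V_1$ then $\chi(wxy)=\pointedtriangle$ with the red vertex at $w$; if $x,y\in V_2$ then $\chi(wxy)=\bluetriangle$; and if $x\in V_1,y\in V_2$ then $\chi(wxy)$ is a $\pointedtriangle$ with the red vertex at $y$ (matching the coloring in \cref{fig:Godd-coloring} for a vertex of $\Godd$). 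I would then propagate this to triples not containing $w$: for a triple $xyz$ with, say, $x,y,z\in V_1$, the accordance of $\chi$ together with the known colors of $wxy$, $wxz$, $wyz$ (all of which are $\pointedtriangle$ with red vertex at $w$) forces, via the list of permitted tetrahedron boundaries in \cref{fig:tetrahedron-boundaries}, that the 4-edge $wxyz$ (if present) has its boundary isomorphic to $\pointedtetrahedronboundary$ and hence $\chi(xyz)=\bluetriangle$ — which is exactly the $\Godd$-coloring of a triple inside $V_1$. The point is that the three ``outer'' faces through $w$ determine the fourth face in each of the three allowed configurations; one checks case by case on $|\{x,y,z\}\cap V_1|\in\{0,1,2,3\}$ that whenever all three triples $wx_ix_j$ have the ``correct'' $\Godd$-color, the triple $x_1x_2x_3$ is forced to have its $\Godd$-color too. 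Since only $\eps''n^2$ of the triples through $w$ are miscolored, at most $O(\eps''n^3)$ triples of $V-\{w\}$ can fail to be forced (each bad triple $x_ix_j$ through $w$ spoils at most $n$ triples $x_1x_2x_3$). Therefore all but $O(\eps''n^3)$ of the triples of $V(G)$ are colored by $\chi$ exactly as in the $\Godd(V_1,V_2)$ coloring of \cref{fig:Godd-coloring}.

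Finally I would translate this triple-level agreement into an edge-level agreement. A 4-set $e=x_1x_2x_3x_4$ is an edge of $G$ iff the boundary coloring $\chi|_e$ is one of the three configurations in \cref{fig:tetrahedron-boundaries}, and it is an edge of $\Godd(V_1,V_2)$ iff $|e\cap V_1|$ is odd; a direct check shows that if all four sub-triples of $e$ are colored as in the $\Godd$ coloring then the two conditions coincide (the $\Godd$ coloring of \cref{fig:Godd-coloring} puts a $\pointedtetrahedronboundary$ on 4-sets meeting $V_1$ oddly and a non-permitted boundary on the others). Since all but $O(\eps''n^3)$ triples are $\Godd$-colored, all but $O(\eps''n^3)\cdot n = O(\eps''n^4)$ of the $4$-sets have all four sub-triples $\Godd$-colored, and for those $4$-sets membership in $G$ agrees with membership in $\Godd(V_1,V_2)$; the remaining $O(\eps''n^4)$ 4-sets, together with the $O(\eps'n^4)$ discarded at the very start, contribute the error. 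Choosing $\eps''$ (hence $\eps_0$) small enough that this total is at most $\eps n^4$, and extending the partition to all of $V(G)$ by assigning the $\leq\eps'n$ discarded vertices arbitrarily, gives $|E(G)\setminus E(\Godd(V_1,V_2))|\le\eps n^4$ with $|V_i|\ge(\tfrac12-\eps)n$, as required.

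\smallskip

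\textbf{Main obstacle.} I expect the bookkeeping in the propagation step — showing that correct colors on the three faces through $w$ force the correct color on the opposite face, in each of the $O(1)$ cases and for each of the three allowed tetrahedron boundaries — to be the delicate part; it is where the precise shapes of $\pointedtetrahedronboundary$, $\circletetrahedronboundary$, $\edgetetrahedronboundary$ and the asymmetry ``at most twice as many reds as greens'' really get used, and one must be careful that the $\eps''n^2$ exceptional pairs through $w$ are correctly charged so the propagated error stays $O(\eps''n^3)$.
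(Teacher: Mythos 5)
The approach follows the paper's first two moves (apply \cref{prelim:large-min-degree}, then \cref{prop:redgreenedges} and \cref{lem:link-stability} via a good vertex $w$), but the ``propagation'' step has a genuine gap that the paper handles with a much more involved argument.

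The issue is that \cref{cor:C14-free}(4) only constrains the coloring of the four sub-triples of $\{w,x,y,z\}$ \emph{when $wxyz\in E(G)$}. You acknowledge this parenthetically ("the 4-edge $wxyz$ (if present)"), but it cannot be waved away. Concretely, with $w$ treated as a $V_1$-vertex, correctly-colored triples $wxy,\,wxz,\,wyz$ can only force the color of $xyz$ when $wxyz\in E(\Godd(V_1,V_2))$, which requires $|\{w,x,y,z\}\cap V_1|$ odd, i.e.\ $|\{x,y,z\}\cap V_1|$ even. For triples $xyz$ with $|\{x,y,z\}\cap V_1|\in\{1,3\}$ (all of $\binom{V_1}{3}$ and of $V_1\times\binom{V_2}{2}$), the three triples $wxy,wxz,wyz$ already determine that $wxyz\notin E(G)$ (no allowed tetrahedron boundary has three $\bluetriangle$ faces, nor three $\pointedtriangle$ faces with red at three distinct vertices), so no constraint whatsoever is placed on $\chi(xyz)$. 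This is roughly half of $\binom V3$, and is precisely the pair of families $\TT_1,\TT_3$ in the paper's proof. The propagation argument controls only $\TT_0\cup\TT_2$; bounding $\TT_1\cup\TT_3$ is where the real work lies, and the paper does it with a careful double-count of $\deg_G(w_2;W)$ over $w_2\in W_2$, combining an upper bound arising from which sub-triples of each edge can lie in $\Tbad$ with a lower bound coming from the minimum-degree assumption. That second, harder half is missing from your plan.

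Two smaller points. First, your final paragraph asserts that $e\in E(G)$ \emph{iff} the boundary coloring of $e$ is one of the three permitted pictograms; only the ``only if'' direction holds, since the coloring is a necessary but not sufficient condition for membership in $E(G)$. Fortunately this does not hurt you because the theorem bounds only $|E(G)\setminus E(\Godd)|$, and the ``only if'' direction suffices for that. Second, the accounting ``each bad pair through $w$ spoils at most $n$ triples'' is the right shape, but it only applies to the triples your propagation actually reaches; if you incorporated a separate bound on $|\TT_1|+|\TT_3|$ you would need to combine the two error terms more carefully.
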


\begin{proof}
Set $\eps_3=\eps$ and suppose $\eps_2,\eps_1,\eps_0$ are chosen with each $\eps_i$ sufficiently small in terms of $\eps_{i+1}$. Let $G$ be a $\C14$-hom-free 4-graph on $n$ vertices with $e(G)\geq\left(\frac 12-\eps_0\right)\frac{n^4}{24}$, and suppose $n$ is sufficiently large.

For any $W\subseteq V(G)$, we have $e(G[W])\leq\frac 12|W|^4/24$ by \cref{thm:C14-turan-density}. Thus, assuming $\eps_0\leq\epsref{prelim:large-min-degree}(4,\eps_1)$ we may apply \cref{prelim:large-min-degree} to obtain a subset $V'\subseteq V$ of size $|V'|\geq(1-\eps_1)n$ such that every vertex in the induced subgraph $G'=G[V']$ has degree at least $\left(\frac 12-\eps_1\right)(|V'|-1)^3/6$. Set $m=|V'|-1$.

Let $\chi$ be a coloring of $(V')^{(3)}$ by the set $\Delbf_3=\left\{\pointedtriangle,\bluetriangle,\circletriangle,\rededgetriangle,\yellowedgetriangle\right\}$ agreeing with \Cref{fig:tetrahedron-boundaries} on each edge of $G$; the existence of such $\chi$ is guaranteed by \cref{cor:C14-free}(4). By \cref{prop:redgreenedges}, there is some vertex $w$ such that the link coloring $\chi_w$ associated to $\chi$ by \cref{table:link-coloring} contains at least twice as many edges colored \redto[e1] as \greenedge. Moreover, the number of triples colored \purpletri, \greentri,\cherry[e1], \cherry[e2], or \cherry[e3] by $\chi_w$ is at least  $\deg_{G'}(w)\geq\left(\frac 12-\eps_1\right)m^3/6$. Set $W=V'-\{w\}$, so $|W|=m$.

Assume $\eps_1\leq\epsref{lem:link-stability}(\eps_2)$. Applying \cref{lem:link-stability} to the link coloring $\chi_w$ of $W^{(2)}$, we conclude that there is a partition $W=W_1\cup W_2$ with $(\frac 12-\eps_2)m\leq|W_i|\leq(\frac 12+\eps_2)m$ such that, in $\chi_w$, the number of edges in $\binom{W_1}2$ not colored $\blueedge[e1]$, edges in $\binom{W_2}2$ not colored $\greenedge$, and ordered pairs $(v_1,v_2)\in W_1\times W_2$ not colored as $\redtolabeled[e1]{v_1}{v_2}$ is in total at most $\eps_2m^2$. Let $\Ebad\subset\binom{W}2$ be the set of such pairs.

For $i=0,1,2,3$, we define sets $\TT_i$ that collect the triples $xyz\in\binom{W}3$ with $i$ vertices in $W_1$ whose coloring by $\chi$ does not agree with \Cref{fig:Godd-coloring}. More precisely,
\begin{itemize}
\setlength\itemsep{-5pt}
\item $\TT_3$ is the set of triples $xyz\in\binom{W_1}3$ not colored as \bluetrianglelabeled xyz by $\chi$;
\item $\TT_2$ is the set of triples $v_1v'_1v_2\in\binom{W_1}2\times W_2$ not colored as \pointedtrianglerightlabeled{v_1}{v'_1}{v_2} by $\chi$;
\item $\TT_1$ is the set of triples $v_1v_2v'_2\in W_1\times\binom{W_2}2$ not colored as \pointedtriangleleftlabeled{v_2}{v_1}{v'_2} by $\chi$; and
\item $\TT_0$ is the set of triples $xyz\in\binom{W_2}3$ not colored as \bluetrianglelabeled xyz by $\chi$.
\end{itemize}
Set $\Tbad=\TT_0\cup\TT_1\cup\TT_2\cup\TT_3$. We shall show that $\Tbad$ contains only a small fraction of $\binom W3$.

\begin{claim}
$|\Tbad|\leq 21\eps_2m^3$.	
\end{claim}

\begin{proof}\proofofclaim
Let $\LL(w)$ denote the link of $w$ in $G[V']$.
Recalling (\ref{eq:chi-and-link-pairs}), observe that if $xyz\in E(\LL(w))$ and $xy,xz,yz\notin \Ebad$ then either $xyz$ is in $\binom{W_1}2\times W_2$ and colored \pointedtrianglerightlabeled xyz by $\chi$ or $xyz$ is in $\binom{W_2}3$ and colored \bluetrianglelabeled xyz by $\chi$. That is, $xyz$ is either in $\left(\binom{W_1}2\times W_2\right)-\TT_2$ or $\binom{W_2}3-\TT_0$. Thus,
\[
\left(\frac 12-\eps_1\right)\frac{m^3}6\leq e(\LL(w))\leq m|\Ebad|+\left(\binom{|W_1|}2\times|W_2|+\binom{|W_2|}3\right)-(|\TT_0|+|\TT_2|)
\]
and it follows that
\begin{align*}
|\TT_0|+|\TT_2|&\leq m|\Ebad|
	-\left(\frac 12-\eps_1\right)\frac{m^3}6
	+\frac 12|W_1|^2|W_2|+\frac 16|W_2|^3
\\&=m|\Ebad|-\left(\frac 12-\eps_1\right)\frac{m^3}6
	+\frac{(|W_1|+|W_2|)^3+(|W_2|-|W_1|)^3}{12}
\\&\leq\eps_2m^3-\left(\frac 12-\eps_1\right)\frac{m^3}6
	+\frac{m^3+(2\eps_2m)^3}{12}
<2\eps_2m^3,
\tag{\thetheorem.1}
\label{eq:T0T2}
\end{align*}
assuming $\eps_1$ is sufficiently small in terms of $\eps_2$.

To bound $|\TT_1|$ and $|\TT_3|$, let us first make the following observation. If $x_1x_2x_3x_4\in E(G[W]))$, then either 0, 3, or 4 of the four sub-triples $x_ix_jx_k$ are in $\Tbad$. Indeed, if any two of these sub-triples are colored according to \Cref{fig:Godd-coloring} and the remaining two colored according to \Cref{fig:tetrahedron-boundaries}, then $x_1x_2x_3x_4$ must be in $W_1\times\binom{W_2}3$ or $\binom{W_1}3\times W_2$ with the remaining two sub-triples also colored as in \Cref{fig:Godd-coloring}. Furthermore, if the edge $x_1x_2x_3x_4$ is in $W_1\times\binom{W_2}3$ or $\binom{W_1}3\times W_2$, the same argument shows that either 0 or 4 of the sub-triples $x_ix_jx_k$ are in $\Tbad$.

Fix a vertex $w_2\in W_2$. We first count edges $w_2xyz\in E(G[W])$ with no sub-triple in $\Tbad$. The number of such edges with $xyz\in\binom{W_1}3$ is at most
$\binom{|W_1|}3-|\TT_3|$
and the number with $xyz\in W_1\times\binom{W_2}2$ is at most
$|W_1|\times\binom{|W_2|}2-|\TT_1|$.

We now count edges $w_2xyz\in E(G[W])$ with at least three sub-triples in $\Tbad$. If $xyz\in\binom{W_1}2\times W_2$, then at least one of the triples $xyw_2$ and $xyz$ is in $\TT_2$. If $xyz\in\binom{W_2}3$, then at least one of $w_2xy$ and $w_2xz$ is in $\TT_0$. If $xyz$ is in $\binom{W_1}3$ or $W_1\times\binom{W_2}2$, then all four sub-triples of $w_2xyz$ are in $\Tbad$. In particular, if $xyz\in\binom{W_1}3$, then $xyw_2\in \TT_2$, and if $xyz\in W_1\times\binom{W_2}2$, then $w_2yz\in\TT_0$. It follows that the number of edges $w_2xyz\in E(G[W])$ with at least three sub-triples in $\Tbad$ is at most
\[
\begin{array}{c}
\left(\deg_{\TT_2}(w_2)\times|W_2|+|\TT_2|\right)+\deg_{\TT_0}(w_2)\times|W_2|+\deg_{\TT_2}(w_2)\times|W_1|+\deg_{\TT_0}(w_2)\times |W_1|
\\[5pt]=|\TT_2|+(\deg_{\TT_0}(w_2)+\deg_{\TT_2}(w_2))\times m.
\end{array}
\]

Combining the previous two paragraphs, it holds that
\begin{align*}
\sum_{w_2\in W_2}&\deg_G(w_2;W)
\\&\leq\sum_{w_2\in W_2}\left[\binom{|W_1|}3-|\TT_3|+|W_1|\times\binom{|W_2|}2-|\TT_1|
	+|\TT_2|+(\deg_{\TT_0}(w_2)+\deg_{\TT_2}(w_2))\times m\right]
\\&\leq\frac{|W_1|^3|W_2|}6-|\TT_3||W_2|+\frac{|W_1||W_2|^3}2-|\TT_1||W_2|
	+|\TT_2||W_2|+3m|\TT_0|+m|\TT_2|
\\&=|W_2|\times\left(\frac{(|W_1|+|W_2|)^3+(|W_1|-|W_2|)^3}{12}-|\TT_1|-|\TT_3|
	+|\TT_2|+\frac{m}{|W_2|}(3|\TT_0|+|\TT_2|)\right)
\\&=|W_2|\times\left(\frac{m^3+(2\eps_2m)^3}{12}-|\TT_1|-|\TT_3|
	+|\TT_2|+\frac{m}{|W_2|}(3|\TT_0|+|\TT_2|)\right).
\end{align*}
Observe that $\frac{m}{|W_2|}\leq\left(\frac 12-\eps_2\right)^{-1}\leq 3$ for sufficiently small $\eps_2$. Hence,
\begin{align*}
\frac 1{|W_2|}\sum_{w_2\in W_2}\deg_G(w_2;W)
&\leq\frac{m^3}{12}+\frac{2\eps_2^3m^3}{3}-(|\TT_1|+|\TT_3|)+9|\TT_0|+4|\TT_2|
\\&\leq\frac{m^3}{12}+\frac{2\eps_2m^3}{3}-(|\TT_1|+|\TT_3|)+18\eps_2m^3
\tag{\thetheorem.2}\label{eq:T1T3-upper}
\end{align*}
using (\ref{eq:T0T2}).
In the opposite direction,
\begin{align*}
\sum_{w_2\in W_2}\deg_G(w_2;W)
&=\sum_{w_2\in W_2}\left(\deg_{G'}(w_2)-\deg_{G'}(w_2w)\right)
\geq|W_2|\times\left(\frac 12-\eps_1\right)\frac{m^3}6-3\deg_{G'}(w)
\\&\geq|W_2|\times\left(\frac 12-\eps_1\right)\frac{m^3}6-\frac{m^3}2
\geq|W_2|\times\left(\frac 12-\eps_2\right)\frac{m^3}6
\tag{\thetheorem.3}\label{eq:T1T3-lower}
\end{align*}
if $\eps_1$ is sufficiently small in terms of $\eps_2$ and $m$ is sufficiently large. Combining (\ref{eq:T1T3-upper}) and (\ref{eq:T1T3-lower}),
\[
|\TT_1|+|\TT_3|\leq\frac{m^3}{12}+\left(\frac{2\eps_2^3}{3}+18\eps_2\right)m^3-\left(\frac 12-\eps_2\right)\frac{m^3}{6}<19\eps_2m^3
\]
and with (\ref{eq:T0T2}) this yields $|\Tbad|=|\TT_0|+|\TT_1|+|\TT_2|+|\TT_3|<21\eps_2m^3$.
\end{proof}

Observe that $|E(G[W])\setminus E(\Godd(W_1,W_2))|\leq|W|\times|\Tbad|\leq 21\eps_2m^4$.
Choose a partition $V=V_1\cup V_2$ such that $V_i\supseteq W_i$. We have $|V_i|\geq|W_i|\geq\left(\frac 12-\eps_2\right)m$ and
\begin{align*}
|E(G)\setminus E(\Godd(V_1,V_2))|&\leq \binom n4-\binom m4+|E(G[W])\setminus E(\Godd(W_1,W_2))
\\&\leq\frac{n^4-m^4}{24}+21\eps_2m^4.
\end{align*}
Recall that $m=|V'|-1\geq(1-\eps_1)n-1$. Thus, if $\eps_1$ and $\eps_2$ are sufficiently small in terms of $\eps_3$ and $n$ is sufficiently large in terms of $\eps_1,\eps_2,\eps_3$, it holds that $|V_i|\geq\left(\frac 12-\eps_3\right)n$ and $|E(G)\setminus E(\Godd(V_1,V_2))|\leq \eps_3n^4$.
\end{proof}

\section{The Extremal $C_L^{(4)}$-hom-free Hypergraphs}\label{sec:cleaning}

In this section, we prove the following result, which implies \cref{thm:unif4-density}.

\begin{theorem}\label{thm:one-cycle}
There exists some $L_0\equiv 1\pmod 4$ such that $\ex(n,C_{L_0}^{(4)}\texthom)=\eopt(n)$ for all sufficiently large $n$. Moreover, every $C_{L_0}^{(4)}$-hom-free 4-graph with $\eopt(n)$ edges is complete oddly bipartite.
\end{theorem}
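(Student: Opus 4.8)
The plan is to wire together the three preceding ingredients — the coloring characterization \cref{cor:C14-free}, the density-plus-stability results \cref{thm:C14-turan-density,thm:stability} for the family $\C14$, and the cycle-removal lemma \cref{delete-long-cycles} — to reduce everything to a purely structural ``cleaning'' problem, which is then resolved by a local cycle-building argument. Fix $L_0\equiv 1\pmod 4$ a sufficiently large integer (pinned down last) and let $n$ be large. The lower bound $\ex(n,C_{L_0}^{(4)})\geq\eopt(n)$ is immediate from \cref{prop:Godd-C-free}, so the task is the matching upper bound and the rigidity clause. Let $G$ be a $C_{L_0}^{(4)}$-hom-free $4$-graph on $n$ vertices with $e(G)=\ex(n,C_{L_0}^{(4)})\geq\eopt(n)$. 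First I would apply \cref{delete-long-cycles}: since $L_0\equiv 1\pmod 4$, deleting at most $8L_0^{-1/4}n^4$ edges produces a $\C14$-hom-free subgraph $G_0\subseteq G$ with $e(G_0)\geq\eopt(n)-8L_0^{-1/4}n^4\geq(\tfrac12-\eps_0)\tfrac{n^4}{24}$, where $\eps_0\to 0$ as $L_0,n\to\infty$ (using $\eopt(n)\geq(n^4-6n^3)/48$ from \cref{prop:eopt}). Choosing $L_0$ large enough that $\eps_0\leq\epsref{thm:stability}(\eps)$ for a small parameter $\eps$ fixed below, \cref{thm:stability} yields a partition $V(G)=A_1\cup B_1$ with $|A_1|,|B_1|\geq(\tfrac12-\eps)n$ and $|E(G_0)\setminus E(\Godd(A_1,B_1))|\leq\eps n^4$; since $G_0\subseteq G$ and $e(\Godd(A_1,B_1))\leq\eopt(n)\leq e(G)$, a two-line estimate bounds $|E(G)\triangle E(\Godd(A_1,B_1))|$ by $\delta n^4$ with $\delta\to 0$. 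Next I would ``repair'' this partition: using that the extremal $G$ is near-regular (\cref{prelim:turan-regular}), reassign each vertex to the side on which its link-coloring $\chi_w$ (as in \cref{table:link-coloring}) best matches the link-coloring of a complete oddly bipartite $4$-graph, and iterate to a stable partition $V(G)=A\cup B$. Declare a triple $xyz$ \emph{good} if $N_G(xyz)$ agrees with the $\Godd(A,B)$-prediction (that is, with $A\setminus\{x,y,z\}$ or $B\setminus\{x,y,z\}$ according to the parity of $|\{x,y,z\}\cap A|$) up to $\eps' n$ vertices, and collect the bad triples into a set $\TT\subseteq\binom{V(G)}{3}$; the goal of this first half of \cref{sec:cleaning} is to show $|\TT|\leq\eps' n^3$, the delicate point being to make the partition genuinely stable so the good structure is usable at \emph{every} edge, not just most.

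The heart of the argument is then the inequality $|E(G)\setminus E(\Godd(A,B))|\leq 0.99\,|E(\Godd(A,B))\setminus E(G)|$. I would prove this by showing that an \emph{excess} edge $e=wxyz\in E(G)\setminus E(\Godd(A,B))$ — equivalently, one with $|\{w,x,y,z\}\cap A|$ even — that is not surrounded by bad triples forces either a homomorphic copy of $C_{L_0}^{(4)}$, contradicting the hypothesis, or the presence of $\Omega(1)$-many \emph{deficient} non-edges (elements of $E(\Godd(A,B))\setminus E(G)$) in a bounded neighborhood of $e$. The cycle-building proceeds from the oriented edge $wxyz$ by repeatedly taking tight steps through good triples; on the good part $G$ behaves like the bipartite-type $4$-graph $\Godd(A,B)$, so one can track how the ``parity type'' of the current window (its image under the $\Godd$-coloring) evolves along a tight walk, and route a closed tight walk back to $wxyz$ of any prescribed stretch modulo $4$ — the crucial point being that $wxyz$ itself being present \emph{violates} the $\Godd$-coloring, which is exactly what makes stretch $\equiv 1\pmod 4$ realizable. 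Padding the resulting homomorphic copy of some $C_\ell^{(4)}$ with $\ell\equiv 1\pmod 4$, $5\le\ell\le L_0$, up to length exactly $L_0$ via \cref{prop:hom-free-chain} then contradicts $C_{L_0}^{(4)}$-hom-freeness. Hence every excess edge either contains or lies near a bad triple (there are only $O(\eps' n^4)$ such, since each triple lies in at most $n$ edges) or comes with enough deficient non-edges in its neighborhood; organizing the latter into a charging scheme in which each deficient non-edge is charged at most $1/0.99$ times, and absorbing the $O(\eps' n^4)$ exceptional edges by taking $\eps,\eps'$ small, delivers the displayed inequality.

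With that inequality in hand, $e(G)\leq e(\Godd(A,B))-0.01\,|E(\Godd(A,B))\setminus E(G)|\leq e(\Godd(A,B))\leq\eopt(n)$, so $\ex(n,C_{L_0}^{(4)})=\eopt(n)$; and if $e(G)=\eopt(n)$ then every inequality is tight, forcing $|E(\Godd(A,B))\setminus E(G)|=0$, hence $|E(G)\setminus E(\Godd(A,B))|=0$, hence $G=\Godd(A,B)$ with $\eopt(n)$ edges — i.e. $G$ is complete oddly bipartite — which also covers the converse direction since any complete oddly bipartite $4$-graph with $\eopt(n)$ edges is $C_{L_0}^{(4)}$-hom-free by \cref{prop:Godd-C-free} and thus extremal. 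The step I expect to be the main obstacle is the local cycle-building inside the charging argument: pinning down the precise sense in which $G$ ``looks like'' $\Godd(A,B)$ near a good triple, and then threading a tight walk of controlled length and controlled residue modulo $4$ through the good region while starting and ending at a prescribed (excess) edge, without the small set of bad triples or the off-center sizes $|A|,|B|$ (which must differ from $n/2$ by $\Theta(\sqrt n)$) obstructing the construction.
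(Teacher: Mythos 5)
Your high-level strategy --- run \cref{delete-long-cycles} and then \cref{thm:stability} to place $G$ near a $\Godd(A,B)$, declare a set of model-like (``good'') triples, and show that each excess edge either threads a short tight cycle through the good region (contradiction, after padding to length $L_0$ via \cref{prop:hom-free-chain}) or is chargeable to deficient non-edges --- mirrors the paper's proof of \cref{thm:one-cycle}. You also correctly identify that the excess edge itself supplies the nonzero residue modulo~$4$, and that the endgame is the ratio inequality $|E(G)\setminus E(\Godd(A,B))|\leq c\,|E(\Godd(A,B))\setminus E(G)|$ with $c<1$, from which $e(G)\leq e(\Godd(A,B))\leq\eopt(n)$ and the rigidity both follow.

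There are, however, two genuine gaps. First, your notion of a good triple --- one whose $G$-neighborhood agrees with the $\Godd$-prediction up to $\eps' n$ vertices --- is too weak to thread a tight walk. Starting from a good oriented edge, the next window of the walk consists of a new $4$-tuple, and one needs the \emph{next} triple window to again be good, the pair after that to lie in the shadow $\partial\TT$, and so on; the paper encapsulates exactly this nested structure as the three conditions of \cref{def:eps-close}, and they are the whole content of \cref{lem:make-walk}. Neither near-regularity nor ``best match of link colorings'' produces this structure; the paper instead constructs $\TT$ via \cref{lem:make-T} and then extends the partition one vertex at a time by analyzing $\TT_u = \TT\cap E(\LL(u))$ via an obstruction (\cref{obstruction:build-TT'} and \cref{subclaim:every-link-good}). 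Second, your charging scheme ``absorbs the $O(\eps' n^4)$ exceptional edges by taking $\eps,\eps'$ small,'' but this cannot work: both $|E(G)\setminus E(\Godd(A,B))|$ and $|E(\Godd(A,B))\setminus E(G)|$ are $O(\eps' n^4)$, so an additive error of the same order does not yield a strict $0.99$ ratio, and the conclusion degrades to $e(G)\leq e(\Godd(A,B))+O(\eps' n^4)$. The paper avoids this by bounding the exceptional triples (its set $T$ in \cref{claim:A4}) directly in terms of the deficient non-edge count $\ebar(G[A^3\times B])$, so the final bound is a clean multiplicative $O(\eps_3)\cdot\ebar$ with no additive error. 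Relatedly, the three kinds of excess edges ($A^4$, $B^4$, $A^2\times B^2$) require distinct obstruction arguments --- the $A^2\times B^2$ case (\cref{claim:A2B2}, \cref{obstruction:A2B2-hard}) needs a probabilistic pigeonhole argument rather than a direct walk --- so a uniform charging scheme as you describe would not give the required constant in that case.
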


\cref{thm:one-cycle} immediately implies the following stronger version of \cref{thm:unif4-density}.

\begin{corollary}\label{cor:unif4-cycles}
Let $L_0$ be the integer defined in \cref{thm:one-cycle}. There is an absolute constant $n_0$ such that for any $n\geq n_0$ and any $L\geq 3L_0$ not divisible by 4, we have $\ex(n,C_L^{(4)}\texthom)=\eopt(n)$. Moreover, every $C_L^{(4)}$-hom-free 4-graph with $\eopt(n)$ edges is complete oddly bipartite.
\end{corollary}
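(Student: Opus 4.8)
The plan is to deduce the corollary from Theorem~\ref{thm:one-cycle} together with the monotonicity result in Proposition~\ref{prop:hom-free-chain}. Fix $L_0$ as in Theorem~\ref{thm:one-cycle}, so $L_0 \equiv 1 \pmod 4$ and $\ex(n, C_{L_0}^{(4)}) = \eopt(n)$ for all $n \geq n_0$, with equality attained only by complete oddly bipartite $4$-graphs. First I would reduce from an arbitrary $L \geq 3L_0$ with $4 \nmid L$ to the single cycle $C_{L_0}^{(4)}$. Write $L = s\cdot L_0 + 4t$ where we choose the residue class carefully: since $L_0$ is invertible modulo $4$ (it is odd), for any target residue $k \in \{1,2,3\}$ of $L$ modulo $4$ we can pick $s \in \{1,2,3\}$ with $s L_0 \equiv k \pmod 4$, and then $t = (L - sL_0)/4$ is a nonnegative integer provided $L \geq 3L_0 \geq sL_0$. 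Proposition~\ref{prop:hom-free-chain} (with $r=4$, $\ell = L_0$) then says that any $C_L^{(4)}$-hom-free $4$-graph is also $C_{L_0}^{(4)}$-hom-free. Hence $\ex(n, C_L^{(4)}\texthom) \leq \ex(n, C_{L_0}^{(4)}\texthom) = \eopt(n)$ for $n \geq n_0$.

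For the matching lower bound, Proposition~\ref{prop:Godd-C-free} gives that $\Godd(V_1,V_2)$ is $C_L^{(4)}$-hom-free for every $4 \nmid L$, so choosing $|V_1|, |V_2|$ to realize $\eopt(n)$ yields $\ex(n, C_L^{(4)}\texthom) \geq \eopt(n)$. Combining, $\ex(n, C_L^{(4)}) = \ex(n, C_L^{(4)}\texthom) = \eopt(n)$ — here I would invoke Lemma~\ref{thm:density-hom}'s underlying fact $\ex(n,F) = \ex(n,F\texthom) + o(n^r)$, but actually for the exact statement it is cleaner to note directly that a $C_L^{(4)}$-free $4$-graph need not be hom-free, so one should argue $\ex(n, C_L^{(4)}) \geq \eopt(n)$ from $\Godd$ being genuinely $C_L^{(4)}$-subgraph-free, and $\ex(n,C_L^{(4)}) \leq \ex(n, C_L^{(4)}\texthom)$ trivially since hom-freeness is a stronger hypothesis — wait, it is the reverse: every hom-free graph is subgraph-free, so $\ex(n, C_L^{(4)}\texthom) \leq \ex(n, C_L^{(4)})$. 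The correct chain is: $\eopt(n) \leq \ex(n, C_L^{(4)}\texthom) \leq \eopt(n)$ once we know the extremal $C_L^{(4)}$-free graph is hom-free. So the real content is to promote the subgraph statement to the hom statement.

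The main obstacle, then, is the uniqueness/structure claim: showing every $C_L^{(4)}$-hom-free $4$-graph $G$ with exactly $\eopt(n)$ edges is complete oddly bipartite. By the reduction above, such a $G$ is $C_{L_0}^{(4)}$-hom-free with $\eopt(n) = \ex(n, C_{L_0}^{(4)}\texthom)$ edges, so it is an extremal $C_{L_0}^{(4)}$-hom-free $4$-graph, and Theorem~\ref{thm:one-cycle} applies directly to conclude $G$ is complete oddly bipartite. The subtlety is making sure the reduction preserves extremality: if $G$ is $C_L^{(4)}$-hom-free with $\eopt(n)$ edges, it is $C_{L_0}^{(4)}$-hom-free with $\eopt(n) \leq \ex(n, C_{L_0}^{(4)}\texthom)$ edges; since we have already established $\ex(n, C_{L_0}^{(4)}\texthom) = \eopt(n)$, equality holds, so $G$ attains the extremum for $C_{L_0}^{(4)}$ and Theorem~\ref{thm:one-cycle} forces the complete oddly bipartite structure. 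Conversely any complete oddly bipartite $\Godd(V_1,V_2)$ with $\eopt(n)$ edges is $C_L^{(4)}$-hom-free by Proposition~\ref{prop:Godd-C-free}, so these are exactly the extremal examples. The absolute constant $n_0$ can be taken to be the one from Theorem~\ref{thm:one-cycle}; note $3L_0$ is itself an absolute constant, so no dependence of $n_0$ on $L$ enters. I would also double-check the edge case $L = 3L_0$ (then $t=0$, $s=3$) and small residues to confirm $s \in \{1,2,3\}$ always suffices, which it does since $\{L_0, 2L_0, 3L_0\}$ hits all three nonzero residues modulo $4$ as $L_0$ is odd.
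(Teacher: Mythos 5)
Your proposal is correct and follows exactly the paper's route: apply \cref{prop:hom-free-chain} with $L = sL_0 + 4t$ to reduce $C_L^{(4)}$-hom-freeness to $C_{L_0}^{(4)}$-hom-freeness, then invoke \cref{thm:one-cycle}. You are somewhat more explicit than the paper in verifying that suitable $s\in\{1,2,3\}$ and $t\geq 0$ exist (using $L_0\equiv 1\pmod 4$ and $L\geq 3L_0$), and you correctly catch yourself on the direction of the inequality between $\ex(n,F\texthom)$ and $\ex(n,F)$, but the substance of the argument is the same.
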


\begin{proof}[Proof of \cref{cor:unif4-cycles} from \cref{thm:one-cycle}]
Fix $L\geq 3L_0$ not divisible by 4. By \cref{prop:hom-free-chain}, it follows that any $C_L^{(4)}$-hom-free 4-graph is also $C_{L_0}^{(4)}$-hom-free.
Thus, if $n$ is sufficiently large and $G$ is a $C_L^{(4)}$-hom-free 4-graph with $n$ vertices and at least $\eopt(n)$ edges, \cref{thm:one-cycle} implies that $G$ must be complete oddly bipartite. In particular, $G$ cannot have more than $\eopt(n)$ edges.
\end{proof}

\subsection{Preliminaries}

Let $V$ be a set of vertices and $\TT\subseteq\binom V3$ a set of triples of vertices. Its \emph{shadow} $\partial\TT$ is the set
\[\partial\TT=\left\{xy\in\binom{V(G)}2:xyz\in\TT\text{ for some }z\in V(G)\right\}.\]
Throughout this section, we utilize the following structure, which can be found whenever a hypergraph $G$ differs from a complete oddly bipartite 4-graph on a small fraction of 4-tuples.

\begin{definition}\label{def:eps-close}
Let $G$ be a 4-graph and fix two disjoint vertex subsets $A,B\subseteq V(G)$. Given $\eps>0$, we say $G$ is \emph{$\eps$-close to $\Godd(A,B)$ via} a set of triples $\TT\subseteq\binom{A\cup B}3$ if the following three conditions hold.
\begin{enumerate}
	\item For each $xyz\in\TT$, if $U\in\{A,B\}$ is the neighborhood of $xyz$ in $\Godd(A,B)$, then $\deg_G(xyz;U)\geq(1-\eps)|U|$.
	\item For each $xy\in\partial\TT$, we have $\deg_{\TT}(xy;A)\geq(1-\eps)|A|$ and $\deg_{\TT}(xy;B)\geq(1-\eps)|B|$.
	\item For each $x\in A\cup B$, we have $\deg_{\partial\TT}(x;A)\geq(1-\eps)|A|$ and $\deg_{\partial\TT}(x;B)\geq(1-\eps)|B|$.
\end{enumerate}
\end{definition}

\begin{remark}\label{rmk:eps-close}
To prove $\eps$-closeness when $\min(|A|,|B|)\geq|V(G)|/3$, it is generally easier to prove the following two statements, which immediately imply (2) and (3).
\begin{enumerate}
	\item[(2')] For each $xy\in\partial\TT$, we have $\deg_{\TT}(xy)\geq(1-\eps/3)\times |V(G)|$.
	\item[(3')] For each $x\in A\cup B$, we have $\deg_{\partial\TT}(x)\geq(1-\eps/3)\times |V(G)|$.
\end{enumerate}	
\end{remark}

The primary motivation for \cref{def:eps-close} is the following lemma, which allows us to find walks in $G$ of the appropriate stretch modulo 4 between two triples of $\TT$ whenever they are connected by a tight walk in $\Godd(A,B)$.

\begin{lemma}\label{lem:make-walk}
Let $G$ be a 4-graph and suppose $A,B\subseteq V(G)$ are disjoint subsets of vertices.
Suppose $G$ is $\eps$-close to $\Godd(A,B)$ via some set $\TT\subseteq\binom{A\cup B}3$, and suppose $\eps\leq 0.1$.
If $x_1\cdots x_{\ell+3}$ is a tight walk in $\Godd(A,B)$ with $\ell\geq 6$ such that $x_1x_2x_3,x_{\ell+1}x_{\ell+2}x_{\ell+3}\in\TT$, then there are vertices $y_4,\dots,y_{\ell}\in V(G)$ such that $x_1x_2x_3y_4\cdots y_\ell x_{\ell+1}x_{\ell+2}x_{\ell+3}$ is a tight walk of the same stretch in $G$.
\end{lemma}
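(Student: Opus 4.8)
The plan is to build the tight walk $x_1x_2x_3y_4\cdots y_\ell x_{\ell+1}x_{\ell+2}x_{\ell+3}$ in $G$ one vertex at a time, choosing each $y_i$ to lie in the correct part ($A$ or $B$) dictated by the $\Godd(A,B)$-parity of the walk at that position, and using the degree conditions in \cref{def:eps-close} to ensure each successive quadruple is genuinely an edge of $G$. The key point is that once $x_1x_2x_3\in\TT$ and $x_{\ell+1}x_{\ell+2}x_{\ell+3}\in\TT$, the ``bulk'' of the walk is unconstrained except for which part each vertex lies in, and we have so much room (each neighborhood condition fails for only an $\eps$-fraction) that a greedy choice always succeeds.

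Concretely, I would proceed as follows. First, for $i=4,5,\dots,\ell$ in order, let $U_i\in\{A,B\}$ be the part such that $x_1\cdots x_{i-3}\,y_4\cdots y_{i-1}$ followed by a vertex of $U_i$ forms (locally) an edge of $\Godd(A,B)$ — equivalently, $U_i$ is determined by the parity of $|\{x_1,x_2,x_3,y_4,\dots,y_{i-1}\}\cap A|$ among the last three chosen vertices, matching the structure of $\Godd$. The constraint for $y_i$ to extend the $G$-walk is that the triple $t_i$ consisting of the previous three vertices (two $y$'s and possibly some $x$'s, or three $x$'s when $i\in\{4,5,6\}$) has $y_i$ in its $G$-neighborhood, and moreover we want $t_i$'s own sub-pairs and the final merge to work. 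The cleanest bookkeeping: maintain the invariant that each newly formed triple $t_{i} = y_{i-2}y_{i-1}y_i$ (reading $x$'s for the earliest indices) lies in $\TT$. We can do this because (i) the first triple $x_1x_2x_3\in\TT$ by hypothesis; (ii) given a pair $xy\in\partial\TT$, condition (2) says $\deg_\TT(xy;U)\geq(1-\eps)|U|$ for both $U=A,B$, so there are $\geq(1-\eps)|U_i| \geq |U_i| - \eps n$ choices of $y_i\in U_i$ with $y_{i-2}y_{i-1}y_i\in\TT$; and among those, by condition (1) applied to the triple $t_{i-1}$ (which lies in $\TT$ and has $G$-neighborhood containing at least $(1-\eps)$ of the appropriate part), we can further insist $y_i\in N_G(t_{i-1})$ — intersecting two sets each missing at most $\eps|U_i|$ elements of a part of size $\geq n/3$ leaves room since $2\eps < 1$. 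One must also check that the pair $y_{i-1}y_i$ lands in $\partial\TT$ so the next step can proceed; this follows automatically once $y_{i-2}y_{i-1}y_i\in\TT$.

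The only genuinely delicate part is the \emph{endgame}: after freely choosing $y_4,\dots,y_{\ell-2}$, the last two slots $y_{\ell-1},y_\ell$ (and the verification that $y_{\ell-1}y_\ell x_{\ell+1}$, $y_\ell x_{\ell+1}x_{\ell+2}$, and the final triple with $x_{\ell+1}x_{\ell+2}x_{\ell+3}$ are all edges of $G$) are constrained from both ends simultaneously — we must land exactly on the prescribed $x_{\ell+1}x_{\ell+2}x_{\ell+3}\in\TT$. This is why the hypothesis $\ell\geq 6$ is present: it guarantees at least a couple of free intermediate vertices as a buffer, so the forward greedy construction doesn't collide with the fixed tail. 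I would handle this by, say, choosing $y_4,\dots,y_{\ell-1}$ greedily as above (maintaining the $\TT$-invariant on triples and the $N_G$-condition), and then noting that the quadruples $y_{\ell-2}y_{\ell-1}x_{\ell+1}x_{\ell+2}$, $y_{\ell-1}x_{\ell+1}x_{\ell+2}x_{\ell+3}$ must be edges of $G$: the first needs $x_{\ell+1}x_{\ell+2}\in N_G$-of-the-triple, which by condition (1) applied to triples containing $x_{\ell+1}x_{\ell+2}$ (these are in $\partial\TT$-shadow relationships with $\TT$) holds after we ensure $y_{\ell-1}$ avoids the $O(\eps n)$ bad choices coming from each of the (constantly many) remaining constraints. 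Since all of these exclusions together remove at most $O(\eps n)$ vertices from a part of size $\geq n/3$, a valid choice exists provided $\eps$ is a small absolute constant — which is exactly the hypothesis $\eps\leq 0.1$ (the constant $0.1$ being comfortably safe, as only a bounded number, at most $\sim 4$, of $\eps$-sized bad sets are ever excluded at any step). I expect writing out this endgame intersection count carefully — tracking precisely which triples and pairs must be controlled at the last two or three steps, and confirming they all lie in $\TT$ or $\partial\TT$ so that \cref{def:eps-close} applies — to be the main obstacle; the interior of the walk is routine greedy extension.
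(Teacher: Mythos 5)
The interior part of your proposal — greedy forward extension maintaining the invariant that each consecutive triple $y_{i-2}y_{i-1}y_i$ lies in $\TT$, with the two degree conditions each excluding at most an $\eps$-fraction of the correct part at each step — is correct and is exactly what the paper does for $i=4,\ldots,\ell-3$.

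The endgame, however, has a genuine gap. After a forward greedy choice of $y_4,\dots,y_{\ell-1}$, the remaining constraints include $y_{\ell-2}y_{\ell-1}y_\ell x_{\ell+1}\in E(G)$ and $y_{\ell-1}y_\ell x_{\ell+1}x_{\ell+2}\in E(G)$ (note your proposal drops $y_\ell$ from these quadruples, which is an indexing slip but points to the real confusion). Neither can be forced by a forward greedy: condition (1) of \cref{def:eps-close} applied to $y_{\ell-2}y_{\ell-1}y_\ell\in\TT$ says that \emph{most} vertices of the appropriate part lie in $N_G(y_{\ell-2}y_{\ell-1}y_\ell)$, but it says nothing about the fixed vertex $x_{\ell+1}$. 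The same issue recurs for the edge through $x_{\ell+1}x_{\ell+2}$. You cannot ``hit'' a prescribed target by excluding $\eps$-sized bad sets from the left only, because the conditions never guarantee a specified vertex is good. The paper resolves this by growing from both ends simultaneously: forward greedy through $y_{\ell-3}$, then observe that the set of triples $(y_{\ell-2},y_{\ell-1},y_\ell)\in U_{\ell-2}\times U_{\ell-1}\times U_\ell$ extending the left walk has density at least $(1-2\eps)^3$, while the set of such triples extending the reversed right walk $x_{\ell+3}x_{\ell+2}x_{\ell+1}y_\ell y_{\ell-1}y_{\ell-2}$ also has density at least $(1-2\eps)^3$; since $\eps\leq 0.1$ gives $(1-2\eps)^3>\tfrac12$, pigeonhole yields a common choice. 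That meet-in-the-middle step is the missing idea.
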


\begin{proof}
For $1\leq i\leq\ell+3$, let $U_i\in\{A,B\}$ be the set with $x_i\in U_i$. Note that, for any $i\geq 3$ and any $(x'_{i-2},x'_{i-1},x'_i)\in U_{i-2}\times U_{i-1}\times U_i$, the neighborhood of $x'_{i-2}x'_{i-1}x'_i$ in $\Godd(A,B)$ is $U_{i+1}\setminus\{x'_{i-2},x'_{i-1},x'_i\}$.

For convenience, set $y_i=x_i$ for $1\leq i\leq 3$.
We inductively build a walk $y_1\cdots y_{\ell-3}$ such that $y_i\in U_i$ for each $i\geq 1$ and $y_{i-2}y_{i-1}y_i\in\TT$ for each $i\geq 3$. At the $i$th step (for $i\geq 4$), suppose $y_1\cdots y_{i-1}$ is a walk with $y_{i-3}y_{i-2}y_{i-1}\in\TT$. As observed above, the neighborhood of $y_{i-3}y_{i-2}y_{i-1}$ in $\Godd(A,B)$ is exactly $U_i$; thus, \cref{def:eps-close}(1) implies that there are at least $(1-\eps)|U_i|$ vertices $y_{i}\in U_i$ with $y_{i-3}y_{i-2}y_{i-1}y_{i}\in E(G)$. By \cref{def:eps-close}(2), $y_{i-2}y_{i-1}y_i\in\TT$ for all but at most $\eps |U_i|$ choices of $y_i$. It follows that there are at least $(1-2\eps)|U_i|$ choices of $y_i$ satisfying the desired conditions; because $\eps<\frac 12$, this is a positive number.

Now, suppose we have constructed a walk $x_1x_2x_3y_4\cdots y_{\ell-3}$ in $G$ with $y_{\ell-5}y_{\ell-4}y_{\ell-3}\in\TT$. Repeating the above argument, there are at least $(1-2\eps)^3|U_{\ell-2}||U_{\ell-1}||U_\ell|$ choices of $(y_{\ell-2},y_{\ell-1},y_\ell)\in U_{\ell-2} \times U_{\ell-1}\times U_\ell$ such that $x_1x_2x_3y_4\cdots y_\ell$ is a walk in $G$. Similarly, there are at least $(1-2\eps)^3|U_{\ell-2}||U_{\ell-1}||U_\ell|$ choices of the same vertices with $x_{\ell+3}x_{\ell+2}x_{\ell+1}y_\ell y_{\ell-1}y_{\ell-2}$ a walk in $G$. Because $\eps\leq 0.1$ and hence $(1-2\eps)^3>\frac 12$, the pigeonhole principle implies there is a choice of $(y_{\ell-2},y_{\ell-1},y_\ell)\in U_{\ell-2} \times U_{\ell-1}\times U_\ell$ such that $x_1x_2x_3y_4\cdots y_\ell$ and $y_{\ell-2}y_{\ell-1}y_\ell x_{\ell+1}x_{\ell+2}x_{\ell+3}$ are simultaneously walks in $G$.
\end{proof}

When applying \cref{lem:make-walk} it is easiest to only specify the sequence of sets $U_4,\ldots,U_{\ell}$, each being either $A$ or $B$, for which we seek $y_i\in U_i$. The resulting walk is notated as a \emph{tight walk of the form}
$$x_1x_2x_3U_4U_5\cdots U_\ell x_{\ell+1}x_{\ell+2}x_{\ell+3}.$$
Such a walk exists by \cref{lem:make-walk} if there are at least three intermediate sets $U_i$, the triples $x_1x_2x_3$ and $x_{\ell+1}x_{\ell+2}x_{\ell+3}$ are in $\TT$, and, for any choice of intermediate vertices $(x_4,\ldots,x_s)\in U_4\times\cdots\times U_\ell$, the resulting sequence $x_1\cdots x_{\ell+3}$ is a walk in $\Godd(A,B)$. In all applications, these hypotheses follow immediately from the setup.

Our last preliminary lemma is a useful tool for constructing the set $\TT$ of triples described in \cref{def:eps-close}.

\begin{lemma}\label{lem:make-T}
Fix constants $\al,\eps\in(0,1]$ and fix $\del\leq \min(\eps^{1/4},\al/8)$. Let $V$ be a set of $n$ vertices, with $n$ sufficiently large in terms of $\del$. Suppose $\TT\subset\binom V3$ has size $|\TT|\geq (\al-\eps)\frac{n^3}6$ and satisfies $\deg_\TT(xy)\leq \al n$ for each $xy\in\binom V2$.
Then there is $V'\subseteq V$ of size at least $(1-\del^2)n$ and $\TT'\subseteq\TT\cap \binom{V'}3$ such that $\deg_{\TT'}(xy)\geq(\al-7\del)n$ for each $xy\in\partial\TT'$ and $\deg_{\partial\TT'}(x)\geq(1-4\del)n$ for each $x\in V'$.
\end{lemma}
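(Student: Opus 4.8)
The plan is to clean up $\TT$ in two rounds of vertex deletion, first removing vertices that lie in too few pairs of $\partial\TT$, then removing vertices whose restricted $\partial\TT$-degree is still small, and to track how many triples are destroyed at each stage. First I would define, for a current triple-set $\TT$, the quantities $d(xy)=\deg_\TT(xy)$ and $d(x)=\deg_{\partial\TT}(x)$, and observe that the hypothesis $d(xy)\le \al n$ for all $xy$ gives the crude but crucial bound $\sum_{xy}d(xy)=3|\TT|$, so that the average pair-degree is at least $(\al-\eps)n/ \binom n2 \cdot \tfrac{n^2}{2}\approx (\al-\eps)n$ over pairs in $\partial\TT$ — and since no pair exceeds $\al n$, in fact almost all pairs of $\partial\TT$ must have degree close to $\al n$. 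Quantitatively: the number of pairs $xy$ with $d(xy)\le (\al-7\del)n$ is at most $O(\eps/\del)\cdot n^2 = O(\del^3 n^2)$ using $\eps\le\del^4$, because each such "deficient" pair costs at least $7\del n$ relative to the maximum $\al n$ while the total deficit $\sum_{xy}(\al n - d(xy))$ over $xy\in\partial\TT$ is at most $\al n|\partial\TT| - 3|\TT| \le \al n\binom n2 - 3(\al-\eps)\tfrac{n^3}{6} = O((\eps+\text{slack})n^3)$; I would need to be slightly careful that $\partial\TT$ is not much larger than $\al^{-1}\cdot(\text{stuff})$, but since $|\partial\TT|\le\binom n2$ and $\del\le 8\al$ this is fine.

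Next I would remove all triples containing a deficient pair, then iteratively delete any vertex $x$ with $\deg_{\partial\TT}(x)<(1-4\del)n$ in the current complex, calling the survivor set $V'$ and the survivor triple-family $\TT'$. The key accounting: deleting one vertex removes at most $n^2/2$ triples, so if the process ran for more than $\del^2 n$ steps it would have destroyed more than $\del^2 n \cdot$ (average loss per step) triples; but each deleted vertex had $<(1-4\del)n$ pairs of the current $\partial\TT$ through it, i.e. was missing $\ge 4\del n$ pairs, and a missing pair through $x$ corresponds to at least (something like) $4\del n$ triples lost — wait, more carefully: a vertex $x$ with $\deg_{\partial\TT}(x)<(1-4\del)n$ means there are $\ge 4\del n$ vertices $y$ with $xy\notin\partial\TT$, hence $\ge 4\del n$ pairs $xy$ that already have zero triples, which is consistent, so instead I should argue via a potential/counting argument that the \emph{total} number of triples ever deleted in this phase is $O(\del^3 n^3)$. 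The cleanest route: let $t$ be the number of vertices deleted; the surviving triple count is $|\TT'|\ge |\TT| - O(\del^3 n^2) - (\text{triples destroyed in the vertex-deletion phase})$. Since deleting a low-degree vertex is triggered by scarcity rather than abundance, one shows each deletion step destroys at most, say, $(1-4\del)n\cdot n$ triples trivially, and more importantly that the process cannot continue long: if $t\ge \del^2 n$, then summing the degree deficits we would need $\sum (\al n - d) $ type budgets exceeded. I expect the honest way is a standard "delete low-degree vertices, bound total deletions by a telescoping count of the shadow" argument, concluding $t\le \del^2 n$, hence $|V'|\ge(1-\del^2)n$, and $|\TT'|\ge(\al-\eps)\tfrac{n^3}{6} - O(\del^3 n^3)>0$; finally by construction every $x\in V'$ has $\deg_{\partial\TT'}(x)\ge(1-4\del)n$ (survivors of the deletion process, noting degrees only drop during the process but the final stable set satisfies the threshold), and every $xy\in\partial\TT'$ has $\deg_{\TT'}(xy)\ge(\al-7\del)n$ since deficient pairs were removed first and subsequent vertex deletions only remove triples lying on deleted vertices, of which there are at most $t\cdot n \le \del^2 n^2 \le \del n$ per pair — one should double check the arithmetic so that the final bound is $(\al-7\del)n$ and not something slightly worse, adjusting the constant $7$ if needed (the statement already has generous constants).

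The main obstacle I anticipate is getting the constants and the order of operations exactly right so that the two cleaning phases don't interfere: removing deficient pairs first can lower vertex degrees (pushing more vertices below the $(1-4\del)n$ threshold), and removing low-degree vertices can lower pair degrees (pushing more pairs below $(\al-7\del)n$). One has to run the vertex-deletion phase to a fixed point and then verify that the pair-degree guarantee still holds for the \emph{final} $\partial\TT'$, using that only $\le\del^2 n$ vertices total were ever deleted so each pair lost at most $\del^2 n^2$... no — at most $t \le \del^2 n$ triples (one per deleted vertex, since a given pair $xy$ together with a deleted vertex $z$ gives at most one triple $xyz$), which is $\le\del^2 n\le\del n$, comfortably within the slack between $\al-7\del$ and $\al-7\del$ once we started from $\ge(\al-7\del)n$ after the pair-cleaning... actually after pair-cleaning the surviving pairs have degree $\ge(\al-7\del)n$ \emph{minus} triples killed because their third vertex sat on a deficient pair, so I'd set the first threshold a bit higher (say $\al - 5\del$) to absorb this, then lose another $\le\del^2 n$ in the vertex phase, landing at $\ge(\al-7\del)n$ as required. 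Because $n$ is assumed large in terms of $\del$, all $o(n)$ and $O(\del^3 n^3)$ error terms are dominated as needed, and no step uses anything beyond double counting, so modulo bookkeeping this is routine; I would present it as: (i) crude degree-sum bounds $\Rightarrow$ few deficient pairs; (ii) remove triples on deficient pairs, losing $O(\del^3 n^3)$ triples; (iii) iteratively delete low-$\partial\TT$-degree vertices, bound the number of deletions by $\del^2 n$ via a triple-count; (iv) verify the two degree conditions for $(V',\TT')$ and that $\TT'\ne\emptyset$.
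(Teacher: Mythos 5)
There is a genuine gap, and it is exactly the issue you circle around in your last paragraph without resolving. You first remove all triples containing a ``deficient'' pair, but your control on deficient pairs is only a \emph{global} count $O(\del^3 n^2)$. That does not bound, for a surviving pair $xy$, how many triples $xyz$ it loses: a single vertex could a priori lie on $\Theta(n)$ deficient pairs, and then a non-deficient pair through it could lose $\Theta(n)$ triples, wiping out the slack you tried to reserve by raising the first threshold to $\al-5\del$. What is needed is a \emph{per-vertex} bound on the number of deficient pairs through each surviving vertex, and your plan has no route to one. Separately, your iterative deletion of vertices with small $\deg_{\partial\TT}$ has no working stopping argument: as you yourself note, a vertex with few shadow pairs need not have few triples through it, so the ``telescoping triple count'' you gesture at does not bound the number of rounds, and you never supply the alternative.

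The paper's proof avoids both problems by reversing the order and replacing iteration with a single shot. It first deletes, in one pass, the vertices $x$ with small \emph{triple} degree $\deg_{\TT}(x)$ (not small shadow degree). The averaging uses exactly the $\deg_\TT(xy)\le\al n$ cap to show $\deg_\TT(x)\le\al n^2/2$ always, and the lower bound on $|\TT|$ to show at most $\eps n/\del^2\le\del^2 n$ vertices fall below $(\al-\del^2)n^2/2$; this gives $V'$ immediately, no iteration. Restricting to $V'$ costs each surviving vertex at most $(n^2-m^2)/2$ of its triple degree, so inside $V'$ every vertex still has $\deg_{\TT''}(x)\ge(\al-3\del^2)n^2/2$. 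From this high \emph{vertex} degree one deduces the crucial per-vertex bound on deficient pairs: writing $\EE$ for pairs in $V'$ with $\deg_{\TT''}(xy)<(\al-\del)n$, the identity $\del n\cdot\deg_\EE(x)\le\sum_y(\al n-\deg_{\TT''}(xy))\le\al n^2-2\deg_{\TT''}(x)\le 3\del^2 n^2$ gives $\deg_\EE(x)\le 3\del n$. Now discarding triples touching $\EE$ is clean: a surviving pair loses at most $\deg_\EE(x)+\deg_\EE(y)\le 6\del n$ triples, landing at $\ge(\al-7\del)n>0$, so no pair degenerates to zero and $\partial\TT'=\binom{V'}2\setminus\EE$ exactly; and $\deg_{\partial\TT'}(x)=m-1-\deg_\EE(x)\ge(1-4\del)n$. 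There is no cascade to chase. To repair your write-up you would essentially need to import this per-vertex deficient-pair bound, at which point you would be doing the paper's proof.
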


\begin{proof}
For each $x\in V$, it holds that
\[\deg_\TT(x)=\frac 12\sum_{y\in V-\{x\}}\deg_\TT(xy)\leq\frac{n}2\times \al n=\frac{\al n^2}2.\]

Let $V'=\{x\in V:\deg_\TT(x)\geq(\al-\del^2)n^2/2\}$ and set $m=|V'|$. We have
\[
|V\setminus V'|\times\frac{\del^2n^2}2\leq\sum_{x\in V}\left(\frac{\al n^2}2-\deg_{\TT}(x)\right)=\frac{\al n^3}2-3|\TT|\leq\frac{\eps n^3}2.
\]
so $|V\setminus V'|\leq \eps n/\del^2\leq \del^2n$ and $m\geq\left(1-\del^2\right)n$. 

Let $\TT''=\TT\cap\binom{V'}3$ and let $\EE=\left\{xy\in\binom{V'}2:\deg_{\TT''}(xy)<(\al-\del)n\right\}$.
For each $x\in V'$, observe that $\deg_{\TT}(x)-\deg_{\TT''}(x)\leq \binom{n-1}2-\binom{m-1}2\leq\frac{n^2-m^2}2$. Hence,
\[
\deg_{\TT''}(x)\geq\deg_{\TT}(x)-\frac{n^2-m^2}2
\geq (\al-\del^2)\frac{n^2}2-\frac{2\del^2 n^2}2=(\al-3\del^2)\frac{n^2}2.
\]
It follows that
\begin{align*}
\del n\times \deg_{\EE}(x)
&\leq\sum_{y\in V'-\{x\}}\left(\al n-\deg_{\TT''}(xy)\right)
\leq\al mn-2\deg_{\TT''}(x)
\\&\leq\al n^2-(\al-3\del^2)n^2=3\del^2n^2.
\end{align*}
Thus $\deg_{\EE}(x)\leq 3\del^2n^2/\del n=3\del n$ for each $x\in V'$.

Let $\TT'=\{xyz\in\TT'':xy,yz,xz\notin\EE\}$. For $xy\in\binom{V'}2-\EE$, we have
\[
\deg_{\TT'}(xy)\geq\deg_{\TT''}(xy)-(\deg_\EE(x)+\deg_\EE(y))
\geq(\al-\del)n-6\del n=(\al-7\del)n.
\]
In particular, $\deg_{\TT'}(xy)>0$, and we conclude that $\partial\TT'=\binom{V'}2-\EE$. It follows that for any $x\in V'$,
\[
\deg_{\partial\TT'}(x)=m-1-\deg_{\EE}(x)\geq (1-\del^2)n-1-3\del n\geq (1-4\del) n
\]
if $n$ is sufficiently large in terms of $\del$.
\end{proof}

\subsection{Proof of \cref{thm:one-cycle}}\label{subsec:prove-one-cycle}

Fix absolute constants $\eps_4\leq \frac 1{200}$ and $\eps_3,\eps_2,\eps_1,\eps_0$ with each $\eps_i$ chosen sufficiently small in terms of $\eps_{i+1}$. Specifically, we take 
\[\eps_3\leq\eps_4^2/10,\quad
\eps_2\leq(\eps_3/100)^4,\quad
\eps_1\leq(\eps_2/100)^5,\quad\text{and}\quad
\eps_0\leq\epsref{thm:stability}(\eps_1)/25.\]
Choose $L_0\equiv 1\pmod 4$ so that $\eps_0\geq 8L_0^{-1/4}$. We prove \cref{thm:one-cycle} for this value of $L_0$.

Suppose $n$ is sufficiently large in terms of the absolute constants $\eps_4,\ldots,\eps_0,L_0$.
Let $G$ be a $n$-vertex 4-graph that is $C_{L_0}^{(4)}$-hom-free with $e(G)=\ex(n,C_{L_0}^{(4)}\texthom)$. As noted in \cref{prop:eopt}, $\ex(n,C_{L_0}^{(4)}\texthom)\geq \eopt(n)\geq(n^4-6n^3)/48$. By \cref{prelim:turan-regular}, for any vertex $v\in V(G)$, it holds that
\[
\deg_G(v)+\binom n2\geq\max_{w\in V(G)}\deg_G(w)\geq\frac 1n\sum_{w\in V(G)}\deg_G(w)=\frac{4e(G)}n\geq\frac{n^3-6n^2}{12},
\]
so $\deg_G(v)\geq \frac{n^3}{12}-n^2$ for each $v\in V(G)$. 

By \cref{delete-long-cycles}, we may delete at most $\eps_0n^4$ edges from $G$ such that the resulting 4-graph $G'$ is $\C14$-hom-free. Because $e(G')\geq e(G)-\eps_0n^4\geq\left(\frac 12-25\eps_0\right)\frac{n^4}{24}$, \cref{thm:stability} implies there is a partition $V(G)=A_1\cup B_1$ with $|A_1|,|B_1|\geq\left(\frac 12-\eps_1\right)n$, such that $|E(G')\setminus E(\Godd(A_1,B_1))|\leq\eps_1n^4$.

This proof is organized into two types of statements, Claims and Obstructions, to aid the reader. An {Obstruction} is a claim regarding structures forbidden in the $C_{L_0}^{(4)}$-hom-free 4-graph $G$, which usually encapsulates the key idea in the proof of the succeeding claim. The proofs of the obstructions each locate a homomorphic copy of $C_\ell^{(4)}$ with $\ell\in\{5,7,9,13,17,21\}$, usually by applying \cref{lem:make-walk}. By \cref{prop:hom-free-chain}, these are forbidden in the $C_{L_0}^{(4)}$-hom-free 4-graph $G$, provided $L_0\geq 21$.

The proof comprises five claims. In \cref{claim:make-T}, we identify sets $A_2$ and $B_2$ whose union is almost all of $V(G)$, such that $G$ is $\eps_2$-close to $\Godd(A_2,B_2)$ via some set $\TT$ of triples. In \cref{claim:make-T'}, we add the remaining vertices of $V(G)$ to $A_2$ or $B_2$, resulting in a partition $V(G)=A_3\cup B_3$ of the entire vertex set such that $G$ is $\eps_3$-close to $\Godd(A_3,B_3)$ via some $\TT'\supseteq\TT$. Then, in \cref{claim:A4,claim:B4,claim:A2B2}, we study the symmetric difference of $E(G)$ and $E(\Godd(A_3,B_3))$.
Most claims are preceded by one to two obstructions, which have relatively simple proofs, but highlight the key structural observations.

\begin{claim}\label{claim:make-T}
There are sets $A_2\subseteq A_1$ and $B_2\subseteq B_1$ and $\TT\subseteq\binom{V(G)}3$ such that $G$ is $\eps_2$-close to $\Godd(A_2,B_2)$ via $\TT$. Moreover, $|A_2|,|B_2|\geq\left(\frac 12-\eps_2\right)n$.
\end{claim}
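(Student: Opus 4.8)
The plan is to extract, from the rough structure provided by \cref{thm:stability}, a cleaner ``$\eps_2$-close'' structure by first cleaning up the set of triples via \cref{lem:make-T}, and then cleaning up the vertex sides $A_1, B_1$. Recall we have a partition $V(G) = A_1 \cup B_1$ with $|A_1|, |B_1| \geq (\tfrac12 - \eps_1)n$ and $|E(G') \setminus E(\Godd(A_1,B_1))| \leq \eps_1 n^4$, where $G'$ is a $\C14$-hom-free subgraph of $G$ obtained by deleting $\leq \eps_0 n^4$ edges, and furthermore every vertex of $G$ has degree $\geq \tfrac{n^3}{12} - n^2$.

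First I would let $\TT_0 \subseteq \binom{V(G)}{3}$ be the set of triples $xyz$ that are ``well-behaved'' with respect to $\Godd(A_1,B_1)$, meaning that, with $U \in \{A_1,B_1\}$ the $\Godd(A_1,B_1)$-neighborhood of $xyz$, we have $\deg_G(xyz; U) \geq (1-\eps_2^{1/2})|U|$ (or some similar threshold polynomial in $\eps_2$). A counting argument using $|E(G') \setminus E(\Godd(A_1,B_1))| \leq \eps_1 n^4$ and $e(G) - e(G') \leq \eps_0 n^4$ shows that all but $O(\eps_1^{1/2} n^3)$ triples lie in $\TT_0$; moreover, one should arrange (passing to a further subset, discarding triples of high $\partial$-degree if necessary, or invoking the min-degree bound) that $\deg_{\TT_0}(xy) \leq \al n$ for a suitable $\al$ close to $\tfrac12$, so that $|\TT_0| \geq (\al - \eps)\tfrac{n^3}{6}$ with $\al$ and $\eps$ fitting the hypotheses of \cref{lem:make-T}. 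I should double-check: the natural $\al$ here is roughly $\tfrac12$ (the density of triples in one side), and the hypothesis $\del \leq \min(\eps^{1/4}, 8\al)$ is easily met. Applying \cref{lem:make-T} yields $V_2 \subseteq V(G)$ with $|V_2| \geq (1 - \del^2)n$ and $\TT \subseteq \TT_0 \cap \binom{V_2}{3}$ with $\deg_\TT(xy) \geq (\al - 7\del)n$ for each $xy \in \partial\TT$ and $\deg_{\partial\TT}(x) \geq (1-4\del)n$ for each $x \in V_2$; choosing $\del$ polynomially small in $\eps_2$ makes these the bounds (2) and (3) of \cref{def:eps-close} with parameter $\eps_2$ (using \cref{rmk:eps-close}, since $|V_2| \geq |V(G)|/3$). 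Then I set $A_2 = A_1 \cap V_2$ and $B_2 = B_1 \cap V_2$; since only $\del^2 n$ vertices were deleted, $|A_2|, |B_2| \geq (\tfrac12 - \eps_1)n - \del^2 n \geq (\tfrac12 - \eps_2)n$.

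It remains to verify \cref{def:eps-close}(1): for each $xyz \in \TT$, with $U \in \{A_2, B_2\}$ its $\Godd(A_2,B_2)$-neighborhood, $\deg_G(xyz; U) \geq (1-\eps_2)|U|$. This follows because $xyz \in \TT_0$ already guarantees $\deg_G(xyz; U') \geq (1-\eps_2^{1/2})|U'|$ where $U'$ is the corresponding part of $\Godd(A_1,B_1)$; since $A_2, B_2$ are obtained from $A_1, B_1$ by deleting at most $\del^2 n$ vertices total and $|U| \geq (\tfrac12 - \eps_2)n$, we lose only an additive $O(\del^2 n)$, which is absorbed into $(1-\eps_2)|U|$ for $\del$ small enough in terms of $\eps_2$. (One caveat to handle carefully: the partition of $V(G)$ into $A_1, B_1$ is a partition of the whole vertex set, but $V_2$ is a strict subset; vertices outside $V_2$ are simply dropped, which is fine since \cref{def:eps-close} only constrains $A_2 \cup B_2$.)

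I do not expect a serious obstacle here --- this claim is essentially bookkeeping, converting the $o(n^4)$-edge-difference statement of \cref{thm:stability} into the local degree conditions of \cref{def:eps-close} via the already-established \cref{lem:make-T}. The one point requiring care is the \emph{choice and propagation of the hierarchy of constants}: one must track how $\eps_2$ (the target closeness) controls the intermediate thresholds ($\eps_2^{1/2}$ for $\TT_0$, $\del$ for \cref{lem:make-T}) and confirm these are all consistent with $\eps_1 \leq (\eps_2/100)^5$ and $\eps_0 \leq \epsref{thm:stability}(\eps_1)$ as fixed at the start of \cref{subsec:prove-one-cycle}. This is routine but must be done honestly so that later claims can invoke \cref{claim:make-T} with the stated parameters.
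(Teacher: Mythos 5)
Your overall approach---define a set $\TT_0$ of well-behaved triples via the closeness of $G$ to $\Godd(A_1,B_1)$, apply \cref{lem:make-T} to clean it up, then intersect $A_1,B_1$ with the resulting vertex set---matches the paper's strategy. However, there is a genuine error in your choice of the parameter $\al$ in \cref{lem:make-T}.

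You propose $\al\approx\tfrac12$, citing ``the density of triples in one side,'' but this is the wrong quantity. The set $\TT_0$ of well-behaved triples contains \emph{almost all} of $\binom{V(G)}3$: every triple $xyz$ has an associated $\Godd(A_1,B_1)$-neighborhood $U\in\{A_1,B_1\}$, and a double-count of $e\big(\Godd(A_1,B_1)-G\big)\leq 2\eps_1 n^4$ shows that all but $O(\eps_1^{4/5}n^3)$ triples satisfy the well-behavedness condition $\deg_G(xyz;U)\geq |U|-O(\eps_1^{1/5})n$. Hence $|\TT_0|\geq(1-O(\eps_1^{4/5}))\binom n3$ and $\deg_{\TT_0}(xy)\leq n$ trivially, so the correct parameter is $\al=1$, whose output via \cref{lem:make-T} is $\deg_{\TT}(xy)\geq(1-7\del)n$. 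With $\al\approx\tfrac12$ as you propose, \cref{lem:make-T} would only produce $\deg_\TT(xy)\geq(\al-7\del)n\approx n/2$, far short of the $(1-\eps_2/3)n$ bound required by \cref{rmk:eps-close}(2') to establish \cref{def:eps-close}(2) --- the argument does not close. (Enforcing $\deg_{\TT_0}(xy)\leq n/2$ in the first place would also require discarding roughly half the triples at every pair, which is unnecessary and counterproductive.) Your verification of \cref{def:eps-close}(1) and the size bounds on $A_2,B_2$ is otherwise sound, modulo the constant bookkeeping you correctly flag.
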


\begin{proof}
Let $H=\Godd(A_1,B_1)-G$	. We observe that
\begin{align*}
e(H)&=|E(\Godd(A_1,B_1)\setminus E(G)|=|E(G)\setminus \Godd(A_1,B_1)|+e(\Godd(A_1,B_1))-e(G)
\\&\leq |E(G)\setminus\Godd(A_1,B_1)|
\leq |E(G)\setminus E(G')|+|E(G')\setminus E(\Godd(A_1,B_1))|
\\&\leq (\eps_0+\eps_1)n^4\leq 2\eps_1n^4,
\end{align*}
as $e(\Godd(A_1,B_1))\leq\eopt(n)\leq e(G)$.

Let $\TT_0=\left\{xyz\in\binom V3:\deg_H(xyz)\leq 8\eps_1^{1/5}n\right\}$. Setting $\overline{\TT_0}=\binom V3-\TT_0$, we have that
\[
8\eps_1^{1/5}n|\overline{\TT_0}|\leq \sum_{xyz\in\binom V3}\deg_H(xyz)= 4e(H)\leq8\eps_1n^4,
\]
so $|\TT_0|\geq\binom n3-\eps_1^{4/5}n^3\geq\frac{n^3}6-1.5\eps_1^{4/5}n^3$.

Applying \cref{lem:make-T} to $\TT_0$ with $(\al,\eps)_{\ref{lem:make-T}}=\big(1,9\eps_1^{4/5}\big)$ yields a set $V'\subseteq V(G)$ of size $|V'|\geq\left(1-3\eps_1^{2/5}\right)n$ and a family $\TT\subseteq\binom{V'}3$ satisfying $\deg_{\TT'}(xy)\geq(1-\eps_2/3)n$ for each $xy\in\partial\TT'$ and $\deg_{\partial\TT}(x)\geq(1-\eps_2/3)n$ for each $x\in V'$. Thus, $\TT$ satisfies conditions (2')--(3') of \cref{rmk:eps-close}, implying \cref{def:eps-close}(2)--(3).

Let $A_2=A_1\cap V'$ and let $B_2=B_1\cap V'$. We have $|A_2|\geq |A_1|-3\eps_1^{2/5}n\geq\left(\frac 12-\eps_1-3\eps_1^{2/5}\right)n\geq\left(\frac 12-\eps_2\right) n$; similarly $|B_2|\geq\left(\frac 12-\eps_2\right)n$. \cref{def:eps-close}(1) holds because, for any $xyz\in\binom{V'}3$ whose neighborhood in $\Godd(A_2,B_2)$ is $U\in\{A_2,B_2\}$, we have that
\[
\deg_G(xyz;U)\geq|U|-\deg_H(xyz)\geq|U|-8\eps_1^{1/5}n\geq|U|-\eps_2n.
\]
Thus, $\TT$ satisfies \cref{def:eps-close}(1)--(3) for the pair $(A_2,B_2)$.
\end{proof}

\cref{claim:make-T'} bootstraps the structure $(A_2,B_2,\TT)$ constructed in \cref{claim:make-T} to build larger sets $(A_3,B_3,\TT')$ such that $A_3\cup B_3=V(G)$ and $G$ is $\eps_3$-close to $\Godd(A_3,B_3)$ via $\TT'$. This claim relies on the following obstructions. 

\begin{obstruction}\label{obstruction:build-TT'}
Let $u\in V(G)$ be any vertex and let $\LL(u)$ denote the link of $u$ in $G$. Suppose $a_1,a_2,a_3\in A_2$ and $b_1,b_2,b_3\in B_2$.
\begin{enumerate}
\item $a_1a_2b_1$ and $a_1b_1b_2$ cannot both be in $E(\LL(u))\cap\TT$.
\item $a_1a_2a_3$ and $a_1a_2b_1$ cannot both be in $E(\LL(u))\cap\TT$.
\item $b_1b_2a_1$ and $b_1b_2b_3$ cannot both be in $E(\LL(u))\cap\TT$.
\end{enumerate}
In particular, for any $xy\in\binom{A_2\cup B_2}2$, its neighborhood in $E(\LL(u))\cap\TT$ is contained either in $A_2$ or in $B_2$.
\end{obstruction}

\begin{proof}
(1) If $a_1a_2b_1$ and $a_1b_1b_2$ were both in $E(\LL(u))\cap\TT$, then applying \cref{lem:make-walk} twice would yield a tight walk of the form
$$a_2a_1b_1u b_2a_1b_1B_2 B_2A_2B_2b_1 b_2a_1ub_1 a_2a_1A_2B_2 A_2 a_2a_1b_1u,$$
which is a homomorphic copy of $C_{21}^{(4)}$.

(2) If $a_1a_2a_3$ and $a_1a_2b_1$ were both in $E(\LL(u))\cap\TT$, then applying \cref{lem:make-walk} would yield a tight walk of the form
$a_3a_1a_2u b_1a_1a_2A_2 B_2A_2A_2A_2 B_2 a_3a_1a_2u,$
 which is a homomorphic copy of $C_{13}^{(4)}$.

(3) The proof is analogous to (2).	
\end{proof}

\begin{claim}\label{claim:make-T'}
There are sets $A_3\supseteq A_2$ and $B_3\supseteq B_2$ and $\TT'\supseteq\TT$ such that $V(G)=A_3\cup B_3$ and $G$ is $\eps_3$-close to $\Godd(A_3,B_3)$ via $\TT'$.	
\end{claim}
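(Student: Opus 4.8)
The plan is to assign each vertex of $V(G)$ to one of the two parts according to how its link meets $\TT$, using \cref{obstruction:build-TT'}; to take $A_3$, $B_3$ to be $A_2$, $B_2$ with the newly-assigned vertices adjoined; and to enlarge $\TT$ to $\TT'$ by adjoining only those triples through a new vertex that already behave like triples of a complete oddly bipartite $4$-graph.

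For $u\in V(G)$, let $M_u\subseteq\TT$ be the \emph{$\TT$-restricted link of $u$}: the set of triples $xyz\in\TT$ with $uxyz\in E(G)$, viewed as a $3$-graph on $A_2\cup B_2$. The ``in particular'' clause of \cref{obstruction:build-TT'} says that $N_{M_u}(xy)$ is contained in $A_2$ or in $B_2$ for every pair $xy$; since $|A_2|+|B_2|\le n$ and $|A_2|,|B_2|\ge(\tfrac12-\eps_2)n$, this already forces $\deg_{M_u}(xy)\le(\tfrac12+\eps_2)n$ for every pair, hence $|E(M_u)|=\tfrac13\sum_{xy}\deg_{M_u}(xy)\le(\tfrac1{12}+O(\eps_2))n^3$. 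In the other direction, $\deg_G(u)\ge\tfrac{n^3}{12}-n^2$ (established just before \cref{claim:make-T}), and the degree conditions for $\TT$ in \cref{claim:make-T} (via \cref{rmk:eps-close}) imply that all but $O(\eps_2 n^3)$ triples of $\binom{V(G)}3$ lie in $\TT$, so $|E(M_u)|\ge\deg_G(u)-O(\eps_2 n^3)\ge(\tfrac1{12}-O(\eps_2))n^3$. Thus $M_u$ is a near-maximum-size $3$-graph on $A_2\cup B_2$ among those whose pair-neighbourhoods are monochromatic in $\{A_2,B_2\}$.

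The main step --- and the one I expect to be the real obstacle --- is a stability statement for this extremal problem: the only $3$-graphs on $A_2\cup B_2$ of the maximum size $(\tfrac1{12}+O(\eps_2))n^3$ with all pair-neighbourhoods monochromatic are the two $\Godd(A_2,B_2)$-links (that of an $A$-vertex, using the triples with an even number of vertices in $A_2$, and that of a $B$-vertex, using the odd ones), and any $3$-graph of near-maximum size agrees with one of these two on all but $O(\eps_3)n^3$ triples. I would prove this by a contagion argument: the size estimates force all but $O(\eps_3)n^2$ pairs to be \emph{saturated}, i.e.\ to have $M_u$-neighbourhood equal to almost all of $A_2$ or almost all of $B_2$; then monochromaticity propagates the direction of a saturated within-$A_2$ pair to almost all within-$A_2$ pairs sharing a vertex with it, hence --- after a bounded number of rounds, watching that the exceptional pairs do not accumulate --- to almost all within-$A_2$ pairs, and the directions of the within-$B_2$ and of the mixed pairs are then pinned down through the triples of $M_u$ of mixed composition. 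Applying this to $M_u$ classifies $u$ as \emph{$A$-like} or \emph{$B$-like} according to which $\Godd(A_2,B_2)$-link $M_u$ resembles; the vertices of $A_2$ are $A$-like and those of $B_2$ are $B$-like, since their links are already close to the correct $\Godd$-link.

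Finally I set $A_3=A_2\cup\{u\notin A_2\cup B_2:u\text{ is }A\text{-like}\}$ and $B_3=B_2\cup\{u\notin A_2\cup B_2:u\text{ is }B\text{-like}\}$, and
\[
\TT'=\TT\cup\Bigl\{\,uxy:\ u\in V(G)\setminus(A_2\cup B_2),\ \{x,y\}\subseteq A_2\cup B_2,\ \{x,y\}\in\partial\TT,\ \deg_G(uxy;U)\ge(1-\eps_3)|U|\,\Bigr\},
\]
where $U\in\{A_3,B_3\}$ is the $\Godd(A_3,B_3)$-neighbourhood of $uxy$. Since $A_2\cup B_2$ is precisely the set ``$V'$'' produced in \cref{claim:make-T}, with $|V(G)\setminus(A_2\cup B_2)|=O(\eps_1^{2/5}n)$, we get $A_3\cup B_3=V(G)$, $A_3\supseteq A_2$, $B_3\supseteq B_2$, and $|A_3|,|B_3|\ge(\tfrac12-\eps_3)n$. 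Condition \cref{def:eps-close}(1) holds on the newly added triples by construction and on $\TT$ by inheritance from $\eps_2$-closeness (using $\eps_2<\eps_3$, and that only $O(\eps_1^{2/5}n)$ vertices and $O(\eps_1^{2/5}n^4)$ edges are new). Conditions (2) and (3) for pairs and vertices inside $A_2\cup B_2$ follow from the corresponding bounds for $\TT$, with $O(\eps_1^{2/5}n)\ll\eps_3 n$ to spare. For a new vertex $u$, the stability step says that $u$'s link in $G$ agrees with the appropriate $\Godd(A_3,B_3)$-link on all but an $O(\eps_3)$-fraction of triples, so all but $O(\eps_3)n$ of the pairs $\{u,x\}$ with $x\in A_2\cup B_2$ lie in $\partial\TT'$ and each has $\TT'$-degree at least a $(1-\eps_3)$-fraction of the relevant part-size into both $A_3$ and $B_3$; this yields \cref{def:eps-close}(2)--(3), or one may check the cruder versions (2$'$)--(3$'$) of \cref{rmk:eps-close} since $\min(|A_3|,|B_3|)\ge n/3$. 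Hence $G$ is $\eps_3$-close to $\Godd(A_3,B_3)$ via $\TT'$.
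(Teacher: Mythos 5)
Your outline tracks the paper's: restrict the link of each new vertex $u$ to $\TT$, observe via \cref{obstruction:build-TT'} that pair-neighbourhoods are monochromatic in $\{A_2,B_2\}$, deduce near-extremality, classify $u$ as $A$-like or $B$-like, and adjoin. The upper and lower counts on $|E(M_u)|$ are the right ones, and the high-level plan for $A_3,B_3,\TT'$ is sound.

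The gap is the step you yourself flag as ``the real obstacle'': the stability statement for $3$-graphs on $A_2\cup B_2$ with monochromatic pair-neighbourhoods, which you propose to prove by a contagion argument but only sketch. This is the entire technical content of the claim. The paper avoids proving a fresh stability theorem by re-applying \cref{lem:make-T}, which was engineered precisely for this reuse: applied to $\TT_u=M_u$ with $\al=\tfrac12+\eps_2$ it yields a cleaned set $\TT'_u\subseteq\TT_u$ supported on a large $V'_u$ with $\deg_{\TT'_u}(xy)\ge(\tfrac12-\eps_3/3)n$ for every $xy\in\partial\TT'_u$ and $\deg_{\partial\TT'_u}(x)\ge(1-\eps_3/3)n$ for every $x\in V'_u$. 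Once you have these clean minimum degrees, the classification of $u$ (\cref{subclaim:every-link-good}) is a short probabilistic argument combined with \cref{obstruction:build-TT'}; in particular the ``mixed'' option $\TT'_u\subseteq\binom{A_2}3\cup\binom{B_2}3$ is killed simply because $\partial\TT'_u$ would then have a vertex of degree below $\max(|A_2|,|B_2|)$, contradicting the min-degree bound. Your contagion sketch does not yet establish that the exceptional pairs fail to accumulate across rounds, and that bookkeeping is exactly what \cref{lem:make-T} packages away. A secondary consequence of the missing step: your definition of $\TT'$ filters $uxy$ by the condition $\deg_G(uxy;U)\ge(1-\eps_3)|U|$ together with $xy\in\partial\TT$, but verifying \cref{def:eps-close}(2) for the new pairs $ux$ — i.e.\ that $\deg_{\TT'}(ux;A_3)$ and $\deg_{\TT'}(ux;B_3)$ are both nearly full — requires precisely the unproven stability claim. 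The paper's choice $\TT'=\TT\cup\bigcup_u\{uxy:xy\in\partial\TT'_u\}$ makes this verification fall out of the degree bounds supplied by \cref{lem:make-T}. So: right skeleton, but you need to either carry the contagion argument through in full or, more efficiently, recognize that \cref{lem:make-T} applies verbatim to $\TT_u$.
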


\begin{proof}
Let $V_2=A_2\cup B_2$ and set $V_3=V(G)\setminus V_2$. We construct $\TT'$ by, for each $u\in V_3$, adding some triples $\{uxy:xy\in\EE_u\}$ to $\TT$, where $\EE_u\subseteq\binom{V_2}2$ is carefully chosen to have ``large minimum degree'' in several senses. More specifically, for each $u\in V_3$ and $xy\in\EE_u$, we need $\deg_G(uxy)$ to be close to $n/2$ and $\deg_{\EE_u}(x)$ and $\deg_{\TT}(xy)$ to be close to $n$.
For each $u\in V_3$, set $\TT_u=\TT\cap E(\LL(u))$, where $\LL(u)$ is the link of $u$ in $G$. It holds that $|\TT|\geq\left(1-2\eps_2\right)\binom{|V_2|}3$, because choosing $xy\in\binom{V_2}2$ and $z\in V_2\setminus\{x,y\}$ uniformly at random yields
\[
\Pr[xyz\in\TT]=\Pr[xy\in\partial\TT]\times\Pr[xyz\in\TT\mid xy\in\partial\TT]
\geq(1-\eps_2)^2\geq 1-2\eps_2.
\]
Thus,\vspace{-0.1in}
\begin{align*}
|\TT_u|&\geq e(\LL(u))-\left(\binom n3-|\TT|\right)\geq\deg_G(u)-\left(\binom n3-(1-\eps_2)\binom{|V_2|}3\right)
\\&\geq\frac{n^3}{12}-n^2-\left(1-(1-\eps_2)(1-2\eps_2)^3\right)\frac{n^3}6\geq\frac{n^3}{12}-8\eps_2\frac{n^3}6.
\end{align*}
Moreover, \cref{obstruction:build-TT'} implies that for any $xy\in\partial\TT_u$, its neighborhood $N_{\TT_u}(xy)$ is contained entirely within either $A_2$ or $B_2$. In particular, $\deg_{\TT_u}(xy)\leq\max(|A_2|,|B_2|)\leq\left(\frac 12+\eps_2\right)n$ for any $xy\in\partial\TT_u$. 

Applying \cref{lem:make-T} to each set $\TT_u$ with $(\al,\eps)_{\ref{lem:make-T}}=(\frac 12+\eps_2,9\eps_2)$ yields a set of vertices $V'_u$ of size at least $\left(1-3\eps_2^{1/2}\right)n$ and a set $\TT'_u\subseteq\TT_u\cap\binom{V'_u}3$ satisfying $\deg_{\TT'_u}(xy)\geq\left(\frac 12-\eps_3/3\right)n$ for each $xy\in\partial\TT'_u$ and $\deg_{\partial\TT'_u}(x)\geq (1-\eps_3/3)n$ for each $x\in V'_u$. Let $\TT'=\TT\cup\bigcup_{u\in V_3}\{uxy:xy\in\partial\TT'_u\}$, so $\partial\TT'_u$ is the set $\EE_u$ alluded to earlier.

To split vertices of $V_3$ between $A_3$ and $B_3$, we rely on the following subclaim.

\begin{subclaim}\label{subclaim:every-link-good}
For each $u\in V_3$, the set $\TT'_u$ is a subset of either $\binom{A_2}3\cup\left(A_2\times\binom{B_2}2\right)$ or $\left(\binom{A_2}2\times B_2\right)\cup\binom{B_2}3$.
\end{subclaim}

\begin{proof}\proofofclaim
We first make a useful observation about $\TT'_u$. Fix a triple $xyz\in\TT'_u$ and let $U,U'\in\{A_2,B_2\}$ be the sets with $x\in U$ and $y\in U'$. Recall that $N_{\TT'_u}(yz)\subseteq N_{\TT_u}(yz)$ is completely contained in $U$. Hence, for $x'\in U$ chosen uniformly at random, we have
\[
\Pr[x'yz\in\TT'_u]=\frac{\deg_{\TT'_u}(xy)}{|U|}\geq\frac{(1/2-\eps_3/3)n}{2n/3}\geq \frac 34-\frac{\eps_3}2.
\]
Iterating this argument twice shows that for $x'\in U$ and $y'\in U'$ chosen independently and uniformly at random we have
\[
\Pr[x'y'z\in\TT'_u]\geq\Pr[x'yz\in\TT'_u]\times\Pr[x'y'z\in\TT'_u\mid x'yz\in\TT'_u]\geq\left(\frac 34-\frac{\eps_3}2\right)^2>\frac 12.
\]
If $\TT'_u$ contained triples $a_1a_2a_3\in\binom {A_2}3$ and $a_4a_5b_1\in\binom {A_2}2\times B_2$, then for random $a'_1,a'_2\in A_2$, the triples $a'_1a'_2a_3$ and $a'_1a'_2b_1$ would both be contained in $\TT'_u$ with positive probability. This contradicts \cref{obstruction:build-TT'}(2). An analogous argument (using \cref{obstruction:build-TT'}(3)) shows that $\TT'_u$ cannot contain edges $b_1b_2b_3\in\binom{B_2}3$ and $a_1b_4b_5\in A_2\times\binom{B_2}2$.
Lastly, if $\TT'_u$ contained $a_1a_2b_1\in\binom{A_2}2\times B_2$ and $a_3b_3b_4\in A_2\times\binom{B_2}2$, then $\TT'_u$ would contain both $a'_1a_2b'_1$ and $a'_1b'_1b_4$ with positive probability --- where $a'_1\in A_2$ and $b'_1\in B_2$ are chosen randomly --- contradicting \cref{obstruction:build-TT'}(1).

We conclude that $\TT'_u$ is a subset of $\binom {A_2}3\cup\left({A_2}\times\binom {B_2}2\right)$ or $\binom {B_2}3\cup \left(\binom {A_2}2\times {B_2}\right)$ or $\binom {A_2}3\cup\binom {B_2}3$. Moreover, the third option is impossible because $\partial\TT'_u$ has minimum degree at least $(1-\eps_3/3)n>\frac n2\geq\min(|A_2|,|B_2|)$, and if $\TT'_u\subseteq\binom{A_2}3\cup\binom{B_2}3$, then $\partial\TT'_u\subseteq\binom{A_2}2\cup\binom{B_2}2$.
\end{proof}

Say $u\in V_3$ is of \emph{type $A$} if $\TT'_u\subseteq\left(\binom{A_2}2\times B_2\right)\cup\binom{B_2}3$ and of \emph{type $B$} if $\TT'_u\subseteq\binom{A_2}3\cup\left(A_2\times\binom{B_2}2\right)$. Set
\[A_3=A_2\cup\{u\in V_3: u\text{ is of type }A\}\text{ and }
B_3=B_2\cup\{u\in V_3: u\text{ is of type }B\}.
\tag{\thetheorem.1}\label{eq:types-AB}\]
We claim that $G$ is $\eps_3$-close to $\Godd(A_3,B_3)$ via $\TT'$. Because $G$ is already $\eps_2$-close to $\Godd(A_2,B_2)$ via $\TT$, we need only check these \cref{def:eps-close}(1)--(3) for $xyz\in\TT'\setminus\TT$, for $xy\in\partial\TT'\setminus\partial\TT$, and for $x\in V(G)\setminus V_2$, respectively. Conditions (2)--(3) are proven in the slightly stronger forms detailed in \cref{rmk:eps-close}(2')--(3')

\begin{enumerate}
	\item Every element of $\TT'\setminus\TT$ may be written as $uxy$ with $u\in V_3$ and $xy\in\partial\TT'_u$. Note that
	\[
	\deg_{\TT'_u}(xy)\geq\left(\frac 12-\eps_3/3\right)n\geq\left(1-\eps_3\right)\left(\frac 12+\eps_2\right)n\geq(1-\eps_3)\max(|A_3|,|B_3|).
	\]
	Additionally, letting $U\in\{A_3,B_3\}$ be the neighborhood of $uxy$ in $\Godd(A_3,B_3)$, we see by (\ref{eq:types-AB}) that $N_{\TT'_u}(xy)$ is completely contained in $U$. Because $\TT'_u\subseteq E(\LL(u))$, it also holds that $uxyz\in E(G)$ for each $xyz\in\TT'_u$. Thus, $\deg_G(uxy;U)\geq\deg_{\TT'_u}(xy)\geq(1-\eps_3)|U|$.
	\item[(2')] Every element of $\partial\TT'\setminus\partial\TT$ may be written as $ux$ with $u\in V_3$ and $x\in V'_u$. For each $xy\in\partial\TT'_u$, we have $uxy\in\TT'$, so $\deg_{\TT'}(ux)\geq\deg_{\partial\TT'_u}(x)\geq(1-\eps_3/3)n$.
	\item[(3')] For each $u\in V_3$, we have $\deg_{\partial\TT'}(u)=|V'_u|\geq(1-\eps_3/3)n$.\qedhere
\end{enumerate}
\end{proof}

Write $A$ and $B$ for the sets $A_3$ and $B_3$ constructed by \cref{claim:make-T'}, so that $G$ is $\eps_3$-close to $\Godd(A,B)$ via $\TT'$. The remainder of this proof is devoted to showing that $e(G)\leq e(\Godd(A,B))$.

Let $E(G[A^i\times B^{4-i}])$ denote the set of edges of $G$ with $i$ vertices in $A$ and $4-i$ vertices in $B$. Write $e(G[A^i\times B^{4-i}])$ for the size of this set. Similarly, write $\ebar(G[A^i\times B^{4-i}])=e(\Gbar[A^i\times B^{4-i}])$, where $\Gbar$ is the complement of $G$. When $i$ is 4 or 0, we use the shortened notation $G[A^4]$ or $G[B^4]$.

In \cref{claim:A4}, we bound $e(G[A^4])$ in terms of $\ebar(G[A^3\times B])$. The same arguments will bound $e(G[B^4])$ in terms of $\ebar(G[A\times B^3])$. The proof relies on the following forbidden configurations.

\begin{obstruction}\label{obstruction:A4}
Suppose the four vertices $a_1,a_2,a_3,a_4\in A$ form an edge $a_1a_2a_3a_4\in E(G)$.
\begin{enumerate}
\item For each $b\in B$, the four 4-tuples of the form $a_ia_ja_kb$ cannot all be edges of $G$. In particular, $\deg_G(a_ia_ja_k;B)\leq\frac 34|B|$ for some size-3 subset $\{i,j,k\}$ of $\{1,2,3,4\}$.
\item $a_1a_2a_4$ and $a_1a_3a_4$ cannot both be in $\TT'$.
\item If $a_1a_4$ and $a_2a_4$ are both in $\partial\TT'$, then one of $a_1a_3a_4$ and $a_2a_3a_4$ has at most $\eps_3|B|$ neighbors in $B$.
\end{enumerate}
\end{obstruction}

\begin{proof}
(1) If all four 4-tuples $a_ia_ja_kb$ were present in $E(G)$, then the five vertices $\{a_1,a_2,a_3,a_4,b\}$ would induce a copy of $C_5^{(4)}=K_5^{(4)}$ in $G$. It follows that
\[
\sum_{\{i,j,k\}\subset[4]}\deg_G(a_ia_ja_k;B)
=\sum_{b\in B}\#\{\{i,j,k\}\subset[4]:a_ia_ja_kb\in E(G)\}\leq 3|B|
\]
so one of the four triples $a_ia_ja_k$ has at most $\frac 34|B|$ neighbors in $B$.

(2) If $a_1a_2a_4,a_1a_3a_4\in\TT'$ then applying \cref{lem:make-walk} would yield a tight walk of the form $$a_2a_1a_4a_3BAAABa_2a_1a_4a_3,$$ which is a homomorphic copy of $C_9^{(4)}$.

(3) Because $a_1a_4\in\partial\TT'$, there are at most $\eps_3|B|$ vertices $b_1\in B$ with $a_1a_4b_1\notin\TT'$. Thus, if $a_1a_3a_4$ had more than $\eps_3|B|$ neighbors in $B$, then there would be $b_1\in B$ with  $a_1a_4b_1\in\TT'$ and $a_1a_3a_4b_1\in E(G)$. Similarly, if $a_2a_3a_4$ had more than $\eps_3|B|$ neighbors in $B$, then there would be $b_2\in B$ with $a_2a_4b_2\in\TT'$ and $a_2a_3a_4b_2\in E(G)$. Applying \cref{lem:make-walk} would yield a tight walk of the form $b_1a_1a_3a_4a_2b_2 AAA b_1a_1a_3a_4$, which is a homomorphic copy of $C_9^{(4)}$.
\end{proof}

\begin{claim}\label{claim:A4}
$e(G[A^4])\leq \frac{12\eps_3|A|}{|B|}\times\ebar(G[A^3\times B])$.
\end{claim}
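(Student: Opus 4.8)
The plan is to reduce \cref{claim:A4}, via a double count, to an upper bound on the ``internal'' degree $\deg_G(T;A)$ of triples $T\subseteq A$. For $T\in\binom A3$ write $\ebar_G(T;B)=|B|-\deg_G(T;B)$ for the number of vertices $b\in B$ with $Tb\notin E(G)$, so that $\ebar(G[A^3\times B])=\sum_{T\in\binom A3}\ebar_G(T;B)$. By \cref{obstruction:A4}(1), for every edge $e=a_1a_2a_3a_4\in E(G[A^4])$ and every $b\in B$ the five vertices $\{a_1,a_2,a_3,a_4,b\}$ do not span a copy of $C_5^{(4)}=K_5^{(4)}$, hence at least one of the four triples $a_ia_ja_k\subseteq\{a_1,a_2,a_3,a_4\}$ has $b$ as a non-neighbour. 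Charging each such pair $(e,b)$ to one such triple $T$, and noting that a fixed triple $T$ is charged at most $\deg_G(T;A)$ times for a fixed $b$ (once per edge of $G[A^4]$ extending $T$), we obtain
\[
e(G[A^4])\,|B|\ \le\ \sum_{T\in\binom A3}\deg_G(T;A)\cdot\ebar_G(T;B).
\]
Thus it is enough to show that $\deg_G(T;A)\le 4\eps_3|A|$ whenever $\ebar_G(T;B)>0$ (triples with $\ebar_G(T;B)=0$ contribute nothing), since then the right-hand side is at most $4\eps_3|A|\sum_T\ebar_G(T;B)=4\eps_3|A|\,\ebar(G[A^3\times B])$.

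To prove this degree bound I would argue by contradiction: suppose $T=a_1a_2a_3\subseteq A$ has $\deg_G(T;A)>4\eps_3|A|$ while some $b_0\in B$ has $a_1a_2a_3b_0\notin E(G)$. By \cref{def:eps-close}(3) applied at $a_1$, $a_2$, $a_3$, all but at most $3\eps_3|A|$ of the vertices $a_4\in N_G(T;A)$ satisfy $a_1a_4,a_2a_4,a_3a_4\in\partial\TT'$; since $\deg_G(T;A)>4\eps_3|A|$ there are more than $\eps_3|A|$ such $a_4$, each giving an edge $e=a_1a_2a_3a_4\in E(G[A^4])$. The $\TT'$-heavy situations are handled purely formally: if $T\in\TT'$, then for every such $a_4$ the (relabelled) \cref{obstruction:A4}(2) forces $a_1a_2a_4\notin\TT'$, yet $a_1a_2\in\partial\TT'$ (being in the shadow of $T$), so \cref{def:eps-close}(2) puts $a_1a_2a_4\in\TT'$ for all but $\eps_3|A|$ choices of $a_4$ --- a contradiction; and if $T\notin\TT'$ but all of $a_1a_2,a_1a_3,a_2a_3\in\partial\TT'$, then \cref{def:eps-close}(2) puts both $a_1a_2a_4$ and $a_1a_3a_4$ in $\TT'$ for most $a_4\in N_G(T;A)$, again contradicting \cref{obstruction:A4}(2). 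This leaves the case that some pair of $T$, say $a_1a_2$, is not in $\partial\TT'$; here I would feed \cref{obstruction:A4}(3) (which, from $a_1a_4,a_2a_4\in\partial\TT'$, yields a triple among $a_1a_3a_4,a_2a_3a_4$ with at most $\eps_3|B|$ neighbours in $B$) together with \cref{def:eps-close}(1)--(2) and the non-neighbour $b_0$ into \cref{lem:make-walk} to build a homomorphic copy of $C_\ell^{(4)}$ with $\ell\in\{5,7,9,13,17,21\}$, contradicting $C_{L_0}^{(4)}$-hom-freeness by \cref{prop:hom-free-chain} (as $L_0\ge 21$).

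The bound on $e(G[B^4])$ in terms of $\ebar(G[A\times B^3])$ then follows by the identical argument with the roles of $A$ and $B$ interchanged, every ingredient used being symmetric in the two parts. The main obstacle is clearly the last case of the degree bound. The $\TT'$-heavy cases come out cleanly from \cref{obstruction:A4}(2) and \cref{def:eps-close}(2) and in fact yield the stronger bound $\deg_G(T;A)\le 2\eps_3|A|$; but a triple with a pair outside $\partial\TT'$ is genuinely a mis-classified triple (in $\Godd(A,B)$ such a triple would have neighbourhood $A$, not $B$), and ruling out a large internal degree there seems to require both the hypothesis $\ebar_G(T;B)>0$ and a careful walk-construction in the spirit of the proof of \cref{obstruction:A4} --- and one must be economical enough in that construction to keep the overall constant at $4\eps_3$.
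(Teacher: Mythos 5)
Your double-counting reduction is correct: summing over pairs $(e,b)\in E(G[A^4])\times B$ and charging each to a sub-triple of $e$ non-adjacent to $b$ (\cref{obstruction:A4}(1) guarantees one exists) does give $e(G[A^4])\,|B|\le\sum_{T}\deg_G(T;A)\,\ebar_G(T;B)$, and you handle the cases $T\in\TT'$ and ``at least two pairs of $T$ in $\partial\TT'$'' cleanly via \cref{obstruction:A4}(2) and \cref{def:eps-close}(2), in fact with constant $2\eps_3$. But you are right to flag the remaining case as the obstacle: it is a genuine gap, not merely a missing computation. The degree bound $\deg_G(T;A)\le 4\eps_3|A|$ for triples with at most one pair in $\partial\TT'$ and $\ebar_G(T;B)>0$ is not something the paper proves or uses, and it is not clear it is even true --- such a triple $T$ lies outside $\TT'$ and most of its pairs lie outside $\partial\TT'$, so none of conditions (1)--(3) in \cref{def:eps-close} constrain its internal degree, and the non-neighbor $b_0$ alone does not obviously give enough structure to build a short forbidden walk.

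The paper's proof avoids needing this degree bound entirely. After defining $T_2$ (triples with $\ge 2$ pairs in $\partial\TT'$ and low $B$-degree) and $T_1$ (the rest with low $B$-degree), it bounds $\deg_G(\cdot;A)\le 2\eps_3|A|$ only for $T_2$-triples, exactly as you do. For edges with no $T_2$ sub-triple (``type 2''), it does \emph{not} bound the internal degree of the $T_1$ sub-triple; instead it shows via \cref{obstruction:A4}(3) that whenever $a_4$ satisfies $a_1a_4,a_2a_4,a_3a_4\in\partial\TT'$, the resulting edge must already have a sub-triple in $T_2$ and so is of type 1, leaving only $3\eps_3|A|$ choices of $a_4$ per $T_1$-triple to produce a type-2 edge. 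Thus a $T_1$-triple can have large $A$-degree, but the excess edges are absorbed by their $T_2$ sub-triples. This is precisely the structural insight your per-triple charging scheme cannot exploit, because you commit to charging $(e,b)$ to a triple that is a non-neighbor of $b$, whereas the crucial $T_2$ sub-triple need not be. To repair your argument you would either have to establish the degree bound in the hard case --- which would be a new result --- or switch to the paper's edge-based accounting.
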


\begin{proof}
Let $T=\{a_1a_2a_3\in\binom{A}3: \deg_G(a_1a_2a_3;B)\leq \frac 34|B|\}$. By \cref{obstruction:A4}(1), any edge $a_1a_2a_3a_4\in E(G[A])$ has a sub-triple satisfying $a_ia_ja_k\in T$. Observe also that $\ebar(G_1[A^3\times B])\geq\frac 14|B|\times|T|$.

Partition $T$ into $T_1$ and $T_2$ as follows. Set $a_1a_2a_3\in T_1$ if at most one of the three pairs $a_1a_2,a_1a_3,a_2a_3$ is in $\partial\TT'$ and set $a_1a_2a_3\in T_2$ if at least two of these pairs are in $\partial\TT'$. We count edges $a_1a_2a_3a_4\in E(G[A^4])$ in two steps. First, we count those edges $a_1a_2a_3a_4$ for which some sub-triple $a_ia_ja_k$ is in $T_2$. Call these the edges of type 1. Then, we count the remaining edges not counted in the first step --- these are called the edges of type 2. Note that every edge of type 2 has some sub-triple $a_ia_ja_k$ contained in $T_1$.

First, consider a triple $a_1a_2a_3\in T_2$; without loss of generality we assume $a_1a_2,a_1a_3\in\partial\TT'$. For all but at most $2\eps_3|A|$ vertices $a_4\in A$, the triples $a_1a_2a_4$ and $a_1a_3a_4$ are both in $\TT'$, and it follows by \cref{obstruction:A4}(2) that $a_1a_2a_3a_4$ cannot be an edge of $G$. Thus, $\deg_G(a_1a_2a_3;A)<2\eps_3|A|$ for each $a_1a_2a_3\in T_2$ and we conclude that $G[A^4]$ contains at most $2\eps_3|A|\times |T_2|$ edges of type 1.

To count edges of type 2, we first make the following observation. If $a_1a_2a_3a_4\in E(G[A^4])$ is such that the three pairs $a_1a_4,a_2a_4,a_3a_4$ containing $a_4$ are all in $\partial\TT'$, we claim that $a_1a_2a_3a_4$ is of type 1. Indeed, it follows from \cref{obstruction:A4}(3) that one of the triples $a_1a_3a_4$ and $a_2a_3a_4$ has at most $\eps_3|B|<\frac 34|B|$ neighbors in $B$, and is thus an element of $T_2$. 

If $a_1a_2a_3\in T_1$, there are at most $3\eps_3|A|$ vertices $a_4\in A$ for which one of the three pairs $a_1a_4$, $a_2a_4$, or $a_3a_4$ is not in $\partial\TT'$. Thus, there are at most $3\eps_3|A|$ edges of type 2 containing each triple $a_1a_2a_3\in T_1$, and we conclude that $G[A^4]$ has at most $3\eps_3|A|\times |T_1|$ edges of type 2.

Hence,
\[
e(G[A^4])\leq 2\eps_3|A|\times |T_2|+3\eps_3|A|\times |T_1|\leq 3\eps_3|A|\times |T|.
\]
Recalling that $\ebar(G[A^3\times B])\geq\frac 14|B|\times|T|$, we have that $e(G[A^4])\leq \frac{12\eps_3|A|}{|B|}\times\ebar(G[A^3\times B])$.
\end{proof}

Swapping $A$ and $B$ in \cref{claim:A4} yields the following claim, whose proof is analogous.

\begin{claim}\label{claim:B4}
$e(G[B^4])\leq\frac{12\eps_3|B|}{|A|}\times e(G[A\times B^3])$.	
\end{claim}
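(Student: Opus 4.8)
The plan is to deduce \cref{claim:B4} by re-running the argument of \cref{claim:A4} with the roles of $A$ and $B$ exchanged. This is legitimate because the extremal construction is symmetric in its two parts: a $4$-tuple meets $A$ in an odd number of vertices if and only if it meets $B$ in an odd number of vertices (the two intersection sizes sum to $4$), so $\Godd(A,B)=\Godd(B,A)$ as $4$-graphs. In particular the hypothesis that $G$ is $\eps_3$-close to $\Godd(A,B)$ via $\TT'$ is, verbatim, the statement that $G$ is $\eps_3$-close to $\Godd(B,A)$ via $\TT'$ (each of the three conditions in \cref{def:eps-close} is symmetric under relabeling $A\leftrightarrow B$), so nothing in the setup changes.

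First I would state and prove the $A\leftrightarrow B$ image of \cref{obstruction:A4}: if $b_1b_2b_3b_4\in E(G)$ with all $b_i\in B$, then (1)~for each $a\in A$ the four $4$-tuples $b_ib_jb_ka$ cannot all be edges of $G$, since $\{b_1,\dots,b_4,a\}$ would span $K_5^{(4)}=C_5^{(4)}$, and hence $\deg_G(b_ib_jb_k;A)\le\frac 34|A|$ for some size-$3$ subset $\{i,j,k\}$ of $\{1,2,3,4\}$; (2)~$b_1b_2b_4$ and $b_1b_3b_4$ cannot both lie in $\TT'$; and (3)~if $b_1b_4,b_2b_4\in\partial\TT'$ then one of $b_1b_3b_4,b_2b_3b_4$ has at most $\eps_3|A|$ neighbors in $A$. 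For (2) and (3) one feeds the relabeled configuration to \cref{lem:make-walk}, obtaining tight walks of the forms $b_2b_1b_4b_3\,A\,B\,B\,B\,A\,b_2b_1b_4b_3$ and $a_1b_1b_3b_4b_2a_2\,B\,B\,B\,a_1b_1b_3b_4$ respectively (these are the walks of \cref{obstruction:A4} with every ``$A$'' and ``$B$'' interchanged); both are homomorphic copies of $C_9^{(4)}$, which are forbidden in $G$ by \cref{prop:hom-free-chain} since $L_0$ is large.

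Then I would carry out the counting step of \cref{claim:A4} with $A$ and $B$ swapped. Set $T=\{b_1b_2b_3\in\binom{B}3:\deg_G(b_1b_2b_3;A)\le\frac 34|A|\}$; by part~(1) every edge of $G[B^4]$ has a sub-triple in $T$, and $\ebar(G[A\times B^3])\ge\frac 34|A|\cdot|T|$. Partition $T$ into $T_1$ (at most one of the three sub-pairs lies in $\partial\TT'$) and $T_2$ (at least two do). By part~(2), any triple in $T_2$ lies in fewer than $2\eps_3|B|$ edges of $G[B^4]$, so there are at most $2\eps_3|B|\cdot|T_2|$ edges with a $T_2$-sub-triple; by part~(3), every remaining edge of $G[B^4]$ has a sub-triple in $T_1$ and each triple of $T_1$ lies in at most $3\eps_3|B|$ such edges, giving at most $3\eps_3|B|\cdot|T_1|$ further edges. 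Hence $e(G[B^4])\le 3\eps_3|B|\cdot|T|\le\frac{4\eps_3|B|}{|A|}\,\ebar(G[A\times B^3])$, which is \cref{claim:B4}.

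The only genuine work here is bookkeeping: one must check that after the $A\leftrightarrow B$ relabeling the exhibited sequences are still tight walks in $\Godd(A,B)$ (so that \cref{lem:make-walk} applies), and that their lengths remain in $\{5,7,9,13,17,21\}$ so they stay forbidden by \cref{prop:hom-free-chain}. Both are immediate from the symmetry $\Godd(A,B)=\Godd(B,A)$ and from the fact that swapping the two parts preserves the parity pattern along each walk and does not change any cycle length, so there is no new content beyond \cref{claim:A4}.
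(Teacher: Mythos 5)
Your proof is correct and matches the paper's approach: the paper derives \cref{claim:B4} by simply interchanging $A$ and $B$ in \cref{claim:A4} (``whose proof is analogous''), relying on exactly the symmetry $\Godd(A,B)=\Godd(B,A)$ that you identify, and you have spelled out the routine relabeling faithfully, including verifying that the parity pattern along the relabeled walks is preserved. Note that your final bound (correctly) uses $\ebar(G[A\times B^3])$ rather than the $e(G[A\times B^3])$ appearing in the displayed statement; the latter is an apparent typo, as the $A\leftrightarrow B$ image of \cref{claim:A4} and the aggregation at the end of \cref{subsec:prove-one-cycle} both require the complement count $\ebar$.
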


Our last claim bounds $e(G[A^2\times B^2])$ in terms of $\ebar(G[A^3\times B])+\ebar(G[A\times B^3])$. We leverage several forbidden configurations of a simple flavor, given in \cref{obstruction:A2B2-easy}, as well as a more in-depth structure discussed in \cref{obstruction:A2B2-hard}.

\begin{obstruction}\label{obstruction:A2B2-easy}
Suppose the four vertices $a_1,a_2\in A$ and $b_1,b_2\in B$ form an edge $a_1a_2b_1b_1\in E(G)$.
\begin{enumerate}
\item $a_1b_1a_2$ and $a_1b_1b_2$ cannot both be in $\TT'$.
\item If $b_1b_2\in\partial\TT'$, then either $a_1b_1b_2$ or $a_2b_1b_2$ has no more than $\eps_3|B|$ neighbors in $|B|$.
\end{enumerate}
\end{obstruction}

\begin{proof}
(1) If $a_1b_1a_2,a_1b_1b_2\in\TT'$ then \cref{lem:make-walk} would yield a tight walk of the form
$$a_1a_2b_1b_2 a_1BBB ABb_2b_1 a_1a_2AB A a_1a_2b_1b_2,$$
which is a homomorphic copy of $C_{17}^{(4)}$.

(2) Because $b_1b_2\in\partial\TT'$, there are at most $\eps_3|B|$ vertices $b'\in B$ with $b_1b_2b'\notin\TT'$. If both $a_1b_1b_2$ and $a_2b_1b_2$ had more than $\eps_3|B|$ neighbors in $B$, then there would be vertices $b'_1,b'_2\in B$ satisfying $b_1b_2b'_i\in\TT'$ and $a_ib_1b_2b'_i\in E(G)$ for $i=1,2$. Applying \cref{lem:make-walk} would yield a tight walk of the form
$a_1a_2b_1b_2 b'_2ABB BAb'_1b_1 b_2 a_1a_2b_1b_2,$
which is a homomorphic copy of $C_{13}^{(4)}$.
\end{proof}

For our final obstruction and claim, we adopt the notation $\deg(xy;A^2)$ and $\deg(x;A^3)$ for $\deg\big(xy;\binom A2\big)$ and $\deg\big(x;\binom A3\big)$, respectively, to avoid multiline binomial coefficients. It is also convenient to introduce the notation $\degbar_G$, which counts non-edges in $G$. More formally, define
\begin{align*}
\degbar_G(xyz;A)&=|A|-\deg_G(xyz;A),
&\degbar_G(xyz;B)&=|B|-\deg_G(xyz;B),
\\\degbar_G(xy;A^2)&=|A|^2/2-\deg_G(xy;A^2),
&\hspace*{-0.15in}\text{and}\quad\degbar_G(x;A^3)&=|A|^3/6-\deg_G(x;A^3).
\end{align*}
Note that $\degbar_G$ also counts tuples with a repeated vertex as non-edges; this choice simplifies the following computations.

\begin{obstruction}\label{obstruction:A2B2-hard}
Suppose that the four vertices $a_1,a_2\in A$ and $b_1,b_2\in B$ form an edge $a_1a_2b_1b_2\in E(G)$.
Suppose further that $\degbar_G(a_1a_2b_i;A)\leq\eps_4|A|$ for $i=1,2$.
\begin{enumerate}
\item Let $a'\in A$ be any vertex. We cannot have $\deg_{G}(a'a_ib_i;A)>4\eps_4|A|$ and $\degbar_G(a'b_i;A^2)<\eps_4^2|A|^2$ for both indices $i=1,2$.
\item In particular, we cannot have $\deg_G(a_ib_i;A^2)>\left(\frac 29+4\eps_4\right)|A|^2$ for both indices $i=1,2$.
\end{enumerate}
\end{obstruction}

\begin{proof}
(1)	Assume for the sake of contradiction that both equations hold for both indices $i=1,2$. For $i=1,2$, choose $a'_i\in N(a'a_ib_i;A)$ uniformly at random. We show that the sequence $C=b_2a_2a_1b_1a'_1a'a'_2b_2a_2a_1b_1$ is a tight walk with positive probability, in which case it forms a homomorphic copy of $C_7^{(4)}$. First, note that $b_2a_2a_1b_1$, $a_1b_1a'_1a'$, and $a'a'_2b_2a_2$ are always edges of $G$. For the other four possible edges, we have
\begin{align*}
\Pr[a_2a_1b_1a'_1\notin E(G)]&\leq\frac{\degbar_G(a_2a_1b_1;A)}{\deg_G(a'a_1b_1;A)}<\frac{\eps_4|A|}{4\eps_4|A|}=\frac 14,
\\\Pr[b_1a'_1a'a'_2\notin E(G)]
&\leq\frac{2\degbar_G(a'b_1;A^2)}{\deg_G(a'a_1b_1;A)\times \deg_G(a'a_2b_2;A)}
	<\frac{2\eps_4^2|A|^2}{(4\eps_4|A|)^2}=\frac 18,
\end{align*}
and analogously $\Pr[a_2'b_2a_2a_1\notin E(G)]<1/4$ and $\Pr[a_1'a'a_2'b_2\notin E(G)]< 1/8$. It follows that the random sequence $C$ forms a homomorphic copy of $C_7^{(4)}$ with probability at least $1-\left(\frac 14+\frac 18+\frac 14+\frac 18\right)>0$, contradicting the assumption that $G$ is $C_7^{(4)}$-hom-free.

(2) Partition $A=A'_1\sqcup A'_2\sqcup A''_1\sqcup A''_2$ such that $\degbar_G(a'b_i;A^2)\geq\eps_4^2|A|^2$ if $a'\in A'_i$ and $\deg_G(a'a_ib_i;A)\leq4\eps_4|A|$ if $a'\in A''_i$. We have $|A|=|A'_1\cup A''_1|+|A'_2\cup A''_2|$; without loss of generality, assume that $|A'_1\cup A''_1|\leq |A|/2$. Observe that
\[
|A'_2|\times \eps_4^2|A|^2\leq\sum_{a'\in A'_2}\degbar_G(a'b_2;A^2)\leq 3\degbar_G(b_2;A^3).
\]
To bound $\degbar_G(b_2;A^3)$, note that $a'_1,a'_2,a'_3\in A$ chosen uniformly at random satisfy
\begin{align*}
\Pr[b_2a'_1a'_2a'_3\in E(G)]\geq&\Pr[b_2a'_1\in\partial\TT']\times\Pr[b_2a'_1a'_2\in\TT'\mid b_2a'_1\in\partial\TT']
\\&\times\Pr[b_2a'_1a'_2a'_3\in E(G)\mid b_2a'_1a'_2\in\TT']
\\\geq&(1-\eps_3)^3> 1-3\eps_3.
\end{align*}
Thus, $\degbar_G(b_2;A^3)\leq 3\eps_3|A|^3/6$ and $|A'_2|\leq3\degbar_G(b_2;A^3)/\eps_4^2|A|^2\leq 3\eps_3|A|/2\eps_4^2\leq |A|/6$.
It follows that $|A''_2|\geq|A|/2-|A'_2|\geq|A|/3$. We conclude that
\begin{align*}
\deg_G(a_2b_2;A^2)&\leq\binom{|A\setminus A''_2|}2+\sum_{a'\in A''_2}\deg_G(a'a_2b_2;A)
\\&\leq\frac{(|A|-|A''_2|)^2}2+4\eps_4|A||A''_2|
\leq\frac 29|A|^2+4\eps_4|A|^2.\qedhere
\end{align*}
\end{proof}

\begin{claim}\label{claim:A2B2}
$e(G[A^2\times B^2])\leq \frac{19\eps_3}{\eps_4}\times \max\left(\frac{|A|}{|B|},\frac{|B|}{|A|}\right)\times\left(\ebar(G[A\times B^3])+\ebar(G[A^3\times B])\right)$.
\end{claim}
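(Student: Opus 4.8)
### Proof proposal for \texorpdfstring{\cref{claim:A2B2}}{Claim}

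The plan is to bound $e(G[A^2\times B^2])$ by classifying edges $a_1a_2b_1b_2\in E(G[A^2\times B^2])$ according to the behaviour of their sub-triples, using the two obstructions above, and then to charge each edge to non-edges lying in $E(\Gbar[A^3\times B])$ or $E(\Gbar[A\times B^3])$. First I would set up an analogue of the set $T$ from \cref{claim:A4}: let $T$ be the set of triples $a_1a_2b\in\binom A2\times B$ with $\degbar_G(a_1a_2b;A)>\eps_4|A|$ together with the symmetric set of triples $ab_1b_2\in A\times\binom B2$ with $\degbar_G(ab_1b_2;B)>\eps_4|B|$. Each triple counted in $T$ of the first kind witnesses at least $\eps_4|A|$ non-edges of $\Gbar[A^3\times B]$ (counting with the multiplicity coming from how many triples a given $4$-tuple can have), so $|T\cap(\binom A2\times B)|\le 3\ebar(G[A^3\times B])/\eps_4|A|$, and similarly for the other part; thus it suffices to show that all but a $O(\eps_3)$-fraction (relative to $\max(|A|,|B|)\cdot|T|$, say) of edges of $G[A^2\times B^2]$ have a sub-triple in $T$, and that those with no sub-triple in $T$ are few.

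For the edges \emph{without} a sub-triple in $T$ — i.e.\ edges $a_1a_2b_1b_2$ with $\degbar_G(a_1a_2b_i;A)\le\eps_4|A|$ for $i=1,2$ and $\degbar_G(a_jb_1b_2;B)\le\eps_4|B|$ for $j=1,2$ — I would invoke \cref{obstruction:A2B2-hard}(2): it forbids $\deg_G(a_ib_i;A^2)\ge(\tfrac29+4\eps_4)|A|^2$ for both $i$, and symmetrically $\deg_G(a_jb_j;B^2)\ge(\tfrac29+4\eps_4)|B|^2$ for both $j$. Combined with \cref{obstruction:A2B2-easy}(1) and (2) (which say $a_1b_1a_2$ and $a_1b_1b_2$ cannot both be in $\TT'$, and that if $b_1b_2\in\partial\TT'$ one of $a_1b_1b_2,a_2b_1b_2$ has $\le\eps_3|B|$ neighbours in $B$ — and the $A$-mirror of the latter), these constraints should force: for all but a tiny fraction of such $4$-tuples, at least one of the pairs $a_ib_i$ has small degree into $\binom A2$ or at least one pair has small degree into $\binom B2$. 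I would then run a counting argument across all $a_1a_2b_1b_2$ of this residual type, summing the degree bounds over vertices of $A$ and $B$, to show their total number is at most $O(\eps_3)\cdot\max(|A|,|B|)\cdot(\ebar(G[A^3\times B])+\ebar(G[A\times B^3]))$, absorbing them into the final bound. Here the key point is that \cref{obstruction:A2B2-easy} lets us use $\partial\TT'$-membership (which holds for all but an $\eps_3$-fraction of pairs, by $\eps_3$-closeness) to replace "is in $\TT'$" conditions by genuine structural restrictions on $G$.

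Finally I would assemble the pieces: the edges with a sub-triple in $T$ number at most $\max(|A|,|B|)\cdot|T|$ (each such edge is determined by its $T$-triple and one more vertex in $A\cup B$), which by the bound on $|T|$ is at most $\tfrac{3}{\eps_4}\max(\tfrac{|A|}{|B|},\tfrac{|B|}{|A|})(\ebar(G[A^3\times B])+\ebar(G[A\times B^3]))$ after accounting for the $\tfrac34$- or $\eps_4$-type density factors; combined with the residual count and a generous choice of constants (recalling $\eps_3\le\eps_4^2/10$ so cross terms are dominated), this gives the claimed coefficient $\tfrac{18\eps_3}{\eps_4}$. The main obstacle I anticipate is the residual case: squeezing a contradiction out of \cref{obstruction:A2B2-hard} requires carefully tracking which of the four pairs $a_1b_1,a_1b_2,a_2b_1,a_2b_2$ has large degree into $\binom A2$ versus $\binom B2$, and arranging the vertices so that the $\degbar_G(a'b_i;A^2)<\eps_4^2|A|^2$ hypothesis of part (1) is met for a positive-probability choice; this bookkeeping, rather than any single inequality, is where the proof will be delicate, and it is likely easiest to first prove a clean statement of the form "if $a_1a_2b_1b_2\in E(G)$ has no sub-triple in $T$ then $\min_i\deg_G(a_ib_i;A^2)\le(\tfrac29+O(\eps_4))|A|^2$ and $\min_j\deg_G(a_jb_j;B^2)\le(\tfrac29+O(\eps_4))|B|^2$" and derive the count from that.
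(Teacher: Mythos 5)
There is a real gap in the accounting: as written, your argument would yield a bound of order $\frac{1}{\eps_4}\cdot(\ebar(G[A^3\times B])+\ebar(G[A\times B^3]))$, not $\frac{18\eps_3}{\eps_4}\cdot(\cdots)$, and since $\eps_3$ is tiny this is far too weak to conclude $e(G)\leq e(\Godd(A,B))$. Concretely, your set $T$ consists of triples with \emph{large} $\degbar$ (at least $\eps_4|A|$ non-neighbors), so $|T|\leq 3\ebar/(\eps_4|A|)$, and you then bound the edges containing a $T$-triple by $\max(|A|,|B|)\cdot|T|$. That product has no factor of $\eps_3$ in it. You assert the final coefficient is $\frac{18\eps_3}{\eps_4}$ "after ... a generous choice of constants", but no amount of constant-wrangling turns $\Theta(1/\eps_4)$ into $\Theta(\eps_3/\eps_4)$.

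The missing idea is that the $\eps_3$ factor must come from showing that, once a witness triple (or pair) is fixed, the number of fourth vertices completing an edge of the relevant kind is only $O(\eps_3 n)$, not $O(n)$. This is where $\eps_3$-closeness does its work: for a fixed triple $a_1a_2b_1$, all but $O(\eps_3|B|)$ choices of $b_2$ have $a_1b_2,a_2b_2,b_1b_2\in\partial\TT'$, and then \cref{obstruction:A2B2-easy}(1)--(2) force the edge to have already been counted elsewhere (the paper's "type 1" class, whose witness set $T'_1$ consists of triples $a_1b_1b_2$ with $\deg_G(a_1b_1b_2;B)\leq\eps_3|B|$ and appropriate $\partial\TT'$-memberships). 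For the remaining edges, the paper's witness sets $T_2$ and $S_3$ are triples/pairs with \emph{small} complement degree — the opposite polarity from your $T$. They are not sparse in number per se, but each such witness supports only $O(\eps_3 n)$ edges outside type 1, and $|T_2|$ and $|S_3|$ are still controlled by $\ebar$ because each element directly witnesses $\Omega(\eps_4|A|)$ or $\Omega(|A|^2)$ non-edges. Your residual argument correctly identifies \cref{obstruction:A2B2-hard}(2) as the tool and has roughly the right flavor, but without the "at most $O(\eps_3 n)$ fourth vertices per witness" step, the bookkeeping cannot close; your stated bound on the residual, $O(\eps_3)\cdot\max(|A|,|B|)\cdot\ebar$, is also dimensionally off by a factor of $n$ from the target $\frac{18\eps_3}{\eps_4}\ebar$, which is another symptom of the same issue. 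To repair this you would need to reorganize around three witness classes — one from \cref{obstruction:A2B2-easy} analogous to the paper's $T'_1$, and two from the alternatives in \cref{obstruction:A2B2-hard}(2) — and, for each, prove the $O(\eps_3 n)$-per-witness bound via the $\partial\TT'$-density condition of \cref{def:eps-close}.
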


\begin{proof}

Set $T_1=\{a_1b_1b_2\in A\times\binom B2:\deg_G(a_1b_1b_2;B)\leq\eps_3|B|\}$ and let $T_1'=\{a_1b_1b_2\in T_1:b_1b_2\in\partial\TT'\text{ and }a_1b_i\in\partial\TT'\text{ for some }i\in[2]\}$.

We count edges in $E(G[A^2\times B^2])$ in three steps. First, we count edges $a_1a_2b_1b_2$ with $a_1b_1b_2$ or $a_2b_1b_2$ in $T_1'$ --- call these the edges of type 1. Suppose $a_1b_1b_2\in T'_1$ with $a_1b_1\in\partial\TT'$. For all but at most $2\eps_3|A|$ vertices $a_2\in A$, the triples $a_1b_1a_2$ and $a_2b_1b_2$ are both in $\TT'$, in which case \cref{obstruction:A2B2-easy}(1) implies that $a_1a_2b_1b_2\notin E(G)$. It follows that there are at most $2\eps_3|A|\times|T'_1|$ edges of type 1.

To count edges not of type 1, we first make the following observation.
If $a_1a_2b_1b_2$ is an edge of $G$ and the three pairs $a_1b_2,a_2b_2,b_1b_2$ are all in $\partial\TT'$, 
then we claim that $a_1a_2b_1b_2$ is of type 1. Indeed, \cref{obstruction:A2B2-easy}(2) implies that $\deg_G(a_ib_1b_2;B)\leq\eps_3|B|$ for some index $i\in[2]$, implying that $a_ib_1b_2\in T_1'$. Note that for any triple $a_1a_2b_1$, there are at most $3\eps_3|B|$ vertices $b_2\in B$ for which the pairs $a_1b_2,a_2b_2,b_1b_2$ are not all in $\partial\TT'$, and it follows that each triple $a_1a_2b_1$ is contained in at most $3\eps_3|B|$ edges $a_1a_2b_1b_2$ that are not of type 1.

Suppose $a_1a_2b_1b_2$ is an edge of $G$ not of type 1. By \cref{obstruction:A2B2-hard}(2), either $\degbar_G(a_1a_2b_i;A)>\eps_4|A|$ or $\deg_G(a_ib_i;A^2)\leq\left(\frac 29+4\eps_4\right)|A|^2$ for some index $i\in[2]$. We say $a_1a_2b_1b_2$ is of type 2 if the former holds and type 3 if the latter holds. Let $T_2=\{a_1a_2b_1\in\binom A2\times B:\degbar_G(a_1a_2b_1;A)>\eps_4|A|\}$ and let $S_3=\{a_1b_1\in A\times B:\deg_G(a_1b_1;A^2)\leq\left(\frac 29+4\eps_4\right)|A|^2\}$. The observation in the prior paragraph implies that there are at most $3\eps_3|B|\times |T_2|$ edges of type 2 and at most $3\eps_3|B|\times|S_3|\times|A|$ edges of type 3.

Lastly, we relate $|T'_1|$, $|T_2|$, and $|S_3|$ to $\ebar(G[A\times B^3])$ and $\ebar(G[A^3\times B])$. Several lower-order terms arise from quadruples with repeated vertices counted by $\degbar$ but not $\ebar$ --- for example, $\degbar_G(a_1b_1b_2;B)$ counts quadruples $a_1b_1b_2b$ not in $E(G)$, which includes the two quadruples $a_1b_1b_2b_1$ and $a_1b_1b_2b_2$ not counted in $\ebar(G[A\times B^3])$. 
Hence,
\begin{align*}
3\ebar(G[A\times B^3])
	&\geq\sum_{a_1b_1b_2\in T'_1}\left(\degbar_G(a_1b_1b_2;B)-2\right)
	\geq((1-\eps_3)|B|-2)|T'_1|\geq \frac{|B|}2\times |T'_1|,
\\3\ebar(G[A^3\times B])
	&\geq\sum_{a_1a_2b_1\in T_2}\left(\degbar_G(a_1a_2b_1;A)-2\right)
	\geq(\eps_4|A|-2)|T_2|\geq\frac{\eps_4|A|}2\times |T_2|,\ \text{and}
\\3\ebar(G[A^3\times B])
	&\geq\sum_{a_1b_1\in S_3}\left(\degbar_G(a_1b_1;A^2)-\frac 32|A|\right)
\geq\left(\frac{|A|^2}{2}-\left(\frac 29+4\eps_4\right)|A|^2-\frac 32|A|\right)|S_3|
\\&=\left(\frac 14+\frac 1{36}-4\eps_4-\frac 3{2|A|}\right)|A|^2|S_3|
	\geq\frac{|A|^2}4\times|S_3|.
\end{align*}
We conclude that
\begin{align*}
e(G[A^2\times B^2])&\leq 2\eps_3|A|\times |T'_1|+3\eps_3|B|\times|T_2|+3\eps_3|B|\times |A|\times |S_3|
\\&\leq\frac{12\eps_3|A|}{|B|}\times \ebar(G[A\times B^3])+\left(\frac{18\eps_3|B|}{\eps_4|A|}+\frac{36\eps_3|B|}{|A|}\right)\times\ebar(G[A^3\times B])
\\&\leq\frac{19\eps_3}{\eps_4}\times \max\left(\frac{|A|}{|B|},\frac{|B|}{|A|}\right)\times\left(\ebar(G[A\times B^3])+\ebar(G[A^3\times B])\right).\qedhere
\end{align*}
\end{proof}

To complete the proof of \cref{thm:one-cycle}, observe that
\begin{align*}
|E(G)\setminus E(\Godd(A,B))|&=e(G[A^4])+e(G[B^4])+e(G[A^2\times B^2]),\text{ and}
\\|E(\Godd(A,B))\setminus E(G)|&=\ebar(G[A^3\times B])+\ebar(G[A\times B^3]).
\end{align*}
By \cref{claim:A4,claim:B4,claim:A2B2}, it holds that
\begin{align*}
e(G[A^4])&+e(G[B^4])+e(G[A^2\times B^2])
\\&\leq\left(12\eps_3+\frac{19\eps_3}{\eps_4}\right)\times\max\left(\frac{|A|}{|B|},\frac{|B|}{|A|}\right)
\times\left(\ebar(G[A^3\times B])+\ebar(G[A\times B^3])\right).
\end{align*}
We have $12\eps_3+\frac{19\eps_3}{\eps_4}\leq 2\eps_4$. Additionally, \cref{claim:make-T} yields $\max\left(\frac{|A|}{|B|},\frac{|B|}{|A|}\right)\leq\left(\frac 12+\eps_2\right)/\left(\frac 12-\eps_2\right)\leq 2$. Thus,
\[|E(G)\setminus E(\Godd(A,B))|\leq 4\eps_4|E(\Godd(A,B))\setminus E(G)|.\]
Because $4\eps_4<1$, we conclude that $e(G)\leq e(\Godd(A,B))$, with equality if and only if $G=\Godd(A,B)$.

\section{Concluding Remarks}\label{sec:concluding}

\subsection{Twisted tight cycles}\label{subsec:twisted}

The classification derived in \cref{sec:bipartite} actually allows us to classify broader families of cycle-like hypergraphs. In uniformity $r$, given an integer $\ell\geq 2r$ and a permutation $\pi\in S_r$, the \emph{tight cycle} of \emph{length} $\ell$ \emph{twisted by} $\pi$ has $\ell$ vertices $v_1\cdots v_\ell$ and edges of the form $v_{i+1}\cdots v_{i+r}$ for each $0\leq i\leq\ell$, where we define $v_{\ell+1}\cdots v_{\ell+r}:=\pi(v_1\cdots v_r)$. For $\pi\in S_r$, define $\CC_\pi$ to be the family of tight cycles twisted by $\pi$ of length 0 modulo $r$. When $\pi=\cyc^k$, it is straightforward to verify that every $C\in \CC_{\pi}$ is a homomorphic copy of some $C'\in \C kr$ and vice versa, implying that $\CC_{\cyc^k}$-hom-freeness and $\C kr$-hom-freeness are identical conditions. However, let us note that the families $\CC_{\cyc^k}$ and $\C kr$ are not precisely the same.

For twisted tight cycles, the techniques in \cref{sec:bipartite} immediately generalize to show the following strengthening of \cref{thm:visual-applicable}.

\begin{theorem}\label{thm:twisted-coloring}
Fix a uniformity $r\geq 2$ and a permutation $\pi\in S_r$. An $r$-graph is $\CC_\pi$-hom-free if and only if $E(G)$ admits an accordant oriented coloring by the set $\Delbf_\pi$ of pictograms defined in \cref{sec:pictorial-oriented-colorings}.	
\end{theorem}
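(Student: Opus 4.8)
The plan is to show that \cref{thm:twisted-coloring} follows from the same argument that proves \cref{thm:alg-provable} (equivalently \cref{thm:visual-applicable}), with only the final ``cycle-finding'' proposition needing to be adjusted to handle an arbitrary twist $\pi$ in place of $\cyc^k$. Indeed, \cref{cor:accordant-stab} is purely group-theoretic: for any $r$-graph $G$ and any $\pi \in S_r$, there is an accordant $A_\pi$-coloring of $\Edir(G)$ if and only if $\tc(\x)$ avoids $\pi$ for every $\x \in \Edir(G)$. (Here $A_\pi$ and $\Delbf_\pi$ are defined for all $\pi$, not just $\pi = \cyc^k$.) So the only new ingredient required is a combinatorial characterization of when $G$ is $\CC_\pi$-hom-free in terms of the groups $\tc(\x)$.

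First I would record the elementary translation between twisted tight cycles and tight walks. A homomorphic copy of the $\pi$-twisted tight cycle of length $\ell$ in $G$ is precisely a tight walk $v_1 \cdots v_{\ell} v'_1 \cdots v'_r$ of stretch $\ell$ from $v_1 \cdots v_r$ to $\pi(v_1 \cdots v_r)$; that is, it is a tight walk from some $\x \in \Edir(G)$ to $\pi(\x)$. (This is the twisted analogue of the observation just before \cref{prop:findoddcycle}.) Thus $G$ is $\CC_\pi$-hom-free if and only if, for every $\x \in \Edir(G)$, there is no tight walk from $\x$ to $\pi(\x)$ of stretch $\equiv 0 \pmod r$. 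Now I would invoke the second half of the proof of \cref{prop:findoddcycle} verbatim: the argument there that ``$\x$ is tightly connected to $\y'$ iff there is a tight walk from $\x$ to $\y'$ of stretch $0$ modulo $r$'' makes no use of the specific form of $\y'$, so it applies with $\y' = \pi(\x)$. Hence $G$ fails to be $\CC_\pi$-hom-free iff $\pi \in \tc(\x)$ for some $\x \in \Edir(G)$.

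Finally I would close the loop exactly as in the proof of \cref{thm:alg-provable}: by \cref{prop:tc-properties}(3), the conjugates of $\tc(\x)$ are the groups $\tc(\sg(\x))$, $\sg \in S_r$, so the condition ``$\pi \notin \tc(\x)$ for all $\x \in \Edir(G)$'' is equivalent to ``$\tc(\x)$ avoids $\pi$ for all $\x \in \Edir(G)$'' (recall that $\tc(\x)$ avoids $\pi$ means no conjugate of $\pi$ lies in $\tc(\x)$, equivalently $\pi$ lies in no conjugate of $\tc(\x)$). Combining this with \cref{cor:accordant-stab} yields: $G$ is $\CC_\pi$-hom-free iff there is an accordant $A_\pi$-coloring of $\Edir(G)$, which by the identification $A_{\Delbf_\pi} \cong A_\pi$ (see \cref{sec:algebraic-oriented-colorings}) is the same as an accordant oriented coloring of $E(G)$ by $\Delbf_\pi$. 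I do not anticipate a genuine obstacle here; the only real point to verify carefully is that every step in \cref{prop:findoddcycle} other than the initial reduction to $\y' = \cyc^k(\y)$ is insensitive to which permutation we use, so that substituting $\pi$ for $\cyc^k$ is legitimate. Since the remark in \cref{subsec:twisted} already observes that $\CC_{\cyc^k}$-hom-freeness coincides with $\C kr$-hom-freeness, \cref{thm:visual-applicable} is recovered as the special case $\pi = \cyc^k$.
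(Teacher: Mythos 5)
Your proposal is correct and takes essentially the same approach the paper intends: the paper only asserts that the techniques of Section 3 generalize immediately, and you have filled in exactly the required details, namely that \cref{cor:accordant-stab} is already stated for arbitrary $\pi$, that the second half of the proof of \cref{prop:findoddcycle} characterizes stretch-$0$ walks between arbitrary oriented edges and hence applies with $\y'=\pi(\x)$, and that the conjugation identity $\tc(\sg(\x))=\sg\tc(\x)\sg^{-1}$ converts ``$\pi\notin\tc(\x)$ for all $\x$'' into ``$\tc(\x)$ avoids $\pi$ for all $\x$.'' No gaps.
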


Similarly, one may easily derive a variant of \cref{delete-long-cycles} from \cref{prop:delete-long-cycles} that applies more generally to the families $\CC_\pi$. Thus, the framework introduced in \cref{sec:framework} is potentially applicable to understanding the Tur\'an densities of long twisted tight cycles in any family $\CC_\pi$.

The most natural question in this direction is in uniformity 3, when $\pi\in S_3$ is a transposition. In this case, \cref{thm:twisted-coloring} states that the $\CC_\pi$-hom-free 3-graphs $G$ are exactly those that admit an accordant oriented coloring by $\Delbf_\pi=\left\{\pinwheel\right\}$. Equivalently, following the approach in \cref{sec:coloring-corollaries}, there is an oriented coloring of $V(G)^{(2)}$ by $\Delbf'=\{\purpleto\}$ such that every edge of $G$ is colored as a directed triangle \purpletri. This latter result was shown previously in \cite{BaLu22}.

Call a 3-graph $G$ \emph{tournament-like} if $E(G)$ admits an accordant oriented coloring by $\left\{\pinwheel\right\}$; equivalently, $E(G)$ is the set of directed triangles in some tournament. It is known (see the proof of \cref{lem:color-ineqs}(5)) that any tournament-like 3-graph on $n$ vertices has at most $\frac 14\binom{n+1}3$ 3-edges, with equality if and only if it arises from a regular tournament. We conjecture that this is the extremal construction for all long 3-uniform tight cycles twisted by a transposition.

\begin{conjecture}\label{conj:tournaments}
Let $\pi\in S_3$ be a transposition. For all but finitely many $C\in\CC_\pi$, we have $\pi(C)=1/4$. Moreover, for $n$ sufficiently large, every extremal $C$-hom-free 3-graph is tournament-like.
\end{conjecture}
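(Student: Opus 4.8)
The plan is to run the four-step framework of \cref{sec:framework} with $\CC=\CC_\pi$. Steps~1 and~3 come essentially for free. By \cref{thm:twisted-coloring} (together with the reduction to link colorings illustrated in \cref{sec:coloring-corollaries} and spelled out in the concluding remarks), a $3$-graph $G$ is $\CC_\pi$-hom-free if and only if it is \emph{tournament-like}, i.e.\ there is an orientation $T$ of $K_{V(G)}$ whose set of cyclic (consistently oriented) triangles is exactly $E(G)$. And the variant of \cref{delete-long-cycles} for twisted cycles, which follows from \cref{prop:delete-long-cycles} exactly as in \cref{sec:delete-long-cycles}, lets us delete $O\big(|V(C)|^{-1/3}n^3\big)$ edges from any $C$-hom-free $3$-graph $G$ (for long $C\in\CC_\pi$) to obtain a $\CC_\pi$-hom-free, hence tournament-like, subgraph.

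Step~2 (density with stability) I would handle through the Goodman-type identity already used in the proof of \cref{lem:color-ineqs}(5): if $T$ is a tournament on $n$ vertices with $C(T)$ cyclic triangles, then $C(T)=\tfrac12\big(\sum_v\indeg(v)\outdeg(v)-\binom n3\big)$, so any tournament-like $G$ has $e(G)=C(T)\le\tfrac12\big(n(n-1)^2/4-\binom n3\big)=\tfrac14\binom{n+1}3$, with equality exactly when $T$ is regular; this gives $\pi(\CC_\pi)=1/4$, with the even-$n$ optimum coming from near-regular tournaments. For the stability statement, the key point is that $e(G)$ depends only on the \emph{degree sequence} of $T$: if $e(G)\ge(\tfrac14-\del)\binom n3$ then $\sum_v(\indeg(v)-\outdeg(v))^2=O(\del n^3)$, so all but $O(\del^{1/4}n)$ vertices of $T$ have in- and out-degree within $O(\del^{1/4}n)$ of $n/2$. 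A short flow/matching argument then flips $O(\del^{1/4}n^2)$ arcs of $T$ to make it (near-)regular, changing only $O(\del^{1/4}n^3)$ triples, so $G$ differs from the cyclic-triangle $3$-graph of a (near-)regular tournament in $o(n^3)$ edges.

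Step~4 is where the real work lies, and I expect it to be the main obstacle. Following the template of \cref{subsec:prove-one-cycle}, I would fix a long $C\in\CC_\pi$, take an extremal $C$-hom-free $G$, and use Steps~2--3 to produce a tournament $T^*$ on $V(G)$ together with a degree-robust notion of $G$ being ``$\eps$-close'' to its cyclic-triangle $3$-graph $\Gopt$ (the analogue of \cref{def:eps-close}: on a $(1-\eps)$-fraction of triples $G$ agrees with $\Gopt$, and this agreement is suitably spread out over sub-triples and sub-pairs). The central tool is a twisted analogue of \cref{lem:make-walk}, rerouting a twisted-tight walk of the appropriate stretch through $G$ whenever its endpoints are joined by such a walk in the cyclic-triangle structure of $T^*$. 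With that in place, every edge of $E(G)\setminus E(\Gopt)$ — a triple that is transitive in $T^*$ but an edge of $G$, or an edge inside a small defect set — should, when it sits near the good part of $G$, close up into a homomorphic copy of a short twisted cycle in $\CC_\pi$, forbidden once $|V(C)|$ is large. Bounding the count of such edges against $|E(\Gopt)\setminus E(G)|$ (splitting, as in \cref{sec:cleaning}, into the transitive-triple defects and the flipped arcs of $T^*$) should yield $|E(G)\setminus E(\Gopt)|\le 0.99\,|E(\Gopt)\setminus E(G)|$, forcing $e(G)\le e(\Gopt)$ and hence, by the equality case of Step~2, that $G$ is the cyclic-triangle $3$-graph of a (near-)regular tournament.

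The two difficulties I anticipate in Step~4 are: (i) pinning down the right notion of $\eps$-closeness, since — unlike the oddly bipartite construction of \cref{sec:cleaning} — the optimal tournament $T^*$ is far from unique, so the obstruction arguments must be robust to local re-orientations of $T^*$ and the ``bad'' vertex/pair sets must be defined intrinsically in $G$ rather than from a fixed bipartition; and (ii) building the analogues of Obstructions~\ref{obstruction:A4}--\ref{obstruction:A2B2-hard} in this setting, where both extra and missing edges of $G$ encode subtle local deviations of an orientation, and in particular verifying that all the short twisted cycles produced have length bounded by an absolute constant, so that a single $L_0$ works for the whole family $\CC_\pi$.
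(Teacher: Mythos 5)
The statement you are trying to prove is a \emph{conjecture} in the paper, not a theorem: the author explicitly leaves it open and writes only that it ``seems the most natural next family to apply the framework in \cref{sec:framework} to.'' So there is no proof in the paper to compare against. Your write-up is, by its own admission, a plan rather than a proof --- you flag Step~4 as ``the main obstacle'' and list the anticipated difficulties. Those anticipated difficulties are exactly why the statement remains a conjecture.

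What you do have is sound as far as it goes and matches what the paper suggests. Step~1 is \cref{thm:twisted-coloring} together with the reformulation in \cref{subsec:twisted}, and indeed the paper notes the tournament-like characterization was already shown in \cite{BaLu22}. Step~3 is a routine variant of \cref{delete-long-cycles}. For Step~2, the Goodman-type identity $C(T)=\tfrac14\binom{n+1}3-\tfrac18\sum_v(\indeg(v)-\outdeg(v))^2$ does give the density $\pi(\CC_\pi)=1/4$ (this is essentially the argument in the proof of \cref{lem:color-ineqs}(5)), and your stability sketch via degree-sequence balancing is plausible --- though note that a genuine $\C14$-style stability statement must also handle the fact that ``tournament-like'' as defined allows $E(G)$ to be a \emph{proper} subset of the cyclic triangles of $T$, so merely controlling the degree sequence of one witnessing $T$ does not immediately give the edgewise closeness you need. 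The real gap is Step~4: you have no analogue of \cref{def:eps-close} or \cref{lem:make-walk} for the twisted setting, no replacements for Obstructions~\ref{obstruction:A4}--\ref{obstruction:A2B2-hard}, and no argument that a single absolute $L_0$ works for all of $\CC_\pi$. Your own point~(i) --- that the extremal tournament is far from unique, unlike the oddly bipartite partition in \cref{sec:cleaning} --- is the crux: the paper's cleaning argument leans heavily on the rigidity of the bipartition, and it is not at all clear how to set up a robust $\eps$-closeness to a moving target $T^*$. Until those are supplied, this is a (reasonable) research program, not a proof, and the paper's author evidently agrees.
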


This seems the most natural next family to apply the framework in \cref{sec:framework} to. Of the remaining steps in the framework --- recalling that Steps 1 and 3 are handled in general --- it seems that Step 2, including stability, is achievable by analyzing the proof of \cref{lem:color-ineqs}(5). However, we were unable to formulate a cleaning argument achieving Step 4. Part of the challenge is the number of extremal examples. In Step 2, there will be many extremal tournament-like 3-graphs differing from $G$ in at most $\eps n^3$ edges; as a result, it is no longer be clear which edges should be added and/or removed in the cleaning argument.

\subsection{Other families of cycle-like hypergraphs}

Let us mention two other families of cycle-like hypergraphs whose Tur\'an densities might possibly be approached with our framework.

First, we suspect that tight cycles in higher uniformities lie within reach. Here, the challenge is understanding the extremal $\C kr$-hom-free $r$-graphs.
Given the ad hoc nature of \cref{lem:color-ineqs}, there is no hope of generalizing it to higher uniformities without some new idea. However, we suspect that the ideas in \cref{sec:cleaning} would generalize to higher uniformities, assuming the extremal constructions still look similar to blowups.

\begin{problem}
Derive more natural proofs of \cref{thm:C14-turan-density,thm:stability}.
\end{problem}

For residues $k$ relatively prime to $r$, we conjecture that the extremal $\C kr$-hom-free constructions are the natural analogues of the constructions in uniformities 2, 3, and 4. For even uniformities $r$, we suspect that the extremal construction is a complete oddly bipartite hypergraph, with vertex set $A\cup B$ and edge set comprising those size-$r$ subsets of $A\cup B$ with an odd number of vertices in $A$.
For odd uniformities $r$, we suspect that the extremal construction involves all size-$r$ subsets with a nonzero even number of vertices in $A$, together with an iterated blowup in $B$. These constructions are pictured in \Cref{fig:extremal-higher-uniformities}

\begin{figure}\centering
\begin{tikzpicture}[scale=0.7]
	\node[left] at (-1,0) {$A$};
	\coordinate (o1) at (0,0);
	\coordinate (o2) at (3.2,0);
	\coordinate (o3) at (5.65,0);
	\node at (7,0) {$\dots$};
	\draw[yscale=1.3] (o1) circle (1) (o2) circle (0.8) (o3) circle (0.64);
	\draw[rounded corners] (1.9,1.3) --++ (6.1,0) --++ (0,-2.6) -- (2,-1.3) -- cycle;
	\draw[rounded corners] (4.55,1.3*0.8) --++ (0,-2*1.3*0.8) --++ (3.1,0) --++ (0,2*1.3*0.8) -- cycle;
	\node[right] at (1.9+6.1,0) {$B$};
	
	\coordinate (p1) at ($(o1) + (0,0.5)$);
	\coordinate (p2) at ($(o2) + (0,0.4)$);
	\coordinate (p3) at ($(o3) + (0,-0.5)$);
	\coordinate (q1) at ($(o1) + (0,-0.5)$);
	\coordinate (q2) at ($(o2) + (0,-0.4)$);
	\coordinate (q3) at ($(o3) + (0,-0.5)$);
	
	\draw[fill,fill opacity=0.2] 
		($(p1) + (-140:0.6)$) -- ($(p1) + (-170:0.7)$) -- ($(p1) + (170:0.7)$) -- ($(p1) + (140:0.6)$) -- ($(p1)!0.7!(p2|-p1)$) -- cycle;
	\draw[fill,fill opacity=0.2] 
		($(q1) + (-150:0.6)$) -- ($(q1) + (150:0.6)$)
		-- ($(q1)!0.72!(q2|-q1) + (120:0.5)$) 
		-- ($(q1)!0.72!(q2|-q1)$) 
		-- ($(q1)!0.72!(q2|-q1) + (240:0.5)$) -- cycle;
	\draw[fill,fill opacity=0.2] 
		($(p2) + (-140:0.45)$) -- ($(p2) + (-170:0.5)$) -- ($(p2) + (170:0.5)$) -- ($(p2) + (140:0.45)$) -- ($(p2)!0.7!(p3|-p2)$) -- cycle;
	\draw[fill,fill opacity=0.2] 
		($(q2) + (-150:0.45)$) -- ($(q2) + (150:0.45)$)
		-- ($(q2)!0.72!(q3|-q2) + (120:0.4)$) 
		-- ($(q2)!0.72!(q3|-q2)$) 
		-- ($(q2)!0.72!(q3|-q2) + (240:0.4)$) -- cycle;
\end{tikzpicture}
\qquad
\begin{tikzpicture}[scale=0.7]
	\coordinate (o1) at (0,0);
	\coordinate (o2) at (3,0);
	\draw[yscale=1.3] (o1) circle (1) (o2) circle (1);
	\path (o1) --++(-1,0) node[left]{$A$};
	\path (o2) --++(1,0) node[right]{$B$};
	
	\coordinate (h1) at (0,0.69);
	\coordinate (h2) at ($(o2)-(h1)$);
	\draw[fill, fill opacity=0.2] 
		($(h1)+(-100:0.4)$) -- ($(h1)+(-140:0.4)$) -- ($(h1)+(180:0.4)$)
		-- ($(h1)+(140:0.4)$) -- ($(h1)+(100:0.4)$) 
		-- (o2|-h1) -- cycle;
	\draw[fill, fill opacity=0.2] 
		($(h2)+(80:0.4)$) -- ($(h2)+(40:0.4)$) -- ($(h2)+(0:0.4)$)
		-- ($(h2)+(-40:0.4)$) -- ($(h2)+(-80:0.4)$) 
		-- ($(o1|-h2)$) -- cycle;
	\draw[fill, fill opacity=0.2] ($(o1)+(0,0.1)$) --++ (220:0.35) --++ (-60:0.37)
		-- ($(o2)+(0,-0.1)$) --++ (220:-0.35) --++ (-60:-0.37) -- cycle;
\end{tikzpicture}
\caption{The conjectural extremal constructions for tight cycles of length relatively prime to the uniformity in uniformities 5 and 6.}
\label{fig:extremal-higher-uniformities}
\end{figure}
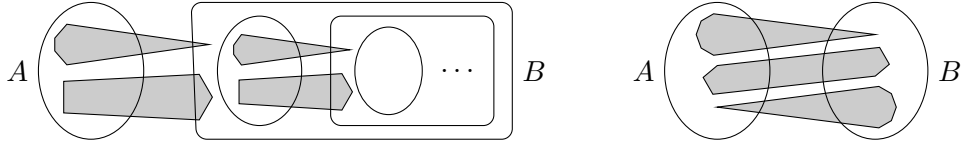

\begin{conjecture}
Fix a uniformity $r$. For all sufficiently long $L$ relatively prime to $r$, the extremal $C_L^{(r)}$-hom-free constructions on any sufficiently large numbers of vertices are given by the complete oddly bipartite $r$-graph (for even $r$) or the iterated blowup described above (for odd $r$), as pictured in \Cref{fig:extremal-higher-uniformities}.
\end{conjecture}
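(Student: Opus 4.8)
The plan is to run the four-step framework of \cref{sec:framework} for the family $\CC=\C1r$, after first reducing to the residue $k=1$: since $\gcd(L,r)=1$ the residue $k=L\bmod r$ is a unit modulo $r$, so by \cref{remark:homfree-equiv} (or \cref{prop:homfree-equiv}) $\C kr$-hom-freeness coincides with $\C1r$-hom-freeness, and $\cyc^k$ is conjugate to $\cyc=(1\,2\,\cdots\,r)$. Steps 1 and 3 are already available in full generality: \cref{thm:visual-applicable}/\cref{thm:alg-provable} characterize $\C1r$-hom-free $r$-graphs as those admitting an accordant oriented coloring of $E(G)$ by $\Delbf_{\cyc}$ (equivalently, those all of whose tight-connectivity groups avoid $\cyc$), and \cref{delete-long-cycles} deletes $O(L^{-1/r}n^r)$ edges from a $C_L^{(r)}$-hom-free $r$-graph to make it $\C1r$-hom-free. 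So the content lies in Steps 2 and 4, carried out separately for even and odd $r$.

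For Step 2 I would first, in the spirit of \cref{sec:coloring-corollaries}, make $\Delbf_{\cyc}$ explicit by determining the conjugacy classes of maximal $\cyc$-avoiding subgroups of $S_r$, and then translate the accordant-coloring condition into the dual language of colorings of $V^{(r-1)}$ whose $r$ sub-$(r-1)$-tuples on each edge are mutually ``consistent'' (the analogue of \cref{cor:C14-free}(4)). The lower bound amounts to exhibiting the coloring of $V^{(r-1)}$ realized by the conjectural construction — the two-colored ``oddly bipartite'' coloring when $r$ is even, and when $r$ is odd the coloring combining the even-intersection-with-$A$ rule with an iterated-blowup coloring on $B$ — and checking it is consistent on every $r$-edge, which is the analogue of \cref{prop:Godd-C-free} and gives $\ex(n,C_L^{(r)}\texthom)\geq(c_r-o(1))\binom nr$ for the appropriate constant $c_r$ (with $c_r=\frac12$ when $r$ is even). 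For the matching upper bound I would induct on $n$ as in \cref{thm:C14-turan-density}: pass to the link coloring $\chi_w$ of a well-chosen vertex $w$ (after a preprocessing step analogous to \cref{prop:redgreenedges} to exclude degenerate link colorings), bound the number of ``good'' $(r-1)$-tuples in any such link coloring by $(c_r+o(1))\binom{n-1}{r-1}$, conclude $\deg_G(w)\leq(c_r+o(1))\binom{n-1}{r-1}$, and close the induction. The stability statement (the analogue of \cref{thm:stability}) would then follow from the equality case of this bound together with \cref{prelim:large-min-degree}, exactly as in \cref{subsection:stability}.

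For Step 4 I would generalize the cleaning argument of \cref{sec:cleaning} structurally verbatim: define ``$G$ is $\eps$-close to the extremal construction via a family $\TT$ of $(r-1)$-tuples'' (generalizing \cref{def:eps-close}), prove the walk-lifting lemma (generalizing \cref{lem:make-walk}: a tight walk in the extremal construction between two $(r-1)$-tuples of $\TT$ lifts to a tight walk of the same stretch in $G$), prove the $\TT$-construction lemma (generalizing \cref{lem:make-T}), and then run through a list of ``obstructions'', each a forbidden local configuration that would force a homomorphic copy of some short $C_\ell^{(r)}$ and hence, via \cref{prop:hom-free-chain} with $L_0$ large, contradict $C_L^{(r)}$-hom-freeness. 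Summing the resulting edge estimates ($e$ of the ``wrong'' parts $\leq$ a small constant times $\ebar$ of the ``missing'' parts, as in \cref{claim:A4,claim:B4,claim:A2B2}) yields $e(G)\leq(c_r-o(1))\binom nr$ with equality only for the extremal construction, giving $\ex(n,C_L^{(r)})$ and completing the proof. For even $r$ this should go through essentially as written, as the extremal object is a genuine two-part blowup-like $r$-graph; for odd $r$ the recursive ``iterated blowup'' on $B$ forces a recursive version of the cleaning step inside $B$, which is the one genuinely new ingredient relative to the $r=4$ treatment.

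The main obstacle, as the paper itself flags, is the sharp coloring inequality in Step 2: \cref{lem:color-ineqs} and \cref{calculus} are proved by an ad hoc mixture of entropy/Shearer bounds, a flag-algebra/symmetrization input (parts (4)--(5)), and Milne's inequality, and none of these transparently generalizes — the number of pictograms (conjugacy classes of maximal $\cyc$-avoiding subgroups of $S_r$) grows with $r$, and the constraints linking the faces of an $r$-simplex become correspondingly intricate. One would need either a conceptual, uniformity-independent reason that the conjectured construction maximizes the count of consistent $r$-edges among all colorings of $V^{(r-1)}$, or a new generation of such inequalities obtained by a cleaner inductive scheme on the space of colorings. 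A secondary obstacle, specific to odd $r$, is that the extremal object is not a finite blowup, so the density computation, the stability statement, and the Step-4 cleaning argument all have to be carried out recursively; this is expected to be tractable but has no precedent in \cref{sec:cleaning}.
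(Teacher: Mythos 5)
The statement you are addressing is a \emph{conjecture}; the paper offers no proof of it, only a heuristic discussion in \cref{sec:concluding} of why the framework of \cref{sec:framework} is expected to apply. Your proposal is therefore not a proof and cannot be one, because Step~2 --- identifying the extremal $\C1r$-hom-free $r$-graph and proving a stability statement --- is genuinely open for $r>4$. To your credit you flag this yourself, and your assessment matches the authors' almost verbatim: the paper states that ``given the ad hoc nature of \cref{lem:color-ineqs}, there is no hope of generalizing it to higher uniformities without some new idea.'' The obstruction is concrete. \cref{lem:color-ineqs}(4) imports a flag-algebra/symmetrization result of Falgas-Ravry--Vaughan and Huang, part (5) rests on a Goodman-type identity for tournaments, part (6) on Milne's inequality, and \cref{calculus} is a bare-hands optimization over a four-parameter family; the number of pictograms in $\Delbf_\cyc$ is the number of conjugacy classes of maximal $\cyc$-avoiding subgroups of $S_r$, which grows with $r$, so the optimization becomes high-dimensional with no structure currently known to exploit. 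Without Step~2 there is nothing for the Step~4 cleaning argument to clean up against, so the induction does not close.

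Setting aside that gap, your plan faithfully transcribes the paper's framework: the reduction from arbitrary $L$ coprime to $r$ to $k=1$ via \cref{remark:homfree-equiv} (or \cref{prop:homfree-equiv}) is correct, Steps~1 and~3 are indeed available in full generality through \cref{thm:alg-provable} and \cref{delete-long-cycles}, and your generalization of \cref{def:eps-close}, \cref{lem:make-walk}, \cref{lem:make-T}, and the obstruction/claim structure of \cref{subsec:prove-one-cycle} is the intended shape of Step~4. You are also right to single out odd $r$ as requiring a second new ingredient: the conjectured extremal construction is an iterated blowup rather than a two-part blowup, so the cleaning argument would have to recurse inside the set $B$, which has no precedent in \cref{sec:cleaning} or in \cite{KLP24,BaLu22}. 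In short, your proposal correctly reproduces the intended strategy and correctly locates where it is incomplete; what it does not --- and, given current knowledge, cannot --- do is supply the missing Step-2 inequality and its stability version.
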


Next, we ask about $C^{(r)-}_L$, the tight $r$-uniform cycle of length $L$ with $r-2$ consecutive edges deleted. In uniformity 3, Balogh and Luo \cite{BaLu22} showed that the extremal $C^{(3)-}_L$-hom-free construction differs from an iterated blowup of a 3-edge on $O(n)$ edges for sufficiently long $L$, a result that was recently generalized to all $L\geq 5$ not divisible by 3 by Lidick\'y, Mattes, and Pfender \cite{LiMaPf24} and Bodn\'ar, Le\'on, Liu, and Pikhurko \cite{BLLP24} using flag algebras. In particular, $\pi(C_L^{(3)-})=1/4$ for $L\geq 5$. This is the best possible bound on $L$ --- for $L=4$, we have $C_4^{(3)-}=K_4^{(3)-}$, which is known to have Tur\'an density at least $2/7$ \cite{FrFu84}.

Let us remark that \cref{conj:tournaments} is a strict strengthening of this result. For example, with $\pi=(1\,2)$, each twisted tight cycle $C=v_1\cdots v_Lv_2v_1v_3$ contains a copy of $C_{L-1}^{(3)-}$ given by the tight walk $v_2\cdots v_Lv_2$. However, \cref{conj:tournaments} predicts a much larger family of conjecturally extremal $C$-free hypergraphs.
More generally, tight cycles minus edges are inherently connected to the tight connectivity parameter, as follows. For an $r$-graph $G$, define $\tc(G)=\{\tc(\x):\x\in\Edir(G)\}$, which is a family of subgroups of $S_r$. The blowup of an edge is the densest 3-graph $G$ with $\tc(G)=\{\{\id\}\}$; that is, $\tc(\x)=\{\id\}$ for each $\x\in\Edir(G)$. This connection is not a coincidence.

\begin{proposition}\label{prop:cycle-minus-edge}
Fix a uniformity $r$ and a nonzero residue $k$ modulo $r$, and let $G$ be an $r$-graph.
If $\tc(G)$ contains any subgroup of $S_r$ other than $\{\id\}$, then $G$ contains a homomorphic copy of $C_L^{(r)-}$ for some $L\equiv k\pmod r$.
\end{proposition}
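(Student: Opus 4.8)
The plan is to first reduce ``$G$ contains a homomorphic copy of $C_L^{(r)-}$'' to a purely combinatorial statement about walks, and then use the hypothesis $\tc(G)\ne\{\{\id\}\}$ to build a witness of that statement for some $L\equiv k\pmod r$. For the reduction I would unroll $C_L^{(r)-}$: deleting a block of $r-2$ consecutive edges from $C_L^{(r)}=v_0\cdots v_{L-1}$ leaves the $L-r+2$ edges $\{v_i,\dots,v_{i+r-1}\}$ for $0\le i\le L-r+1$ (indices mod $L$), and reading these edges in order traces out the vertex sequence $v_0v_1\cdots v_{L-1}v_0$. Hence a homomorphic copy of $C_L^{(r)-}$ in $G$ is exactly a vertex sequence $z_0z_1\cdots z_L$ of $G$ with $z_0=z_L$, $L>r$, and $z_iz_{i+1}\cdots z_{i+r-1}\in\Edir(G)$ for all $0\le i\le L-r+1$ --- a ``closed tight walk of length $L$''. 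So it suffices to produce a closed tight walk whose length is congruent to $k$ modulo $r$.

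Now I would use the hypothesis. Fix $\x=x_1\cdots x_r\in\Edir(G)$ with $\tc(\x)\ne\{\id\}$ and $\sigma\in\tc(\x)$ with $\sigma\ne\id$. Since $\sigma\ne\id$ it has a cycle of length at least $2$, so there are $c_1\ne c_2$ with $\sigma(c_1)=c_2$; and since $k\not\equiv 0\pmod r$ I can choose $\pi\in S_r$ with $\pi(c_1)=1$ and $\pi(c_2)$ the representative of $1+k$ in $\{1,\dots,r\}$. Setting $\mathbf w=\pi(\x)\in\Edir(G)$ and $\rho=\pi\sigma\pi^{-1}$, \cref{prop:tc-properties}(3) gives $\rho\in\tc(\mathbf w)$, with $\rho\ne\id$ and $\rho(1)=\pi(\sigma(c_1))=\pi(c_2)\equiv 1+k\pmod r$. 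This maneuver is the crux: even if no element of $\tc(\x)$ sends some index to its $k$-th cyclic successor, some conjugate of $\tc(\x)$ --- realized by a reordering $\mathbf w$ of the \emph{same} edge --- does, so a single nontrivial tight-connection suffices.

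Finally I would assemble the walk. Since $\rho\in\tc(\mathbf w)\setminus\{\id\}$ and $\mathbf w$ has distinct entries, $\mathbf w$ is tightly connected to $\rho(\mathbf w)\ne\mathbf w$, so by the construction in the proof of \cref{prop:findoddcycle} there is a tight walk from $\mathbf w$ to $\rho(\mathbf w)$ of stretch $rs'$ for some $s'\ge 1$. I would then extend this walk by looping around the underlying edge $\{w_1,\dots,w_r\}$, starting from $\rho(\mathbf w)$: each new window is a cyclic shift of $\rho(\mathbf w)$, hence an oriented edge of $G$, and since $w_1$ occupies position $\rho(1)$ of $\rho(\mathbf w)$, after any number of loop steps $t\equiv\rho(1)\pmod r$ the final window ends in $w_1$. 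The resulting sequence begins and ends with $w_1$, all its windows are edges, and its length is $L=rs'+r-1+t\equiv\rho(1)-1\equiv k\pmod r$; taking $t$ (or $s'$) large makes $L>r$. By the first paragraph this is a homomorphic copy of $C_L^{(r)-}$ with $L\equiv k\pmod r$, completing the proof. I expect the only step needing genuine care to be the unrolling in the first paragraph --- keeping track of which $r-2$ edges are removed and of the single repeated vertex $v_0$ --- while the rest is a short symbolic computation once the conjugation idea in the second paragraph is in place.
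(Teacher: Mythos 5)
Your proof is correct and follows essentially the same approach as the paper: take a nontrivial $\sigma\in\tc(\x)$, conjugate so that the new permutation moves the index $1$ by the right amount modulo $r$ (you arrange $\rho(1)\equiv 1+k$; the paper arranges $\pi'^{-1}(1)=r-k+1$), and then use the tight-connection sequence to build a closed tight walk of the appropriate length. The only difference is at the final assembly step: the paper truncates the concatenated walk so that the first and last vertices coincide, whereas you pad it by looping around the terminal oriented edge until the endpoints match; both resolutions are valid and yield the same conclusion.
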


\begin{proof}
Suppose there were some $\x\in\Edir(G)$ such that $\tc(\x)$ contained some non-identity element $\pi\in S_r$. It has a conjugate $\pi'=\sg\pi\sg^{-1}$ of $\pi$ for which $\pi'^{-1}(1)=r-k+1$; by \cref{prop:tc-properties}(3), $\pi'\in\tc(\sg(x))$. That is, there is a sequence
\[
\sg(\x)=\z^{(0)},\z^{(1)},\ldots,\z^{(s)}=\pi'(\sg(\x))
\]
of oriented edges of $G$ such that $\z^{(i-1)}$ and $\z^{(i)}$ differ on at most one coordinate for each $1\leq i\leq s$. Writing $\z^{(i)}=z_1^{(i)}\cdots z_r^{(i)}$ and recalling from the proof of \cref{prop:findoddcycle} that $\z^{(0)}\z^{(1)}\cdots \z^{(s)}$ is a tight walk, we obtain a tight subwalk
\[
z^{(0)}_{r-k+1}\cdots z^{(0)}_rz_1^{(1)}\cdots z_r^{(1)}\cdots z_1^{(s-1)}\cdots z_r^{(s-1)}z_1^{(s)}.
\]
Because $z_1^{(s)}=z_{\pi'^{-1}(1)}^{(0)}=z^{(0)}_{r-k+1}$, this forms a homomorphic copy of $C_{(s-1)r+k}^{(r)-}$.
\end{proof}

We remark that \cref{prop:cycle-minus-edge} is not an equivalent characterization: there are $r$-graphs $G$ of any uniformity with $\tc(G)=\{\{\id\}\}$ that contain tight cycles minus edges $C_L^{(r)-}$. Nevertheless, it seems likely that this is the dominating condition in many cases.

\begin{question}
For which uniformities $r$ and integers $L$ not divisible by $r$ is $\ex(n,C_L^{(r)-}\texthom)$ exactly the densest $r$-graph $G$ on $n$ vertices with $\tc(G)=\{\{\id\}\}$?
\end{question}

\subsection{Deriving exact results}

The results in this paper, as well as those of \cite{KLP24,BaLu22} for tight cycles and tight cycles minus an edge in uniformity 3 are all focused on understanding the extremal $C$-hom-free hypergraphs for sufficiently long $C$. Because the extremal examples are so well-structured, it seems very likely that $\ex(n,C)$ could be determined exactly for sufficiently large $n$. It would be very interesting to understand how to turn results about $C$-hom-free hypergraphs into results about $C$-free hypergraphs in general, or indeed for any of the families mentioned above.

Let us remark that the ideas in \cref{subsec:prove-one-cycle} show that, for all sufficiently large $L$ not divisible by 4 and all $n$ sufficiently large in terms of $L$, the extremal $C_L^{(4)}$-free 4-graph(s) on $n$ vertices differ from a complete oddly bipartite 4-graph in $o(n^4)$ edges, where the hidden constant depends on $n$. Although this is not an immediate consequence of \cref{thm:one-cycle}, it can be deduced as follows.

Let $G_0$ be an extremal $C_L^{(4)}$-free 4-graph on $n$ vertices. Here, we consider the regime with $L$ a large constant and $n\to\infty$; the dependency on $L$ will be suppressed in our asymptotic notation.
One can delete $o(n^4)$ edges from $G_0$ so that the resulting 4-graph $G_0'$ is $C_L^{(4)}$-hom-free --- for example, by a standard regularity argument in the style of the graph removal lemma (see e.g.\ \cite[Chapter 2]{gtac}). After deleting $o(n)$ vertices from $G'_0$, the resulting graph $G$ will have minimum degree at least $\left(\frac 12-o(1)\right)n^3$. Applying the argument in \cref{subsec:prove-one-cycle} to $G$ --- which requires only that $G$ is $C_L^{(4)}$-hom-free with large minimum degree --- we conclude that there is a partition $V(G)=A\sqcup B$ such that
\[
|E(G)\setminus E(\Godd(A,B))|\leq 4\eps_4|E(\Godd(A,B))\setminus E(G)|,
\]
where $\eps_4$ is the parameter chosen in \cref{subsec:prove-one-cycle}. Thus,
\begin{align*}
E(G)&\leq |E(\Godd(A,B))\cap E(G)|+4\eps_4|E(\Godd(A,B))\setminus E(G)|
\\&\leq \eopt(|V(G)|)-(1-4\eps_4)|E(\Godd(A,B))\setminus E(G)|.
\end{align*}
Because $\eopt(|V(G)|)=(1-o(1))\eopt(n)$ and $E(G)\geq (1-o(1))E(G_0)\geq (1-o(1))\eopt(n)$, we conclude that $G$ and $\Godd(A,B)$ differ on $o(n^4)$ edges. This implies that $G_0$ differs from a complete oddly bipartite graph in $o(n^4)$ edges.

However, it is not clear what the true order of growth of $\ex(n,C_L^{(4)})-\eopt(n)$ might be.

\appendix
\section*{Appendix}
\section{Subgroups of $S_4$}
\label{appendix:subgroups}

In this appendix, we list the subgroups of $S_4$ up to conjugacy. For each of the eleven conjugacy classes, we provide its name (if it is known by a canonical name), an explicit presentation of one of the groups in the conjugacy class, whether or not this conjugacy class avoids conjugates of $\cyc=(1\,2\,3\,4)$ or $\cyc^2=(1\,3)(2\,4)$, and any relevant containment in a larger subgroup of $S_4$.

\begin{table}[h!]
\begin{tabular}{|c|c|c|c|c|}
\hline
Name&Presentation&Avoids $\cyc$&Avoids $\cyc^2$&Contained in
\\\hline\hline
$S_1$&$\{\id\}$
	&yes&yes&$S_3$
\\\hline
$S_2$&$\langle(1\,2)\rangle$
	&yes&yes&$S_3$
\\\hline
&$\langle (1\,2)(3\,4)\rangle$
	&yes&no&$\langle(1\,2),\,(3\,4)\rangle$
\\\hline
$A_3$&$\langle(1\,2\,3)\rangle$
	&yes&yes&$S_3$
\\\hline
&$\langle(1\,2\,3\,4)\rangle$
	&no&no&
\\\hline
&$\langle (1\,2),\,(3\,4)\rangle$
	&yes&no&
\\\hline
&$\begin{array}{l}\{\id,\,(1\,2)(3\,4),
	\\\quad (1\,3)(2\,4),\,(1\,4)(2\,3)\}
\end{array}$
	&yes&no&$A_4$
\\\hline
$S_3$&$\langle(1\,2\,3),\,(1\,2)\rangle$
	&yes&yes&
\\\hline
$D_4$&$\langle(1\,2\,3\,4),\,(1\,3)\rangle$
	&no&no&
\\\hline
$A_4$&$\{\pi\in S_4:\mathrm{sgn}(\pi)=1\}$
	&yes&no&
\\\hline
$S_4$&$S_4$
	&no&no&
\\\hline
\end{tabular}
\end{table}

From this table, we conclude that the maximal $\cyc^2$-conjugate avoiding subgroups of $S_4$ are conjugates of $S_3$ and the maximal $\cyc$-conjugate avoiding groups are conjugates of $S_3$, $A_4$, and $\langle (1\,2)\,(3\,4)\rangle$. We refer to this last group as the \emph{non-normal Klein four-subgroup} of $S_4$, distinguishing it from the \emph{normal Klein four-subgroup} $\{\id,\ (1\,2)(3\,4),\,(1\,3)(2\,4),\,(1\,4)(2\,3)\}$.

\section{Proof of \cref{calculus}}
\label{appendix:calculus}

In this appendix we prove \cref{calculus}. Given nonnegative real numbers $\abcd$, set
\[
f(\abcd)=\min\left(2\del^{3/2}+g(\abc),\frac 14+\frac 34g(\abc)\right)
\]
where 
\[
g(\abc)=\ga^{3/2}+\min\left(0.465,\frac{3\al\bet}{\al+\bet},3\al\sqrt\bet\right).
\]
Throughout this section, we define $\frac{\al\bet}{\al+\bet}$ to be 0 at the point $(\al,\bet)=(0,0)$, so that $\frac{\al\bet}{\al+\bet}$ is continuous on $\mathbb R_{\geq 0}^2$. Because $f$ is a minimum of continuous functions on $\mathbb R_{\geq 0}^4$, it follows that $f$ is continuous on $\mathbb R_{\geq 0}^4$.

\cref{calculus} states that $f$, when restricted to the region
\[
R=\{(\abcd)\in\mathbb R_{\geq 0}^4:2\al+\bet+\ga+2\del=1,\,\ga\leq\al\},
\]
attains a maximum uniquely at the point $f\abcdopt=\frac 12$.
(Because $f$ is continuous and $R$ is closed and bounded, we know that $f$ attains a maximum on $R$.) The proof is given in a sequence of propositions, each deriving further restrictions on the global maxima of $f$. Throughout, we implicitly use that the value of $f$ at its maximum $(\abcd)$ on $R$ must satisfy $f(\abcd)\geq f\abcdopt=0.5$.

We first prove a simple bound on $\frac{\al\bet}{\al+\bet}$.

\begin{proposition}\label{calc:ab:a+b}
For any $\al,\bet\geq 0$, we have $\frac{\al\bet}{\al+\bet}\leq\frac{2\al+\bet}{3+2\sqrt 2}$, with equality if and only if $\bet=\sqrt 2\al$.
\end{proposition}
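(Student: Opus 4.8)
The statement is an elementary optimization: we want to show $\frac{\al\bet}{\al+\bet}\le\frac{2\al+\bet}{3+2\sqrt2}$ for all $\al,\bet\ge0$ (not both zero), with equality iff $\bet=\sqrt2\al$. The plan is to cross-multiply and reduce to a perfect-square inequality. Clearing denominators (both $\al+\bet>0$ and $3+2\sqrt2>0$), the claim is equivalent to
\[
(3+2\sqrt2)\,\al\bet \le (2\al+\bet)(\al+\bet) = 2\al^2 + 3\al\bet + \bet^2,
\]
i.e. to $0 \le 2\al^2 + (3-3-2\sqrt2)\al\bet + \bet^2 = 2\al^2 - 2\sqrt2\,\al\bet + \bet^2$. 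The right-hand side factors as $(\sqrt2\,\al - \bet)^2$, which is manifestly nonnegative, and is zero exactly when $\bet=\sqrt2\,\al$. This handles the interior case, and the boundary case $(\al,\bet)=(0,0)$ is covered by the paper's convention that $\frac{\al\bet}{\al+\bet}=0$ there (and the right side is also $0$), so equality holds there too — though one should note that $(0,0)$ also satisfies $\bet=\sqrt2\al$, so the "if and only if" is consistent.

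First I would dispose of the degenerate point $(\al,\bet)=(0,0)$ separately by invoking the stated convention. Then, assuming $\al+\bet>0$, I would state that the inequality is equivalent (by multiplying through by the positive quantity $(3+2\sqrt2)(\al+\bet)$) to $(\sqrt2\al-\bet)^2\ge0$, and read off the equality condition. There is no real obstacle here — the only thing to be slightly careful about is the bookkeeping of the algebraic identity $(2\al+\bet)(\al+\bet) - (3+2\sqrt2)\al\bet = 2\al^2 - 2\sqrt2\al\bet + \bet^2 = (\sqrt2\al-\bet)^2$, which is routine. I would present it as a two-line computation rather than belaboring it.

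Here is the proof.

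\begin{proof}
If $(\al,\bet)=(0,0)$ then both sides equal $0$ by convention, and $\bet=\sqrt 2\al$ holds, so the claim is satisfied. Now suppose $\al+\bet>0$. Since $\al+\bet>0$ and $3+2\sqrt 2>0$, the inequality $\frac{\al\bet}{\al+\bet}\le\frac{2\al+\bet}{3+2\sqrt 2}$ is equivalent to
\[
(3+2\sqrt 2)\,\al\bet\le(2\al+\bet)(\al+\bet).
\]
Expanding the right-hand side and rearranging, this is equivalent to
\[
0\le 2\al^2+3\al\bet+\bet^2-(3+2\sqrt 2)\al\bet=2\al^2-2\sqrt 2\,\al\bet+\bet^2=(\sqrt 2\,\al-\bet)^2,
\]
which holds for all $\al,\bet$. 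Moreover equality holds if and only if $\sqrt 2\,\al-\bet=0$, i.e.\ $\bet=\sqrt 2\al$.
\end{proof}
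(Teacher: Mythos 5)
Your proof is correct and takes essentially the same route as the paper: clear denominators, expand, and recognize $(2\al+\bet)(\al+\bet)-(3+2\sqrt 2)\al\bet=(\sqrt 2\al-\bet)^2$. The only difference is that you explicitly dispose of the degenerate point $(0,0)$, which the paper leaves implicit.
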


\begin{proof}
We have $(2\al+\bet)(\al+\bet)-(3+2\sqrt 2)\al\bet=2\al^2+\bet^2-2\sqrt 2\al\bet=(\sqrt 2\al-\bet)^2\geq 0$ with equality if and only if $\bet=\sqrt 2\al$.
\end{proof}

We now show that if $\ga+2\del$ is too small or $\del$ is too large, then $f(\abcd)<0.5$.

\begin{proposition}\label{calc:reduce-to-R0}
Suppose $\abcd$ are nonnegative real numbers with $2\al+\bet+\ga+2\del=1$ and $\ga\leq\al$.
\begin{enumerate}
	\item If $\ga+2\del\leq 0.1$, then $f(\abcd)<0.5$.
	\item If $\del\geq 0.18$, then $g(\abc)<\frac 13$ and $f(\abcd)<0.5$.
\end{enumerate}
\end{proposition}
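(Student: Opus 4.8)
The statement to prove is \cref{calc:reduce-to-R0}, which has two parts. Both are elementary estimates once one unpacks the definitions of $f$ and $g$, so the plan is to treat the ``small $\gamma+2\delta$'' case and the ``large $\delta$'' case separately, using the crude bounds already available.

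\textbf{Part (1): $\gamma+2\delta\le 0.1$.} The plan is to bound $f(\abcd)\le\frac14+\frac34 g(\abc)$ and show $g(\abc)<\frac13$, which forces $f<\frac14+\frac14=\frac12$. To bound $g$, note $g(\abc)=\gamma^{3/2}+\min(0.465,\tfrac{3\alpha\beta}{\alpha+\beta},3\alpha\sqrt\beta)\le\gamma^{3/2}+\tfrac{3\alpha\beta}{\alpha+\beta}$. Now $\gamma\le\gamma+2\delta\le 0.1$ gives $\gamma^{3/2}$ tiny. For the second term, apply \cref{calc:ab:a+b}: $\tfrac{3\alpha\beta}{\alpha+\beta}\le\tfrac{3(2\alpha+\beta)}{3+2\sqrt2}<\tfrac{3(2\alpha+\beta)}{5.8}$. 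Since $2\alpha+\beta=1-\gamma-2\delta\ge 0.9$ this is not immediately less than $\tfrac13$, so I need to be slightly more careful: I would instead argue that when $\gamma+2\delta$ is small, $2\alpha+\beta$ is close to $1$, and $\tfrac{3}{3+2\sqrt2}\approx 0.515$, so $\tfrac{3\alpha\beta}{\alpha+\beta}$ can be as large as about $0.515$, which exceeds $\tfrac13$. Hence the correct bound must use the other branch of the minimum as well. Specifically, use $g(\abc)\le\gamma^{3/2}+0.465$; then $f\le\frac14+\frac34(\gamma^{3/2}+0.465)$, and with $\gamma\le 0.1$ we get $f\le 0.25+0.75(0.0317+0.465)=0.25+0.75\cdot0.4967<0.25+0.373<0.5$. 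So the argument only needs the universal cap $0.465$ together with $\gamma^{3/2}\le 0.1^{3/2}<0.032$.

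\textbf{Part (2): $\delta\ge 0.18$.} Here the plan is to show both $g(\abc)<\frac13$ and $f(\abcd)<\frac12$. Since $\delta\ge 0.18$ we have $2\alpha+\beta+\gamma=1-2\delta\le 0.64$, so in particular $\gamma\le 0.64$ and $\alpha\le 0.32$, $\beta\le 0.64$. Bound $g(\abc)\le\gamma^{3/2}+3\alpha\sqrt\beta$. With $\gamma\le 0.64$, $\gamma^{3/2}\le 0.64^{3/2}\approx 0.512$ — too big on its own, so again I use the cap: $g\le\gamma^{3/2}+0.465$ and, using $\alpha\le 0.32$, $3\alpha\sqrt\beta\le 3\cdot0.32\cdot0.8<0.77$, still not enough directly. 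The cleaner route: $\gamma^{3/2}\le\gamma\cdot\sqrt\gamma\le\gamma\cdot\sqrt{0.64}=0.8\gamma$ and $3\alpha\sqrt\beta\le 3\alpha$ (since $\beta\le 1$, though here $\sqrt\beta\le 0.8$ gives $2.4\alpha$); combined with the constraint $2\alpha+\beta+\gamma\le 0.64$ one can check $g\le 0.8\gamma+\min(0.465,2.4\alpha)$. Optimizing over $2\alpha+\gamma\le 0.64$ (taking $\beta=0$): maximize $0.8\gamma+2.4\alpha$ subject to $2\alpha+\gamma\le 0.64$; the ratio favors putting weight on $\gamma$ only if $0.8>2\cdot2.4$ which is false, so put all weight on $\alpha$: $\alpha\le 0.32$, $\gamma=0$, giving $g\le 2.4\cdot 0.32=0.768$, still capped at $0.465$. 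So $g(\abc)\le 0.465$ — but I need $g<\frac13$. The constraint $\gamma\le\alpha$ has not yet been used: combined with $2\alpha+\gamma+2\delta\le 1$ and $\delta\ge 0.18$, we get $3\alpha\ge 2\alpha+\gamma\le 0.64$, so $\alpha\le 0.32$ but also if $\gamma=\alpha$ then $3\alpha\le 0.64$. I would push the estimate $g\le\gamma^{3/2}+3\alpha\sqrt\beta$ with $\alpha,\gamma\le 0.32$ and $\beta\le 0.64$: $\gamma^{3/2}\le 0.32^{1.5}<0.182$ and $3\alpha\sqrt\beta\le 3\cdot 0.32\cdot 0.8<0.768$ — the second term is the obstacle. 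Use instead $3\alpha\sqrt\beta$ versus $\tfrac{3\alpha\beta}{\alpha+\beta}$: when $\beta$ is bounded the min can still be large, so realistically the bound $g<\frac13$ in Part (2) must come from a tighter interplay, perhaps by noting that $2\delta\ge 0.36$ leaves only $0.64$ for $2\alpha+\beta+\gamma$ and plugging carefully into $g$. Then for $f$: $f\le 2\delta^{3/2}+g$; with $\delta\ge 0.18$ one has $2\delta^{3/2}\ge 2(0.18)^{3/2}\approx 0.153$, which does NOT help — we want an upper bound on $f$. So use the other branch $f\le\frac14+\frac34 g<\frac14+\frac34\cdot\frac13=\frac12$, which closes Part (2) once $g<\frac13$ is established.

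\textbf{Main obstacle.} The genuinely delicate point is establishing $g(\abc)<\frac13$ in Part (2) (and the analogous sub-bound in Part (1)): the three-way minimum defining $g$ means one must carefully case on which branch is active, and the constraint $\gamma\le\alpha$ together with $2\alpha+\beta+\gamma=1-2\delta$ needs to be fed in at exactly the right moment. I would handle this by splitting on whether $\tfrac{3\alpha\beta}{\alpha+\beta}\le 0.465$ or not: if the cap $0.465$ is active use $\gamma^{3/2}\le\gamma\le\alpha$ and the constraint to bound $\gamma$; otherwise use \cref{calc:ab:a+b} to write $g\le\gamma^{3/2}+\tfrac{3(2\alpha+\beta)}{3+2\sqrt2}$ and substitute $2\alpha+\beta+\gamma=1-2\delta\le 0.64$, reducing to a one-variable maximization in $\gamma$ on $[0,0.32]$ (using $\gamma\le\alpha$), which is monotone and hence maximized at an endpoint — a routine check. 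The remaining steps are purely arithmetic and I would not belabor them.
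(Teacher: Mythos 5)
Your Part (1) argument fails due to a fatal arithmetic error. You compute $f \le 0.25 + 0.75 \cdot 0.4967 < 0.25 + 0.373 < 0.5$, but $0.25 + 0.373 = 0.623$, which is not less than $0.5$. The branch $f \le \tfrac14 + \tfrac34 g$ cannot work here: with only the cap $g \le \gamma^{3/2} + 0.465 \approx 0.497$, you get $f \le 0.62$, useless. The fix is to use the \emph{other} branch, $f \le 2\delta^{3/2} + g(\al,\bet,\ga) \le 2\delta^{3/2} + \ga^{3/2} + 0.465$, and then observe $\ga^{3/2} + 2\del^{3/2} \le (\ga+2\del)^{3/2} \le 0.1^{3/2} \approx 0.032$, which yields $f \le 0.497 < 0.5$. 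This is exactly the paper's argument.

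Your Part (2) sketch identifies the right ingredients but does not close. Two issues. First, the range you propose for the one-variable maximization is too wide: you suggest $\ga \in [0, 0.32]$ (coming from $\ga \le \al \le 0.32$), but the constraints $\ga \le \al$ and $2\al + \bet + \ga \le 0.64$ force $3\ga \le 2\al + \ga \le 0.64$, hence $\ga \le 0.64/3 \approx 0.213$. This matters: at $\ga = 0.32$ the quantity $\ga^{3/2} + \tfrac{3(0.64-\ga)}{3+2\sqrt2} \approx 0.181 + 0.165 = 0.346 > \tfrac13$, so your analysis would not produce the bound. Second, you never settle which branch of the three-way minimum is active; the clean observation is that Milne's inequality gives $\tfrac{3\al\bet}{\al+\bet} \le \tfrac{3(2\al+\bet)}{3+2\sqrt2} \le \tfrac{3\cdot 0.64}{3+2\sqrt2} \approx 0.329 < 0.465$, so the $0.465$ cap is never binding here and you can simply use $g \le \ga^{3/2} + \tfrac{3\al\bet}{\al+\bet}$. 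The paper then avoids the endpoint analysis entirely via the slick rewriting $\ga^{3/2} + \tfrac{3(2\al+\bet)}{3+2\sqrt2} = \ga\bigl(\sqrt\ga - \tfrac{3}{3+2\sqrt2}\bigr) + \tfrac{3(2\al+\bet+\ga)}{3+2\sqrt2}$, and using $\sqrt\ga \le \sqrt{0.64/3} < \tfrac{3}{3+2\sqrt2}$ to see the first term is $\le 0$; this gives $g \le \tfrac{3\cdot 0.64}{3+2\sqrt2} < \tfrac13$ in one line. Your endpoint-maximization route would also work once the correct range $\ga \le 0.213$ is used and the convexity of $\ga^{3/2}$ is invoked to justify checking only the endpoints, but as written the argument is incomplete.
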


\begin{proof}
(1) If $\ga+2\del\leq 0.1$, then $\ga^{3/2}+2\del^{3/2}\leq(\ga+2\del)^{3/2}\leq 0.1^{3/2}$. Thus,
\[
f(\abcd)\leq 2\del^{3/2}+\ga^{3/2}+0.465\leq 0.1^{3/2}+0.465\approx 0.497<0.5.
\]
(2)	By \cref{calc:ab:a+b},
\[
g(\al,\bet,\ga)\leq \ga^{3/2}+\frac{3\al\bet}{\al+\bet}\leq \ga\left(\sqrt\ga-\frac 3{3+2\sqrt 2}\right)+\frac{3(2\al+\bet+\ga)}{3+2\sqrt 2}.
\]
If $\del\geq 0.18$, then $2\al+\bet+\ga=1-2\del\leq 0.64$. Additionally, $3\ga\leq 2\al+\ga\leq 0.64$, so
\[
\sqrt\ga\leq \left(\frac{0.64}3\right)^{1/2}<\frac{3}{3+2\sqrt 2}.
\]
It follows that
\[
g(\abc)\leq\frac{3\times 0.64}{3+2\sqrt 2}\approx 0.329<\frac 13,
\]
so $f(\abcd)\leq \frac 14+\frac 34g(\abc)<0.5$.
\end{proof}

We now restrict our focus to studying the function
\begin{align*}
f_0(\abcd)&=\ga^{3/2}+2\del^{3/2}+\min\left(\frac{3\al\bet}{\al+\bet}, 3\al\sqrt\bet\right)
\\&=\ga^{3/2}+2\del^{3/2}+\begin{cases}
		3\al\sqrt\bet&\text{if }\sqrt\bet\leq\al+\bet\\
		\frac{3\al\bet}{\al+\bet}&\text{if }\sqrt\bet\geq\al+\bet
\end{cases}
\end{align*}
on the region
\begin{align*}
R_0&=\{(\abcd)\in\mathbb R_{\geq 0}^4:2\al+\bet+\ga+2\del=1,\,\al\leq\ga,\,\ga+2\del\geq 0.1,\,\del\leq 0.18\}
\\&=\{(\abcd)\in R:\ga+2\del\geq 0.1,\,\del\leq 0.18\}.
\end{align*}
To prove \cref{calculus}, it suffices to show that $f_0$ attains a unique maximum on $R_0$ at the point $\abcdopt$, where $f_0\abcdopt=f\abcdopt=\frac 12$. The next four propositions study the global maxima of $f_0$ on $R_0$.
Set $f_1(\abcd)=\ga^{3/2}+2\del^{3/2}+3\al\sqrt\bet$ and $f_2(\abcd)=\ga^{3/2}+2\del^{3/2}+\frac{3\al\bet}{\al+\bet}$, so $f_0=\min(f_1,f_2)$.

\begin{proposition}\label{calc:not-interior}
Let $(\abcd)$ be a global maximum of $f_0$ on the region $R_0$ defined above. Then either $\ga=0$ or $\ga=\al$ or $\del=0$ or $\del=0.18$.
\end{proposition}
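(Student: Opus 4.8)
\textbf{Proof plan for \cref{calc:not-interior}.}

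The strategy is a standard Lagrange-multiplier/boundary argument on the compact region $R_0$. Since $f_0 = \min(f_1, f_2)$ is continuous and $R_0$ is closed and bounded, a global maximum exists; suppose it is attained at an interior-type point, i.e.\ one with $0 < \ga < \al$ (wait---on $R_0$ we have $\al \le \ga$) so more precisely one with $\ga \ne 0$, $\ga \ne \al$, $\del \ne 0$, $\del \ne 0.18$. The plan is to show this leads to a contradiction by exhibiting a feasible perturbation that strictly increases $f_0$. First I would dispose of the non-smooth locus: on the subregion where $f_1 < f_2$ (resp.\ $f_2 < f_1$) we work with the smooth function $f_1$ (resp.\ $f_2$); on the coincidence locus $f_1 = f_2$ one argues that a perturbation increasing $\min(f_1,f_2)$ can still be found, or one treats the two cases $\sqrt\bet \le \al+\bet$ and $\sqrt\bet \ge \al+\bet$ separately using the explicit piecewise formula for $f_0$ already displayed in the excerpt.

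The heart of the argument is a single-variable monotonicity computation in the direction that trades $\al$, $\bet$ against $\ga$, $\del$ while preserving the constraint $2\al+\bet+\ga+2\del = 1$. The key observation is that $\ga^{3/2}$ and $\del^{3/2}$ are \emph{convex}, so along a segment on which the remaining variables are adjusted linearly, the value of $f_0$ restricted to that segment is a convex function of the parameter plus a term coming from the $\min(\tfrac{3\al\bet}{\al+\bet}, 3\al\sqrt\bet)$ piece; a convex function on an interval attains its maximum at an endpoint. Concretely: fix all of $\al, \bet$ and the sum $\ga + 2\del$, and vary $\ga$ (so $\del$ moves oppositely). Then $f_0$ as a function of $\ga$ on the feasible interval is $\ga^{3/2} + 2\left(\tfrac{s-\ga}{2}\right)^{3/2} + (\text{const})$ where $s = \ga + 2\del$; this is convex in $\ga$, hence maximized at an endpoint of the feasible $\ga$-interval, which forces either $\ga = 0$, or $\ga = \al$ (the constraint $\al \le \ga$ on $R_0$... here I must be careful: actually the region has $\ga \le \al$? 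No---$R_0$ is defined with $\al \le \ga$ via $\ga \le \al$; I will quote the definition verbatim), or $\del = 0$, or $\del = 0.18$. Thus at any global maximum at least one of these four boundary equalities must hold, which is exactly the claim.

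The main obstacle I anticipate is bookkeeping around the piecewise definition of $f_0$ and the constraint boundary $\ga = \al$: when I slide $\ga$ toward $\al$ from below I may cross the curve $\sqrt\bet = \al+\bet$ where the two branches of the $\min$ switch, and on each branch the ``constant'' term (everything not involving $\ga$ or $\del$) genuinely is constant, so the convexity argument applies branch-by-branch, but I need to check that the maximum over the whole interval is still at an endpoint rather than at the branch-switch point. This is handled by noting $f_0 = \min(f_1, f_2)$ where each $f_i$ is convex in $\ga$ on the relevant interval (again because $\ga^{3/2} + 2(\tfrac{s-\ga}{2})^{3/2}$ is convex and the rest is $\ga$-independent), and a minimum of convex functions need not be convex, but its maximum on an interval is still attained where one of the $f_i$ is maximized subject to being the smaller one---and since both $f_1, f_2$ are individually maximized at interval endpoints, so is $f_0$. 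The only remaining subtlety is the degenerate endpoint behaviour of $\tfrac{\al\bet}{\al+\bet}$ at $(\al,\bet)=(0,0)$, which is irrelevant here since we are not varying $\al$ or $\bet$ in this step. I would then record that this proposition is the first of the sequence of reductions (the excerpt continues with \cref{calc:reduce-to-R0} already proved and further propositions to come) that pin down the maximum to $\abcdopt$.
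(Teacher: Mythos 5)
Your argument is essentially the same as the paper's: both exploit the convexity of $x^{3/2}$ along the perturbation that fixes $\al$, $\bet$ and the quantity $\ga+2\del$ while trading $\ga$ against $\del$. The paper does this discretely (picking a small $\eps$ and showing one of the two perturbed points $(\ga\pm 2\eps,\del\mp\eps)$ strictly increases $\ga^{3/2}+2\del^{3/2}$), whereas you phrase it as the continuous fact that a convex function on a segment attains its maximum at an endpoint; these are the same idea, and your formulation is clean.

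One small point: your worry about ``crossing the curve $\sqrt\bet=\al+\bet$'' while sliding $\ga$ is unfounded --- that curve lives entirely in $(\al,\bet)$-space, so it never moves as you vary $\ga$ with $\al,\bet$ held fixed. Consequently, the branch of the $\min$ is determined once and for all by $(\al,\bet)$, and the term $\min\bigl(\tfrac{3\al\bet}{\al+\bet},\,3\al\sqrt\bet\bigr)$ is literally constant along your perturbation segment. This makes the digression about ``a minimum of convex functions need not be convex'' unnecessary: $f_0$ restricted to the segment is exactly $\ga^{3/2}+2\bigl(\tfrac{s-\ga}{2}\bigr)^{3/2}+\text{const}$, which is convex, and the endpoints of the feasible $\ga$-interval are precisely where one of $\ga=0$, $\ga=\al$, $\del=0$, $\del=0.18$ holds (the constraint $\ga+2\del\geq 0.1$ is preserved exactly along the segment, so it never pins the endpoint).
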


\begin{proof}
Suppose that $0<\ga<\al$ and $0<\del<0.18$. Choose $\eps=\min(\frac{\ga}2,\frac{\al-\ga}2,\del,0.18-\del)>0$. Because the function $x^{3/2}$ is convex on $\mathbb R_{\geq 0}$, we have
\begin{align*}
\ga^{3/2}+2\del^{3/2}&<\frac{(\ga+2\eps)^{3/2}+(\ga-2\eps)^{3/2}}2+(\del+\eps)^{3/2}+(\del-\eps)^{3/2}
\\&\leq\max\left((\ga+2\eps)^{3/2}+2(\del-\eps)^{3/2},(\ga-2\eps)^{3/2}+(\del+\eps)^{3/2}\right).
\end{align*}
Thus, we may choose $(\ga',\del')\in\left\{(\ga+2\eps,\del-\eps),(\ga-2\eps,\del+\eps)\right\}$ such that $(\ga')^{2/3}+2(\del')^{2/3}>\ga^{2/3}+2\del^{2/3}$. Moreover, $(\al,\bet,\ga',\del')$ is also in $R_0$ because of our choice of $\eps$. It follows that $f_0(\al,\bet,\ga',\del')>f_0(\abcd)$, contradicting the assumption that $f_0$ attains a global maximum at $(\abcd)$. Hence, both inequalities $0<\ga<\al$ and $0<\del<0.18$ cannot simultaneously hold at a global maximum of $f_0$ on $R_0$.
\end{proof}

\begin{proposition}\label{calc:easy-boundaries}
Let $(\abcd)\in R_0$. If $\al=0$ or $\bet=0$ or $\ga+2\del=0.1$ or $\del=0.18$, then $f_0(\abcd)<0.5$. Additionally, if $\ga=\al$, then $f_0(\abcd)\leq 0.5$, with equality if and only if $(\abcd)=\abcdopt$.
\end{proposition}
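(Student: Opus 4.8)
The plan is to dispatch the four ``strict'' boundary cases ($\al=0$, $\bet=0$, $\ga+2\del=0.1$, $\del=0.18$) by crude estimates built from superadditivity of $t\mapsto t^{3/2}$ on $\mathbb R_{\ge 0}$ and from \cref{calc:ab:a+b}, and then to handle the sharp case $\ga=\al$ by a small two–variable optimization, reduced via $f_0=\min(f_1,f_2)\le f_1$.

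For the four strict cases I would argue as follows. If $\al=0$, then $\ga\le\al=0$ forces $\ga=0$ and the inner minimum vanishes, so $f_0=2\del^{3/2}\le 2(0.18)^{3/2}<\tfrac12$. If $\bet=0$, the inner minimum again vanishes, so $f_0=\ga^{3/2}+2\del^{3/2}\le(\ga+2\del)^{3/2}$; moreover $\ga+2\del=1-2\al$, and since $1=2\al+\ga+2\del\le 3\al+0.36$ we get $\al\ge 0.64/3$, hence $\ga+2\del\le 1-\tfrac{1.28}{3}<0.574$ and $f_0<0.574^{3/2}<\tfrac12$. If $\ga+2\del=0.1$, then $\ga^{3/2}+2\del^{3/2}\le 0.1^{3/2}$, while $2\al+\bet=0.9$ gives, via \cref{calc:ab:a+b}, $\min(\tfrac{3\al\bet}{\al+\bet},3\al\sqrt\bet)\le\tfrac{3\al\bet}{\al+\bet}\le\tfrac{2.7}{3+2\sqrt2}$, so $f_0\le 0.1^{3/2}+\tfrac{2.7}{3+2\sqrt2}<\tfrac12$. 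If $\del=0.18$, I bound $f_0\le f_2=2(0.18)^{3/2}+(\ga^{3/2}+\tfrac{3\al\bet}{\al+\bet})$; using $2\al+\bet=0.64-\ga$ and $3\ga\le 2\al+\ga\le 0.64$ together with \cref{calc:ab:a+b}, the bracket is at most $\ga^{3/2}-\tfrac{3}{3+2\sqrt2}\ga+\tfrac{1.92}{3+2\sqrt2}$, and the convex function $\ga\mapsto\ga^{3/2}-\tfrac{3}{3+2\sqrt2}\ga$ is $\le 0$ on $[0,0.64/3]$ (it vanishes at $0$ and is negative at $0.64/3$), so $f_0\le 2(0.18)^{3/2}+\tfrac{1.92}{3+2\sqrt2}<\tfrac12$.

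For the case $\ga=\al$: substituting $\ga=\al$ the constraint becomes $3\al+\bet+2\del=1$, and since $f_0\le f_1=\al^{3/2}+2\del^{3/2}+3\al\sqrt\bet$ it suffices to show that $f_1\le\tfrac12$ on $R_0\cap\{\ga=\al\}$ with equality only at $\abcdopt$ — for then $f_0\le\tfrac12$ throughout, and $f_0(p)=\tfrac12$ forces $f_1(p)=\tfrac12$, whence $p=\abcdopt$. I would view $f_1$ as a function of $(\al,\del)$ with $\bet=1-3\al-2\del$ on the compact region cut out by $\al,\del,\bet\ge 0$, $\del\le 0.18$, $\al+2\del\ge 0.1$. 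At an interior critical point, $\partial_\del f_1=0$ gives $\del\bet=\al^2$, and $\partial_\al f_1=0$, after clearing radicals, becomes $\sqrt{\al\bet}=3\al-2\bet$; squaring and factoring $9\al^2-13\al\bet+4\bet^2=(9\al-4\bet)(\al-\bet)$ and discarding the branch $\bet=\tfrac94\al$ (which violates $3\al\ge 2\bet$) leaves $\bet=\al$, so $\al=\bet=\del$ and the only interior critical point is $\al=\bet=\ga=\del=\tfrac16$, where $f_1=1/\sqrt6<\tfrac12$. On the boundary pieces: $\al=0$ gives $f_1=2\del^{3/2}<\tfrac12$; $\al+2\del=0.1$ gives $\al\le 0.1$, hence $f_1\le 0.1^{3/2}+2(0.05)^{3/2}+0.3\sqrt{0.9}<\tfrac12$; on $\del=0.18$ and on $\bet=0$ one locates the single edge–critical point by the same radical–clearing trick and checks, with the two endpoints, that $f_1<\tfrac12$; and on $\del=0$, $f_1(\al,0)=\al^{3/2}+3\al\sqrt{1-3\al}$ has derivative $\tfrac32\sqrt\al+\tfrac{6-27\al}{2\sqrt{1-3\al}}$, which is positive for $\al<\tfrac14$ and negative for $\al>\tfrac14$, so $f_1(\al,0)\le\tfrac12$ with equality exactly at $\al=\tfrac14$, i.e.\ at $\abcdopt$.

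The four strict cases are immediate. The substance is the $\ga=\al$ case, and I expect the main care–point there to be the boundary enumeration together with the one slightly non-obvious step — the factorization $9\al^2-13\al\bet+4\bet^2=(9\al-4\bet)(\al-\bet)$ obtained after clearing radicals — which simultaneously pins down the unique interior critical point and lets one locate the edge–critical points on $\del=0.18$ and $\bet=0$. (If one prefers to avoid square roots, the same programme runs with $f_2=\al^{3/2}+2\del^{3/2}+\tfrac{3\al\bet}{\al+\bet}$ in place of $f_1$.)
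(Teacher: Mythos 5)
Your four strict cases are correct and essentially parallel the paper's: the paper also uses $\ga^{3/2}+2\del^{3/2}\le(\ga+2\del)^{3/2}$, \cref{calc:ab:a+b} with $2\al+\bet=0.9$ for the $\ga+2\del=0.1$ case, and (effectively) the sign of $\ga^{3/2}-\tfrac{3}{3+2\sqrt2}\ga$ on $[0,0.64/3]$ for $\del=0.18$. Your handling of $\al=0$ (using $\ga\le\al$ to force $\ga=0$) and of $\bet=0$ (deducing $\al\ge 0.64/3$) are cosmetically different but equivalent in effort. Where you genuinely diverge is the sharp case $\ga=\al$. You treat $f_1$ as a two-variable function of $(\al,\del)$, find the interior critical point via $\del\bet=\al^2$ and the factorization $9\al^2-13\al\bet+4\bet^2=(9\al-4\bet)(\al-\bet)$ (correct: the branch $\bet=\tfrac94\al$ makes $3\al-2\bet<0$ and is discarded), and then enumerate five boundary edges. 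The paper instead fixes $c=1-2\del$ and observes that the single-variable function $j(\al)=\al^{3/2}+3\al\sqrt{c-3\al}$ has a closed-form unique maximizer $\al=c/4$ with value $c^{3/2}/2$, reducing to the one-variable inequality $2\del^{3/2}+\tfrac12(1-2\del)^{3/2}\le\tfrac12$; this nested 1D reduction is shorter, sidesteps the interior critical-point computation entirely, and identifies the equality case in two lines. Both routes are valid, but note you leave the $\del=0.18$ and $\bet=0$ edges as ``the same trick'' — these do work out (e.g.\ on $\del=0.18$ the paper's $j$ with $c=0.64$ gives $f_1\le 2(0.18)^{3/2}+0.256<0.41$, and on $\bet=0$ the function $\al^{3/2}+2\del^{3/2}$ is convex along the edge so its max is at an endpoint), but they should be written out to make the proof complete; the paper's parametrization by $\del$ conveniently subsumes those edges and is worth adopting if you rewrite this.
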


\begin{proof}
First, suppose $\min(\al,\bet)=0$. We have $\ga\leq 1/3$, as $3\ga\leq 2\al+\ga\leq 1$, so
\[
f_0(\al,\bet,\ga,\del)=\ga^{3/2}+2\del^{3/2}+0\leq \left(\frac 13\right)^{3/2}+2\times 0.18^{3/2}\approx 0.345<0.5.
\]

Next, suppose $\ga+2\del=0.1$. Then $\ga^{3/2}+2\del^{3/2}\leq (\ga+2\del)^{3/2}\leq 0.1^3$. Additionally, $\frac{\al\bet}{\al+\bet}\leq\frac{2\al+\bet}{3+2\sqrt 2}$ by \cref{calc:ab:a+b}. Because $2\al+\bet=1-(\ga+2\del)=0.9$, we have
\[
f_0(\abcd)\leq f_2(\abcd)\leq(\ga+2\del)^{3/2}+\frac{3\times(2\al+\bet)}{3+2\sqrt 2}=0.1^{3/2}+\frac{3\times 0.9}{3+2\sqrt 2}\approx 0.495<0.5.
\]

Third, suppose $\del=0.18$. We have $f_0(\abcd)=2\del^{3/2}+g(\abc)$. By \cref{calc:reduce-to-R0}, we have $g(\abc)<\frac 13$, so $f_0(\abcd)\leq 2\times 0.18^{3/2}+\frac 13\approx 0.486<0.5$.

Lastly, suppose $\ga=\al$. It holds that
\[
\ga^{3/2}+3\al\sqrt\bet=\al^{3/2}+3\al\sqrt{1-2\del-3\al}.
\]
Treating $c=1-2\del$ as a fixed constant, the function $j(x)=x^{3/2}+3x\sqrt{c-3x}$ is differentiable with regards to $x$ on the interval $\left(0,\frac c3\right)$, with derivative equal to 
\[
j'(x)=\frac 32\sqrt x+3\sqrt{c-3x}-\frac{9x}{2\sqrt{c-3x}}
=\left(\sqrt{c-3x}-\sqrt x\right)\left(3+\frac{9\sqrt x}{2\sqrt {c-3x}}\right).
\]
Because $j'(x)$ is positive for $0<x<\frac c4$ and negative for $\frac c4<x<\frac c3$, the function $j(x)$ is uniquely maximized at $j(\frac c4)=(c/4)^{3/2}+3(c/4)\sqrt{c/4}=c^{3/2}/2=(1-2\del)^{3/2}/2$ on the interval $\left[0,\frac c3\right]$. Thus,
\begin{align*}
f_0(\abcd)&\leq f_1(\abcd)=2\del^{3/2}+\al^{3/2}+3\al\sqrt{1-3\al}
\\&=2\del^{3/2}+j(\al)
\leq 2\del^{3/2}+\frac 12(1-2\del)^{3/2}.
\end{align*}
Observing that $\del^{1/2}\leq 0.18^{1/2}<\frac 12$ and $(1-2\del)^{1/2}\leq 1$, we have that
\[
f_0(\abcd)\leq 2\del^{3/2}+\frac 12(1-2\del)^{3/2}
\leq\frac 12\times 2\del+\frac 12\times(1-2\del)=\frac 12,
\]
with strict inequality if $\del>0$ or $\al\neq\frac{1-2\del}{4}$. The unique point in $R_0$ with $\del=0$ and $\ga=\al=\frac{1-2\del}4$ is $(\abcd)=\abcdopt$, and $f_0$ evaluates to 0.5 at this point.
\end{proof}

\begin{proposition}\label{calc:hard-boundaries}
Let $(\abcd)$ be a global maximum of $f_0$ on the region $R_0$. Then $\sqrt\bet=\al+\bet$ and either $\ga=0$ or $\del=0$.
\end{proposition}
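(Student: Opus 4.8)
\textbf{Proof plan for \cref{calc:hard-boundaries}.}

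The plan is to combine the two preceding propositions with a perturbation argument along the boundary. By \cref{calc:not-interior}, at a global maximum $(\abcd)$ of $f_0$ on $R_0$ we are in one of the four cases $\ga=0$, $\ga=\al$, $\del=0$, or $\del=0.18$. By \cref{calc:easy-boundaries}, the cases $\ga=\al$ and $\del=0.18$ cannot yield a global maximum, since $f_0\abcdopt=0.5$ and the only point of $R_0$ where $\ga=\al$ achieves $f_0=0.5$ is $\abcdopt$ itself, which has $\del=0$. (One must note $\abcdopt\in R_0$: indeed $2\ga+2\del=0.5\geq 0.1$ and $\del=0\leq 0.18$.) Hence any global maximum has $\del=0$ or $\ga=0$; this establishes the ``either $\ga=0$ or $\del=0$'' half of the conclusion, and in either subcase \cref{calc:easy-boundaries} already rules out $\al=0$, $\bet=0$, and $\ga+2\del=0.1$, so the maximum lies in the relative interior of the remaining constraints.

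It remains to show $\sqrt\bet=\al+\bet$, i.e.\ that the maximum does not occur strictly in the region where $f_0=f_1$ (the regime $\sqrt\bet<\al+\bet$, where $f_0=\gamma^{3/2}+2\delta^{3/2}+3\al\sqrt\bet$) nor strictly in the region where $f_0=f_2$ ($\sqrt\bet>\al+\bet$, where $f_0=\gamma^{3/2}+2\delta^{3/2}+\tfrac{3\al\bet}{\al+\bet}$). I would handle this by a local perturbation of $(\al,\bet)$ that preserves $2\al+\bet$ (hence preserves $\gamma$ and $\delta$ and keeps us in $R_0$, as $\al,\bet>0$ strictly). In the region $f_0=f_1$, fix $s=2\al+\bet$ and maximize $3\al\sqrt\bet=3\al\sqrt{s-2\al}$ over $\al$; differentiating shows the unconstrained optimum is at $\bet=4\al$, where indeed $\sqrt\bet<\al+\bet$ fails in general, so within the strip $\{\sqrt\bet\le\al+\bet\}$ the function $3\al\sqrt\bet$ has no interior critical point unless we are on the curve $\sqrt\bet=\al+\bet$; pushing $\al$ toward the value $\al=s/4$ strictly increases $f_1$ until the boundary $\sqrt\bet=\al+\bet$ (or another active constraint) is hit. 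Symmetrically, in the region $f_0=f_2$, with $s=2\al+\bet$ fixed one maximizes $\tfrac{3\al\bet}{\al+\bet}$; by \cref{calc:ab:a+b} this is at most $\tfrac{3s}{3+2\sqrt2}$ with equality iff $\bet=\sqrt2\,\al$, which is exactly the locus where $\sqrt\bet=\al+\bet$ fails to bind only if $\sqrt\bet=\al+\bet$ — a short check: $\sqrt\bet=\al+\bet$ with $\bet=\sqrt2\al$ is consistent, so the maximizer of $f_2$ along the fixed-$s$ slice again lies on the curve $\sqrt\bet=\al+\bet$. In both cases, since $f_0$ is the minimum of $f_1$ and $f_2$ and each is being increased toward the common curve, the true maximum of $f_0$ must occur where $f_1=f_2$, i.e.\ where $\sqrt\bet=\al+\bet$.

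The main obstacle is the bookkeeping in the perturbation argument: one must verify that moving $\al$ (with $2\al+\bet$ fixed) toward its optimal value always \emph{stays} inside $R_0$ until the curve $\sqrt\bet=\al+\bet$ is reached, rather than first exiting through $\al=0$, $\bet=0$, or (in the $\del$-fixed case) some other face. This is routine because all the other defining inequalities of $R_0$ involve only $\gamma,\delta$ (which are untouched) except $\al\le\gamma$ and $\al,\bet\ge0$, and a direct comparison of the target $\al$-value with these bounds settles it; but it requires care to present cleanly. Once this is done, the three remaining degrees of freedom have been pinned to the one-parameter curve $\{\sqrt\bet=\al+\bet,\ \del=0\text{ or }\ga=0\}$, which is exactly the setup consumed by \cref{calc:not-interior}'s sequel (the unstated \cref{calc:final}) to finish the proof of \cref{calculus}.
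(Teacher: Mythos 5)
The first half of your argument --- deriving $\ga=0$ or $\del=0$ from \cref{calc:not-interior} and \cref{calc:easy-boundaries} --- matches the paper and is correct. The second half, the perturbation argument for $\sqrt\bet=\al+\bet$, has a genuine gap, independent of an algebra slip. Fixing $s=2\al+\bet$, the slice-optimum of $3\al\sqrt{s-2\al}$ is at $\al=s/3$ (so $\bet=\al$), not at $\bet=4\al$ or $\al=s/4$ as you wrote (these are even mutually inconsistent with $2\al+\bet=s$). More seriously, even with the correct value, your argument tacitly assumes the slice-optimum always lies on the far side of the curve $\sqrt\bet=\al+\bet$, so that pushing toward it forces a boundary crossing. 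That is false in general: depending on $s=1-\ga-2\del$, the slice-optimum $\al=\bet=s/3$ (and similarly the $f_2$-slice-optimum $\bet=\sqrt 2\al$) can lie strictly inside the $f_1$ region or the $f_2$ region respectively. In those cases the perturbation simply converges to an interior critical point on the constraint hyperplane, never reaching $\sqrt\bet=\al+\bet$, and nothing in your argument rules this out.

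The paper confronts exactly this possibility: it sets up the four open regions $R_1,R_1',R_2,R_2'$, imposes the Lagrange condition (gradient parallel to $\langle 2,1,2\rangle$ or $\langle 2,1,1\rangle$), solves for the candidate interior critical point in each, and then either observes a constraint violation (e.g.\ $\al=\bet=\ga$ contradicting $\ga<\al$ in $R_1'$) or explicitly evaluates $f_0$ there ($\approx 0.447$, $\approx 0.474$, $\approx 0.495$) to see it falls short of $0.5$. These explicit evaluations are precisely the missing step in your plan: a perturbation argument cannot tell you that an interior critical point is subcritical without actually computing its value. Your sentence about $\bet=\sqrt2\al$ and ``fails to bind only if $\sqrt\bet=\al+\bet$'' conflates two unrelated loci (a ray through the origin and a genuine curve) that coincide only at isolated points, so the $f_2$ half of the argument has the same problem. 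Finally, the ``\cref{calc:final}'' you reference does not exist; the next proposition is \cref{calc:max}.
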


\begin{proof}
By \cref{calc:not-interior}, any global maximum $(\abcd)$ of $f_0$ on $R_0$ must satisfy one of the four equations $\ga=0$ or $\ga=\al$ or $\del=0$ or $\del=0.18$. \cref{calc:easy-boundaries} implies $\del\neq 0.18$ and that, if $\ga=\al$, then $(\abcd)=\abcdopt$ and in particular $\del=0$. Thus, we have either $\ga=0$ or $\del=0$.

If $\sqrt\bet=\al+\bet$, then the conclusion of the proposition is satisfied. We handle the cases $\al>\sqrt\bet-\bet$ and $\al<\sqrt\bet-\bet$ separately. Within each of these cases, we consider the $\ga=0$ and $\del=0$ cases separately. In all cases, we use the strict inequalities $\al>0$, $\bet>0$, $\ga+2\del>0.1$, and $\del<0.18$, which are implied by \cref{calc:easy-boundaries}.

\medskip\noindent
\textit{Case 1.} $\sqrt\bet<\al+\bet$. That is, $f_0(\abcd)=f_1(\abcd)$.

If $\ga=0$, then $(\abd)$ must be a global maximum of $h_1(\abd):=f_1(\al,\bet,0,\del)$ on the region
\[
R_1=\{(\abd)\in\mathbb R^3:2\al+\bet+2\del=1,\,
\al>0,\,\bet>0,\,2\del>0.1,\,\del< 0.18,\,\sqrt\bet<\al+\bet\},
\]
which is an open subset of the hyperplane $\{(\al,\bet,\del):2\al+\bet+2\del=1\}$. Because $h_1$ is differentiable on $R_1$, the gradient $\nabla h_1=\left\langle 3\sqrt\bet,\frac{3\al}{2\sqrt\bet},3\sqrt\del\right\rangle$ must be parallel to $\langle 2,1,2\rangle$. This implies $\sqrt\al=\sqrt\bet=\sqrt\del$, and we conclude that $\al=\bet=\del=\frac 15$. However, $f_0(\abcd)=h_1(\frac 15,\frac 15,\frac 15)\approx 0.447<0.5$, so this point is not a global maximum of $f_0$ on $R_0$.

If $\del=0$, then $(\abc)$ must be a global maximum of $j_1(\abc):=f_1(\abc,0)$ on the region
\[
R'_1=\{(\abc)\in\mathbb R^3:2\al+\bet+\ga=1,\,
\al>0,\,\bet>0,\,\ga>0.1,\,\ga<\al,\,\sqrt\bet<\al+\bet\},
\]
which is an open subset of the hyperplane $\{(\abc):2\al+\bet+\ga=1\}$. Because $j_1$ is differentiable on $R'_1$, the gradient $\nabla j_1=\left\langle 3\sqrt\bet,\frac{3\al}{2\sqrt\bet},\frac 32\sqrt\ga\right\rangle$ must be parallel to $\langle 2,1,1\rangle$. This implies $\al=\bet=\ga$, contradicting the inequality $\ga<\al$ in the definition of $R'_1$.

\medskip\noindent
\textit{Case 2.} $\sqrt\bet>\al+\bet$. That is, $f_0(\abcd)=f_2(\abcd)$.

If $\ga=0$, then $(\abd)$ must be a global maximum of $h_2(\abd):=f_2(\al,\bet,0,\del)$ on the region 
\[
R_2=\{(\abd)\in\mathbb R^3:2\al+\bet+2\del=1,\,
\al>0,\,\bet>0,\,2\del>0.1,\,\del< 0.18,\,\sqrt\bet>\al+\bet\},
\]
which is an open subset of the hyperplane $\{(\al,\bet,\del):2\al+\bet+2\del=1\}$. Because $h_2$ is differentiable on $R_2$, the gradient $\nabla h_2=\left\langle \frac{3\bet^2}{(\al+\bet)^2},\frac{3\al^2}{(\al+\bet)^2},3\sqrt\del\right\rangle$ must be parallel to $\langle 2,1,2\rangle$. This occurs if and only if $\bet=\sqrt 2\al$ and $\del=\frac{\bet^4}{(\al+\bet)^4}=\frac{4}{(1+\sqrt 2)^4}\approx 0.118$. In this case, \cref{calc:ab:a+b} implies
\[
\frac{3\al\bet}{\al+\bet}=\frac{3(2\al+\bet)}{3+2\sqrt 2}=\frac{3(1-2\del)}{3+2\sqrt 2}\approx 0.394,
\]
and we conclude $f_0(\abcd)=h_2(\abd)\approx 0.474<0.5$. Hence, such a point is not a global maximum of $f_0$ on $R_0$.

If $\del=0$ and $\ga\neq 0$, then $(\abc)$ must be a global maximum of $j_2(\abc):=f_2(\abc,0)$ on the region
\[
R'_2=\{(\abc)\in\mathbb R^3:2\al+\bet+\ga=1,\,
\al>0,\,\bet>0,\,\ga>0.1,\,\ga<\al,\,\sqrt\bet>\al+\bet\},
\]
which is an open subset of the hyperplane $\{(\abc):2\al+\bet+\ga=1\}$. Because $j_2$ is differentiable on $R'_2$, the gradient $\nabla j_2=\left\langle \frac{3\bet^2}{(\al+\bet)^2},\frac{3\al^2}{(\al+\bet)^2},\frac 32\sqrt\ga\right\rangle$ must be parallel to $\langle 2,1,1\rangle$. This occurs if and only if $\bet=\sqrt 2\al$ and $\ga=\frac{4\al^4}{(\al+\bet)^4}=\frac{4}{(1+\sqrt 2)^4}\approx 0.118$. This time, we have
\[
\frac{3\al\bet}{\al+\bet}=\frac{3(2\al+\bet)}{3+2\sqrt 2}=\frac{3(1-\ga)}{3+2\sqrt 2}\approx 0.454,
\]
and we conclude $f_0(\abcd)=j_2(\abc)\approx 0.495< 0.5$. Hence, such a point is not a global maximum of $f_0$ on $R_0$.
\end{proof}

\begin{proposition}\label{calc:max}
The unique global maximum of $f_0$ on $R_0$ is $\abcdopt$, where $f_0\abcdopt=\frac 12$.
\end{proposition}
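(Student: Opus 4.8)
The plan is to use the preceding propositions to pin down any global maximum and then reduce to a one-variable calculus problem. Since $f_0$ is continuous and $R_0$ is compact, a global maximum exists; let $(\abcd)$ be one. By \cref{calc:hard-boundaries} it satisfies $\sqrt\bet = \al+\bet$ together with $\ga = 0$ or $\del = 0$, and by the strict bounds recorded in the proof of \cref{calc:easy-boundaries} we also have $\al,\bet > 0$, $\ga + 2\del > \tfrac1{10}$, and $\del < \tfrac9{50}$. On the locus $\sqrt\bet = \al+\bet$ the two branches $f_1,f_2$ of $f_0 = \min(f_1,f_2)$ agree, since $\tfrac{3\al\bet}{\al+\bet} = \tfrac{3\al\bet}{\sqrt\bet} = 3\al\sqrt\bet$; hence there $f_0 = \ga^{3/2} + 2\del^{3/2} + 3\al\sqrt\bet$. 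Setting $t = \sqrt\bet$, so $\bet = t^2$ and $\al = \sqrt\bet - \bet = t(1-t)$, this becomes $f_0 = \ga^{3/2} + 2\del^{3/2} + 3t^2(1-t)$, and the equality constraint $2\al+\bet+\ga+2\del = 1$ fixes the one remaining free variable as a function of $t$ in each of the two cases.

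In the case $\del = 0$ one gets $\ga = 1 - 2\al - \bet = (1-t)^2$, so $f_0 = (1-t)^3 + 3t^2(1-t)$, and I would exploit the identity $(1-t)^3 + 3t^2(1-t) = \tfrac12 - 4\bigl(t-\tfrac12\bigr)^3$, which makes the conclusion immediate: the constraint $\ga \le \al$ reads $(1-t)^2 \le t(1-t)$, i.e.\ $t \ge \tfrac12$ (using $t < 1$), whence $f_0 \le \tfrac12$ with equality exactly when $t = \tfrac12$, i.e.\ $(\abcd) = \abcdopt$, where $f_0 = \tfrac12$.

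In the case $\ga = 0$ one gets $2\del = (1-t)^2$, so $2\del^{3/2} = (1-t)^3/\sqrt2$ and, by the same identity, $f_0 = \tfrac12 - 4\bigl(t-\tfrac12\bigr)^3 - \bigl(1-\tfrac1{\sqrt2}\bigr)(1-t)^3$; here the constraints $\del \le \tfrac9{50}$ and $\ga + 2\del \ge \tfrac1{10}$ confine $t$ to $\bigl[\tfrac25,\, 1-\tfrac1{\sqrt{10}}\bigr]$. To see $f_0 < \tfrac12$ throughout, I would split at $t = \tfrac12$: for $t \ge \tfrac12$ both subtracted terms are nonnegative and the second is strictly positive (as $t<1$), while for $\tfrac25 \le t < \tfrac12$ one has $4\bigl(\tfrac12-t\bigr)^3 \le \tfrac1{250} < \tfrac1{32} < \bigl(1-\tfrac1{\sqrt2}\bigr)(1-t)^3$, using $\tfrac12 - t \le \tfrac1{10}$, $1 - t > \tfrac12$, and $\tfrac1{\sqrt2} < \tfrac34$. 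Thus no global maximum of $f_0$ occurs when $\ga = 0$, and combining with the previous case we conclude that $\abcdopt$ is the unique global maximum of $f_0$ on $R_0$, with value $\tfrac12$ (which together with \cref{calc:reduce-to-R0} and the pointwise bound $f \le f_0$ completes the proof of \cref{calculus}).

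The genuine difficulty in this whole calculus argument is not this final proposition but the chain \cref{calc:not-interior}--\cref{calc:hard-boundaries} that reduces the four-variable optimization to the two one-parameter families above; once that is done, spotting the factorization $\tfrac12 - 4(t-\tfrac12)^3$ does essentially all of the remaining work. The only mild subtlety here is tracking the small numerical slack in the case $\ga = 0$, which is nonetheless entirely routine.
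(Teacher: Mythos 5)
Your proof is correct and follows essentially the same two-case structure as the paper's (both driven by \cref{calc:hard-boundaries}), with the Case $\del=0$ argument identical up to the substitution $t=\sqrt\bet$ — your identity $f_0 = \tfrac12 - 4(t-\tfrac12)^3$ is the same as the paper's $f_0 = \tfrac12 + \tfrac12(1-2\sqrt\bet)^3$. In Case $\ga=0$ you take a mildly different route: rather than the paper's bound $\tfrac1{\sqrt2}\le\tfrac34$ followed by the factorization $0.48 + \tfrac34(1-5\sqrt\bet)(\tfrac35-\sqrt\bet)^2$, you reuse the Case-1 cubic identity to write $f_0 = \tfrac12 - 4(t-\tfrac12)^3 - (1-\tfrac1{\sqrt2})(1-t)^3$ and close with the split at $t=\tfrac12$ and the numerical comparison $\tfrac1{250}<\tfrac1{32}$ — a clean, equivalent, and arguably more transparent argument (and your invocation of $\ga+2\del\ge 0.1$ to bound $t$ from above is not actually needed, since $t<1$ already follows from $\al>0$).
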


\begin{proof}
Let $(\abcd)$ be a global maximum of $f_0$ on $R_0$. By \cref{calc:hard-boundaries}, $\al=\sqrt\bet-\bet$ and either $\ga=0$ or $\del=0$. We handle these two cases separately. If $\al=\sqrt\bet-\bet$, observe that $3\al\sqrt\bet=\frac{3\al\bet}{\al+\bet}=3\bet-3\bet^{3/2}$.

\smallskip
\textit{Case 1.} $\del=0$. In this case, $\ga=1-2\al-\bet=(1-\sqrt\bet)^2$, so
\[
f_0(\abcd)=\ga^{3/2}+3\bet-3\bet^{3/2}=\big(1-\sqrt\bet\big)^3+3\bet\big(1-\sqrt\bet\big).
\]
Rearranging terms, it holds that
\[
f_0(\abcd)=\frac 12+\frac 12\left(1-6\bet^{1/2}+12\bet-8\bet^{3/2}\right)=\frac 12+\frac 12\left(1-2\sqrt\bet\right)^3
\]
Moreover, the equation $\ga\leq\al$ implies $(1-\sqrt\bet)^2\leq\sqrt\bet(1-\sqrt\bet)$, implying that $\sqrt\bet\geq\frac 12$. It follows that $f_0(\abcd)\leq\frac 12$ with equality only if $\bet=\frac 14$, in which case $\al=\sqrt\bet-\bet=\frac 14$ and $\ga=1-2\al-\bet=\frac 14$.

\smallskip
\textit{Case 2.} $\ga=0$. In this case, $2\del=1-2\al-\bet=1-2\sqrt\bet+2\bet-\bet=(1-\sqrt\bet)^2$, so
\begin{align*}
f_0(\abcd)&=2\del^{3/2}+3\bet-3\bet^{3/2}=\frac 1{\sqrt 2}\left(1-\sqrt\bet\right)^3+3\bet\left(1-\sqrt\bet\right)
\\&\leq\frac 34\left(1-\sqrt\bet\right)^3+3\bet\left(1-\sqrt\bet\right)
=\frac 34\left(\frac{16}{25}+\frac{9}{25}-3\sqrt\bet+7\bet-5\bet^{3/2}\right)
\\&=0.48+\frac 34\left(1-5\sqrt\bet\right)\left(\frac 35-\sqrt\bet\right)^2.
\end{align*}
The equation $\del\leq 0.18$ implies $(1-\sqrt\bet)^2\leq 0.36=0.6^2$, so $\sqrt\bet\geq 0.4$ and $1-5\sqrt\bet<0$. It follows that $f_0(\abcd)\leq 0.48<0.5$ in this case, so $f_0$ has no global maximum with $\ga=0$ and $\sqrt\bet=\al+\bet$.

\smallskip
Combining the results of the two cases, we conclude that $f_0(\abcd)<\frac 12$ on $R_0$ except at the point $f_0\abcdopt=\frac 12$.
\end{proof}

Combining the previous propositions completes the proof of \cref{calculus}.

\begin{proof}[Proof of \cref{calculus}]
We show that the unique global maximum of $f$ on the region
\[R=\{(\abcd)\in\mathbb R^4_{\geq 0}:2\al+\bet+\ga+2\del=0,\ga\leq\al\}\]
occurs at the point $f\abcdopt=\frac 12$. 

By \cref{calc:reduce-to-R0}, we have that $f(\abcd)<\frac 12$ whenever $\ga+2\del\leq 0.1$ or $\del\geq 0.18$, implying that $f(\abcd)<\frac 12$ for any $(\abcd)\in R\setminus R_0$. By \cref{calc:max}, it holds that $f(\abcd)\leq f_0(\abcd)<\frac 12$ for any $(\abcd)\in R_0$ not equal to $\abcdopt$.
\end{proof}

\section*{Acknowledgments} 
The author is grateful to Ben Foster, Daniel Zhu, and Nathan Sheffield for helpful comments, including several suggestions that simplified portions of the presentation. The author is also grateful to Bernard Lidick\'y for testing whether \cref{thm:C14-turan-density} could be proven directly using flag algebras; unfortunately, no such proof was found. Lastly, the author thanks the anonymous referees for many suggestions that improved the quality of this paper.




\providecommand{\bysame}{\leavevmode\hbox to3em{\hrulefill}\thinspace}
\providecommand{\MR}[1]{}
\providecommand{\MRhref}[2]{%
  \href{http://www.ams.org/mathscinet-getitem?mr=#1}{#2}
}


\begin{aicauthors}
\begin{authorinfo}[maya]
  Maya Sankar\\
  Stanford University\\
  Palo Alto, California, USA\\
  mayars\imageat{}stanford\imagedot{}edu \\
  \url{https://mayasankar.github.io}
\end{authorinfo}
\end{aicauthors}

\end{document}